\author{Owen Gwilliam}
\address{Max-Planck-Institut f\"ur Mathematik, Bonn, Germany}
\email{gwilliam@mpim-bonn.mpg.de}
\urladdr{http://people.mpim-bonn.mpg.de/gwilliam/}
\title[Linear Batalin-Vilkovisky Quantization as a Functor of $\infty$-Categories]{Linear Batalin-Vilkovisky Quantization\\ as a Functor of $\infty$-Categories}
\newcommand{\bC}{\mathbf{C}}
\newcommand{\bCc}{\bC^{\txt{c}}}
\newcommand{\bCf}{\bC^{\txt{f}}}
\newcommand{\bCcf}{\bC^{\txt{cf}}}
\newcommand{\Lie}{\txt{Lie}}
\newcommand{\bCs}{\bC_{\Delta}}
\newcommand{\bCcs}{\bCc_{\Delta}}
\newcommand{\bCfs}{\bCf_{\Delta}}
\newcommand{\bCcfs}{\bCcf_{\Delta}}
\newcommand{\obCs}{\bar{\bC}_{\Delta}}
\newcommand{\obCcs}{\bar{\bC}^{\txt{c}}_{\Delta}}
\newcommand{\obCfs}{\bar{\bC}^{\txt{f}}_{\Delta}}
\newcommand{\obCcfs}{\bar{\bC}^{\txt{cf}}_{\Delta}}
\newcommand{\NobCs}{\mathrm{N}\obCs}
\newcommand{\NobCcs}{\mathrm{N}\obCcs}
\newcommand{\NobCfs}{\mathrm{N}\obCfs}
\newcommand{\NobCcfs}{\mathrm{N}\obCcfs}
\newcommand{\NbCcfs}{\mathrm{N}\bCcfs}
\newcommand{\angled}[1]{\langle #1 \rangle}
\newcommand{\QQuad}{\mathcal{Q}\txt{uad}}
\newcommand{\MMod}{\mathcal{M}\txt{od}}
\newcommand{\LLie}{\mathcal{L}\txt{ie}}
\newcommand{\LLI}{\mathcal{L}_{\infty}}
\newcommand{\AAlg}{\mathcal{A}\txt{lg}}
\newcommand{\SSymp}{\mathcal{S}\txt{ymp}}
\newcommand{\QQCoh}{\mathcal{Q}\txt{Coh}}
\newcommand{\PPic}{\mathcal{P}\txt{ic}}
\newcommand{\Moduli}{\mathcal{M}\txt{oduli}}
\newcommand{\sQuad}{\txt{\textbf{Quad}}}
\newcommand{\sMod}{\txt{\textbf{Mod}}}
\newcommand{\sHom}{\txt{\textbf{Hom}}}
\newcommand{\sLie}{\txt{\textbf{Lie}}}
\newcommand{\sLI}{\mathbf{L}_{\infty}}
\newcommand{\LI}{\mathrm{L}_{\infty}}
\newcommand{\sAlg}{\txt{\textbf{Alg}}}
\newcommand{\Quad}{\txt{Quad}}
\newcommand{\BD}{\txt{BD}}
\newcommand{\Comm}{\txt{Comm}}
\newcommand{\sComm}{\txt{\textbf{Comm}}}
\newcommand{\CComm}{\mathcal{C}\txt{omm}}
\newcommand{\dSt}{\txt{dSt}}
\newcommand{\QCoh}{\txt{QCoh}}
\newcommand{\OOpd}{\mathcal{O}\txt{pd}}
\newcommand{\eg}{e.g.\@}
\newcommand{\ie}{i.e.\@}
\newcommand{\cf}{cf.\@}
\newcommand{\CAT}{\txt{CAT}}
\newcommand{\LCAT}{\widehat{\CAT}}
\def\PZ{{{\rm P}_0}}
\def\teo{{\widetilde{\mathrm E}_0}}
\def\g{\mathfrak g}
\def\Sym{{\rm Sym}}
\def\d{{\rm d}}
\def\dq{{\rm dequant}}
\def\ev{\mathrm{ev}}
\def\CL{\mathrm{C}^{\mathrm{L}}}
\def\c{\mathbf{c}}
\def\H{\mathcal{H}}
\def\quant{\mathcal{Q}}
\def\Spec{{\rm Spec}}
\def\Heis{\txt{Heis}}
\def\HH{\mathcal{H}}
\def\htimes{\underset{\mathrm{H}}{\otimes}}
\def\bvq{\mathcal{BVQ}}
\def\VB{\mathcal{VB}}
\def\GVB{\mathcal{GVB}}
\def\cq{\mathcal{CQ}}
\def\div{{\txt{div}}}
\def\rE{{\txt{E}}}
\newcommand{\CNCoalg}{\txt{Cocomm}^{\txt{conil}}}
\newcommand{\sCNCoalg}{\mathbf{Cocomm}^{\txt{conil}}}
\begin{document}
\begin{abstract}
  We study linear Batalin-Vilkovisky (BV) quantization, which is a derived and shifted
  version of the Weyl quantization of symplectic vector spaces. Using
  a variety of homotopical machinery, we implement this construction 
  as a symmetric monoidal functor of \icats{}. We also show that this construction
  has a number of pleasant properties: It has a natural extension to
  derived algebraic geometry, it can be fed into the higher Morita
  category of $\mathrm{E}_{n}$-algebras to produce a ``higher BV
  quantization'' functor, and when restricted to formal moduli
  problems, it behaves like a determinant. Along the way we also use
  our machinery to give an algebraic construction of
  $\mathrm{E}_{n}$-enveloping algebras for shifted Lie algebras.
\end{abstract}

\maketitle
\tableofcontents

\section{Introduction}

A well-known adage in mathematical physics is that ``quantization is not a functor,'' 
but with suitable restrictions, there are situations where quantization is functorial.  
Our goal here is to articulate the simplest piece of the Batalin-Vilkovisky formalism 
as a functor, using modern machinery of higher categories and derived geometry.

The most fundamental case of quantization assigns to the vector space
$\mathbb{R}^{2n}$ the \emph{Weyl algebra}, which is the associative
algebra on $2n$ generators $p_{1},\ldots,p_{n},q_{1},\ldots,q_{n}$
with relations $[p_{i},p_{j}] = 0 = [q_{i},q_{j}]$ and
$[p_{i},q_{j}]=\delta_{ij}$. Here $\mathbb{R}^{2n}$ should be thought
of as the cotangent bundle of $\mathbb{R}^{n}$, equipped with its
standard symplectic structure, which is the arena for classical mechanics on
$\mathbb{R}^{n}$. This assignment can formulated as a functor,
known as \emph{Weyl quantization}, from symplectic vector spaces (or
more generally, vector spaces with a skew-symmetric pairing) 
to assocative algebras. For us this is the model case of functorial quantization. 
This construction naturally breaks up into three steps:
\begin{enumerate}[(1)]
\item To a vector space $V$ with skew-symmetric pairing $\omega$ we
  associate its \emph{Heisenberg Lie algebra} $\Heis(V,\omega)$,
  which is the direct sum $V \oplus \mathbb{R}\hbar$ 
  equipped with the Lie bracket where
  \[
  [x,y] = \omega(x,y)\hbar
  \]
  for $x, y$ in $V$, and all other brackets are zero.
\item To the Lie algebra $\Heis(V,\omega)$, we assign its
  universal enveloping algebra $U\Heis(V, \omega)$.
\item If we now set $\hbar$ to 1, we get the \emph{Weyl algebra}:
  $\txt{Weyl}(V,\omega) := U\Heis(V,\omega)/(\hbar = 1)$.
\end{enumerate}
On the other hand, if we set $\hbar = 0$ we get $\Sym(V)$, equipped
with the Poisson bracket 
\[\{x,y\} = \lim_{\hbar \to 0} [x,y]/\hbar = \omega(x,y),\] for $x,y
\in V$, which is an algebraic version of the Poisson algebra of
classical observables.  The universal enveloping algebra
$U\Heis(V,\omega)$ can thus be viewed as a deformation quantization of
the Poisson algebra $\Sym(V)$.
This procedure is at the core of all approaches to ``free theories,''
and hence the base case for the more challenging and more interesting
interacting theories.

Our main object of study in this paper is a \emph{derived} and
\emph{shifted} version of this construction --- \emph{derived} in the
sense that we replace vector spaces by cochain complexes (or more
generally modules over a commutative differential graded algebra) and
\emph{shifted} in the sense that we consider skew-symmetric pairings
of degree 1. We will construct a functorial quantization of these
objects to \emph{$\mathrm{E}_{0}$-algebras} (which are just pointed
cochain complexes), using shifted versions of the Heisenberg Lie
algebra construction and of the universal enveloping algebra. (In
general, we expect that there is a functorial quantization of cochain
complexes with $(1-n)$-shifted skew-symmetric pairings to
$\mathrm{E}_{n}$-algebras, and we discuss below how we believe this
arises naturally from the case $n = 0$ that we consider.)

Our construction produces the simplest possible examples of
\emph{Batalin-Vilkovisky quantization}. This homological approach
to quantization of field theories was introduced by Batalin and
Vilkovisky \cite{BV1,BV2,BV3} as a generalization of the BRST formalism, 
in an effort to deal with complicated field theories such as supergravity. 
Their formalism for field theory, both classical and quantum, 
has broad application and conceptual depth.  
(For recent work on these issues, see \cite{Cat,CMR1,CMR2,Cos,CG}.) 
For brevity, we will talk about the \emph{BV formalism} and 
\emph{BV quantization}.  (We should point out that we mean here the Lagrangian
formulation, whose quantum aspect is focused on a homological approach
to the path integral.)

In this introduction, we begin by describing our main results in
\S\ref{subsec:mainresults}. Next we describe
some consequences in the setting of derived geometry in
\S\ref{subsec:DAG} and then discuss an extension to ``higher BV
quantization'' in \S\ref{subsec:aksz}, where we also sketch how
we expect our results to relate to the simplest examples of AKSZ
theories.

Afterwards, in \S\ref{div}, we discuss BV quantization from the
perspective of physics --- notably, how it is a homological version of
integration --- and explain how the standard approach relates to our
work here.  (A reader coming from a field theory setting might
prefer to read that discussion before the preceding sections; on the
other hand, readers without such background should feel free to skip
it, as nothing in the rest of the paper depends on it.)

\subsection{Our Main Results}\label{subsec:mainresults}

As in the case of Weyl quantization, our construction naturally breaks
up into three steps: first we apply an analogue of the Heisenberg Lie
algebra construction, then an enveloping algebra functor, and finally
we ``set $\hbar = 1$.'' These constructions are certainly well-known
among those who work with BV quantization, although rarely articulated
in this way, and they can be found, in a slightly different form, in
\cite{BD} and later in \cite{CG}.

In the first step we start with a cochain complex $V$ over the base field
$k$, equipped with a 1-shifted symmetric pairing $\omega \colon \Sym^{2}V \to
k[1]$ --- we call such objects \emph{1-shifted quadratic modules}. 
We then define a 1-shifted Heisenberg Lie algebra
$\txt{Heis}_{1}(V, \omega)$ by equipping the cochain complex 
$V \oplus k\mathbf{c}$, where
$\mathbf{c}$ is an added central element, with the bracket $[v, w] =
\omega(v,w)\mathbf{c}$ for $v,w \in V$. 
Unfortunately, this simple construction is not homotopically meaningful, 
which requires us to do a bit of work. In \S\ref{sec:Heis}, we show:
\begin{thm}
  For $A$ a commutative differential graded algebra over $k$, let
  $\QQuad_{1}(A)$ denote the \icat{} of 1-shifted quadratic
  $A$-modules and $\LLie_{1}(A)$ the \icat{} of 1-shifted Lie algebras
  over $A$. Then there is a lax symmetric monoidal functor of \icats{}
  \[ \mathcal{H} \colon \QQuad_{1}(A)^\oplus \to \LLie_{1}(A)^\oplus \] that takes
  $(V,\omega) \in \QQuad_{1}(A)$ to a cofibrant replacement of
  $\txt{Heis}_{1}(V, \omega)$. The construction is natural in $A$.
  
  Moreover, letting $\MMod_{A\mathbf{c}}(\LLie_{1}(A))$ denote the \icat{}
  of modules in $\LLie_1(A)^\oplus$ over the abelian 1-shifted Lie algebra $A\c$, 
  the induced functor
  \[ \widetilde{\mathcal{H}} \colon \QQuad_{1}(A)^\oplus \to
  \MMod_{A\mathbf{c}}(\LLie_{1}(A))^{\sqcup_{A\c}} \]
  is symmetric monoidal.
\end{thm}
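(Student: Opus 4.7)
The plan is to construct a strict lax symmetric monoidal functor between model categories presenting $\QQuad_1(A)$ and $\LLie_1(A)$, then descend to $\infty$-categories using the homotopy-coherence machinery from earlier sections. Let $\sQuad_1(A)$ and $\sLie_1(A)$ denote such model categories, each equipped with the direct-sum symmetric monoidal structure. At the strict level, define $\mathcal{H}^{\txt{str}}(V,\omega) := V \oplus A\c$ with $\c$ central and $[v,w] = \omega(v,w)\c$; this is patently functorial in $(V,\omega)$ and in base change of $A$.

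For the lax structure, observe that $\mathcal{H}^{\txt{str}}(V,\omega) \oplus \mathcal{H}^{\txt{str}}(V',\omega')$ contains \emph{two} central generators $\c$ and $\c'$, whereas $\mathcal{H}^{\txt{str}}(V \oplus V', \omega \oplus \omega')$ contains only one; the Lie algebra map collapsing $\c' \mapsto \c$ serves as the lax structure morphism, and coherence (associativity, symmetry, unit) is immediate from $A$-bilinearity of the pairings. Precomposing with a functorial cofibrant replacement $Q$ in the source makes $\mathcal{H}^{\txt{str}} \circ Q$ homotopy invariant, and the lax symmetric monoidal structure can be arranged to survive this composition. Standard $\infty$-categorical descent from symmetric monoidal model categories to their localizations then supplies the lax symmetric monoidal $\infty$-functor $\mathcal{H}\colon \QQuad_1(A)^\oplus \to \LLie_1(A)^\oplus$; naturality in $A$ is inherited because the whole construction is strictly compatible with base change along morphisms of cdgas.

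For the ``moreover'' statement, the canonical inclusion of the center $A\c \hookrightarrow \mathcal{H}^{\txt{str}}(V,\omega)$ is natural in $(V,\omega)$, so the construction lifts to $\MMod_{A\c}(\sLie_1(A))$. In the coslice under $A\c$, coproducts are computed as pushouts over $A\c$, and the lax structure morphism becomes
\[
\mathcal{H}(V,\omega) \sqcup_{A\c} \mathcal{H}(V',\omega') \longrightarrow \mathcal{H}(V \oplus V', \omega \oplus \omega'),
\]
which is now an \emph{iso}morphism, precisely because the two copies of $A\c$ have been identified; moreover the unit inclusion $A\c \to \mathcal{H}(0,0) = A\c$ becomes an equivalence. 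Hence $\widetilde{\mathcal{H}}$ is (strongly) symmetric monoidal.

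The main obstacle is not algebraic --- the Heisenberg assignment is transparent, and the collapsing map for the lax structure is forced upon us --- but a matter of homotopical bookkeeping. One must present $\QQuad_1(A)$ and $\LLie_1(A)$ by symmetric monoidal model (or relative) categories whose direct-sum structure is compatible with everything in sight, arrange a functorial cofibrant replacement that respects the lax monoidal structure, and assemble the entire construction into a functor out of the $\infty$-category of cdgas. Supplying these ingredients, rather than verifying the elementary algebraic content of $\Heis_1$, is exactly what the preceding formalism in the paper is designed to handle.
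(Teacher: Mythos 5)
There is a genuine gap, and it sits exactly where you declare the problem to be ``transparent.'' Your strategy is to define a strict functor $\mathcal{H}^{\txt{str}}$ on a model/relative category of quadratic modules whose morphisms preserve the pairing on the nose, and then localize. But that strict category does not present $\QQuad_1(A)$. The \icat{} $\QQuad_1(A)$ is defined as a pullback of \icats{} involving the slice $\MMod(A)_{/A[1]}$, and the corresponding strict pullback of simplicial categories is not a homotopy pullback (the projection from the slice is not a fibration of simplicial categories). The correct model, constructed in \S\ref{subsec:quadmod}, has morphisms $(f,\eta)$ consisting of a module map $f$ \emph{together with a cochain homotopy} $\eta$ from $\omega$ to $f^*\omega'$. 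For such a morphism the assignment $x + \alpha\c \mapsto f(x) + \alpha\c$ is \emph{not} a map of shifted Lie algebras --- the brackets agree only up to the boundary of $\eta$ --- so $\Heis_1$ is not functorial on the category you actually need it to be functorial on. This is the whole difficulty, and no choice of functorial cofibrant replacement repairs it, since the failure is in the domain of the functor, not in cofibrancy of its values. The paper's solution is to pass to bar constructions: one sends $(V,\omega)$ to $\mathbb{B}\Heis_1(V,\omega)$ and a morphism $(f,\eta)$ to $\Sym^c_A(f)\circ\exp(D_\eta)$, where $D_\eta$ is the coderivation determined by $\eta$; the lemmas on exponentials of coderivations show this is a map of coalgebras and that the assignment respects composition. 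Applying the cobar functor $\bbOmega$ then simultaneously produces the cofibrant replacement and lands in honest shifted Lie algebras. Nothing in your proposal plays the role of this step.

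A secondary, smaller issue: in the ``moreover'' part you treat the monoidal structure on $\MMod_{A\c}(\LLie_1(A))$ as a strict pushout over $A\c$ and conclude the structure map is an isomorphism. The relative tensor product $X\otimes_{A\c}Y$ is the geometric realization of a two-sided bar construction, so one must argue that this realization agrees with $\mathcal{H}(V\oplus V',\omega+\omega')$; the paper does this by using that the forgetful functor detects sifted colimits (Proposition~\ref{propn:opdforgetsifted}), identifying the underlying object of the relative tensor with a pushout in $\MMod(k)$, and checking that $V\oplus V'\oplus A\c$ is a homotopy pushout of $V\oplus A\c$ and $V'\oplus A\c$ over $A\c$ because the relevant inclusions are cofibrations. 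Your instinct about identifying the two central elements is the right heuristic for the lax structure map, but the equivalence statement requires this argument rather than an assertion of isomorphism.
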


In the second step we apply an enveloping algebra functor. However,
this no longer produces an associative algebra, but rather a \emph{BD-algebra} 
in the following sense:
\begin{defn}\label{def bd}
  A \emph{Beilinson-Drinfeld (BD) algebra} is a differential graded
  module $(M,\d)$ over $k[\hbar]$ equipped with an $\hbar$-linear
  unital graded-commutative product of degree zero and an $\hbar$-linear
  shifted Poisson bracket bracket of degree one such that
  \[
  \d(\alpha \beta) = \d(\alpha)\beta + (-1)^{\alpha} \alpha\,\d(\beta) + \hbar\{\alpha,\beta\}
  \]
  for any $\alpha, \beta$ in $M$.
\end{defn}

BD-algebras can be encoded as algebras for an operad, and since we are
working over a base field of characteristic zero, there are
well-behaved model categories of such operad algebras.  Using this
machinery we explicitly describe the BD-enveloping algebra of a 1-shifted Lie
algebra and show that it gives a symmetric monoidal
functor of \icats{} from $\LLie_{1}(A)$ to the \icat{}
$\AAlg_{\txt{BD}}(A[\hbar])$ of $\txt{BD}$-algebras in differential
graded modules over $A[\hbar]$. The second claim is not entirely obvious, 
since the model categories in question are not compatible with the tensor
products in the usual sense.

In the Weyl quantization story, we produced an associative algebra
with a parameter $\hbar$. Setting $\hbar = 0$ this algebra reduced to a
Poisson algebra, and setting $\hbar = 1$ it was the Weyl algebra. Hence
the parameter $\hbar$ explicitly describes a deformation quantization. 
Interpreting the symplectic vector space as the \emph{phase space} 
of a classical system, the Weyl quantization procedure gave us 
both the classical observables --- the Poisson algebra
of functions on the phase space, when $\hbar = 0$ --- and the quantum
observables, when $\hbar = 1$.

In the BV formalism, the classical observables form a 1-shifted
Poisson algebra, and the quantum observables are just a pointed
cochain complex. Starting with a BD-algebra $M$ over $A$, we can recover
both of these structures by taking $\hbar$ to be $0$ or $1$.
If we set $\hbar$ to $0$, \ie{} we pass to the quotient $M/(\hbar)$,
then we obtain a shifted Poisson algebra structure, which we interpret as the
\emph{dequantization} of the BD-algebra $M$.
If we set $\hbar$ to $1$, \ie{} we pass to the quotient $M/(\hbar -1)$, 
then the differential is not a derivation, and so up to quasi-isomorphism
the only remaining algebraic structure is the unit. 
That is, the reduction $M/(\hbar-1)$ is essentially just a pointed
$A$-module. More precisely, a pointed $A$-module is the same thing as
an algebra in $A$-modules for an operad $\rE_{0}$, and the structure
we have on $M/(\hbar-1)$ is encoded by an operad $\teo$; these operads
are weakly equivalent, and so they encode the same kind of information.

The abstract problem of BV quantization is: given a shifted Poisson
algebra $R$, produce a BD-algebra $\widetilde{R}$ whose dequantization
is quasi-isomorphic to $R$. Composing our shifted Heisenberg functor
with the BD-enveloping algebra, we thus get a functorial quantization
procedure that abstracts and encodes the usual approach to BV
quantization for linear systems. The shifted Poisson algebra obtained
from this quantization by setting $\hbar = 0$ can be identified with
the enveloping shifted Poisson algebra of the shifted Lie algebra we
started with. Moreover, the $\teo$-algebra we get from taking $\hbar =
1$ is also an enveloping algebra. More formally, we can sum up our
work on operads and enveloping algebras in \S\ref{sec:opd} as:
\begin{thm}
  There is a commutative diagram of \icats{} and symmetric monoidal
  functors
\[
  \begin{tikzcd}
    {} & & \AAlg_{\teo}(A) \\
    \LLie_{1}(A) \arrow{r}{U_{\txt{BD}}} \arrow[bend
    left]{urr}{U_{\teo}} \arrow[bend right]{drr}{U_{\mathrm{P}_{0}}} &
    \AAlg_{\txt{BD}}(A[\hbar]) \arrow{ur}{\hbar = 1} \arrow{dr}{\hbar = 0}
    \\
     & & \AAlg_{\mathrm{P}_{0}}(A),
  \end{tikzcd}
\]
 that is natural in the commutative differential graded $k$-algebra $A$.
\end{thm}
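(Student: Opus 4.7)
The plan is to construct the BD-enveloping functor explicitly at the chain level and then lift the whole diagram to \icats{}. Given a 1-shifted dg Lie algebra $(\g, \d_{\g}, [-,-])$ over $A$, I would put on $\Sym_{A}(\g)[\hbar]$ the obvious graded commutative $A[\hbar]$-algebra product, the 1-shifted Poisson bracket $\{-,-\}$ extending $[-,-]$ by the (signed) Leibniz rule, and the unique $\hbar$-linear differential $\d$ that restricts to $\d_{\g}$ on generators and satisfies
\[
\d(\alpha\beta) = \d(\alpha)\beta + (-1)^{|\alpha|}\alpha\,\d(\beta) + \hbar\,\{\alpha,\beta\}.
\]
Call this BD-algebra $U_{\BD}(\g)$; a direct check shows $U_{\BD}$ is left adjoint to the forgetful functor $\AAlg_{\BD}(A[\hbar]) \to \LLie_{1}(A)$ that remembers only the shifted Poisson bracket and underlying dg $A$-module. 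Setting $\hbar = 0$ recovers the free $\PZ$-algebra $\Sym_{A}(\g)$ on $\g$, while setting $\hbar = 1$ yields $\Sym_{A}(\g)$ with the Chevalley--Eilenberg-type differential $\d_{\g} + \{-,-\}$, which is an $\teo$-enveloping algebra of $\g$.

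Next I would promote $U_{\BD}$ and the two quotient functors to functors of \icats{} using the operadic machinery set up earlier in the paper. In characteristic zero, each of $\LLie_{1}(A)$, $\AAlg_{\BD}(A[\hbar])$, $\AAlg_{\PZ}(A)$, and $\AAlg_{\teo}(A)$ carries the model structure transferred from dg modules; my chain-level $U_{\BD}$ is left Quillen (it preserves colimits and has an evident right adjoint), and the quotient maps by $(\hbar)$ and $(\hbar-1)$ are likewise left Quillen. Localizing gives the desired diagram of $\infty$-functors, and the identifications of the two composites with $U_{\PZ}$ and $U_{\teo}$ follow from the universal property of the relevant enveloping constructions together with the explicit formulas above.

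The main obstacle is symmetric monoidality, since (as the paper emphasises) the operad-algebra model structures do not interact well with the ambient tensor products, so this cannot be read off from a generic Quillen argument. To handle it I would deploy the same Segal-type $\infty$-operadic framework developed for $\H$ in the preceding section: encode the four $\infty$-categories as Segal objects of operad algebras, verify the Segal conditions, and lift all the functors levelwise. The essential point-set input is the natural isomorphism
\[
U_{\BD}(\g \oplus \mathfrak{h}) \;\cong\; U_{\BD}(\g) \otimes_{A[\hbar]} U_{\BD}(\mathfrak{h})
\]
together with its analogues after specialisation of $\hbar$, which is transparent from the explicit formula because $\Sym$ converts direct sums to tensor products. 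Naturality in $A$ is then immediate, since each of $\Sym_{A}$, the operadic model structures, and the $\hbar$-quotients commutes with base change along any cdga map $A \to A'$.
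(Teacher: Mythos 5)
Your overall strategy matches the paper's: the same explicit chain-level formulas for $U_{\BD}$, $U_{\PZ}$, $U_{\teo}$ as free symmetric algebras with modified differentials, the identification of the two specializations of $\hbar$, passage to \icats{} via the left Quillen structure coming from the operad maps $\Lie_1[\hbar]\to\BD$ and the algebra maps $k[\hbar]\to k$, and an operadic (category-of-operators over $\bbGamma^{\op}$) enhancement to handle symmetric monoidality. The commutativity of the two triangles is likewise obtained in the paper by observing that the corresponding diagram of right adjoints commutes strictly.

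The genuine gap is in the symmetric monoidality step. You correctly flag it as the delicate point, but you identify the wrong ``essential point-set input'': the strict isomorphism $U_{\BD}(\g\oplus\mathfrak{h})\cong U_{\BD}(\g)\otimes_{A[\hbar]}U_{\BD}(\mathfrak{h})$ holds for arbitrary shifted Lie algebras and is not where the difficulty lies. In the simplicial-operad model of the symmetric monoidal structure on $\LLie_1(A)$, the monoidal product of two cofibrant Lie algebras $L$, $L'$ is represented by a cofibrant replacement $L''\to L\oplus L'$: the direct sum is the \emph{product} (not the coproduct) of Lie algebras and is in general not cofibrant as a Lie algebra. To show the induced functor of \icats{} is strongly (not merely lax) symmetric monoidal, one must prove that $U(L'')\to U(L\oplus L')$ is a quasi-isomorphism, i.e.\ that $U$ preserves this particular weak equivalence whose target is not cofibrant; being left Quillen does not give this. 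The paper supplies the missing argument by filtering $U(\g)$ by symmetric powers, identifying the filtration quotients with $\Sym^n_A(\g)$, and invoking the characteristic-zero fact that $\Sym^n_A$ preserves quasi-isomorphisms between cofibrant $A$-modules, together with the fact that the underlying module of a cofibrant Lie algebra is cofibrant. Two smaller omissions of the same flavour: you need $X\otimes_A(-)$ to preserve quasi-isomorphisms for $X$ a cofibrant algebra in order for the target \icats{} to carry symmetric monoidal structures at all, and the naturality in $A$ is only pseudofunctorial (tensor products are not strictly associative), which the paper resolves with a Duskin-nerve argument rather than by inspection.
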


Although not strictly needed for our main results, we take the time to
prove some further interesting results concerning the symmetric
monoidal enveloping functor $U_{\teo} \colon \LLie_{1}(A) \to
\AAlg_{\teo}(A)$. Firstly, in \S\ref{sec:CE} we show that
(at the model category level) it can be identified with (a shifted
version of) the Chevalley-Eilenberg chains, which describe Lie algebra
homology. Secondly, in \S\ref{sec:Enenv} we use it to construct an
\emph{enveloping $E_{n}$-algebra functor} $\LLie_{1-n}(A) \to
\AAlg_{\mathrm{E}_{n}}(A)$, by a different approach than those of
Fresse \cite{FresseKoszul} and Knudsen \cite{KnudsenEn}. Let us
briefly sketch the idea: By using $\infty$-operads, 
we get from $U_{\teo}$ a functor between \icats{} of $\rE_{n}$-algebras
$\AAlg_{\rE_{n}}(\LLie_{1}(A)) \to \AAlg_{\rE_{n}}(\AAlg_{\rE_{0}}(A))
\simeq \AAlg_{\rE_{n}}(A)$ (this approach does not work on the model
category level); using the bar/cobar adjunction, we then
show that the \icat{} $\AAlg_{\rE_{n}}(\LLie_{1}(A))$ is equivalent to
$\LLie_{1-n}(A)$, by an argument due to To\"en.

\subsection{Extension to Derived Algebraic Geometry}\label{subsec:DAG}
In \S\ref{subsec:BVstack} we show that our functors all have
natural extensions to derived stacks: for instance, given a
quasi-coherent sheaf of 1-shifted quadratic modules on a derived
stack, there is a functorial way to quantize it to a quasi-coherent
sheaf of $\rE_{0}$-algebras. This result is essentially a formal consequence
of the naturality of our constructions in the \icatl{} setting,
together with Lurie's descent theorem for \icats{} of modules.

This result begs the question of what such a quantization means in natural
geometric examples, which we hope to explore in future work. For
example, in light of \cite{CPTVV, Hennion,GR1,GR2}, for a well-behaved
0-shifted symplectic stack $X$, its relative tangent complex
$\mathbb{T}_{X/X_{dR}}$ can be input to our construction.  What does
this quantization mean?

In \S\S\ref{subsec:quantk}--\ref{subsec:quantsympvb} we will focus on
{\em cotangent quantization}, which sends a graded vector bundle $V$
(\ie{} a finite direct sum of shifts of vector bundles) to the
quantization of $V \oplus V^\vee[1]$.  This case has striking
behaviour: the cotangent quantization of a graded vector bundle is a
line bundle (up to a shift).  In other words, this functor factors
through $\PPic$, the stack of invertible sheaves, and hence it behaves
likes a determinant functor.  (It does not possess all the properties
of the determinant, however.)  This feature demonstrates a sense in
which BV quantization is a kind of homological encoding of the path
integral, since the determinant line of a vector space is the natural
home for translation-invariant volume forms on the vector space.

\begin{remark}
  From the viewpoint of physics, more specifically the divergence
  complex perspective that we discuss in Section \ref{div} below, this
  behavior is not too surprising. Indeed, the standard toy example
  of BV quantization produces a cochain complex that is isomorphic 
  to the polynomial de Rham complex on a vector space $V$, 
  shifted down by the dimension of $V$.  
  Poincar\'e's lemma then tells us that we get a one-dimensional
  vector space in degree $-\dim(V)$.
  We leverage this example as far as it can easily go.
\end{remark}

We expect this result to be true in somewhat greater generality, 
likely for 1-shifted symplectic vector bundles, 
which are quadratic modules whose pairing is non-degenerate
and whose underlying module is a sum of shifts of vector bundles. 
For more general quasicoherent sheaves, however, 
the quantization is not invertible. 
On the other hand, we show it is \emph{constructibly} invertible in a certain sense. 
Interpreting the meaning of this behavior is an intriguing question.  
We expect that it is closely related to recent work \cite{Behrend-Fantechi,Joyceetal,Pridham} on vanishing cycles on stacks. 
  (In a sense, the BV formalism is an obfuscated version of the twisted de
Rham complex, as explained in Section \ref{div}, and hence closely related
to vanishing cycles.)

\subsection{Higher BV Quantization and AKSZ
  Theories}\label{subsec:aksz}
As we discussed above, we  construct, for any derived stack $X$, 
a sequence of symmetric monoidal functors
\[ \QQuad_{1}(X) \to \MMod_{\mathcal{O}_{X}\mathbf{c}}{\LLie_{1}}(X) \to
\MMod_{\mathcal{O}_{X}[\hbar,\mathbf{c}]}\AAlg_{\BD}(X) \to \AAlg_{\teo}(X).\]
Using $\infty$-operads, this sequence immediately induces functors between
\icats{} of $\mathrm{E}_{n}$-algebras:
\[ \AAlg_{\mathrm{E}_{n}}(\QQuad_{1}(X)) \to \AAlg_{\mathrm{E}_{n}}(\MMod_{\mathcal{O}_{X}\mathbf{c}}{\LLie_{1}}(X)) \to
\AAlg_{\mathrm{E}_{n}}(\MMod_{\mathcal{O}_{X}[\hbar,\mathbf{c}]}\AAlg_{\BD}(X))
\to \AAlg_{\mathrm{E}_{n}}(X).\]
We mentioned above that $\mathrm{E}_{n}$-algebras in
${\LLie_{1}}(X)$ are equivalent to $(1-n)$-shifted Lie algebras, and
heuristically it looks like there is an analogous description of
$\mathrm{E}_{n}$-algebras in $\QQuad_{1}(X)$. More precisely, we
expect the following:
\begin{conjecture}\label{conj:QuadEn}
  There is a natural equivalence
  $\AAlg_{\mathrm{E}_{n}}(\QQuad_{1}(X)) \simeq
  \QQuad_{1-n}(X)$. The induced functor
  \[ \QQuad_{1-n}(X) \simeq \AAlg_{\mathrm{E}_{n}}(\QQuad_{1}(X)) \to
  \AAlg_{\mathrm{E}_{n}}({\LLie_{1}}(X)) \simeq {\LLie_{1-n}}(X) \] is a
  $(1-n)$-shifted version of the Heisenberg Lie algebra, so the
  composite functor $\QQuad_{1-n}(X) \to \AAlg_{\mathrm{E}_{n}}(X)$ is
  the $\mathrm{E}_{n}$-enveloping algebra of the shifted Heisenberg
  Lie algebra.
\end{conjecture}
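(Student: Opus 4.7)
The plan is to bootstrap from the already-established equivalence $\AAlg_{\rE_n}(\LLie_1(A)) \simeq \LLie_{1-n}(A)$ together with the symmetric monoidal enhanced Heisenberg functor $\widetilde{\H}$ of the main theorem. The strategy is to realize $\QQuad_1(X)$ as a full sub-\icat{} of $\MMod_{\mathcal{O}_X\c}(\LLie_1(X))$ cut out by an abelianness condition on the quotient, so that the $\rE_n$-algebra identification for Lie algebras transfers to the quadratic setting.

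Working in the affine case $X = \Spec A$, I would first verify that $\widetilde{\H}$ identifies $\QQuad_1(A)^\oplus$ symmetric monoidally with the full sub-\icat{} of $\MMod_{A\c}(\LLie_1(A))^{\sqcup_{A\c}}$ on those central extensions $A\c \to L$ whose quotient $L/A\c$ is an \emph{abelian} 1-shifted Lie algebra. This realizes $\QQuad_1(A)$ as the \icat{} of abelian central extensions by $A\c$, in a manner compatible with monoidal structures (direct sum of quadratic modules on the left matches relative coproduct of Heisenbergs over $A\c$ on the right). By symmetric monoidality, this inclusion induces a fully faithful functor on $\rE_n$-algebras, and one identifies the essential image by checking that the abelianness condition on the quotient is stable under the relevant coproducts and hence under any $\rE_n$-algebra structure built from them.

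The main obstacle is then to combine this with the Rozenblyum-style equivalence $\AAlg_{\rE_n}(\LLie_1(A)) \simeq \LLie_{1-n}(A)$ and show that the resulting sub-\icat{} of $\MMod_{A\c}(\LLie_{1-n}(A))$ of abelian central extensions is precisely $\QQuad_{1-n}(A)$. Concretely, this amounts to tracing through the bar/cobar Koszul duality used in the Lie algebra case to confirm that an $\rE_n$-algebra structure on a 1-shifted quadratic module encodes exactly a $(1-n)$-shifted antisymmetric pairing on the underlying complex, and that the ``abelian quotient'' condition on the Lie side corresponds to the ``pure pairing, no residual bracket'' condition on the quadratic side. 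I would attempt this by induction on $n$ using Dunn additivity $\rE_n \simeq \rE_1 \otimes \rE_{n-1}$, reducing to the case $n = 1$: showing $\AAlg_{\rE_1}(\QQuad_k(A)) \simeq \QQuad_{k-1}(A)$, where the degree shift should arise from operadic suspension applied to the antisymmetric pairing, with an Eckmann-Hilton type argument ensuring the output is again a pure antisymmetric pairing at the shifted degree.

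For the remaining parts of the conjecture, the extension from affine $A$ to a general derived stack $X$ is formal, by descent, following the framework of \S\ref{subsec:BVstack}. Naturality of $\widetilde{\H}$ in $A$, which was established in the main theorem, ensures that the induced functor $\AAlg_{\rE_n}(\QQuad_1(X)) \to \AAlg_{\rE_n}(\LLie_1(X))$ corresponds under the equivalences to a functor $\QQuad_{1-n}(X) \to \LLie_{1-n}(X)$ that adjoins a central generator and encodes the shifted pairing as the bracket --- precisely the $(1-n)$-shifted Heisenberg. Finally, identifying the composite $\QQuad_{1-n}(X) \to \AAlg_{\rE_n}(X)$ with the $\rE_n$-enveloping algebra of this shifted Heisenberg follows by chasing through the commutative diagram of enveloping functors in the second theorem and invoking Dunn additivity in the form $\AAlg_{\rE_n}(\AAlg_{\rE_0}(X)) \simeq \AAlg_{\rE_n}(X)$.
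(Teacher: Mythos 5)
The statement you are addressing is explicitly left as a \emph{conjecture} in the paper --- no proof is given there --- so your proposal cannot be measured against an existing argument; it has to stand on its own, and as written it does not close the gap. The central problem is a mismatch of monoidal structures at exactly the step where you invoke the Rozenblyum equivalence. That equivalence, $\AAlg_{\rE_n}(\LLie_1(A)) \simeq \LLie_{1-n}(A)$, is proved via Proposition~\ref{propn:barcobareq}, which requires the monoidal structure to be the \emph{Cartesian} product (so that Bar is suspension and Cobar is loops on underlying objects). But your embedding lands in $\MMod_{A\c}(\LLie_1(A))^{\sqcup_{A\c}}$, whose tensor product is the relative coproduct over $A\c$, not a Cartesian product; the bar/cobar argument does not apply there, and $\rE_n$-algebras for $\sqcup_{A\c}$ are not related to $\rE_n$-algebras for $\oplus$ in any way you have explained. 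Relatedly, $\oplus$ on $\QQuad_1(A)$ is itself not the Cartesian product --- the unit $(0,0)$ is not terminal, since a map $(U,\mu)\to(0,0)$ is a nullhomotopy of $\mu$ --- which is precisely why the quadratic case does not follow formally from the Lie case and why the statement is a conjecture rather than a corollary.

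Second, your ``base case'' $\AAlg_{\rE_1}(\QQuad_k(A)) \simeq \QQuad_{k-1}(A)$ is asserted rather than proved: it is the $n=1$ instance of the conjecture itself, and ``tracing through the bar/cobar Koszul duality'' together with ``an Eckmann--Hilton type argument'' does not supply a mechanism for why an associative multiplication $(V,\omega)\oplus(V,\omega)\to(V,\omega)$, with its coherence data and its homotopies of pairings, should encode a single pairing shifted down by one degree. Finally, the claim that $\widetilde{\mathcal{H}}$ is fully faithful onto the abelian central extensions of $A\c$ is plausible but nowhere established in the paper (only symmetric monoidality and the value on objects are proved there), so it too would require an argument. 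The descent step to a general derived stack $X$ and the identification of the composite with the $\rE_n$-enveloping algebra via $\AAlg_{\rE_n}(\AAlg_{\rE_0}(X))\simeq\AAlg_{\rE_n}(X)$ are fine in outline, but those are the easy parts; the substantive content of the conjecture remains unproved by this plan.
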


\begin{remark}
We expect that this construction recovers on objects the \emph{Weyl $n$-algebras} described in \cite{mark}.
This expectation is motivated by recognizing that our construction here, 
when formulated using factorization algebras,
corresponds to the BV quantization of a very simple AKSZ theory,
with target a shifted symplectic cochain complex,
and hence behaves like an abelian Chern-Simons-type theory 
(more accurately, an abelian BF theory).
This construction is developed in \cite{CG, OGthesis}.
Markarian's construction seems to be an alternative description of this construction.
\end{remark}

In fact, this construction has a further interesting extension. Recall
that given a nice symmetric monoidal \icat{}
$\mathcal{C}$, the \emph{higher Morita category} of $\rE_{n}$-algebras
$\mathfrak{Alg}_{n}(\mathcal{C})$, as constructed in \cite{nmorita},
is a symmetric monoidal $(\infty,n)$-category with
\begin{itemize}
\item objects $\rE_{n}$-algebras in $\mathcal{C}$,
\item 1-morphisms $\rE_{n-1}$-algebras in bimodules in $\mathcal{C}$,
\item 2-morphisms $\rE_{n-2}$-algebra in bimodules in bimodules in $\mathcal{C}$,
\item \ldots
\item $n$-morphisms bimodules in \ldots{} in bimodules in $\mathcal{C}$.
\end{itemize}
In \S\ref{subsec:higherbv} we show that the \icats{} we work with
satisfy the requirements for these higher Morita categories to exist,
and our functors give symmetric monoidal functors between them. In
particular, we get \emph{higher BV quantization} functors
$\mathfrak{Alg}_{n}(\QQuad_{1}(X)) \to
\mathfrak{Alg}_{n}(\AAlg_{\rE_{0}}(\QCoh(X)))$.

We expect that the $i$-morphisms in
$\mathfrak{Alg}_{n}(\QQuad_{1}(X))$ have an interesting
interpretation, in the same way as we conjectured above for the
objects. Specifically, for $i = 1$ they should be a cospan analogue of
the linear version of the Poisson morphisms studied by
\cite{CPTVV,SafronovPoisson}, and for $i > 1$ we should have iterated
versions of this notion. We have learned from Nick Rozenblyum that
such results should be provable using the same techniques as in his
unpublished proof that $\mathrm{E}_{n}$-algebras in
$\mathrm{P}_{k}$-algebras are $\mathrm{P}_{k+n}$-algebras.

It is then attractive to guess that
$\mathfrak{Alg}_{n}(\QQuad_{1}(A))$ receives a symmetric monoidal
functor from an $(\infty,n)$-category
$\txt{Lag}_{(\infty,n)}^{n-1,\txt{lin}}(A)$ where
\begin{itemize}
\item objects are $(n-1)$-shifted symplectic $A$-modules,
\item 1-morphisms are Lagrangian correspondences,
\item $i$-morphisms for $i > 1$ are iterated Lagrangian correspondences.
\end{itemize}
Heuristically, this functor simply takes duals --- \eg{} it would take
an $(n-1)$-shifted symplectic $A$-module $M$ to its dual $M^{\vee}$
with its induced $(1-n)$-shifted pairing. 

The $(\infty,n)$-categories $\txt{Lag}_{(\infty,n)}^{n-1,\txt{lin}}(A)$,
or rather the more general version
$\txt{Lag}_{(\infty,n)}^{k}$ whose objects are $k$-shifted
symplectic derived Artin stacks, will be constructed in forthcoming work
of the second author together with Damien Calaque and Claudia
Scheimbauer. Moreover, it will be shown there that the AKSZ
construction, implemented in this algebro-geometric setting in
\cite{PTVV}, gives for every $k$-symplectic derived Artin stack $X$ 
 an extended oriented topological quantum field theory (TQFT) \[ \txt{AKSZ}_{X} \colon \txt{Bord}^{\txt{or}}_{(\infty,n)} \to
\txt{Lag}_{(\infty,n)}^{k} \]
 (where the dimension $n$ is
arbitrary). Combining this with our hypothetical dualizing functor, we
would have for every $(n-1)$-shifted symplectic cochain complex $X$ a
chain of symmetric monoidal functors of $(\infty,n)$-categories
\[ \txt{Bord}^{\txt{or}}_{(\infty,n)} \to
\txt{Lag}_{(\infty,n)}^{n-1,\txt{lin}}(k) \to
\mathfrak{Alg}_{n}(\QQuad_{1}(k)) \to
\mathfrak{Alg}_{n}(\AAlg_{\rE_{0}}(k)).\]
The resulting TQFT can be interpreted as a \emph{quantization} of the AKSZ
field theory with target $X$.

\subsection{Linear BV Quantization and Integration}
\label{div}

Let's turn here to a more traditional presentation of linear BV quantization and 
interpret it as a hidden version of the de Rham complex 
providing the connection to homological perspectives on integration.
(For further discussion of BV quantization, we recommend \cite{Fiorenza,CMR,Witten,Cos,CG}.)

Let $V$ be a finite-dimensional vector space over, say, the real numbers. 
A standard way to encode the algebraic relations among integrals 
(more accurately, integrands) is via the de Rham complex. 
In particular, if $K \subset V$ is a smooth compact region 
--- like a closed ball of codimension 0 --- 
then two top forms $\alpha$ and $\alpha' = \alpha + \d \beta$ 
satisfy
\[ \int_K \alpha - \int_K \alpha' = \int_K \d \beta = \int_{\partial K} \beta.\]
Hence top forms modulo exact terms encodes integrals modulo boundary integrals.
We want to see what this perspective tells us about the integrals 
that represent toy models of the path integral.

The model case for physics is to fix a quadratic form $Q$ on $V$ with
a global minimum at the origin and consider the integrand $e^{-Q(x)}
\d^nx$.  Up to scale, this provides a probability measure on $V$ whose
average is the origin and which extends around it as a ``Bell curve''.
(On $\RR$, this might be $Q(x) = x^2$, which gives the Gaussian
measure as the integrand.)  It is a toy model of a free theory in
physics, with $Q$ the action functional. (As $Q$ is quadratic, its
equations of motion are linear and so ``free.'')  It is reasonable, in
order to explore this measure, to focus on its \emph{moments}, \ie{} to
understand the integrals
\[ \int_V p(x) e^{-Q(x)} \d^nx,\]
with $p \in \Sym(V^*)$ a polynomial. 
Indeed, the perturbative machinery of Feynman diagrams can be understood as 
formally extending these computations to formal power series.
Our BV approach to this free case then similarly extends and provides
another perspective on the origin of Feynman diagrams as ``homotopy transfer''.
(See \cite{JFG, OGthesis} for more.)
In other words, we want to understand the expected value map
\[ 
\begin{array}{cccc}
\mathbb{E}: & \Sym(V^*) & \to & \RR \\
& p & \mapsto & \int_V p(x) e^{-Q(x)} \d^nx/\int_V e^{-Q(x)} \d^nx.
\end{array}
\]
Note that these integrands decay very fast at infinity and hence are integrable.

Observe that this map factors as a composition
\[  \Sym(V^*)  \xto{e^{-Q(x)}\d^n x}  \Omega^{\dim V}(V)  \xto{\int_V}  \RR. \]
The kernel of $\mathbb{E}$ can be identified with those integrands $p(x) e^{-Q(x)}\d^n x$ 
that are exact, \ie{} in the image of the de Rham differential $\d$.
In fact, the kernel of $\mathbb{E}$ is the image of the 
``divergence against the volume form $\mu = e^{-Q(x)}\d^n x$'' operator
\[
\begin{array}{cccc}
\div_\mu: & \Sym(V^*) \otimes V & \to & \Sym(V^*) \\
&X = \sum_i p_i(x) \frac{\partial}{\partial x_i} & \mapsto & \mathcal{L}_X \mu = \d(\iota_X \mu) = -\sum_i \frac{\partial Q}{\partial x_i} e^{-Q(x)}\d^n x,
\end{array}
\]
sending a vector field $X$ with polynomial coefficients to the Lie derivative of $\mu$ along $X$.
More generally we can use the volume form $\mu = e^{-Q(x)}\d^n x$ to produce an injection 
\[ \iota_\mu: \txt{PV}_{poly}^d(V) = \Sym(V^*) \otimes \Lambda^d V \to \Omega^{\dim V - d}(V) \]
from the polyvector fields on $V$ with polynomial coefficients into de Rham forms.
The de Rham differential preserves the image 
(just note that the derivative of $e^{-Q(x)}$ is a polynomial times itself)
and hence pulls back to a ``divergence operator'' $\div_\mu$ on $\txt{PV}^*_{poly}(V)$.

By construction, this \emph{divergence complex} $\txt{Div} = (\txt{PV}^*_{poly}(V), \div_\mu)$
encodes the moments of the measure $\mu$ in the map 
\[ 
\begin{array}{ccc}
\Sym(V^*) & \to & H^0(\txt{Div}) \cong \RR \\
p & \mapsto & [p] = \mathbb{E}(p).
\end{array}
\]
(In this situation, the rest of the cohomology vanishes, by a Poincar\'e lemma argument.)
Hence it captures the information we most want from the measure.
But this construction has several features that make it possible 
to generalize this approach to infinite-dimensional vector spaces and manifolds 
(\ie{} to actual field theories)
and also to derived settings, where the usual approaches to integration 
do not always work.
Two aspects are:
\begin{itemize}
\item It replaces the measure by the divergence operator, 
and so one can try to axiomatize the properties of divergence complexes
and then search for new examples. In particular, it replaces 
questions about integration by examining relations between integrands.
\item It focuses on functions and their expected values --- 
\ie{} integration against a fixed volume form ---
rather than a general theory of integration. 
Thus, by contrast to the de Rham complex, it makes sense in infinite dimensions,
whereas top forms make no sense there.
\end{itemize}
Here we will focus on the first aspect, 
using an operad introduced by Beilinson and Drinfeld \cite{BD} for the axiomatization,
and introduce a wealth of examples from higher algebra and derived geometry.
The second aspect is pursued wherever BV quantization is used in field theory, 
such as \cite{CMR,Cos,CG}.
Of course, this BV approach to the path integral does not resolve
all challenges! Viewing integration this way loses some of the advantages
of other perspectives and introduces new puzzles and challenges.

Let us now rapidly sketch the algebraic features of the divergence complex that we will focus on.
First, polyvector fields have a natural graded-commutative product by wedging
(in parallel with de Rham forms, but \emph{not} preserved by the map $\iota_\mu$!).
Second, polyvector fields have a shifted Poisson bracket, known as the Schouten bracket,
which is defined by extending the natural action of vector fields on functions and vector fields.
Explicitly, we define
\[ \{X,f\} = \mathcal{L}_X f \quad \txt{and} \quad \{X,Y\} = [X,Y] \]
for $f$ a function and $X,Y$ vector fields. 
(In general, there are signs to keep track of, due to the Koszul sign rule,
but we will not focus on that in this introduction.)
Finally, the divergence operator is a derivation with respect to the bracket 
(\ie{} with respect to the shifted Lie algebra structure) 
but it is \emph{not} a derivation with respect to the commutative product.
Instead, it satisfies the relation
\[
\div(\alpha \beta) = \div(\alpha)\beta + (-1)^{\alpha} \alpha\,\div(\beta) + \{\alpha,\beta\}
\]
for any $\alpha, \beta$ polyvector fields. 
(These features do not depend on the coefficients being polynomial 
and hold for holomorphic or smooth coefficients too.)
This relation says that the bracket encodes the failure of $\div$ to be a derivation.
It should be seen as analogous to deformation quantization, 
where the failure to be commutative is encoded in the commutator bracket
and, to first order in $\hbar$, this failure is the Poisson bracket.
This perspective leads directly to the  definition of a Beilinson-Drinfeld algebra
(see Definition \ref{def bd}).

In our model case, the shifted Poisson algebra is
\[ (\txt{PV}_{poly}(V), \{Q,-\}). \]
(Note that the zeroth cohomology is precisely functions on the critical set of $Q$,
which fits nicely with the fact that observables in a classical theory should be
functions on the critical points of the action.)
The BV quantization is
\[ (\txt{PV}_{poly}(V)[\hbar], \{Q,-\}+\hbar \triangle), \]
where $\triangle = \div_{\txt{Leb}}$ is divergence against the Lebesgue measure $\d^n x$.
(In formulas, one usually sees $\triangle = \sum_i \partial^2/\partial x_i \partial \xi_i$,
where the $x_i$ are a basis for $V^*$ and the $\xi_i$ are the dual basis for $V[1]$.)
In this example, we explicitly see that the deformation of the differential
amounts to taking into account the relations among integrands.

There is one final thing to note about this model example,
which makes manifest the analogy with Weyl quantization.
Observe that the shifted Poisson bracket $\{-,-\}$ is linear in nature.
If we fix a basis $\{x_i\}$ for $V^*$ and a dual basis $\{\xi_i\}$ for $V$,
then 
\[ \txt{PV}_{poly}(V) \cong \RR[x_1,\ldots,x_n,\xi_1,\ldots,\xi_n], \]
with $\dim(V) = n$ and the $x_i$'s in degree zero and the $\xi_i$'s in degree one.
The bracket is
\[ \{x_i,\xi_j\} = \delta_{ij} \quad \txt{and} \quad \{x_i,x_j\} = 0 = \{\xi_i,\xi_j\},\]
which looks just like a shifted version of the Poisson bracket on the symplectic vector space $T^* \RR^n$.
Indeed, we can view this shifted bracket as arising from a shifted skew-symmetric pairing
\[ \omega: (V^* \oplus V[1])^{\otimes 2} \to \RR[1], \]
which is simply the restriction of $\{-,-\}$ to the linear space 
generating the graded-symmetric algebra of polyvector fields.
There is then a shifted Lie algebra $\g$ given by centrally extending the abelian Lie algebra $V^* \oplus V[1]$ 
by $\RR \c$, \ie{} 
\[ \RR\c \to \g \to V^* \oplus V[1], \]
where the shifted Lie bracket is
\[ [p,q] = \c \omega(p,q). \]
Thus $\g$ is clearly a kind of shifted Heisenberg Lie algebra.
To obtain the BV quantization, we do not take the universal enveloping algebra,
which would produce an associative algebra, 
but instead take the enveloping BD algebra $U_\BD(\g)$.
(We construct this enveloping algebra functor in the text.) 
The quotient $U_\BD(\g)/(\c = \hbar)$ recovers the standard BV quantization on the nose.

\subsection{Notations and Conventions}
Throughout this paper, we work in the setting of cochain complexes
over a field $k$ of characteristic zero.  In other words, everything
is differential graded, aside from the occasional motivational remark.
Hence, when we speak about an algebra, we always mean an algebra
object in some category (or higher category) with a forgetful functor
to cochain complexes.  After the introduction, we will simply speak
about commutative or Lie algebras and not differential graded
commutative algebras or differential graded Lie algebras.
Notationally, $A$ typically denotes a commutative algebra in cochain
complexes over the field $k$ (\ie{} a cdga), and $\g$ typically denotes
a Lie algebra in cochain complexes over $k$ (\ie{} a dgla).  A module
over an algebra always means a module object and we will not use the
term differential graded module.  Thus we write $A$-module rather that
differential graded $A$-module and so~on.

To construct our \icats{} and functors we will also need to work with
both model categories and simplicial categories. To distinguish the
three kinds of mapping objects that arise we adopt the convention that
for an ordinary category $\mathbf{C}$ we write
$\Hom_{\mathbf{C}}(x,y)$ for the set of maps between objects $x$ and
$y$, for a simplicial category $\mathbf{C}$ we write
$\sHom_{\mathbf{C}}(x,y)$ for the simplicial set of maps, and for an
\icat{} $\mathcal{C}$ we write $\Map_{\mathcal{C}}(x,y)$ for the space
of maps.

In many cases, we will have to work with a model category, a
simplicial category and an \icat{} that encode the same homotopy
theory, and we use a typographical convention to distinguish
these. For instance, there is a category $\Mod(A)$ of $A$-modules in
$\Mod(k)$, the category of cochain complexes over $k$.  There is also
a simplicial category $\sMod(A)$ of (cofibrant) $A$-modules, and there
is an \icat{} $\MMod(A)$ of $A$-modules.  Similarly, for $\mathbf{O}$
an operad in the category $\Mod(A)$ of $A$-modules, there is a
category $\Alg_{\mathbf{O}}(A)$ of $\mathbf{O}$-algebras in $\Mod(A)$,
there is a simplicial category $\sAlg_{\mathbf{O}}(A)$ of (cofibrant)
$\mathbf{O}$-algebras in the simplicial category $\sMod(A)$, and there
is an \icat{} $\AAlg_{\mathbf{O}}(A)$ of $\mathbf{O}$-algebras in the
\icat{} $\MMod(A)$.

There are two exceptions to the convention we just described. 
When $\mathbf{O}$ is the commutative operad $\Comm$, we use the
abbreviated notations $\Comm(A)$, $\sComm(A)$, and $\CComm(A)$, and
when $\mathbf{O}$ is the Lie operad $\Lie$, or more generally the
$n$-shifted Lie operad $\Lie_n$, we use $\Lie_n(A)$, $\sLie_{n}(A)$,
and $\LLie_{n}(A)$.

We write $\Lambda^{2}_{[n]}X$ for the shifted antisymmetric square
\[ \Lambda^{2}_{[n]}X := \Lambda^{2}(X[n])[-2n].\]
In other words,
\[ \Lambda^{2}_{[n]}X \cong
\begin{cases}
  \Lambda^{2}X, & \txt{$n$ even},\\
  \Sym^{2}X, & \txt{$n$ odd}.
\end{cases}
\]

\subsection{Acknowledgments}

OG thanks Kevin Costello for teaching him the BV formalism and
pointing out that it behaves like a determinant, an idea he pursued in
his thesis and that prompted this collaboration. He also thanks Nick
Rozenblyum, Toly Preygel, and Thel Seraphim for many helpful conversations around
quantization and higher categories.
RH thanks Irakli Patchkoria for help with model-categorical
technicalities and Dieter Degrijse for some basic homological
algebra.
Together we thank Theo Johnson-Freyd, David Li-Bland, and Claudia Scheimbauer
for stimulating conversations around these topics, 
particularly the pursuit of higher Weyl quantization.

Finally, this work was begun at the Max Planck Institute for Mathematics
when RH and OG were both postdocs there, and we deeply appreciate the 
open and stimulating atmosphere of MPIM that made it so easy to begin our collaboration.
Moreover, it is through the MPIM's great generosity that we were able to continue
work and finish the paper during several visits by RH.

\section{Operads and Enveloping Algebras}\label{sec:opd}

Our goal in this section is to introduce the operads that play a
central role in BV quantization and to construct a collection of
functors between their \icats{} of algebras.  To do so, we first
explain what we mean by the \icat{} of algebras over a $k$-linear
operad $\mathbf{O}$ --- although notions of enriched \iopds{} have
been introduced in \cite{enropd,symmseq}, their theory is not yet
sufficiently developed for our purposes.  Thus, the beginning of this
section is devoted to higher-categorical machinery: we draw together
results from the literature in order to
\begin{enumerate}
\item produce a model category $\Alg_{\mathbf{O}}(A)$ of $\mathbf{O}$-algebras in $\Mod(A)$,
where $A$ is a commutative algebra in cochain complexes over $k$ and $\Mod(A)$ is
a model category of $A$-modules, and then
\item extract a simplicial category $\sAlg_{\mathbf{O}}(A)$ of $\mathbf{O}$-algebras in a simplicial category 
$\sMod(A)$ of  $A$-modules, and finally 
\item provide  an \icat{} $\AAlg_{\mathbf{O}}(A)$ of $\mathbf{O}$-algebras in the \icat{} $\MMod(A)$ of $A$-modules.
\end{enumerate}
With these tools available, we turn to our problem of interest.

The main result of this section can then be summarized in the following commuting diagram of symmetric
monoidal functors:
\[
\begin{tikzcd}
{} & & {\LLie_1}(A)^\oplus \arrow[bend
right]{ddl}[swap]{U_{\PZ}} \arrow[bend left]{ddrr}{\CL}
\arrow{d}{U_{\BD} \circ (A[\hbar] \otimes_A -)} \arrow[bend right]{dll}{\txt{forget}} & & \\
\MMod(A)^{\oplus} \arrow{d}{\Sym} & &  \AAlg_{\BD}(A[\hbar])^{\otimes_{A[\hbar]}}
 \arrow{dl}[swap]{\ev_{\hbar = 0}} \arrow{dr}{\ev_{\hbar = 1}} & &\\
\CComm(A)^{\otimes_{A}} &\AAlg_{\PZ}(A)^{\otimes_{A}}
\arrow{l}{\txt{forget}} & & \AAlg_{\teo}(A)^{\otimes_{A}} \arrow{r}{\txt{forget}} & \MMod(A)^{\otimes_{A}}
\end{tikzcd}
\]
which says in essence that 
\begin{enumerate}
\item every shifted Lie algebra $\g$ in $\MMod(A)$ generates a shifted Poisson algebra $U_{\PZ}(\g) = \Sym(\g)$ 
that admits a natural BV quantization by its $\BD$-enveloping algebra $U_{\BD}(\g[\hbar])$, and
\item when $\hbar$ is specialized to $1$, this quantization reduces to
  $\CL(\g) = \mathrm{C}^{\Lie}(\g[-1])$ (\ie{} the derived
  coinvariants, or Chevalley-Eilenberg chains, of the {\it un}shifted
  Lie algebra) .
\end{enumerate}
These relationships certainly seem to be folklore among the community
who work with the BV formalism, but we need the result in this higher-categorical setting and so provide proofs. 
(See, for instance, \cite{BD, BashkirovVoronov, BraunLazarev}.) 
We will begin by proving everything in the setting of model categories and 
then apply our machinery to obtain the desired statements for \icats{}.

\subsection{Model Categories of Modules and Operad Algebras}
\label{sec: MCMOA}

Let $k$ be a field of characteristic~$0$. We write $\Mod(k)$ for the
category of (unbounded) cochain complexes of $k$-modules, equipped with the
standard projective model structure:
\begin{propn}[Hinich, Hovey]
  The category $\Mod(k)$ has a left proper combinatorial model
  structure where
  \begin{itemize}
  \item the weak equivalences are the quasi-isomorphisms,
  \item the fibrations are the levelwise surjective maps.
  \end{itemize}
  Moreover, this is a symmetric monoidal model category with respect to
  the usual tensor product of cochain complexes.
\end{propn}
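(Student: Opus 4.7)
The plan is to construct the model structure by invoking Kan's recognition theorem for cofibrantly generated model categories and then separately verifying left properness, combinatoriality, and the pushout-product axiom. This follows the standard approach of Hovey (in \emph{Model Categories}), which extends Hinich's earlier construction from the bounded to the unbounded setting.

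First I would introduce the generating sets. For each $n \in \mathbb{Z}$, let $S^{n-1}$ denote the complex with $k$ concentrated in degree $n-1$ and let $D^{n}$ denote the contractible complex with $k$ in degrees $n-1$ and $n$ connected by the identity. Set $I = \{S^{n-1} \hookrightarrow D^{n}\}$ and $J = \{0 \hookrightarrow D^{n}\}$. A direct computation shows that the $J$-injectives are exactly the levelwise surjections, and the $I$-injectives are exactly the levelwise surjections with acyclic kernel (the prospective trivial fibrations). Since $\Mod(k)$ is locally finitely presentable, smallness of the domains of $I$ and $J$ is automatic, and quasi-isomorphisms visibly satisfy two-out-of-three and are closed under retracts. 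The nontrivial input to Kan's recognition theorem is that every relative $J$-cell complex is both a cofibration and a quasi-isomorphism; this follows because transfinite compositions and pushouts of degreewise-split acyclic monomorphisms remain degreewise-split acyclic monomorphisms. Combinatoriality then comes for free from local presentability together with cofibrant generation.

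For left properness I would use Hovey's characterization of cofibrations as the degreewise split monomorphisms whose cokernel is DG-projective. Consequently, a pushout along a cofibration preserves short exact sequences of complexes, hence long exact sequences in cohomology, so pushouts of quasi-isomorphisms along cofibrations are again quasi-isomorphisms.

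For the symmetric monoidal structure I would check the pushout-product axiom on generators. The pushout product of two elements of $I$ is, up to isomorphism, a generating cofibration, and the pushout product of an element of $I$ with an element of $J$ is a trivial cofibration, both verified by direct computation using the explicit small complexes $S^{n-1}$ and $D^{n}$. The unit axiom is automatic since the monoidal unit $k = S^{0}$ is itself cofibrant. The main obstacle is the bookkeeping for the pushout-product axiom in the unbounded setting: one must verify that tensoring a DG-projective complex with an arbitrary complex still produces a quasi-isomorphism when tensored against an acyclic generating cofibration, which is precisely the content of Hovey's analysis of $K$-flat and DG-projective complexes. Once that technical point is in hand, the remaining verifications are essentially formal.
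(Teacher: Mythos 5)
Your proposal is correct and in substance identical to the paper's proof, which simply cites Hinich's Theorem 2.2.1 and Hovey's Theorem 2.3.11 and Proposition 4.2.13; you have unfolded exactly the argument of those cited results (generating sets $S^{n-1}\hookrightarrow D^{n}$ and $0\hookrightarrow D^{n}$, Kan's recognition theorem, the characterization of cofibrations, and the pushout-product check on generators). The only remark worth adding is that over a field every complex splits as a sum of spheres and disks, so all objects are cofibrant; this makes left properness automatic and dissolves the $K$-flatness worries you flag in your last paragraph.
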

\begin{proof}
  The model structure is constructed in \cite{HinichHAHA}*{Theorem
    2.2.1}; see also \cite{HoveyModCats}*{Theorem 2.3.11} for a more
  detailed construction that works over an arbitrary ring. It is a
  symmetric monoidal model category by
  \cite{HoveyModCats}*{Proposition 4.2.13}.
\end{proof}

If $A$ is a commutative algebra over $k$, \ie{} a commutative algebra object in
$\Mod(k)$, then we can lift this model structure to the category
$\Mod(A)$ of $A$-modules in $\Mod(k)$:
\begin{propn}[Hinich, Schwede-Shipley]\ Let $A$ be a commutative
  algebra over $k$. Then the category $\Mod(A)$ has a left proper
  combinatorial model structure where the weak equivalences and
  fibrations are the maps whose underlying maps of cochain complexes
  are weak equivalences and fibrations in $\Mod(k)$. If the underlying
  cochain complex of $A$ is cofibrant, then the forgetful functor also
  preserves cofibrations. Moreover, this is a symmetric monoidal model
  category with respect to~$\otimes_{A}$.
\end{propn}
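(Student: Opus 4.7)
The plan is to obtain the model structure on $\Mod(A)$ by transferring the projective model structure on $\Mod(k)$ along the free-forgetful adjunction $F \dashv U$, where $F = A \otimes_k (-)$ and $U$ forgets the $A$-action. The general machinery of Kan/Crans transfer says that this produces a cofibrantly generated model structure with generating (trivial) cofibrations $F(I)$ and $F(J)$, where $I$ and $J$ are the standard generators for $\Mod(k)$, provided two conditions hold: the small object argument is available (which is immediate since $\Mod(A)$ is locally presentable), and every relative $F(J)$-cell complex is a weak equivalence in $\Mod(k)$.

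The acyclicity condition is the heart of the matter, and in our case it is essentially automatic. Over a field $k$, every $k$-module is flat and every trivial cofibration $j$ of cochain complexes remains a trivial cofibration after tensoring with $A$, since $A \otimes_k j$ is again an injection with acyclic cokernel. Transfinite compositions and pushouts of trivial cofibrations of cochain complexes are again trivial cofibrations of cochain complexes, so any relative $F(J)$-cell complex is a weak equivalence in $\Mod(k)$, and the transfer theorem applies. The resulting model structure is combinatorial since $\Mod(A)$ is presentable and the generating sets are small. Left properness transfers from $\Mod(k)$ once one observes that cofibrations in $\Mod(A)$ have underlying maps that are injections of cochain complexes (true for $F(i)$ when $i$ is an injection, and preserved by the cellular operations), so pushing out a weak equivalence along such a map is a homotopy pushout in $\Mod(k)$.

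For the symmetric monoidal structure with respect to $\otimes_A$, I would verify the pushout-product axiom by reducing to generators: since $F(i) \mathbin{\square_A} F(i') \cong F(i \mathbin{\square_k} i')$, the pushout-product axiom for $\otimes_A$ on $F(I) \cup F(J)$ reduces to the (already known) pushout-product axiom for $\otimes_k$ on $I \cup J$, and an argument with cell presentations propagates this to all cofibrations. The unit axiom is trivial because $A$ itself is the monoidal unit. Finally, when $A$ is cofibrant in $\Mod(k)$, each generating cofibration $F(i) = A \otimes_k i$ is the tensor of a cofibrant object with a cofibration in the symmetric monoidal model category $\Mod(k)$, hence a cofibration of $k$-complexes; stability of cofibrations under pushouts, transfinite composition, and retracts then shows that $U$ preserves cofibrations.

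The only real obstacle is bookkeeping for the pushout-product axiom: one must be careful to distinguish $\otimes_A$ and $\otimes_k$ and to check that generating cofibrations of $\Mod(A)$ are closed under $\mathbin{\square_A}$ up to cellular decomposition. This is where the identification $F(i) \mathbin{\square_A} F(i') \cong F(i \mathbin{\square_k} i')$ is decisive, since it allows the entire monoidal axiom to be reduced to the corresponding statement for $\Mod(k)$.
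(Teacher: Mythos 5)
Your argument is correct, but you should know that the paper does not actually prove this proposition: its ``proof'' is a bare citation to Hinich's \emph{Homological algebra of homotopy algebras} (\S 3) and to Schwede--Shipley's Theorem 4.1. What you have written is therefore a reconstruction of the standard argument from those references rather than an alternative to anything in the text. Your route --- transfer along $A \otimes_{k} \blank \dashv U$ --- is the standard one, and you correctly isolate why the present setting is easier than the general monoid-axiom situation of Schwede--Shipley: since $k$ is a field, every complex is flat, so the acyclicity condition is essentially automatic. Two small points of precision, neither of which damages the proof. First, for acyclicity you only need that pushouts and transfinite composites of the maps $A \otimes_{k} j$, $j \in J$, are quasi-isomorphisms; your phrase ``remains a trivial cofibration'' asserts more than you use and more than is obvious (there is no reason $A \otimes_{k} j$ should be a \emph{cofibration} in the projective structure on $\Mod(k)$), so it is cleaner to say ``remains an injection with acyclic cokernel, hence its pushouts and transfinite composites are quasi-isomorphisms.'' Second, the unit axiom is not vacuous merely because $A$ is the monoidal unit; the correct reason is that $A = F(k)$ is cofibrant in the transferred structure. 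Everything else --- combinatoriality, left properness via the observation that all cofibrations have underlying degreewise injections, the reduction of the pushout-product axiom to generating maps via the identification of the pushout-product over $A$ of $F(i)$ and $F(i')$ with $F$ applied to the pushout-product over $k$ of $i$ and $i'$, and the cofibrancy of $A \otimes_{k} i$ in $\Mod(k)$ when $A$ is cofibrant --- is sound.
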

\begin{proof}
  This is \cite{HinichHAHA}*{\S 3} or
  \cite{SchwedeShipleyAlgMod}*{Theorem 4.1}.
\end{proof}

\begin{remark}
  \cite{BarthelMayRiehlDG}*{Theorems 9.10 and 9.12} give an explicit
  characterization of the cofibrant objects and cofibrations in
  $\Mod(A)$. 
\end{remark}

Since $\Mod(A)$ is a symmetric monoidal model category, if $M$ is a
cofibrant object then the functor $M \otimes_{A} \blank$ preserves
quasi-isomorphisms between cofibrant objects. In fact, slightly more
is true:

\begin{lemma}\label{lem:cofmodtensor}
  If $M$ is a cofibrant object of $\Mod(A)$, then the functor $M \otimes_{A}
  \blank$ preserves quasi-isomorphisms.
\end{lemma}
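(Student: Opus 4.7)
The plan is to first reduce the statement to showing that if $N$ is an acyclic $A$-module and $M$ is cofibrant then $M \otimes_A N$ is acyclic. Indeed, for any map $f \colon N \to N'$ of $A$-modules there is a mapping cone $\txt{cone}(f)$, and tensor product commutes with the mapping cone construction in the sense that $M \otimes_A \txt{cone}(f) \cong \txt{cone}(M \otimes_A f)$, because the cone is just a direct sum with a twisted differential. Since a map of $A$-modules is a quasi-isomorphism if and only if its cone is acyclic, the general statement follows from the acyclic case applied to $\txt{cone}(f)$.

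So fix an acyclic $A$-module $N$. Since $M$ is cofibrant, it is a retract of a cell $A$-module, and retracts of acyclic complexes are acyclic, so I may assume $M$ itself is a cell $A$-module. By definition $M$ is then a transfinite composite $M = \operatorname{colim}_{\alpha} M_{\alpha}$ with $M_{0}=0$ and each $M_{\alpha+1}$ obtained from $M_{\alpha}$ by a pushout
\[
\begin{tikzcd}
A \otimes_{k} S^{n-1} \arrow[r] \arrow[d, hook] & M_{\alpha} \arrow[d] \\
A \otimes_{k} D^{n} \arrow[r] & M_{\alpha+1}
\end{tikzcd}
\]
along a generating cofibration. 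Tensoring commutes with colimits, and filtered colimits of acyclic complexes are acyclic (cohomology commutes with filtered colimits in $\Mod(k)$), so it will suffice to verify the inductive step: if $M_{\alpha} \otimes_{A} N$ is acyclic, then so is $M_{\alpha+1} \otimes_{A} N$.

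For the inductive step, apply $-\otimes_A N$ to the pushout square. Using the identification $(A \otimes_{k} V) \otimes_{A} N \cong V \otimes_{k} N$, we obtain
\[
M_{\alpha+1} \otimes_{A} N \;\cong\; (M_{\alpha} \otimes_{A} N) \cup_{S^{n-1} \otimes_{k} N} (D^{n} \otimes_{k} N).
\]
Now $S^{n-1} \otimes_{k} N$ is just a shift of $N$ and is therefore acyclic, while $D^{n} \otimes_{k} N$ is acyclic because $D^{n}$ is contractible as a $k$-complex (so the projection $D^{n}\otimes_{k} N \to 0$ is a chain homotopy equivalence). The remaining vertex of the pushout is acyclic by induction. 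Since the left-hand vertical map in the square is a monomorphism with termwise projective cokernel, the pushout fits in a short exact sequence
\[
0 \to S^{n-1} \otimes_{k} N \to (M_{\alpha} \otimes_{A} N) \oplus (D^{n} \otimes_{k} N) \to M_{\alpha+1} \otimes_{A} N \to 0,
\]
and the associated long exact sequence in cohomology shows $M_{\alpha+1} \otimes_{A} N$ is acyclic.

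The only real subtlety is keeping track of the cell structure on a cofibrant $A$-module and the interaction of $\otimes_A$ with pushouts and transfinite compositions; this is standard once one invokes the explicit description of cofibrations in $\Mod(A)$ from Barthel--May--Riehl cited above. The reason the argument goes through without any flatness hypothesis on $A$ is that at each cell attachment one is tensoring over $k$ with the very simple complexes $S^{n-1}$ and $D^{n}$, where acyclicity is preserved for trivial reasons; no appeal to the K\"unneth theorem or the field hypothesis beyond characteristic zero being irrelevant is needed.
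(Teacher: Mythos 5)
Your proof is correct and follows essentially the same route as the paper's: both reduce to the explicit cell structure of a cofibrant $A$-module and induct up the cell filtration using long exact sequences, exploiting that each attached cell is a shifted free module whose tensor product is easy to control. The only differences are organizational --- you first pass to the acyclic case via the mapping cone and then use the Mayer--Vietoris short exact sequence of each cell-attachment pushout, whereas the paper compares $F_{n} \otimes_{A} M \to F_{n} \otimes_{A} M'$ directly using the short exact sequences of the filtration quotients (verifying the needed injectivity by the same termwise splitting you invoke).
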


This fact is standard; we include a short proof for
completeness. 

\begin{proof}
  $\Mod(A)$ is a cofibrantly generated model category, with the set
  $\mathcal{I}$ of generating cofibrations being $S^{n}_{A} := A
  \otimes S^{n}_{k} \to A \otimes D^{n+1}_{k} =: D^{n+1}_{A}$, where
  $S^{n}_{k} := k[n]$ is the cochain complex with $k$ in degree $-n$ and
  $0$ elsewhere, and $D^{n+1}_{k}$ is that with $k$ in degrees $-n$ and
  $-n-1$, with differential $\id_{k}$, and $0$ elsewhere
  (\cf{} \cite{BarthelMayRiehlDG}*{Theorem 3.3}). It follows that the
  cofibrant $A$-modules are the objects that are retracts of
  $\mathcal{I}$-cell complexes, where the latter are the objects $X$
  that can be written as colimits of a sequence of maps $0=F_{0} \to
  F_{1} \to F_{2} \to \cdots$, with each $F_{n-1} \to F_{n}$ obtained
  as a pushout \nolabelcsquare{\coprod_{i \in T_{n}}
    S^{d_{i}}_{A}}{F_{n-1}}{\coprod_{i \in T_{n}}
    D^{d_{i}+1}_{A}}{F_{n},} where $T_{n}$ is a set.  Note in
  particular that each filtration quotient $F_{n}/F_{n-1}$ is of the
  form $\coprod_{i \in T_{n}} S^{n+1}_{A}$, \ie{} it is a sum of copies
  of shifts of $A$.

  Now suppose $X$ is a cofibrant $A$-module, and $f \colon M \to M'$
  is a quasi-isomorphism. We wish to prove that $X \otimes f$ is a
  quasi-isomorphism. Since quasi-isomorphisms are closed under
  retracts, it suffices to prove this under the assumption that $X$ is
  an $\mathcal{I}$-cell complex; we therefore fix a filtration $F_{n}$
  of $X$ as above. We claim that the induced maps $F_{n} \otimes_{A} M
  \to F_{n+1} \otimes_{A} M$ are injective, so that we get a
  filtration of $X \otimes_{A}M$. Assuming this, we have short exact
  sequences of cochain complexes over $k$,
  \[ 
  0 \to F_{n-1} \otimes_{A} M \to F_{n} \otimes_{A} M \to F_{n}/F_{n-1}
  \otimes_{A} M \to 0,
  \] 
  and using the associated long exact sequence, we see by induction 
  that $F_{n} \otimes_{A} M \to F_{n} \otimes_{A} M'$ is a quasi-isomorphism, 
  since $F_{n}/F_{n-1} \otimes_{A} M \to F_{n}/F_{n-1} \otimes_{A} M'$ is a
  quasi-isomorphism (being a sum of shifts of $f$). As quasi-isomorphisms are closed under filtered
  colimits, it follows that $X \otimes_{A} M \to X \otimes_{A} M'$ is
  also a quasi-isomorphism.
  
  To prove injectivity for $F_{n}\otimes_{A} M \to F_{n+1} \otimes_{A}
  M$, observe that we can prove this on the level of underlying graded
  $k$-modules. The freeness of $F_{n+1}/F_{n}$ implies that we can
  choose a splitting of $F_{n+1} \to F_{n+1}/F_{n}$, which gives a
  splitting of $F_{n+1} \otimes_{A} M \to F_{n+1}/F_{n} \otimes_{A}
  M$. Thus we have for every $i \in \mathbb{Z}$ a split
  short exact sequence \[0 \to (F_{n} \otimes_{A} M)_{i}\to (F_{n+1}
  \otimes_{A} M)_{i} \to (F_{n+1}/F_{n} \otimes_{A} M)_{i} \to 0\] of
  $k$-modules, which in particular implies that the map $(F_{n} \otimes_{A}
  M)_{i}\to (F_{n+1} \otimes_{A} M)_{i}$ is injective.
\end{proof}

For later use, we note a useful consequence of this:
\begin{lemma}\label{lem:Symnqiso}
  Suppose $A$ is a commutative algebra over $k$. Then the $n$th
  symmetric power functor
  $\Sym^{n}_{A} \colon \Mod(A) \to \Mod(A)$
  preserves quasi-isomorphisms between cofibrant $A$-modules.
\end{lemma}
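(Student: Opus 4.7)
The plan is to first establish the result for the $n$-fold tensor power $M^{\otimes_{A} n}$ and then exploit characteristic zero to pass to the symmetric coinvariants.

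Given a quasi-isomorphism $f \colon M \to M'$ between cofibrant $A$-modules, I would first factor the induced map $f^{\otimes n} \colon M^{\otimes_{A} n} \to (M')^{\otimes_{A} n}$ as a telescope
\[ M^{\otimes n} \to M' \otimes M^{\otimes (n-1)} \to (M')^{\otimes 2} \otimes M^{\otimes (n-2)} \to \cdots \to (M')^{\otimes n} \]
that changes one factor at a time. Each step has the form $\id_{X} \otimes f \otimes \id_{Y}$ with $X = (M')^{\otimes k}$ and $Y = M^{\otimes (n-k-1)}$. Since $\Mod(A)$ is a symmetric monoidal model category, the pushout-product axiom ensures that a tensor product of cofibrant modules is cofibrant, so $X \otimes_{A} Y$ is cofibrant. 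By Lemma \ref{lem:cofmodtensor}, tensoring with the cofibrant object $X \otimes_{A} Y$ preserves quasi-isomorphisms, so each step of the telescope is a quasi-isomorphism, and hence so is $f^{\otimes n}$.

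Second, I would use characteristic zero to pass from $M^{\otimes_{A} n}$ to $\Sym^{n}_{A}(M) = (M^{\otimes_{A} n})_{\Sigma_{n}}$. Over a field of characteristic zero the averaging operator $e = \frac{1}{n!}\sum_{\sigma \in \Sigma_{n}} \sigma$ is an idempotent on $M^{\otimes_{A} n}$ whose image is canonically identified with both the $\Sigma_{n}$-coinvariants and the $\Sigma_{n}$-invariants. This produces a natural splitting $M^{\otimes_{A} n} \cong \Sym^{n}_{A}(M) \oplus \ker(e)$ of $A$-modules, and likewise for $M'$. Since $f^{\otimes n}$ intertwines these splittings by naturality of $e$, the map $\Sym^{n}_{A}(M) \to \Sym^{n}_{A}(M')$ is a direct summand of the quasi-isomorphism $f^{\otimes n}$, and is therefore itself a quasi-isomorphism.

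I do not expect any serious obstacle: both steps are routine given Lemma \ref{lem:cofmodtensor}. The only essential input beyond that lemma is the use of characteristic zero in the second step, since the coinvariants functor for a finite group action is exact only when the group order is invertible in the base. In positive characteristic the statement would fail once $n$ is large enough for $\Sigma_{n}$ to have order divisible by the characteristic, and one would need to work with derived symmetric powers instead.
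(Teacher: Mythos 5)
Your proof is correct, but the second half takes a genuinely different route from the paper. For the tensor-power step your telescope argument is essentially the paper's (the paper just cites the fact that $\otimes_{A}$ is a left Quillen bifunctor, which one proves exactly by your one-factor-at-a-time factorization together with Lemma~\ref{lem:cofmodtensor}). For the passage from $M^{\otimes_{A} n}$ to $\Sym^{n}_{A}(M)$, however, the paper does not use the averaging idempotent: it writes $\Sym^{n}_{A}$ as $A \otimes_{A[\Sigma_{n}]} (\blank)^{\otimes_{A} n}$, observes that by Maschke's theorem $k$ is projective over $k[\Sigma_{n}]$ so that $A$ is cofibrant as an $A[\Sigma_{n}]$-module, and then applies Lemma~\ref{lem:cofmodtensor} over the group algebra $A[\Sigma_{n}]$ to conclude that $A \otimes_{A[\Sigma_{n}]} (\blank)$ preserves quasi-isomorphisms. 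Both arguments hinge on the same semisimplicity of $k[\Sigma_{n}]$ in characteristic zero, but yours is more elementary and self-contained: the natural splitting $M^{\otimes_{A} n} \cong \Sym^{n}_{A}(M) \oplus \ker(e)$ exhibits the symmetric-power map as a retract of $f^{\otimes n}$, with no need to invoke a model structure on $\Mod(A[\Sigma_{n}])$ or the left Quillen base-change functor. The paper's formulation has the advantage of fitting into its systematic reuse of Lemma~\ref{lem:cofmodtensor} and of being the version one would adapt if the strict coinvariants had to be replaced by homotopy coinvariants (derived symmetric powers) outside characteristic zero, which is exactly the caveat you note at the end.
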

\begin{proof}
  The functor $\Sym^{n}_{A}$ is defined by the tensor product $A
  \otimes_{A[\Sigma_{n}]} (\blank)^{\otimes_{A} n}$ where $A$ has the
  trivial $\Sigma_{n}$-action and $(\blank)^{\otimes_{A} n}$ has the
  obvious action by permuting the factors. Since $k$ is a field of
  characteristic zero, every module over $k[\Sigma_{n}]$ is
  projective. In particular, $k$ is a projective
  $k[\Sigma_{n}]$-module, and hence it is cofibrant in
  $\Mod(k[\Sigma_{n}])$. Since $A[\Sigma_{n}] \otimes_{k[\Sigma_{n}]}
  \blank$ is a left Quillen functor, this implies that $A$ is
  cofibrant in $\Mod(A[\Sigma_{n}])$. It therefore follows from
  Lemma~\ref{lem:cofmodtensor} that the functor $A
  \otimes_{A[\Sigma_{n}]} (\blank)$ preserves quasi-isomorphisms. We
  are left with showing that if $M \to N$ is a quasi-isomorphism of
  cofibrant $A$-modules, then $M^{\otimes_{A} n} \to N^{\otimes_{A}
    n}$ is a quasi-isomorphism, which follows from $\blank \otimes_{A}
  \blank$ being a left Quillen bifunctor.
\end{proof}

It will also be useful to know that in the case of a field we can relax the
assumption that $M$ is cofibrant:

\begin{lemma}\label{lem:unbddflat}
  For every $X \in \Mod(k)$, the functor $X \otimes \blank$ preserves
  quasi-isomorphisms.
\end{lemma}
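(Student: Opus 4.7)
The plan is to reduce to two easy cases by decomposing $X$ as a direct sum of its cohomology (with zero differential) and a contractible summand. This decomposition is possible precisely because we are over a field.

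First, since $k$ is a field, for each integer $n$ the short exact sequences
\[ 0 \to Z^n \to X^n \to B^{n+1} \to 0 \quad \text{and} \quad 0 \to B^n \to Z^n \to H^n(X) \to 0 \]
of $k$-vector spaces split. Assembling such splittings degreewise produces an isomorphism of cochain complexes $X \cong H^{*}(X) \oplus A$, where $H^{*}(X)$ carries the zero differential and $A$ is the direct sum over $n$ of disk complexes $B^{n+1} \xrightarrow{\id} B^{n+1}$ placed in degrees $n$ and $n+1$. Each such disk is contractible via the degree $-1$ map given by the identity on $B^{n+1}$, and these assemble into a contracting homotopy $h$ on all of $A$ satisfying $\d h + h \d = \id_A$. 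Since the splittings are chosen independently in each degree, unboundedness of $X$ presents no obstacle.

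Tensor product distributes over direct sums, so it suffices to show that each of $H^{*}(X) \otimes \blank$ and $A \otimes \blank$ preserves quasi-isomorphisms. The first is immediate: $H^{*}(X)$ is a direct sum of shifts of $k$, so $H^{*}(X) \otimes M$ is a direct sum of shifts of $M$, and both operations commute with taking cohomology. For the second I would prove the stronger statement that $A \otimes M$ is contractible for \emph{every} cochain complex $M$, via the homotopy $h \otimes \id_M$: a direct Koszul-sign computation yields
\[ \d(h \otimes \id_M) + (h \otimes \id_M)\, \d \;=\; (\d h + h \d) \otimes \id_M \;=\; \id_{A \otimes M}, \]
the cross terms involving $\d_M$ canceling because $(-1)^{|a|-1} + (-1)^{|a|} = 0$. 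Hence $A \otimes \blank$ sends every map to a map between contractible complexes, which is automatically a quasi-isomorphism. Combining the two cases gives the result. The only step requiring genuine attention is the initial splitting $X \cong H^{*}(X) \oplus A$; everything that follows is routine bookkeeping.
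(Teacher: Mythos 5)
Your proof is correct, but it takes a genuinely different route from the paper. The paper observes that bounded-above complexes over a field are cofibrant (citing \cite{HoveyModCats}*{Lemma 2.3.6}), so the claim holds for them by Lemma~\ref{lem:cofmodtensor}, and then writes an arbitrary $X$ as a filtered colimit of bounded-above complexes, using that $\otimes$ commutes with colimits and that quasi-isomorphisms are closed under filtered colimits. You instead exploit the field hypothesis directly to split $X \cong H^{*}(X) \oplus A$ with $A$ contractible, and check each summand by hand; the sign computation showing $h \otimes \id_M$ contracts $A \otimes M$ is correct, as is the observation that $H^{*}(X) \otimes f$ is a direct sum of shifts of $f$. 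Your argument is more elementary and self-contained — it needs no model-categorical input and no appeal to the earlier lemma — whereas the paper's argument recycles machinery it has already built (Lemma~\ref{lem:cofmodtensor}) and follows a pattern (reduce to cofibrant objects, pass to filtered colimits) that transfers to settings where complexes do not split into cohomology plus a contractible piece. Both are complete proofs of the statement as given.
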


\begin{proof}
  By \cite{HoveyModCats}*{Lemma 2.3.6}, any bounded-above cochain complex
  of $k$-modules is cofibrant, so the result holds in this case by
  Lemma~\ref{lem:cofmodtensor}. But any cochain complex $X$ is a
  filtered colimit of bounded-above cochain complexes. Since the tensor
  product commutes with colimits in each variable and
  quasi-isomorphisms are closed under filtered colimits, we obtain
  the result.
\end{proof}

\begin{propn}\label{propn:commutative algebraQAdj}
 Any map of commutative algebras $\phi \colon A \to B$ induces a Quillen adjunction
 \[ 
 \phi_{!} := B \otimes_{A} \blank : \Mod(A) \rightleftarrows \Mod(B) : \phi^{*}. 
 \] 
 If $\phi$ is a quasi-isomorphism, then this adjunction is a Quillen equivalence.
\end{propn}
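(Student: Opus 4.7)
The plan is to verify the Quillen adjunction first, and then deduce the Quillen equivalence assertion from Lemma \ref{lem:cofmodtensor}.

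For the Quillen adjunction, observe that the weak equivalences and fibrations in both $\Mod(A)$ and $\Mod(B)$ are created by the forgetful functor to $\Mod(k)$, and the right adjoint $\phi^{*}$ acts as the identity on underlying cochain complexes. Hence $\phi^{*}$ automatically preserves fibrations and trivial fibrations, so $(\phi_{!}, \phi^{*})$ is a Quillen adjunction. (Equivalently, one sees that $\phi_{!}$ sends the generating (trivial) cofibrations $S^{n}_{A} \to D^{n+1}_{A}$ to the generating (trivial) cofibrations $S^{n}_{B} \to D^{n+1}_{B}$.)

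Now assume $\phi$ is a quasi-isomorphism. Since $\phi^{*}$ creates weak equivalences, the Quillen equivalence criterion reduces to showing that for every cofibrant $M \in \Mod(A)$, the unit map $M \to \phi^{*}\phi_{!}M = B \otimes_{A} M$ is a quasi-isomorphism. This unit is precisely $\phi \otimes_{A} \id_{M} \colon A \otimes_{A} M \to B \otimes_{A} M$, which is the tensor product of the quasi-isomorphism $\phi$ of $A$-modules with the cofibrant $A$-module $M$. By Lemma \ref{lem:cofmodtensor}, tensoring with a cofibrant $A$-module over $A$ preserves quasi-isomorphisms, so the unit is indeed a quasi-isomorphism.

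To conclude, for any cofibrant $M \in \Mod(A)$ and any $N \in \Mod(B)$ (every object being fibrant), the adjoint of a map $f \colon \phi_{!}M \to N$ factors as
\[
M \xrightarrow{\eta_{M}} \phi^{*}\phi_{!}M \xrightarrow{\phi^{*}f} \phi^{*}N,
\]
where $\eta_{M}$ is a quasi-isomorphism by the previous paragraph. Since $\phi^{*}$ detects quasi-isomorphisms, $\phi^{*}f$ is a quasi-isomorphism iff $f$ is, so the adjoint is a quasi-isomorphism iff $f$ is. This is precisely the Quillen equivalence condition.

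There is no real obstacle here; the only nontrivial input is that tensoring with a cofibrant module preserves all quasi-isomorphisms, not merely those between cofibrant modules, which is Lemma \ref{lem:cofmodtensor}. If one only knew the homotopy-invariance of $\otimes_{A}$ for cofibrant arguments on both sides, one would need to cofibrantly replace $\phi$ in the category of $A$-modules to carry out the argument; the strengthened lemma lets us bypass that step entirely.
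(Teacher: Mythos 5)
Your proof is correct and follows essentially the same route as the paper: the paper notes the adjunction is Quillen because weak equivalences and fibrations are detected in $\Mod(k)$, and for the equivalence it cites Hinich/Schwede--Shipley together with Lemma~\ref{lem:cofmodtensor}, whose content is exactly the unit-is-a-quasi-isomorphism step you carry out explicitly. The only difference is that you unwind the standard criterion (right adjoint creates weak equivalences, all objects fibrant, so it suffices that the unit on cofibrant objects is a quasi-isomorphism) rather than quoting the cited theorems.
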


\begin{proof}
  It is a Quillen adjunction because weak equivalences and fibrations are detected in $\Mod(k)$.
  It is a Quillen equivalence for $\phi$ a quasi-isomorphism by
  \cite{HinichHAHA}*{Theorem 3.3.1} or
  \cite{SchwedeShipleyAlgMod}*{Theorem 4.3}, together with
  Lemma~\ref{lem:cofmodtensor}.
\end{proof}

If $\mathbf{O}$ is an operad in $\Mod(A)$, we can  lift the model
structure on $\Mod(A)$ to the category $\Alg_{\mathbf{O}}(A)$ of
$\mathbf{O}$-algebras in $A$-modules:

\begin{propn}[Pavlov--Scholbach]\label{propn:opdch}\ 
  \begin{enumerate}[(i)]
  \item The category $\Alg_{\mathbf{O}}(A)$ has a model structure
    where the weak equivalences and fibrations are the maps whose
    underlying maps of $A$-modules are weak equivalences and
    fibrations in $\Mod(A)$.
  \item If $\mathbf{O}(n)$ is cofibrant in $\Mod(A)$ for all $n$ and the unit $A
    \to \mathbf{O}(1)$ is a cofibration, then the forgetful functor
    from $\Alg_{\mathbf{O}}(A)$ to
    $\Mod(A)$ also preserves cofibrations.
  \item Any map $f \colon \mathbf{O} \to \mathbf{P}$ of operads in
    $\Mod(A)$ gives rise to a Quillen adjunction
    \[ f_{!} \colon \Alg_{\mathbf{O}}(A) \rightleftarrows
    \Alg_{\mathbf{P}}(A) : f^{*}.\] If $f$ is a weak equivalence then
    this is a Quillen equivalence.
  \item Any map of commutative algebras $\phi \colon A \to B$ gives rise to a Quillen
    adjunction
    \[ 
    (\phi_{!})_{*} : \Alg_{\mathbf{O}}(A) \rightleftarrows \Alg_{\phi_{!}\mathbf{O}}(B) : (\phi^{*})_{*},
    \]
    where $\phi_{!}\mathbf{O}$ denotes the base-changed operad $B
    \otimes_{A} \mathbf{O}$. 
    This adjunction is a Quillen equivalence if $\phi$ is a quasi-isomorphism 
    and one of the following holds:
    \begin{enumerate}[(a)]
    \item $\mathbf{O}$ is cofibrant,
    \item $\mathbf{O}$ is $A \otimes \mathbf{O}'$ for some operad
      $\mathbf{O}'$ in $\Mod(k)$,
    \item $A$ is an $R$-algebra for some commutative algebra $R$, $\mathbf{O}$ is $A
      \otimes_{R} \mathbf{O}'$ for some operad $\mathbf{O}'$ in
      $\Mod(R)$, and the underlying $R$-modules of $A$ and $B$ are
      cofibrant.
    \end{enumerate}
  \end{enumerate}
\end{propn}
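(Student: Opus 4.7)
The plan is to transfer the model structure on $\Mod(A)$ to $\Alg_{\mathbf{O}}(A)$ along the free-forgetful adjunction $F_{\mathbf{O}} \dashv U_{\mathbf{O}}$, using the admissibility machinery of Pavlov-Scholbach. The essential input is that $\Mod(A)$ is cofibrantly generated symmetric monoidal, in which the symmetric-power functors preserve quasi-isomorphisms between cofibrant objects (Lemma~\ref{lem:Symnqiso}), together with the characteristic-zero fact that $\Sigma_n$-coinvariants are exact.

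For (i), I would apply a Kan-style transfer theorem, which reduces to verifying that pushouts of free maps on generating trivial cofibrations of $\Mod(A)$ yield trivial cofibrations on underlying modules. The standard pushout filtration expresses such a pushout as a transfinite composition whose subquotients are built from $\mathbf{O}(n) \otimes_{\Sigma_n} T_n$, where $T_n$ is a diagram of smash-type products of the inputs; Lemmas~\ref{lem:cofmodtensor} and \ref{lem:Symnqiso} together with exactness of $\Sigma_n$-coinvariants give the required preservation of quasi-isomorphisms, with no cofibrancy hypothesis on $\mathbf{O}$. The same filtration analysis, now tracking cofibrations, yields (ii): if $\mathbf{O}(n)$ is cofibrant and the unit $A \to \mathbf{O}(1)$ is a cofibration, each subquotient lies in the cofibrations of $\Mod(A)$, so $U_{\mathbf{O}}$ preserves cofibrations. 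Part (iii) is then almost formal: since weak equivalences and fibrations in both $\Alg_{\mathbf{O}}(A)$ and $\Alg_{\mathbf{P}}(A)$ are detected in $\Mod(A)$, the functor $f^{*}$ is right Quillen automatically, and when $f$ is a weak equivalence, computing the derived counit on a cell complex reduces, via the same pushout filtration, to the observation that a $\Sigma_n$-equivariant quasi-isomorphism $\mathbf{O}(n) \to \mathbf{P}(n)$ remains a quasi-isomorphism after $\Sigma_n$-coinvariants.

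Part (iv) is where the main obstacle lies. The Quillen adjunction itself follows directly from the module-level adjunction of Proposition~\ref{propn:commutative algebraQAdj} combined with the fact that $F_{\phi_!\mathbf{O}}$ is computed by postcomposing $(\phi_!)_{*}$ with $F_{\mathbf{O}}$ on underlying modules. For the Quillen equivalence, one must show that for a cofibrant $\mathbf{O}$-algebra $X$, the derived unit $X \to (\phi^{*})_{*}(\phi_!)_{*}X \simeq B \otimes_A X$ is a quasi-isomorphism. This holds on underlying modules, but to conclude at the level of algebras one must commute the base change $B \otimes_A \blank$ past the pushout filtration building $X$ as a cofibrant $\mathbf{O}$-algebra, and the interaction of the operad structure with this base change is the delicate point. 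Hypotheses (a), (b), (c) are precisely designed to make the comparison tractable: (a) allows a direct cofibrancy argument on $\mathbf{O}$ itself; (b) uses that if $\mathbf{O} = A \otimes \mathbf{O}'$ for $\mathbf{O}' \in \Mod(k)$ then $\phi_!\mathbf{O} = B \otimes \mathbf{O}'$, so the symmetric-power functors on both sides are built from the same $k$-linear operad via Lemma~\ref{lem:unbddflat}; and (c) is a relative version of (b) over a base commutative algebra $R$. In each case the argument reduces to verifying that the canonical map $B \otimes_A (\mathbf{O}(n) \otimes_{\Sigma_n} X^{\otimes_A n}) \to \phi_!\mathbf{O}(n) \otimes_{\Sigma_n} (B \otimes_A X)^{\otimes_B n}$ is a quasi-isomorphism on the subquotients of the filtration; I would then package the verification by appealing to the precise theorems of Pavlov-Scholbach.
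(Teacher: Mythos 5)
Your overall strategy overlaps heavily with the paper's in substance but is organized quite differently. For parts (i)--(iii) and (iv)(a) the paper does not rerun the transfer and pushout-filtration arguments you sketch: it verifies the standing hypotheses of \cite{PavlovScholbachOpd} for $\Mod(k)$ and $\Mod(A)$ (via \cite{PavlovScholbachSymm}) and then quotes Theorems 5.10, 6.6, 7.5 and 8.10 of that paper directly. Your filtration sketch is essentially the content of those cited theorems, so reproducing it is legitimate but redundant; if you do go that route, the step you must not omit is checking that $\Mod(A)$ satisfies the admissibility hypotheses in the first place, which is where the characteristic-zero input (exactness of $\Sigma_n$-coinvariants) actually enters the published machinery.

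The genuine divergence, and the one place your proposal is not yet a proof, is (iv)(b) and (iv)(c). Theorem 8.10 of \cite{PavlovScholbachOpd} covers only case (a), so your closing appeal to ``the precise theorems of Pavlov--Scholbach'' cannot close cases (b) or (c). Moreover, your intermediate assertion that the derived unit $X \to B \otimes_A X$ is a quasi-isomorphism ``on underlying modules'' is exactly what is in doubt: when $\mathbf{O} = A \otimes \mathbf{O}'$ with $\mathbf{O}'$ not levelwise cofibrant, the underlying $A$-module of a cofibrant $\mathbf{O}$-algebra need not be cofibrant, so Proposition~\ref{propn:commutative algebraQAdj} and Lemma~\ref{lem:cofmodtensor} do not apply directly, and your subquotient comparison is at this stage a restatement of the difficulty rather than a resolution of it. The paper sidesteps the filtration analysis entirely: choose a cofibrant replacement $r \colon \mathbf{O}'' \to \mathbf{O}'$ of the operad, form the commutative square of left Quillen functors relating $\Alg_{\mathbf{O}''}$ and $\Alg_{\mathbf{O}'}$ over $A$ and over $B$, note that the vertical functors are left Quillen equivalences by (iii) (using Lemma~\ref{lem:unbddflat} in case (b), respectively Lemma~\ref{lem:cofmodtensor} in case (c), to see that $r$ remains a weak equivalence after base change) and that the top horizontal one is a left Quillen equivalence by (iv)(a), and conclude for the bottom horizontal one by two-out-of-three for Quillen equivalences. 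Either adopt that reduction or carry out your subquotient comparison in full; as written, the last third of your argument does not go through.
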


\begin{remark}
  Over $k$, most of these results are due to Hinich~\cite{HinichHAHA}:
  (i) is \cite{HinichHAHA}*{Theorem 4.1.1} and (iii) is
  \cite{HinichHAHA}*{Theorem 4.7.4}.
\end{remark}

As special cases, we have the model categories $\Comm(A)$ of
commutative algebras and $\Lie_{n}(A)$ of $n$-shifted Lie algebras
in~$\Mod(A)$.

\begin{proof}
We use results from \cite{PavlovScholbachOpd}, 
whose  hypotheses hold for $\Mod(k)$ by \cite{PavlovScholbachSymm}*{\S 7.4} 
and hold for $\Mod(A)$ for any commutative algebra $A$ over $k$ by \cite{PavlovScholbachSymm}*{Theorem 5.3.1}. 
Then (i) follows from
  \cite{PavlovScholbachOpd}*{Theorem 5.10} and  (ii) from
  \cite{PavlovScholbachOpd}*{Theorem 6.6}. The adjunctions in (iii)
  and (iv) are obviously Quillen adjunctions, and the adjunction in
  (iii) is a Quillen equivalence for
  $f$ a weak equivalence by \cite{PavlovScholbachOpd}*{Theorem
    7.5}. The adjunction in (iv) is a Quillen equivalence in case (a)
  by \cite{PavlovScholbachOpd}*{Theorem 8.10}. 
  To prove case (b),
  let $r: \mathbf{O}'' \to \mathbf{O}'$ be a cofibrant
  replacement in operads in $\Mod(k)$. We then have a commutative
  square of left Quillen functors
  \csquare
  {\Alg_{\mathbf{O}''}(A)}{\Alg_{\mathbf{O}''}(B)}{\Alg_{\mathbf{O}'}(A)}{\Alg_{\mathbf{O}'}(B)}
  {(\phi_{!})_{*}}{r_{!}}{r_{!}}{(\phi_{!})_{*}}
  Since $k$ is a field, Lemma~\ref{lem:unbddflat} tells us that
  $R \otimes r: R \otimes \mathbf{O}'' \to R \otimes \mathbf{O}'$ 
  is again a weak equivalence for any commutative algebra $R$. By (iii) this
  implies that both vertical morphisms are left Quillen
  equivalences. We also know that the top horizontal map is a left
  Quillen equivalence by (iv)(a), so it follows that the bottom horizontal
  map must be one too. Case (c) is proved similarly, taking a
  cofibrant replacement $\mathbf{O}'' \to \mathbf{O}'$ in operads in
  $\Mod(R)$ and using Lemma~\ref{lem:cofmodtensor} to conclude that $A
  \otimes_{R} \mathbf{O}'' \to B \otimes_{A} \mathbf{O}'$ is a weak
  equivalence.
\end{proof}

\begin{remark}
The results of Pavlov and Scholbach encompass a broader class of examples,
including model categories of cochain complexes of bornological and convenient vector spaces constructed in \cite{Wallbridge}.
These would form a natural context for many examples coming from field theory
where our formulation of functorial BV quantization would apply,
but we will restrict our efforts here to an algebraic setting.
\end{remark}

\subsection{$\infty$-Categories of Modules and Operad Algebras}

From the model categories discussed in \S\ref{sec: MCMOA}, we can obtain \icats{} by
inverting the weak equivalences, \ie{} the quasi-isomorphisms. 
Let $A$ be a commutative algebra over $k$.
We write
\begin{itemize}
\item $\MMod(A)$ for the \icat{} obtained from~$\Mod(A)$, 
\item $\AAlg_{\mathbf{O}}(A)$ for the \icat{} obtained from
  $\Alg_{\mathbf{O}}(A)$, with $\mathbf{O}$ an operad in~$\Mod(A)$.
\end{itemize}
Here we will use the results of \S\ref{sec: SEMC} to show that these
\icats{} can alternatively be described using the standard simplicial
category structures defined by tensoring with the algebras of
polynomial differential forms:

\begin{defn}
  Let $\Omega(\Delta^{n})$ denote the commutative differential graded
  $k$-algebra of polynomial differential forms on $\Delta^{n}$.
  That is, $\Omega(\Delta^{n}) = k[x_{1},\ldots,x_{n},dx_{1},\ldots,dx_{n}]$ 
  where each $x_i$ has degree 0 and $dx_i$ has degree 1 and 
  the differential is the derivation determined by $d(x_j) = dx_j$.  
  This construction extends to a unique limit-preserving functor $\Omega \colon \sSet^{\op} \to \Comm(k)$.
\end{defn}

For all the categories $\mathbf{C}$ considered above, we can use the
simplicial object $\Omega(\Delta^{\bullet})$ to define a simplicial
enrichment, by taking the mapping spaces to be $\mathbf{C}(X,
\Omega(\Delta^{\bullet}) \otimes Y)$. We will denote the simplicial categories
obtained in this way from the \emph{cofibrant} objects in the model
categories above by $\sMod(A)$, and
$\sAlg_{\mathbf{O}}(A)$.

\begin{lemma}[{Bousfield-Gugenheim \cite{BousfieldGugenheim}*{\S 8}}]
The functor  $\Omega \colon \sSet^{\op} \to \Comm(k)$ is a right Quillen functor.
\end{lemma}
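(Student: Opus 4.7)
The plan is to exhibit a left adjoint of $\Omega$ and verify that $\Omega$ takes fibrations and trivial fibrations in $\sSet^{\op}$ to fibrations and trivial fibrations in $\Comm(k)$. Since $\Omega$ preserves all limits, it admits a left adjoint $F \colon \Comm(k) \to \sSet^{\op}$ given by $F(A)_{n} = \Hom_{\Comm(k)}(A, \Omega(\Delta^{n}))$; the adjunction isomorphism $\Hom_{\sSet}(S, F(A)) \cong \Hom_{\Comm(k)}(A, \Omega(S))$ follows by writing $S$ as a colimit of its simplices and using that $\Omega$ converts this colimit to a limit.

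Fibrations in $\sSet^{\op}$ are monomorphisms of simplicial sets, while fibrations in $\Comm(k)$ are levelwise surjections of the underlying cochain complexes. I thus need to check (a) for every monomorphism $S \hookrightarrow T$, the restriction $\Omega(T) \to \Omega(S)$ is surjective, and (b) for every anodyne extension $S \hookrightarrow T$, it is in addition a quasi-isomorphism. By a standard saturation argument, these two properties reduce to the generating (trivial) cofibrations: $\Omega$ converts pushouts of monomorphisms to pullbacks of cdgas and transfinite composites to inverse limits of towers, and both surjections and surjective quasi-isomorphisms of cochain complexes are stable under these operations (the quasi-isomorphism statement for towers of surjections follows from the vanishing of $\lim^{1}$ together with the fact that a product of acyclic complexes is acyclic). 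It therefore suffices to handle $\partial\Delta^{n} \hookrightarrow \Delta^{n}$ and $\Lambda^{n}_{k} \hookrightarrow \Delta^{n}$.

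The remaining content is geometric: polynomial forms must extend from $\partial\Delta^{n}$ to $\Delta^{n}$, and both $\Omega(\Delta^{n})$ and $\Omega(\Lambda^{n}_{k})$ must be quasi-isomorphic to $k$ (a polynomial Poincar\'e lemma). The cleanest approach is to write down an explicit chain contraction of $\Omega(\Delta^{n})$ onto the constant forms at a vertex, built from fiberwise integration in barycentric coordinates; a suitable variant handles the horn by induction on $n$. This last step is the main obstacle, as the explicit polynomial formulas demand some bookkeeping, but they are classical and are carried out carefully in Bousfield-Gugenheim \S 8, which is why we cite that reference.
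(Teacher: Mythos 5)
Your proposal is correct and follows essentially the same route as the paper, which simply cites Bousfield--Gugenheim (the existence of the left adjoint from \cite{BousfieldGugenheim}*{8.1} and the left Quillen property from their Lemma 8.2 and Proposition 8.3). You unpack the standard reduction to the generating (trivial) cofibrations and then defer the extension lemma and polynomial Poincar\'e lemma to \cite{BousfieldGugenheim}*{\S 8}, which is exactly where the paper's proof also places the substantive content.
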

\begin{proof}
  The functor $\Omega$ has a left adjoint by
  \cite{BousfieldGugenheim}*{8.1}, and this is a left Quillen functor
  by \cite{BousfieldGugenheim}*{Lemma 8.2, Proposition 8.3}.
\end{proof}

\begin{lemma}\label{lem:OmegaReedy}
  For every cochain complex $X$, the simplicial cochain complex
  $\Omega(\Delta^{\bullet}) \otimes X$ is Reedy fibrant. Moreover, the maps
  $\Omega(\Delta^{n}) \otimes X \to \Omega(\Delta^0) \otimes X \cong X$ are all quasi-isomorphisms.
\end{lemma}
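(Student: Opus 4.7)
The plan is to prove both claims by reducing to the case $X = k$ and then tensoring with $X$. Two standard facts enable the reduction: over a field, the functor $(-) \otimes X$ on cochain complexes is exact, so it commutes with finite limits and preserves degreewise surjections, and by Lemma~\ref{lem:unbddflat} it preserves quasi-isomorphisms.

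For Reedy fibrancy, first I would identify the matching object $M_n(\Omega(\Delta^{\bullet}))$ with $\Omega(\partial\Delta^n)$. Writing $\partial\Delta^n$ in $\sSet$ as the colimit over its proper faces and using that $\Omega$ sends colimits in $\sSet$ to limits in $\Comm(k)$, one obtains $\Omega(\partial\Delta^n)$ as the limit of $\Omega(\Delta^k)$ over the diagram of face inclusions $[k]\hookrightarrow [n]$ with $k<n$, which is precisely the standard description of the matching object of the simplicial cdga $\Omega(\Delta^\bullet)$. Since $\partial\Delta^n\hookrightarrow\Delta^n$ is a cofibration in $\sSet$ and $\Omega$ is right Quillen by the previous lemma, the induced restriction $\Omega(\Delta^n)\to\Omega(\partial\Delta^n)$ is a fibration in $\Comm(k)$, hence a degreewise surjection of cochain complexes. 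Tensoring with $X$ then yields $M_n(\Omega(\Delta^\bullet)\otimes X)\cong\Omega(\partial\Delta^n)\otimes X$ by commuting the finite limit computing $M_n$ with $(-)\otimes X$, and the matching map $\Omega(\Delta^n)\otimes X\to\Omega(\partial\Delta^n)\otimes X$ remains a degreewise surjection, hence a fibration in $\Mod(k)$.

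For the second claim, the remaining ingredient is the classical polynomial Poincar\'e lemma: for any choice of vertex of $\Delta^n$, the corresponding restriction $\Omega(\Delta^n)\to\Omega(\Delta^0)=k$ is a quasi-isomorphism, with an explicit polynomial contracting homotopy given by integration along straight lines to the chosen vertex. Applying $(-)\otimes X$ and invoking Lemma~\ref{lem:unbddflat} shows that every simplicial structure map $\Omega(\Delta^n)\otimes X\to\Omega(\Delta^0)\otimes X\cong X$ (each indexed by some vertex of $\Delta^n$) is a quasi-isomorphism.

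The main obstacle is really just careful bookkeeping rather than a substantive difficulty: namely, the identification $M_n\Omega(\Delta^\bullet)\cong\Omega(\partial\Delta^n)$, where one must track how the Reedy-direct structure on $\Delta$ (face inclusions) becomes the Reedy-inverse structure on $\Delta^{\op}$ that controls the matching objects, because $\Omega$ is contravariant. Once this identification is in hand, both claims follow formally from $\Omega$ being right Quillen together with the exactness and quasi-isomorphism-preserving properties of $(-)\otimes X$ over a field.
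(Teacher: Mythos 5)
Your proof is correct and follows essentially the same route as the paper's: identify the matching object with $\Omega(\partial\Delta^n)$, use that $\Omega$ is right Quillen to get a levelwise surjection, commute the finite limit past $(-)\otimes X$ over the field $k$, and conclude the quasi-isomorphism claim from Lemma~\ref{lem:unbddflat}. You supply slightly more detail than the paper (the explicit polynomial Poincar\'e lemma behind $\Omega(\Delta^n)\simeq k$, which the paper leaves implicit), but the argument is the same.
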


\begin{proof}
  The $n$th matching object for $\Omega(\Delta^{\bullet})$ is
  $\Omega(\partial \Delta^{n})$, and since $\Omega$ is a right Quillen
  functor, the map $\Omega(\Delta^{n}) \to \Omega(\partial \Delta^{n})$
  is a fibration and hence $\Omega(\Delta^{\bullet})$ is Reedy fibrant. We can
  moreover identify the matching object for $\Omega(\Delta^{\bullet}) \otimes
  X$ with $\Omega(\partial \Delta^{n}) \otimes X$ --- this boils down
  to the fact that over a field the tensor product preserves finite
  limits in each variable. The levelwise surjectivity of
  $\Omega(\Delta^{n})\to \Omega(\partial \Delta^{n})$ gives levelwise
  surjectivity of $\Omega(\Delta^{n}) \otimes X \to \Omega(\partial
  \Delta^{n}) \otimes X$, so this is again a fibration, as
  required. The second point follows from Lemma~\ref{lem:unbddflat}.
\end{proof}

\begin{lemma}\label{lem:Chkcohfr}\ 
  \begin{enumerate}[(i)]
  \item The simplicial monad $\Omega(\Delta^{\bullet}) \otimes \blank$ gives a
    coherent right framing on $\Mod(k)$ (in the sense of Definition~\ref{defn:cohfr}).
  \item More generally $(A \otimes \Omega(\Delta^{\bullet})) \otimes_{A}
    \blank$ gives a coherent right framing of $\Mod(A)$ for
    $A$ any commutative algebra over~$k$.
  \item If $\mathbf{O}$ is an operad in $\Mod(k)$ and $A$ is a
    commutative algebra over $k$, then $(A \otimes
    \Omega(\Delta^{\bullet})) \otimes_{A} \blank$ (with
    $\mathbf{O}$-algebra structure from the base change adjunction) is
    a coherent right framing on $\Alg_{\mathbf{O}}(A)$.
  \end{enumerate}
\end{lemma}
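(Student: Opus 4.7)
The plan is to reduce (iii) to (ii) and (ii) to (i), and then verify (i) directly. All three statements require us to check the axioms of a coherent right framing in the sense of Definition~\ref{defn:cohfr}. Unpacking that definition, we need (a) that the cosimplicial object attached to every $X$ is Reedy fibrant, (b) that the augmentation is a weak equivalence, and (c) a coherence datum saying that the simplicial monad structure is compatible with the model-categorical structure (arising here from the cosimplicial cdga structure on $\Omega(\Delta^\bullet)$ itself).

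For (i), the Reedy fibrancy of $\Omega(\Delta^\bullet) \otimes X$ and the fact that the augmentation $X \to \Omega(\Delta^n) \otimes X$ is a quasi-isomorphism are precisely Lemma~\ref{lem:OmegaReedy}. For the coherence datum, I would observe that $\Omega(\Delta^\bullet)$ is a cosimplicial commutative algebra in $\Mod(k)$: this supplies the unit $k \to \Omega(\Delta^n)$ and the multiplication $\Omega(\Delta^n) \otimes \Omega(\Delta^n) \to \Omega(\Delta^n)$ cosimplicially, so tensoring with an arbitrary $X$ produces a monad in $\Mod(k)^{\Delta}$ whose augmentation and multiplication are compatible with the symmetric monoidal structure. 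This is exactly the formal input needed for a coherent right framing.

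For (ii), I would exploit that the forgetful functor $\Mod(A) \to \Mod(k)$ preserves and reflects quasi-isomorphisms and fibrations, and preserves all limits (so in particular matching objects). Consequently, the underlying $k$-cochain complex of $(A \otimes \Omega(\Delta^n)) \otimes_{A} X$ is canonically identified with $\Omega(\Delta^n) \otimes X$, and Reedy fibrancy plus augmentation-is-a-weak-equivalence transfer immediately from (i). The monad data comes from the fact that $A \otimes \Omega(\Delta^\bullet)$ is a cosimplicial commutative $A$-algebra, giving the required unit, multiplication, and cosimplicial compatibility by the same argument as in (i).

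For (iii), I would use Proposition~\ref{propn:opdch}(i), which says that weak equivalences and fibrations in $\Alg_{\mathbf{O}}(A)$ are detected by the forgetful functor to $\Mod(A)$. The base-change adjunction along $A \to A \otimes \Omega(\Delta^n)$ provided by Proposition~\ref{propn:opdch}(iv) endows $(A \otimes \Omega(\Delta^n)) \otimes_A X$ with a canonical $\mathbf{O}$-algebra structure whose underlying $A$-module is just the ordinary tensor product, and this construction is functorial in $[n] \in \Delta$ because $A \otimes \Omega(\Delta^\bullet)$ is a cosimplicial commutative $A$-algebra. So the Reedy fibrancy and augmentation checks reduce verbatim to (ii). The coherence datum again follows from the cosimplicial cdga structure on $A \otimes \Omega(\Delta^\bullet)$, propagated along the symmetric monoidal base change.

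The main obstacle I anticipate is purely bookkeeping: making sure the monadic compatibilities in the definition of a coherent right framing really do follow from the cosimplicial cdga structure rather than requiring an independent verification. Assuming Definition~\ref{defn:cohfr} is set up so that a cosimplicial commutative algebra $R^\bullet$ together with a central map from the unit canonically determines such a framing via $R^\bullet \otimes -$ (which is what one expects), each of (i)--(iii) becomes a one-line consequence of the identification of the cosimplicial algebra in question.
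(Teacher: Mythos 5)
Your proof is correct and follows essentially the same route as the paper's own (very terse) proof: monadicity comes for free from the base-change adjunctions along $k \to \Omega(\Delta^{n})$, resp. $A \to A \otimes \Omega(\Delta^{n})$, and the Reedy fibrancy and unit-is-a-weak-equivalence conditions are checked on underlying complexes in $\Mod(k)$ via Lemma~\ref{lem:OmegaReedy}, using that the forgetful functors detect fibrations and weak equivalences and preserve limits. One terminological slip: since $\Omega$ is contravariant on simplicial sets, $\Omega(\Delta^{\bullet})$ is a \emph{simplicial} commutative algebra and the objects $T_{\bullet}X$ whose Reedy fibrancy you verify are simplicial, not cosimplicial, which is exactly what Definition~\ref{defn:cohfr} asks for; also note that definition contains no extra coherence datum beyond the simplicial monad structure, so the hedge in your last paragraph is unnecessary.
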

\begin{proof}
  Monadicity is clear since these functors come from adjunctions. The
  remaining conditions can be checked in $\Mod(k)$, where we proved
  them in Lemma~\ref{lem:OmegaReedy}.
\end{proof}

Combining this with Proposition~\ref{propn:cohfrloc}, we get:

\begin{cor}\label{cor:Chkicat}\ 
  \begin{enumerate}[(i)]
  \item The simplicial category $\sMod(A)$ is fibrant for every $A \in
    \Comm(k)$, and its
    coherent nerve is equivalent to the \icat{} $\MMod(A)$.
  \item The simplicial category $\sAlg_{\mathbf{O}}(A)$ is fibrant for
    every $A \in \Comm(k)$ and every operad $\mathbf{O}$ in $\Mod(A)$,
    and its coherent nerve is equivalent to the \icat{} $\AAlg_{\mathbf{O}}(A)$.
  \end{enumerate}
\end{cor}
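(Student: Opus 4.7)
The plan is to deduce the corollary immediately by combining Lemma~\ref{lem:Chkcohfr} with Proposition~\ref{propn:cohfrloc}. The latter (from the subsection on simplicial enrichments of model categories) asserts that whenever a model category admits a coherent right framing, the full simplicial subcategory on its cofibrant objects, enriched via the framing, is fibrant and its coherent nerve is equivalent to the localization of the model category at its weak equivalences. Since the mapping spaces in $\sMod(A)$ and $\sAlg_{\mathbf{O}}(A)$ are defined precisely by tensoring with $A\otimes\Omega(\Delta^{\bullet})$ in the respective ambient categories, the hypotheses needed to invoke the proposition are exactly the framing statements recorded in Lemma~\ref{lem:Chkcohfr}.

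For part (i), I would simply feed the framing on $\Mod(A)$ from Lemma~\ref{lem:Chkcohfr}(ii) into Proposition~\ref{propn:cohfrloc}. This yields fibrancy of $\sMod(A)$ and identifies its coherent nerve with the \icat{} obtained from $\Mod(A)$ by inverting the quasi-isomorphisms, which is $\MMod(A)$ by definition.

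For part (ii), the same argument applies using Lemma~\ref{lem:Chkcohfr}(iii), provided one verifies that the coherent framing described there remains valid when $\mathbf{O}$ is an operad in $\Mod(A)$ rather than in $\Mod(k)$. This is essentially formal: $A\otimes\Omega(\Delta^{n})$ is a commutative $A$-algebra, so the functor $(A\otimes\Omega(\Delta^{\bullet}))\otimes_{A}-$ defines a simplicial monad on $\Alg_{\mathbf{O}}(A)$ for any operad $\mathbf{O}$ in $\Mod(A)$, and the Reedy fibrancy and quasi-isomorphism conditions for a coherent framing can be checked on underlying $A$-modules via Lemma~\ref{lem:OmegaReedy}. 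The main point to watch is precisely this compatibility of the operad action with the framing; once it is in place, the equivalence $\mathrm{N}\sAlg_{\mathbf{O}}(A) \simeq \AAlg_{\mathbf{O}}(A)$ drops out of Proposition~\ref{propn:cohfrloc} with no further work.
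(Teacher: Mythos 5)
Your proposal is correct and matches the paper's own (one-line) proof: the corollary is obtained precisely by feeding the coherent right framings of Lemma~\ref{lem:Chkcohfr} into Proposition~\ref{propn:cohfrloc} (together with Corollary~\ref{cor:cohframesimp} for fibrancy). Your additional remark about extending Lemma~\ref{lem:Chkcohfr}(iii) from operads in $\Mod(k)$ to operads in $\Mod(A)$ is a reasonable patch of a small mismatch that the paper glosses over, and your justification of it is sound.
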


The Quillen adjunctions induced by maps of algebras and operads of
Proposition~\ref{propn:commutative algebraQAdj} and
Proposition~\ref{propn:opdch}(iii--iv) induce adjunctions of \icats{}
(as proved in \cite{MazelGeeQAdj} for not necessarily simplicial model
categories such as these). However, since tensor products are not
strictly associative, the left adjoints are only
\emph{pseudo}functorial in the commutative algebra variable.  Since
these functors, unlike their right adjoints, are compatible with the
simplicial categories we have just described, we quickly point out how
to obtain a functor of \icats{}:

\begin{lemma}\label{lem:AlgOftr}
  Let $R$ be a commutative algebra over $k$ and $\mathbf{O}$ an operad in
  $\Mod(R)$. There is a functor $\AAlg_{\mathbf{O}}(R \otimes \blank):
  \CComm(k) \to \LCatI$ taking $A$ to $\AAlg_{\mathbf{O}}(R \otimes
  A)$.
\end{lemma}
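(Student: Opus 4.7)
The plan is to construct $\AAlg_{\mathbf{O}}(R \otimes \blank)$ as the straightening of a coCartesian fibration over $\CComm(k)$, sidestepping the pseudo-functoriality issue by working with the total space rather than strictifying the assignment directly. First I would assemble a strict simplicial category $\mathbf{E}$ whose objects are pairs $(A, M)$ with $A$ a cofibrant commutative $k$-algebra and $M$ a cofibrant $\mathbf{O}$-algebra in $\Mod(R \otimes A)$ (the operad $\mathbf{O}$ being tacitly base-changed from $\Mod(R)$). An $n$-simplex of $\mathbf{E}((A,M), (B,N))$ is a pair $(f, \phi)$ where $f \colon A \to \Omega(\Delta^n) \otimes B$ is an $n$-simplex of $\sComm(k)(A,B)$ and $\phi \colon M \to \Omega(\Delta^n) \otimes N$ is a map of $\mathbf{O}$-algebras in $\Mod(R \otimes A)$, where the target receives its $R \otimes A$-module structure via $R \otimes f$. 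Because $\otimes_{k}$ is strictly associative and $\Omega(\Delta^\bullet)$ composes via the standard comultiplication, composition in $\mathbf{E}$ is strict, so projection $p \colon \mathbf{E} \to \sComm(k)^{\text{cof}}$ onto the first coordinate is an honest simplicial functor.

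Next I would take coherent nerves to produce $\hat{p} \colon \mathrm{N}\mathbf{E} \to \CComm(k)$. By Corollary~\ref{cor:Chkicat}, the fiber of $\hat{p}$ over $A$ is the \icat{} $\AAlg_{\mathbf{O}}(R \otimes A)$. The key step is showing that $\hat{p}$ is a coCartesian fibration. For an edge $f \colon A \to B$ and an object $(A, M)$ over $A$, a coCartesian lift is provided by the edge $(A, M) \to (B, (R \otimes B) \otimes_{R \otimes A} M)$, where the target is cofibrant because $(\phi_{!})_{*}$ is a left Quillen functor by Proposition~\ref{propn:opdch}(iv). The coCartesian universal property then reduces to the derived adjunction
\[ \Map_{\AAlg_{\mathbf{O}}(R \otimes B)}((R \otimes B) \otimes_{R \otimes A}^{\mathbf{L}} M,\, N) \simeq \Map_{\AAlg_{\mathbf{O}}(R \otimes A)}(M,\, f^{*}N) \]
for $N \in \AAlg_{\mathbf{O}}(R \otimes B)$, which follows from Proposition~\ref{propn:opdch}(iv) together with \cite{MazelGeeQAdj}.

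The hard part is precisely this last verification: identifying the mapping spaces in $\mathrm{N}\mathbf{E}$ concretely enough that the coCartesian condition is visible, and checking that a zigzag of morphisms in $\mathbf{E}$ over a fixed edge in $\CComm(k)$ is equivalent (in the \icat{} of morphisms over that edge) to a single derived base-change morphism. Once $\hat{p}$ is known to be a coCartesian fibration, Lurie's straightening equivalence directly produces the desired functor $\AAlg_{\mathbf{O}}(R \otimes \blank) \colon \CComm(k) \to \LCatI$ with fiber $\AAlg_{\mathbf{O}}(R \otimes A)$ over $A$, as required.
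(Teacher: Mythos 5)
Your proposal is correct in outline, but it takes a genuinely different route from the paper, which never builds a fibration at all. The paper observes that $A \mapsto \sAlg_{\mathbf{O}}(R \otimes A)$ is a \emph{normal pseudofunctor} from commutative algebras to fibrant simplicial categories, applies the Duskin nerve of the $(2,1)$-category $\CAT_{\Delta}$ (following \cite{freepres}*{Lemma A.24}) to obtain a map of quasicategories $\mathrm{N}\Comm(k) \to \mathrm{N}_{(2,1)}\CAT_{\Delta}$, restricts to cofibrant algebras so that quasi-isomorphisms go to weak equivalences of simplicial categories by Proposition~\ref{propn:opdch}(iv)(c), and then localizes both sides, using that $\CatI$ is the localization of $\CAT_{\Delta}$ at the weak equivalences. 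You instead strictify by passing to the total space: encoding a morphism as a module map lying over a ring map genuinely eliminates the pseudofunctoriality of iterated base change (which is the paper's stated obstruction --- your aside that $\otimes_k$ is ``strictly associative'' is the same harmless convention the paper itself adopts when building its simplicial categories), and you recover the functor by straightening a coCartesian fibration. What your route buys is that homotopy invariance in the algebra variable comes for free once $\hat{p}$ is coCartesian, rather than entering as an input to a localization; what it costs is exactly the step you defer: before $\mathrm{N}\mathbf{E}$ is even an \icat{} and before the homotopy-pullback criterion for coCartesian edges is available, you must check that $\mathbf{E}$ is a fibrant simplicial category and that $p$ is a fibration of simplicial categories (Kan fibrations on mapping spaces plus an isofibration on homotopy categories), and only then does your derived-adjunction computation --- which is the right one, resting on Proposition~\ref{propn:opdch}(iv) and \cite{MazelGeeQAdj} --- identify the coCartesian lifts. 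This is the same kind of verification the paper carries out by hand for $\sMod(A)'_{/X} \to \sMod(A)$ and for $\mathrm{N}({\sLie}(A)^{\op,\oplus}) \to \bbGamma^{\op}$, so it is feasible with the tools already in the paper, but it is real technical work that the Duskin-nerve argument sidesteps entirely.
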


\begin{proof}
  The proof follows that of \cite{freepres}*{Lemma A.24}, and we will freely use
  notation and ideas from there in the proof here (but nowhere else in this paper).  
  We have a normal pseudofunctor from
  commutative algebras over $k$ to (fibrant) simplicial categories taking $A$ to
  $\sAlg_{\mathbf{O}}(R \otimes A)$. Using the Duskin nerve
  \cite{Duskin} of 2-categories as in \cite{freepres}*{\S A} this
  gives a functor of quasicategories $\mathrm{N}\Comm(k) \to
  \mathrm{N}_{(2,1)}\LCAT_{\Delta}$.  If we restrict to
  \emph{cofibrant} commutative algebras, then this functor takes quasi-isomorphisms
  of commutative algebras to weak equivalences of simplicial categories by
  Proposition~\ref{propn:opdch}(iv)(c); it thus induces a functor from
  the localization of $\mathrm{N}\Comm(k)^{\txt{cof}}$ at the
  quasi-isomorphisms, which is $\CComm(k)$, to the localization of
  $\mathrm{N}_{(2,1)}\LCAT_{\Delta}$ at the weak equivalences of
  simplicial categories, which is $\LCatI$ since by \cite{HA}*{Theorem
    1.3.4.20} it is equivalent to the localization of the 1-category
  of simplicial categories at the weak equivalences.
\end{proof}

We also note a useful technical result:

\begin{propn}[Pavlov-Scholbach, \cite{PavlovScholbachOpd}*{Proposition 7.8}]\label{propn:opdforgetsifted}
  Let $\mathbf{O}$ be an operad in $\Mod(A)$ such that the unit map $A \to
  \mathbf{O}(1)$ is a cofibration and $\mathbf{O}(n)$
  is a cofibrant $A$-module for every $n$. Then the forgetful functor
  $\AAlg_{\mathbf{O}}(A) \to \MMod(A)$ detects sifted colimits. \qed
\end{propn}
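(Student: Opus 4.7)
The plan is to reduce this to two statements: that the forgetful functor $U \colon \AAlg_{\mathbf{O}}(A) \to \MMod(A)$ is conservative, and that it preserves sifted colimits. Once we have both, a standard argument (valid in any presentable $\infty$-categorical setting) upgrades ``conservative plus preserves'' into ``detects'': given a sifted diagram $F$ and a cocone in $\AAlg_{\mathbf{O}}(A)$ whose image under $U$ is a colimit cocone, compare with the actual colimit of $F$ in $\AAlg_{\mathbf{O}}(A)$ (which exists since the \icat{} is presentable) via the canonical comparison map; the preservation hypothesis makes this comparison a $U$-equivalence, and conservativity promotes it to an equivalence.

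Conservativity is essentially free: weak equivalences in $\Alg_{\mathbf{O}}(A)$ are by definition those whose underlying maps are quasi-isomorphisms in $\Mod(A)$, and the localization to \icats{} inverts exactly the weak equivalences on each side, so $U$ reflects equivalences. So the real content is preservation of sifted colimits. For this I would argue monadically: the free/forgetful adjunction $F \dashv U$ is monadic (at the model category level this is standard; at the \icat{} level it follows from Lurie's monadicity theorem together with conservativity of $U$ and compatibility with colimits computed along the bar resolution). The associated monad sends a module $M$ to $\bigoplus_{n\ge 0}\mathbf{O}(n) \otimes_{A[\Sigma_{n}]} M^{\otimes_{A} n}$, so it suffices to check that each summand preserves sifted colimits in $M$.

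Here the two hypotheses enter. Since $\mathbf{O}(n)$ is a cofibrant $A$-module, tensoring with it preserves quasi-isomorphisms by Lemma~\ref{lem:cofmodtensor}, and because $k$ has characteristic zero the $\Sigma_n$-coinvariants functor is homotopical (as in the proof of Lemma~\ref{lem:Symnqiso}). The outstanding point is that the $n$-fold tensor power $M \mapsto M^{\otimes_A n}$ preserves sifted colimits at the level of the underlying \icat{} $\MMod(A)$. This reduces, by a Fubini argument, to the $n=2$ case: tensor product commutes with colimits in each variable, and sifted colimits commute with finite products in a symmetric monoidal presentable \icat{}, so one gets preservation of sifted colimits in the diagonal variable. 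At the model-categorical level, the analogous statement is made precise in Pavlov--Scholbach via their analysis of when the forgetful functor from algebras over a $\Sigma$-cofibrant operad commutes with sifted homotopy colimits; the cofibrancy of $\mathbf{O}(1)$ as a unit-cofibration ensures the free algebra functor is homotopically well-behaved.

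The main obstacle is this last step, namely verifying that symmetric powers preserve sifted homotopy colimits under our cofibrancy hypotheses on $\mathbf{O}$. Filtered colimits are easy (they commute with finite tensor powers and with finite-group quotients in characteristic zero), but geometric realizations of simplicial objects are more delicate, because one must control the interaction of the symmetric group action with the simplicial bar construction. Fortunately this is precisely the computation encapsulated by Pavlov--Scholbach~\cite{PavlovScholbachOpd}*{Proposition 7.8}, so I would quote that and focus my proof on assembling the \icatl{} consequence from their model-categorical input via Corollary~\ref{cor:Chkicat}.
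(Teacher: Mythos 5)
Your proposal is correct in outline and ends up in essentially the same place as the paper, which gives no proof of this proposition at all but simply attributes it to Pavlov--Scholbach's Proposition~7.8: your scaffolding (conservativity of the forgetful functor because weak equivalences of $\mathbf{O}$-algebras are defined on underlying modules, the standard ``conservative plus preserves implies detects'' comparison-map argument, and the monadic reduction to showing that $M \mapsto \bigoplus_{n}\mathbf{O}(n)\otimes_{A[\Sigma_n]}M^{\otimes_A n}$ preserves sifted colimits, using cofinality of the diagonal for sifted diagrams and exactness of $\Sigma_n$-coinvariants in characteristic zero) is all sound and is the natural way to assemble the $\infty$-categorical statement from the model-categorical input. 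Just be aware that, as written, your argument is circular as a standalone proof, since the ``main obstacle'' you identify is discharged by citing the very proposition being proved; this is acceptable only because the paper itself treats the result as an external citation rather than something to be reproved.
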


\subsection{Some Operads}
In this section, we introduce the operads relevant to our construction: 
$\Lie_{n}$, $\PZ$, and $\teo$, which live in cochain complexes over $k$, and
$\BD$, which lives in cochain complexes over the algebra $k[\hbar]$, where $\hbar$ has degree zero.

Before defining these operads, we need to review some material, for which we use \cite{LV} as a convenient reference.

\begin{defn}
The \emph{Hadamard tensor product} $\htimes$ of operads (see Section 5.3 of \cite{LV})
has $n$-ary operations
\[
(\mathbf{O} \htimes \mathbf{P})(n) = \mathbf{O}(n) \otimes \mathbf{P}(n),
\]
where the permutation group $\Sigma_{n}$ acts diagonally on the tensor
product, and the composition of operations is in $\mathbf{O}$
and $\mathbf{P}$ independently.  For instance, composition $\circ_i$
in the $i$th input is given by
\begin{align*}
(\mathbf{O} \htimes \mathbf{P})(n) \otimes (\mathbf{O} \htimes \mathbf{P})(m) 
&\cong \mathbf{O}(n) \otimes \mathbf{O}(m) \otimes \mathbf{P}(n) \otimes \mathbf{P}(m)\\
&\quad  \quad \quad\quad\quad\quad\quad \downarrow\\
(\mathbf{O} \htimes \mathbf{P})(n+m-1)&= \mathbf{O}(n+m-1) \otimes \mathbf{P}(n+m-1), \\
\end{align*}
where the vertical map is $\circ_i^{\mathbf{O}} \otimes \circ_i^{\mathbf{P}}$.
\end{defn}

Note that given an $\mathbf{O}$-algebra $A$ and a $\mathbf{P}$-algebra $B$, 
the tensor product $A \otimes B$ possesses a natural structure of an $\mathbf{O} \htimes \mathbf{P}$-algebra.

\begin{defn}[\cite{LV}*{\S 5.3.5}]
A \emph{Hopf operad} is an operad  $\mathbf{O}$ that is a counital coassociative coalgebra in operads, with respect to $\htimes$. 
Equivalently, it is an operad in the symmetric monoidal category of counital coassociative coalgebras.
\end{defn}

By the preceding remark, we see that for a Hopf operad $\mathbf{O}$, 
the category of $\mathbf{O}$-algebras possesses a natural monoidal structure.
When the Hopf operad is cocommutative --- as in our examples --- 
$\mathbf{O}$-algebras form a symmetric monoidal category.

We also need to discuss \emph{shifts} of operations, particularly
shifted Lie brackets, for which we follow the treatment of \cite{LV}
(notably Section 7.2). In the setting of cochain complexes, it is
convenient to view shifting a complex as tensoring with the complex
$k[1]$, the one-dimensional vector space placed in degree
$-1$. Similarly, shifting an operad amounts to tensoring with a
distinguished, simple operad, namely the the endomorphism operad
$\txt{End}_{\txt{Op}}(k[1])$ of $k[1]$.  The $\Sigma_n$-module of
$n$-ary operations $\txt{End}_{\txt{Op}}(k[1])(n)$ is the sign
representation placed in degree $1-n$.

\begin{defn}
For $\mathbf{O}$ an operad, its \emph{operadic suspension} is $\txt{End}_{\txt{Op}}(k[1]) \htimes \mathbf{O}$.
\end{defn}

Note that for any cochain complex $V$, its suspension $k[1] \otimes V$ is an algebra over $\txt{End}_{\txt{Op}}(k[1])$.
Hence, an $\txt{End}_{\txt{Op}}(k[1]) \htimes \mathbf{O}$-algebra structure on $V$ is equivalent to an $\mathbf{O}$-algebra
structure on $k[-1] \otimes V$.

\begin{defn}
  Let $\Lie_n$ denote the $n$-shifted Lie operad $\txt{End}_{\txt{Op}}(k[n]) \otimes_H
  \Lie$.
\end{defn}

\begin{remark}\label{rmk:shiftedliesign}
  Giving a $\Lie_{n}$-algebra structure on $V$ is by construction
  equivalent to giving a Lie algebra structure on $V[-n]$. An
  $n$-shifted Lie algebra therefore has a shifted Lie bracket
  \[ \Lambda^{2}(V[-n]) \to V[-n],\]
  or 
  \[ \Lambda^{2}_{[-n]}(V) \to V[n],\]
  where we write $\Lambda^{2}_{[-n]}(V) :=
  \Lambda^{2}(V[-n])[2n]$. Since
\[ \Lambda^{2}_{[-n]}V \cong
\begin{cases}
  \Lambda^{2}V, & \txt{$n$ even},\\
  \Sym^{2}V, & \txt{$n$ odd},
\end{cases}
\]
we see that a shifted Lie algebra has a skew-symmetric shifted pairing
for $n$ even, but a \emph{symmetric} shifted pairing for $n$ odd.
\end{remark}

As we will primarily be interested in $\Lie_1$, we will call an
algebra over $\Lie_1$ a \emph{shifted} Lie algebra, only mentioning
the level of shifting when it is not 1. Note in particular that the
chain complex $\Lie_1(2)$ of binary operations is the trivial
$\Sigma_2$-representation placed in degree~1.

\begin{defn}
  The operad $\PZ$ is generated by two binary operations: $\bullet$ in
  degree 0 called ``multiplication'' and $\{\}$ in degree 1 called
  ``bracket.'' The operation $\bullet$ satisfies the relations for a
  commutative algebra, and the operation $\{\}$ satisfies the
  relations for a shifted Lie algebra. The remaining ternary relation is
  that the bracket acts as a biderivation for multiplication. 
  (The operad $\PZ$ is also known as the Poisson$_0$ or Gerstenhaber or -1-braid
  operad. For a description of the operad using generators and relations, 
  see Section 13.3.4 of \cite{LV} .) 
\end{defn}

Note that each space of $n$-ary operations $\PZ(n)$ has zero
differential. As remarked in Section 13.3.4 of \cite{LV}, $\PZ$ is an
``extension'' of the shifted Lie operad $\Lie_1$ by the commutative
operad $\Comm$, and hence there are canonical operad maps $\Comm \to
\PZ \to \Lie_1$. There is also a natural operad map $\Lie_1 \to \PZ$.

\begin{defn}
The operad $\mathrm{E}_{0}$ is the operad with just a single nullary
operation. Its algebras in a symmetric monoidal category
$\mathbf{C}$ are therefore just objects of $\mathbf{C}$ equipped with
a map from the monoidal unit. 
\end{defn}

We construct now an operad quasi-isomorphic to $\mathrm{E}_{0}$ as a variant of the operad $\PZ$:

\begin{defn}
  The operad $\teo$ is a modification of $\PZ$ by changing the
  differentials. Let the binary operations $\teo(2)$ be $\PZ(2)$ with
  differential $\d(\bullet) = \{\}$. Let $\teo(n)$ denote $\PZ(n)$
  equipped with the differential induced by the differential on binary
  operations.
\end{defn}

By construction, the cohomology operad $H^* \teo$ has trivial $n$-ary operations for $n > 1$. 
As $\teo$ is cochain homotopic to $H^* \teo$, this operad provides a model for the $\rE_0$-operad.

Note that there is a map of operads $\Lie_1 \to \teo$, induced by
the map $\Lie_1 \to \PZ$. In contrast, 
the map $\Comm \to \PZ$ does not lift to a map $\Comm \to \teo$ as 
such a map would not respect the differential on $\teo$.

Finally, we introduce an operad interpolating between $\PZ$ and $\teo$; it is a kind of ``Rees operad.''

\begin{defn}
The operad $\BD$ is a modification of $\PZ \otimes k[\hbar]$ by changing the differentials. Let the binary operations $\BD(2)$ be $\PZ(2) \otimes k[\hbar]$ with differential $\d(\bullet) =\hbar \{\}$. Let $\BD(n)$ denote $\PZ(n) \otimes k[\hbar]$ equipped with the differential induced by the differential on binary operations. 
\end{defn}

This definition implies that for a $\BD$-algebra $A$,
\[
\d(a \cdot b) = (\d a) \cdot b + (-1)^a a (\d b) + \hbar\{a,b\},
\]
so that $\d$ is a second-order differential operator on the underlying graded algebra $A^\sharp$.
Thus, modulo $\hbar$, the differential $\d$ is a derivation, so that the bracket measures
the failure of $A$ to be a commutative algebra in cochain complexes.

Observe that $\PZ$ is isomorphic to
\[
\BD_{\hbar = 0} := \BD \otimes_{k[\hbar]} k[\hbar]/(\hbar)
\]
and that $\teo$ is isomorphic to
\[
\BD_{\hbar = 1} := \BD \otimes_{k[\hbar]} k[\hbar]/(\hbar -1).
\]
Thus lifting a $\PZ$-algebra to a $\BD$-algebra produces an $\mathrm{E}_0$-algebra by setting $\hbar =1$ in the algebra. In this sense, a $\BD$-algebra ``quantizes'' a $\PZ$-algebra to an $\mathrm{E}_0$-algebra.

\begin{remark}
  The operad $\PZ$ is a cocommutative Hopf operad, just as the Poisson operad is.
  The coproduct $\Delta: \PZ \to \PZ \htimes \PZ$ is given by
  \[
  \Delta(\bullet) = \bullet \otimes \bullet \quad \text{and} \quad \Delta(\{\}) = \{\} \otimes \bullet + \bullet \otimes \{\},
  \]
  which is the direct analogue for the Poisson operad.
  One simply checks directly that this choice works.
  The same coproduct works for the operads $\BD$ and
  $\teo$, which are thus also Hopf. 
  \end{remark}

\subsection{Enveloping Algebras on the Model Category Level}

We now wish to analyze the relationship between algebras over the
three operads $\PZ$, $\BD$, and~$\teo$.  As we remarked above, we have
a map of $k[\hbar]$-operads $\Lie_1[\hbar] \to \BD$ that induces both
the standard inclusion $\Lie_1 \to \PZ$ when we set $\hbar = 0$ and
also a map $\Lie_1 \to \teo$ when we set $\hbar = 1$. Combining these
with the right Quillen functors induced by the algebra maps $k[\hbar]
\to k[\hbar]/(\hbar) \cong k$, $k[\hbar] \to k[\hbar]/(\hbar-1) \cong
k$, and the inclusion $k \to k[\hbar]$, we get a commutative diagram
of right Quillen functors:
\[
\begin{tikzcd}
{} & {\Lie_1}(A)  \arrow{dl}{\id} \arrow{d} & \Alg_{\teo}(A) \arrow{l} \arrow{d}\\
{\Lie_1}(A)  & {\Lie_1}(A[\hbar]) \arrow{l}&
\Alg_{\BD}(A[\hbar]) \arrow{l}\\
 & {\Lie_1(A)} \arrow{ul}{\id}\arrow{u}  & \Alg_{\PZ}(A). \arrow{l}\arrow{u}
\end{tikzcd}
\]
We will give explicit descriptions of the corresponding
\emph{left} adjoints to the horizontal morphisms, which can be thought
of as ``enveloping algebras'':
\begin{itemize}
\item the $\PZ$-enveloping functor $U_\PZ$ is left adjoint to the ``forgetful'' functor from $\Alg_\PZ(A)$ to ${\Lie_1}(A)$,
\item the $\BD$-enveloping functor $U_{\BD}$ is left adjoint to
  the ``forgetful'' functor from $\Alg_\BD(A[\hbar])$ to
  ${\Lie_1}(A[\hbar])$,
\item the $\teo$-enveloping functor $U_\teo$ is left adjoint to
  the ``forgetful'' functor from $\Alg_\teo(A)$ to
  ${\Lie_1}(A)$.
\end{itemize}
From the explicit descriptions it will be clear that these enveloping
functors interact well with the natural symmetric monoidal structures
on these categories of algebras. Note that $\PZ$, $\BD$, and $\teo$
are all Hopf operads, and so the natural monoidal structures amount, on the
level of the underlying modules, to just tensor product $\otimes$
(over $A$ for $\PZ$ and $\teo$ or $A[\hbar]$ for $\BD$). By contrast,
we equip Lie algebras with the monoidal structure given by the
Cartesian product, which is the direct sum $\oplus$ on the level of
underlying modules.

Let $\ev_{\hbar = 0} \colon \Mod(A[\hbar]) \to \Mod(A)$ be the left adjoint
functor induced by the map of algebras $A[\hbar]\to A[\hbar]/(\hbar)
\cong A$, sending $M$ to $M \otimes_{A[\hbar]} A[\hbar]/(\hbar)$. It
is naturally symmetric monoidal, intertwining $\otimes_{A[\hbar]}$ and
$\otimes_A$.  Likewise, let $\ev_{\hbar = 1} \colon \Mod(A[\hbar]) \to
\Mod(A)$ denote the symmetric monoidal functor induced by $A[\hbar]
\to A[\hbar]/(\hbar-1) \cong A$. Then replacing the right adjoints in
the diagram above with their left adjoints, we get a commutative
diagram of symmetric monoidal categories and strong symmetric monoidal
functors:
\[
\begin{tikzcd}
{} & {\Lie_1}(A)^{\oplus} \arrow{r}{U_{\teo}}   & \Alg_{\teo}(A)^{\otimes_{A}} \\
{\Lie_1}(A)^{\oplus} \arrow{ur}{\id} \arrow{r}{\blank \otimes_{A} A[\hbar]}
\arrow{dr}{\id} & {\Lie_1}(A[\hbar])^{\oplus} \arrow{u}{\ev_{\hbar = 1}}
\arrow{d}{\ev_{\hbar = 0}} \arrow{r}{U_{\txt{BD}}} &
\Alg_{\BD}(A[\hbar])^{\otimes_{A[\hbar]}} \arrow{u}{\ev_{\hbar = 1}}
\arrow{d}{\ev_{\hbar = 0}} \\
 & {\Lie_1(A)}^{\oplus} \arrow{r}{U_{\PZ}}  & \Alg_{\PZ}(A)^{\otimes_{A}}. 
\end{tikzcd}
\]

\begin{defn}
Let $\dq: \Alg_\BD(A[\hbar]) \to \Alg_\PZ(A)$ denote the {\em
  dequantization} functor sending $R$ to $R \otimes_{A[\hbar]}
A[\hbar]/(\hbar)$. Thus, given a $\PZ$-algebra $R^{cl}$,  a
\emph{BD-quantization} of $R^{cl}$ is any $R \in \Alg_\BD(A[\hbar])$
such that $R^{cl} \simeq \dq(R)$.
\end{defn}

In this terminology, we have shown that $U_{\BD}(\g \otimes_A
A[\hbar])$ is a functorial BD quantization of $U_{\PZ}(\g)$ for any
shifted Lie algebra $\g$ in $\Mod(A)$.

\begin{remark}
  In the setting of deformation quantization, people require that a
  quantization is flat over $\hbar$ or topologically free. Since the
  functors involved are left Quillen, our construction always produces
  a module that is nicely behaved with respect to $\hbar$ provided the
  input is cofibrant.
\end{remark}

The $\PZ$-enveloping functor is explicitly provided by the following
construction, which should seem obvious: if we have a shifted Lie
algebra and we want a $\PZ$-algebra, all we need to do is freely
construct the commutative algebra structure.

\begin{lemma}
For a $\Lie_1$-algebra $\g$ in $\Mod(A)$, the $\PZ$-enveloping algebra 
$U_\PZ(\g)$ is $\Sym_A(\g)$ with the commutative multiplication of the symmetric algebra 
and with the bracket
\[
\{x,y\} = [x,y]
\]
where $x,y \in \g$. Thus $U_\PZ$ is a strong symmetric monoidal functor:
\[
U_\PZ( \g \oplus \g') \cong U_\PZ(\g) \otimes_A U_\PZ (\g')
\]
for any shifted Lie algebras $\g$ and $\g'$.
\end{lemma}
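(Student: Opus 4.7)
The plan is to equip $\Sym_A(\g)$ with the stated $\PZ$-algebra structure, verify the universal property that identifies it with the left adjoint $U_\PZ$, and then read off the symmetric monoidal compatibility from the explicit description.

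First, I would define the bracket $\{-,-\}$ on $\Sym_A(\g)$ as the unique degree~$1$ biderivation of the graded-commutative product whose restriction to $\g \subset \Sym_A(\g)$ is the given shifted Lie bracket. Existence and uniqueness proceed by induction on word length using the biderivation rule; explicitly,
\[
\{x_1\cdots x_n,\, y_1\cdots y_m\} = \sum_{i,j} \varepsilon_{i,j}\, [x_i, y_j]\cdot x_1\cdots \widehat{x_i}\cdots x_n \cdot y_1\cdots \widehat{y_j}\cdots y_m,
\]
with Koszul signs $\varepsilon_{i,j}$ dictated by the degree~$1$ nature of the bracket. Graded-commutativity of the multiplication and the biderivation axiom hold by construction. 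The shifted Jacobi identity propagates from $\g$ to all of $\Sym_A(\g)$: the Jacobiator is a triderivation in each argument, and since it vanishes on triples of generators by the hypothesis that $\g$ is a $\Lie_1$-algebra, it vanishes everywhere. Similarly, the internal differential on $\Sym_A(\g)$ inherited from $\g$ is automatically a derivation of both $\cdot$ and $\{-,-\}$, because both conditions reduce to checks on generators.

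Next, I would verify the universal property. Given a $\PZ$-algebra $R$ over $A$, any $\PZ$-map $f \colon \Sym_A(\g) \to R$ is in particular a map of commutative $A$-algebras, hence determined by its restriction $f|_\g \colon \g \to R$. Conversely, any $\Lie_1$-map $\g \to R$ extends uniquely to a map of commutative $A$-algebras $\Sym_A(\g) \to R$ by the universal property of the symmetric algebra, and this extension is a $\PZ$-map because both brackets are biderivations that agree on the generating submodule $\g$. This yields the natural bijection
\[
\Hom_{\Alg_\PZ(A)}(\Sym_A(\g), R) \cong \Hom_{\Lie_1(A)}(\g, R),
\]
identifying $U_\PZ(\g)$ with $\Sym_A(\g)$ as claimed.

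Finally, for strong symmetric monoidality, the canonical isomorphism $\Sym_A(\g \oplus \g') \cong \Sym_A(\g) \otimes_A \Sym_A(\g')$ of commutative $A$-algebras is also an isomorphism of $\PZ$-algebras: both sides carry biderivation brackets, and these agree on the generating sub-complex $\g \oplus \g'$ because the bracket on the tensor product (inherited from the Hopf operad structure on $\PZ$) satisfies $\{x \otimes 1, 1 \otimes y'\} = 0$ and restricts to the brackets of $\g$ and $\g'$ on elements coming from the respective factors --- precisely the bracket of the direct sum shifted Lie algebra $\g \oplus \g'$. I expect the main bookkeeping obstacle to be the careful tracking of Koszul signs for the degree~$1$ bracket, particularly in confirming that the Jacobiator really is a triderivation rather than some signed variant; the unshifted case (the Poisson envelope of an ordinary Lie algebra) is classical, and the shifted case is its operadic suspension, so the verification carries over once signs are aligned.
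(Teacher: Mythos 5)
Your proposal is correct and follows essentially the same route as the paper: equip $\Sym_A(\g)$ with the biderivation extension of the bracket, verify the adjunction isomorphism $\Hom_{\Alg_\PZ(A)}(\Sym_A(\g),R)\cong\Hom_{\Lie_1(A)}(\g,R)$ via the universal property of the symmetric algebra, and deduce strong symmetric monoidality from that of $\Sym$. You supply more detail than the paper does (the explicit signed formula, the Jacobiator-as-triderivation argument, and the check that the Hopf-operad bracket on the tensor product matches the direct-sum bracket on generators), but the underlying argument is the same.
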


\begin{proof}
  Let $\g$ be a shifted Lie algebra, and let us write
  $U_{\PZ}(\mathfrak{g})$ for the explicit $\PZ$-algebra above; we
  will then show that this gives a left adjoint to the forgetful
  functor. Observe that the inclusion $\g \hookrightarrow \Sym_{A}(\g)$ of the
  commutative algebra generators is a map of Lie algebras, if we equip
  $\Sym_{A}(\g)$ with the bracket that defines $U_{\PZ}(\g)$. We want
  to show that composing with this map induces for every $\PZ$-algebra
  $R$ an isomorphism
  \[ \Hom_{\Alg_{\PZ}(A)}(U_{\PZ}(\g), R) \to
  \Hom_{{\Lie_1}(A)}(\g, R)\]
  (where we have not explicitly denoted the forgetful functor to Lie algebras).

  To see this, observe that a Lie algebra map
  $\g \to R$ induces a unique map of commutative algebras $\Sym_{A}(\g) \to
  R$, and this respects the Lie bracket giving $U_{\PZ}(\g)$ its
  Poisson structure, \ie{} it is a map of $\PZ$-algebras. By
  inspection, this construction provides the desired inverse.

  The fact that the functor is strong symmetric monoidal is then an
  immediate consequence of the fact that $\Sym$ is.
\end{proof}

By a completely parallel argument, we obtain an analogous description
of the $\teo$-enveloping functor, except that the construction of the enveloping
algebra looks slightly more complicated than in the $\PZ$ case, since we
need to describe the differential explicitly. Recall that for a commutative algebra $A$, we use 
$A^\sharp$ to denote the underlying commutative graded algebra.

\begin{lemma}
For a $\Lie_1$-algebra $\g$ in $\Mod(A)$, the $\teo$-enveloping
  algebra $U_\teo(\g)$ has underlying $A^\sharp$-module
  $\Sym_{A^\sharp}(\g)$ with the commutative multiplication of
  the symmetric algebra, with the bracket
  \[
  \{x,y\} = [x,y]
  \]
  where $x,y \in \g$, and with differential $\d_{\teo}$ determined by
  \[
  \d_{\teo}(x)= \d_\g x
  \]
  for $x \in \g$ and
  \[
  \d_{\teo}(x \cdot y) = (\d_\g x) \cdot y + (-1)^{x} x \cdot (\d_\g y) +  \{x,y\}
  \]
  for $x, y \in \g$. Thus $U_\teo$ is a strong symmetric monoidal functor:
  \[
  U_\teo( \g \oplus \g') \cong U_\teo(\g) \otimes_{A} U_\teo(\g')
  \]
  for any shifted Lie algebras $\g$ and $\g'$.\qed
\end{lemma}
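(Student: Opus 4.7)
The plan is to follow the template of the preceding proof for $U_\PZ$ verbatim, with the additional work of verifying that the twisted differential $\d_\teo$ is consistently defined and satisfies the $\teo$-algebra relations. First I would define a candidate $U_\teo(\g)$ as in the statement and check it genuinely is a $\teo$-algebra. The commutative multiplication together with the bracket $\{x,y\} = [x,y]$ extended as a biderivation already give $\Sym_{A^\sharp}(\g)$ the underlying graded $\PZ$-structure of $U_\PZ(\g)$, so the only new content is the differential. What must be checked is that $\d_\teo$ is a well-defined (second-order) operator on all of $\Sym_{A^\sharp}(\g)$ satisfying
\[
  \d_\teo(\alpha\beta) = (\d_\teo\alpha)\beta + (-1)^{\alpha}\alpha(\d_\teo\beta) + \{\alpha,\beta\}
\]
for all $\alpha, \beta$, and that $\d_\teo^2 = 0$.

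To construct $\d_\teo$ I would extend by induction on the polynomial degree of monomials: the formulas on $\g$ and on degree-two products are given, and the $\teo$-relation together with the derivation property of $\{-,-\}$ force its value on $x_1 \cdots x_n$. Well-definedness (independence of how one associates the product) follows from the biderivation property of $\{-,-\}$, which is precisely the $\PZ$-axiom already built into $U_\PZ(\g)$. For $\d_\teo^2 = 0$, one checks on generators (reducing to $\d_\g^2 = 0$) and on products $xy$ of two generators (reducing to $\d_\g$ being a derivation of $[-,-]$ on $\g$ together with the shifted Jacobi identity), and then propagates to arbitrary monomials by induction using the $\teo$-relation. The main obstacle is the careful bookkeeping of Koszul signs in the shifted setting, which is error-prone but essentially mechanical; conceptually there is nothing beyond the defining identities of a shifted Lie algebra.

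With $U_\teo(\g)$ in hand, the universal property is proved exactly as in the $U_\PZ$ case. Given a $\teo$-algebra $R$ and a map of shifted Lie algebras $\phi \colon \g \to R$, the underlying map of graded $A^\sharp$-modules extends uniquely to a map of graded-commutative $A^\sharp$-algebras $\tilde{\phi} \colon \Sym_{A^\sharp}(\g) \to R^\sharp$. This map automatically respects the bracket, because both brackets are biderivations for the commutative product and agree on generators, and it respects the differential because the defining formula of $\d_\teo$ on $\Sym_{A^\sharp}(\g)$ is just the $\teo$-relation, which holds in $R$ by assumption. Inspection shows this construction inverts the unit-of-adjunction map, giving the required natural isomorphism
\[
  \Hom_{\Alg_{\teo}(A)}(U_\teo(\g), R) \cong \Hom_{\Lie_1(A)}(\g, R).
\]

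Finally, strong symmetric monoidality is immediate from the strong monoidality of $\Sym_{A^\sharp}$ on underlying graded modules, combined with the observation that on the direct sum $\g \oplus \g'$ the bracket vanishes on cross-terms and the twisted differential splits tensorially (the cross-bracket contribution in the $\teo$-relation is zero on $x \otimes y'$ with $x \in \g$, $y' \in \g'$). This gives the canonical isomorphism $U_\teo(\g \oplus \g') \cong U_\teo(\g) \otimes_A U_\teo(\g')$ and completes the argument.
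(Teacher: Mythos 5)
Your proposal is correct and follows essentially the same route as the paper, which simply declares the argument ``completely parallel'' to the $U_{\PZ}$ case and notes afterwards that the $\teo$-relation lets one inductively extend the differential from the $\Sym^{\leq 2}$ summand to all higher symmetric powers. Your write-up fills in exactly the steps the paper leaves implicit (well-definedness of $\d_{\teo}$, $\d_{\teo}^2 = 0$, and the adjunction check), with no divergence in strategy.
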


The fact that an $\teo$-algebra satisfies
\[
\d(a \cdot b) = (\d a) \cdot b + (-1)^a a \cdot (\d b) + \{a,b\}
\]
for any elements $a$ and $b$ means that we can inductively define the differential on higher symmetric
powers in $U_\teo(\g)$, as we have specified it on its $\Sym^{\leq 2}$ summand.

The situation with $\BD$ is parallel, after adjoining $\hbar$ everywhere.

\begin{lemma}
  For a $\Lie_1$-algebra $\g$ in $\Mod(A[\hbar])$, the $\BD$-enveloping
  algebra $U_\BD(\g)$ has underlying $A^\sharp[\hbar]$-module
  $\Sym_{A^\sharp[\hbar]}(\g)$ with the commutative multiplication of
  the symmetric algebra, with the bracket
  \[
  \{x,y\} = [x,y]
  \]
  where $x,y \in \g$, and with differential $\d_{\BD}$ determined by
  \[
  \d_{\BD}(x)= \d_\g x
  \]
  for $x \in \g$ and
  \[
  \d_{\BD}(x \cdot y) = (\d_\g x) \cdot y + (-1)^{x} x \cdot (\d_\g y) + \hbar \{x,y\}
  \]
  for $x, y \in \g$. Thus $U_\BD$ is a strong symmetric monoidal functor:
  \[
  U_\BD( \g \oplus \g') \cong U_\BD(\g) \otimes_{A[\hbar]} U_\BD(\g')
  \]
  for any shifted Lie algebras $\g$ and $\g'$.\qed
\end{lemma}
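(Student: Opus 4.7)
The plan is to follow the same template as the preceding lemmas for $U_\PZ$ and $U_\teo$, adapting the explicit constructions to incorporate the parameter $\hbar$. First, I would give meaning to the proposed construction: the underlying graded module $\Sym_{A^\sharp[\hbar]}(\g)$ carries the standard free commutative graded multiplication, and I extend the bracket on $\g$ to a degree-one Poisson bracket on $\Sym_{A^\sharp[\hbar]}(\g)$ by declaring it to be an $\hbar$-linear biderivation of the product --- this is well defined and satisfies the graded Jacobi identity because $[-,-]$ on $\g$ does, exactly as in the $\PZ$-case. What is new is the differential: on $\Sym^{\le 1}$ it agrees with $\d_\g$, and the prescribed BD-relation forces a unique extension to each $\Sym^n$ by induction on $n$, since $\d_\BD(x_1 \cdots x_n)$ is determined from $\d_\BD$ on $\Sym^{<n}$ via $\d_\BD(x_1 \cdot (x_2 \cdots x_n)) = \d_\g x_1 \cdot (x_2\cdots x_n) + (-1)^{x_1} x_1 \cdot \d_\BD(x_2 \cdots x_n) + \hbar \{x_1, x_2 \cdots x_n\}$.

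Next, I would check that this construction actually lands in $\Alg_\BD(A[\hbar])$, namely that $\d_\BD^2 = 0$ and that the BD-relation holds on every pair of elements (not just ones where one factor lies in $\g$). Consistency of the inductive definition reduces to showing that the right-hand side above is independent of the chosen splitting $x_1 \cdot (x_2 \cdots x_n)$; this is a direct combinatorial check using the biderivation property of $\{-,-\}$ and graded commutativity. The identity $\d_\BD^2 = 0$ follows by induction on symmetric powers: the degree-zero and degree-one cases are $\d_\g^2 = 0$, and the inductive step uses the Jacobi identity for the extended bracket together with the BD-relation itself. The full BD-relation for arbitrary $a,b$ then follows from biderivativity and a symmetric-power induction.

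For the universal property, given any $\BD$-algebra $R$ over $A[\hbar]$ and a map of shifted Lie algebras $\phi \colon \g \to R$ (through the forgetful functor), the underlying graded commutative algebra map $\Sym_{A^\sharp[\hbar]}(\g) \to R^\sharp$ determined by $\phi$ is forced to exist uniquely, and one checks that it automatically intertwines $\d_\BD$ with $\d_R$ using the BD-relation in $R$. Symmetry of the Poisson structure makes it a map of $\BD$-algebras, giving the adjunction
\[ \Hom_{\Alg_\BD(A[\hbar])}(U_\BD(\g), R) \cong \Hom_{\Lie_1(A[\hbar])}(\g, R). \]
The strong symmetric monoidal statement $U_\BD(\g \oplus \g') \cong U_\BD(\g) \otimes_{A[\hbar]} U_\BD(\g')$ is then immediate: the underlying commutative algebras agree because $\Sym$ is strong symmetric monoidal, the brackets agree since both extend the direct-sum bracket as biderivations, and the differentials agree since both satisfy the same BD-relation with respect to identical data on generators.

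The main obstacle will be the verification that $\d_\BD^2 = 0$ on $\Sym^{\ge 3}$: unlike the $\teo$ and $\PZ$ cases, there is a genuine three-term interplay between $\d_\g$, the $\hbar$-correction from $\{-,-\}$, and the graded Leibniz rule, so one has to be careful that the $\hbar\{-,-\}$ contributions telescope correctly using Jacobi. Once that bookkeeping is in place, the rest of the proof is a direct parallel to the $U_\teo$ lemma, with $A$ replaced by $A[\hbar]$ throughout.
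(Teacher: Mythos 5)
Your proposal is correct and follows essentially the same route as the paper: the paper proves the $U_{\PZ}$ case by exhibiting the explicit algebra and verifying the adjunction via the inclusion of generators, then declares the $\teo$ and $\BD$ cases ``completely parallel, after adjoining $\hbar$ everywhere,'' with the inductive extension of the differential to higher symmetric powers justified exactly as you describe. The only difference is emphasis: the bookkeeping you flag for $\d_{\BD}^2=0$ is handled in the paper not by direct computation but by observing (in the subsequent comparison with Chevalley--Eilenberg chains and the coderivation lemma) that $\d_{\BD}$ is the extension of the CE differential as a linear-coefficient second-order operator, so its square vanishing reduces to the standard Jacobi-identity argument.
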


\subsection{Relationship with Lie Algebra Homology}\label{sec:CE}
The enveloping algebra constructions described above may seem
reminiscent of the Chevalley-Eilenberg chains of a Lie algebra, since
the differential is determined by the Lie bracket in a similar way.
We now pin down a precise relationship.

Let $\txt{Cocomm}(A)$ denote the category of cocommutative coalgebras in $\Mod(A)$.
Let $\Sym^c_A(V)$ denote the symmetric coalgebra on the $A$-module $V$, 
whose underlying $A$-module is $\bigoplus_{n \geq 0} \Sym^n_A(V)$ and
whose coproduct satisfies
\[ \Delta(x) = 1 \otimes x + x \otimes 1 \]
for every $x \in \Sym^1_A(V)$.

\begin{defn}\label{CL}
  Let $\CL: {\Lie_1}(A) \to \txt{Cocomm}(A)$ denote the functor sending
  $\g$ to the  cocommutative coalgebra $\Sym^c_{A^\sharp}(\g^\sharp)$ 
  over $A^\sharp$ equipped with the differential $\d_{\CL}$, 
  which is the degree 1 coderivation such that
  for $x \in \Sym^1_{A^\sharp}(\g^\sharp)$,  $$\d_{\CL}(x) =\d_\g(x)$$ 
  and for $x y \in \Sym^2_{A^\sharp}(\g^\sharp)$, 
  \[
  \d_{\CL}(xy) = (\d_\g x)y + (-1)^x x(\d_\g y) + [x,y].
  \]
  (In other words, this functor agrees with the Chevalley-Eilenberg
  chains functor after shifting $\g$ to an unshifted Lie algebra $\g[-1]$.)
\end{defn}

\begin{propn}\label{enviscl}
  For a $\Lie_1$-algebra $\g$ in $\Mod(A)$, the underlying cochain
  complex of the $\teo$-enveloping algebra $U_\teo(\g)$ is naturally
  isomorphic to the underlying cochain complex of~$\CL(\g)$.
\end{propn}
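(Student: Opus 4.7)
The two sides of the claim share the same underlying graded $A^\sharp$-module, namely the symmetric algebra $\Sym_{A^\sharp}(\g^\sharp)$, so the content of the proposition is that the differentials $\d_{\teo}$ and $\d_{\CL}$ coincide. My plan is to reduce this to a uniqueness statement for coderivations on the cofree cocommutative coalgebra $\Sym^c_{A^\sharp}(\g^\sharp)$.

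First, the two differentials manifestly agree on $\Sym^{\leq 2}$: both restrict to $\d_\g$ on $\g = \Sym^1$, and both send $x y \in \Sym^2$ to $(\d_\g x)\, y + (-1)^{|x|} x (\d_\g y) + [x,y]$, by the explicit descriptions in the definitions of $U_\teo$ and $\CL$. Now $\d_{\CL}$ is by construction the unique degree $1$ coderivation on $\Sym^c_{A^\sharp}(\g^\sharp)$ whose projection to the cogenerators $\g$ is $\d_\g$ on $\Sym^1$, the bracket $[\,,\,]$ on $\Sym^2$, and zero on $\Sym^{\geq 3}$. It therefore suffices to verify that $\d_{\teo}$ is also a coderivation with the same cogenerator projection. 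The projection condition is immediate: the BD-relation together with the biderivation property of $\{\,,\,\}$ shows inductively that $\d_{\teo}$ sends $\Sym^n$ into $\Sym^n \oplus \Sym^{n-1}$ (the derivation part preserves symmetric degree, while the bracket correction drops it by one), so for $n \geq 3$ its projection to $\Sym^1$ vanishes.

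The substantive step is to check the coderivation identity $\Delta \circ \d_{\teo} = (\d_{\teo} \otimes 1 + 1 \otimes \d_{\teo}) \circ \Delta$. I would prove this by induction on symmetric degree, using that the coproduct $\Delta$ on $\Sym^c$ is an algebra map for the symmetric product, that $\d_\g$ (being a derivation of $\Sym$ with primitive image on generators) is automatically a coderivation, and that the Schouten-type bracket $\{\,,\,\}$ obtained from $[\,,\,]$ by biderivation satisfies the coderivation-style compatibility with $\Delta$, which is verified inductively from the primitive identity $\Delta[x,y] = [x,y] \otimes 1 + 1 \otimes [x,y]$ on generators.

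The main obstacle is precisely the combinatorial Koszul-sign bookkeeping needed to verify the coderivation identity for $\d_{\teo}$; conceptually, the identification records that the BD-extension of the differential on the symmetric algebra is dual to the coderivation extension on the symmetric coalgebra. An alternative, more direct but equally sign-intensive route is to produce a closed formula for both differentials on $\Sym^n$, each of which collapses to the standard expression $\sum_i \varepsilon_i (\d_\g x_i)\cdot x_1 \cdots \widehat{x_i} \cdots x_n + \sum_{i<j} \varepsilon_{ij}\, [x_i,x_j]\cdot x_1 \cdots \widehat{x_i} \cdots \widehat{x_j} \cdots x_n$ with the usual Koszul signs, and then compare term by term. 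Either route reduces the proposition to a verification that is combinatorial rather than conceptual, and the naturality of the isomorphism in $\g$ is clear from either description.
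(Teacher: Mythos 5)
Your reduction is essentially the paper's: both arguments note that the two objects share the underlying graded module $\Sym_{A^\sharp}(\g^\sharp)$, that the differentials preserve the filtration by symmetric powers and agree on $\Sym^{\leq 2}$, and that the whole content is therefore that the second-order-differential-operator extension ($\d_{\teo}$) and the coderivation extension ($\d_{\CL}$) of the same degree-$\leq 2$ data coincide --- i.e.\ that $\d_{\teo}$ is a coderivation with the prescribed corestriction to cogenerators. Your filtration argument for why that corestriction vanishes on $\Sym^{\geq 3}$, and your appeal to uniqueness of coderivations on the conilpotent coalgebra $\Sym^c$, are both correct. Where you diverge is in how the coderivation identity for $\d_{\teo}$ is actually verified: you propose a degree-by-degree induction (or a term-by-term comparison of closed formulas) and explicitly flag the Koszul-sign bookkeeping as the main obstacle, leaving that verification as a sketch. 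The paper instead isolates this step as Lemma~\ref{lemma: coder}: any linear-coefficient differential operator of \emph{arbitrary} order on $\Sym_R(V)$ is a coderivation of $\Sym^c_R(V)$. Its proof factors such an operator as $m_x \circ \partial_1 \cdots \partial_n$, checks that multiplication $m_x$ by a linear (primitive) element is a coderivation, that each constant-coefficient derivation $\iota_\lambda$ is a comodule map, and that comodule maps compose; this factorization eliminates essentially all of the sign combinatorics you were worried about, and it applies uniformly beyond the second-order case (e.g.\ to the $\hbar$-weighted higher-order operators for $L_\infty$-algebras mentioned in the paper's subsequent remark). So your outline is sound, but to close the one step you left open I would substitute this factorization argument for the proposed induction.
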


In other words, if $\theta$ denotes the forgetful functor from $\Alg_\teo(A)$
to $\Mod(A)$, then there is a natural isomorphism $\theta \circ \CL
\Rightarrow \theta \circ U_\teo$.

\begin{remark}
  This relationship should not seem implausible. Consider the
  underived setting of Lie algebras in vector spaces.  The inclusion
  of $\txt{Vect}$ into $\Lie$ as \emph{abelian} Lie algebras is right
  adjoint to the functor $\g \mapsto \g/[\g,\g]$ that ``abelianizes''
  a Lie algebra (or takes its \emph{coinvariants}). Hence the functor
  $\g \mapsto \txt{C}^\Lie_*(\g,\g)$ (whose cohomology is the Lie algebra
  cohomology groups $H^{\Lie}_{*}(\g, \g)$) should provide a model for the
  derived left adjoint of the abelian Lie algebra functor. 
  Now let us turn to our situation of shifted Lie algebras.  An
  $\rE_0$-algebra is simply a ``pointed'' module $A \to M$, so we see
  that the functor $\g \mapsto A \oplus \CL(\g,\g)$ --- where the
  first summand is the ``pointing'' --- provides a derived left
  adjoint to the functor $(A \to M) \mapsto M/A$, with $M/A$ an
  abelian shifted Lie algebra.  But the composite $\theta \circ
  \CL(\g)$ is isomorphic to $A \oplus \CL(\g,\g)$.  In short, $\theta
  \circ \CL$ should be a derived left adjoint to the ``forgetful''
  functor (\ie{} inclusion functor) from $\Alg_{\rE_0}(A)$ to
  ${\Lie_1}(A)$.
\end{remark}

\begin{remark}
  The result also fits nicely with the perspective of derived
  deformation theory: if we view a differential graded Lie algebra
  $\g$ as presenting a formal moduli space, then $\CL(\g)$ describes
  the coalgebra of \emph{distributions} on this space.  As
  distributions are a natural home for ``things that integrate,'' it
  is not surprising that this derived version exhibits the formal,
  algebraic properties axiomatized by physicists in $\BD$-algebras
  when they sought to formalize properties of the putative path
  integral.
\end{remark}

\begin{proof}[Proof of Proposition~\ref{enviscl}]
Both $U_{\teo}$ and $\CL$ assign to $\g$ the same underlying $A^\sharp$-module
$\Sym_{A^\sharp}(\g^\sharp)$. Moreover, the differentials on both modules
respect the filtration by symmetric powers:
\[
\d_{\teo}(\Sym^{\leq n}_{A^\sharp}(\g^\sharp)) \subset \Sym^{\leq n}_{A^\sharp}(\g^\sharp)
\]
and
\[
\d_{\CL}(\Sym^{\leq n}_{A^\sharp}(\g^\sharp)) \subset \Sym^{\leq n}_{A^\sharp}(\g^\sharp).
\]
By definition, the differentials agree on $\Sym^{\leq 2}_{A^\sharp}(\g^\sharp)$. The key difference is that
\begin{itemize}
\item $\d_{\teo}$ is extended (uniquely) to higher symmetric powers as a differential operator on the symmetric algebra, whereas
\item $\d_{\CL}$ is extended (uniquely) to higher symmetric powers as a coderivation on the symmetric coalgebra.
\end{itemize}
Hence we must show these conditions coincide, which follows
immediately from Lemma \ref{lemma: coder}. (In fact, the differential
of the BD-enveloping algebra is a linear-coefficient second-order
differential operator, which matches the fact that the
Chevalley--Eilenberg differential arises from a Lie bracket.)
\end{proof}

Let $R$ be a graded commutative algebra, and let $V$ be in $\Mod(R)$.
The symmetric coalgebra $\Sym^c_R(V)$ and the symmetric algebra $\Sym_R(V)$
are manifestly isomorphic as underlying $R$-modules.

\begin{lemma}\label{lemma: coder}
Under this isomorphism of $R$-modules, there is a bijection between coderivations on the symmetric coalgebra $\Sym^c_R(V)$ and linear-coefficient differential operators on the symmetric algebra~$\Sym_R(V)$.
That is, every coderivation $\delta$ of $\Sym^c_R(V)$ determines a linear-coefficient differential operator on $\Sym_R(V)$, and {\em vice versa},
which can be identified by looking at the underlying $R$-linear endomorphism.
\end{lemma}

\begin{remark}
This lemma is the coalgebraic twin to a familiar fact about the symmetric algebra:
there is a bijection between derivations and first-order differential operators on $\Sym_R(V)$.
In fact, linear duality recovers this lemma from that fact if $V$ is finitely-generated and projective as an $R$-module.
\end{remark}

\begin{proof}
Every coderivation $\delta$ is determined by its post-composition with projection onto the cogenerators $\Sym^1_R(V)$ of $\Sym^c_R(V)$,
just as a derivation is determined by its behavior on generators of $\Sym_R(V)$.
Likewise, every linear-coefficient differential operator is determined by the same information.
Hence, to prove the lemma, we will demonstrate a class of coderivations that manifestly correspond to linear-coefficient differential operators and then we will observe that every coderivation is a linear combination of elements of this class.

Consider the multiplication map $m_x: p \mapsto xp$ given by multiplying in $\Sym_R(V)$ by a linear element $x \in \Sym^1_R(V)$.
It determines a coderivation: 
\[
\Delta(m_x(p)) = \Delta(x)\Delta(p) = (x \otimes 1 + 1 \otimes x) \Delta(p) = (m_x \otimes \id + \id \otimes m_x)(\Delta(p)).
\]
(Note that this operation is the linear dual to~$\partial/\partial x$.)

Consider now a constant-coefficient derivation $\partial$ of the
form $f \mapsto \iota_\lambda f$, where $\lambda \in \Hom_R(V,R)$ and
$\iota_\lambda$ denotes contraction with $\lambda$. 
Thus $\partial$ is the derivation on $\Sym_R(V)$ obtained by extending $\lambda$ from
$\Sym^1_R(V) \cong V$ by the Leibniz rule.
From the perspective of $\Sym^c_R(V)$, this map $\partial$ is a comodule map, as we show by direct computation.
Let $x_1 \cdots x_n$ be a pure product in $\Sym^n_R(V)$, and compute
\begin{align*}
\Delta(\partial(x_1 \cdots x_n)) &= \Delta \left( \sum_{j = 1}^n (-1)^{|\partial|(|x_1| + \cdots |x_{j-1}|)} x_1 \cdots (\partial x_j) \cdots x_n \right) \\
&= \sum_{j = 1}^n (-1)^{|\partial|(|x_1| + \cdots |x_{j-1}|)} \prod_{i = 1}^{j-1} \Delta(x_i) \cdot (\partial x_j)\cdot \prod_{i = j+1}^{n}\Delta(x_i)
\end{align*}
and then compute
\begin{align*}
\id \otimes \partial(\Delta (x_1 \cdots x_n)) &= \id \otimes \partial \left( \prod_{i =1}^n \Delta(x_i)\right)\\
&= \sum_{j = 1}^n (-1)^{|\partial|(|x_1| + \cdots |x_{j-1}|)} \prod_{i = 1}^{j-1} \Delta(x_i) \cdot (0 + 1 \otimes \partial x_j)\cdot \prod_{i = j+1}^{n} \Delta(x_i).
\end{align*}
Comodule maps are closed under composition, so any constant-coefficient differential operator $D = \partial_1 \cdots \partial_n$ is also a comodule map.
(Note that this operation is linear dual to multiplication by a monomial.)

The composite of a coderivation following a comodule map is a coderivation.
Thus the composition $m_x D$ is a coderivation.
By construction, we have seen that for this coderivation of $\Sym^c_R(V)$, 
the underlying linear endomorphism can be read as a differential operator on $\Sym_R(V)$ with linear coefficients.

It is straightforward to see that linear combinations of such coderivations span all coderivations.
Given a coderivation, one reconstructs the expression as a differential operator by postcomposing the coderivation with projection onto cogenerators.
\end{proof}

\begin{remark}
As noted in \cite{BashkirovVoronov}, this lemma implies that the Chevalley-Eilenberg chains of an $\rm{L}_\infty$-algebra $\g$ is the specialization to $\hbar = 1$ of a kind of $\BD_\infty$-algebra $U_{\BD_\infty}(\g[1])$. Here one weights the $k$th Taylor coefficient of the Chevalley-Eilenberg differential by~$\hbar^{k-1}$.
\end{remark}

\subsection{Enveloping Algebras on the $\infty$-Category Level}\label{subsec:icatenv}
As a special case of Lemma~\ref{lem:AlgOftr}, we know that the
enveloping algebra functors described above give
functors of \icats{}, compatible with base change in the commutative
algebra variable. What is a bit less straightforward is showing that
these functors are symmetric monoidal at the \icat{} level. The
issue is that our model categories are not \emph{monoidal} model
categories: in particular, the tensor products do not preserve
cofibrant objects, so our simplicial categories will not be symmetric
monoidal. We therefore have to do a bit more work to see we have
symmetric monoidal structures on the \icats{} at all; we will proceed
analogously to the proof of \cite{HA}*{Proposition 4.1.3.10}: we
enhance our simplicial category to a simplicial operad
and check that the associated \iopd{} is actually a symmetric monoidal
\icat{}.  Once this is done, it is straightforward to see that our
enveloping functors give maps between these simplicial operads,
(pseudofunctorially) compatible with base change, and
these induce symmetric monoidal functors on the \icat{} level.

We focus on the case of Lie algebras; the same idea works for the
other operads. 
\begin{defn}
  We define a simplicial (coloured) operad structure on the simplicial
  category ${\sLie}(A)^{\op}$ by defining the multimorphism spaces
  as
  \[ \sHom((X_{1},\ldots,X_{k}), Y) :=
  \sHom_{{\sLie}(A)^{\op}}(X_{1} \oplus \cdots \oplus X_{k}, Y) =
  \sHom_{{\sLie}(A)}(Y, X_{1} \oplus \cdots \oplus X_{k}),\] with
  composition induced from that in ${\sLie}(A)$. This is compatible
  with the simplicial enrichment, given by tensoring with
  $\Omega(\Delta^{\bullet})$, since tensoring commutes with direct sums. 
\end{defn}

To get from this simplicial operad to an \iopd{} we need to pass
through its simplicial \emph{category of operators}. Recall that any
simplicial operad $\mathbf{O}$ has a simplicial category of operators
$\mathbf{O}^{\otimes}$. This has objects pairs $(\angled{n}, (X_{1},
\ldots, X_{n}))$, where $\angled{n}$ is an object of $\bbGamma^{\op}$
--- the category of finite pointed sets --- and the $X_{i}$ are
objects of $\mathbf{O}$. A morphism $(\angled{n},
(X_{1},\ldots,X_{n})) \to (\angled{m}, (Y_{1},\ldots,Y_{m}))$ is given
by a morphism $\phi \colon \angled{n} \to \angled{m}$ in
$\bbGamma^{\op}$ and for each $i \in \angled{m}$ a multimorphism
$(X_{j})_{j \in \phi^{-1}(i)} \to Y_{i}$ in $\mathbf{O}$. If
$\mathbf{O}$ is a fibrant simplicial operad (meaning each simplicial
set of multimorphisms $\mathbf{O}((X_{1},\ldots,X_{k}),Y)$ is a Kan
complex), then the coherent nerve $\mathrm{N}\mathbf{O}^{\otimes} \to
\bbGamma^{\op}$ of the obvious projection to $\bbGamma^{\op}$ is an
\iopd{}, in the sense of \cite{HA}*{\S 2.1.1}, by
\cite{HA}*{Proposition 2.1.1.27}.

Let ${\sLie}(A)^{\op,\oplus}$ denote the simplicial category of
operators of the simplicial operad of cofibrant Lie algebras we just
defined. The simplicial sets $\sHom((X_{1},\ldots,X_{k}), Y)$ are all
Kan complexes, since $Y$ is cofibrant, so the nerve
$\mathrm{N}({\sLie}(A)^{\op,\oplus}) \to \bbGamma^{\op}$ is an \iopd{}.

Recall that a symmetric monoidal \icat{} can be defined as an \iopd{}
$\mathcal{O}$ such that the projection $\mathcal{O} \to
\bbGamma^{\op}$ is a coCartesian fibration. This holds for our \iopd{} $\mathrm{N}({\sLie}(A)^{\op,\oplus})$:
\begin{propn}
  The projection $\pi \colon \mathrm{N}({\sLie}(A)^{\op,\oplus}) \to
  \bbGamma^{\op}$ is a coCartesian fibration. That is,
  $\pi$ is a symmetric monoidal \icat{}.
\end{propn}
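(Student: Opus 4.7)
The plan is to construct explicit $\pi$-coCartesian lifts for every morphism in $\bbGamma^{\op}$; once these are exhibited, $\pi$ is a coCartesian fibration by the standard criterion for coherent nerves of fibrant simplicial operads. Fix an object $(\angled{n}, (X_1, \ldots, X_n))$ of ${\sLie}(A)^{\op,\oplus}$ and a morphism $\phi \colon \angled{n} \to \angled{m}$ in $\bbGamma^{\op}$. For each non-basepoint $i \in \angled{m}$, I would choose a cofibrant replacement $r_i \colon Y_i \xrightarrow{\sim} \bigoplus_{j \in \phi^{-1}(i)} X_j$ in ${\sLie}(A)$ (with $Y_i$ a cofibrant replacement of $0$ when $\phi^{-1}(i)$ is empty). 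The candidate lift $(\angled{n}, (X_j)) \to (\angled{m}, (Y_i))$ covering $\phi$ has $i$-th multimorphism component given by $r_i$, regarded as a vertex of $\sHom_{{\sLie}(A)}(Y_i, \bigoplus_{j \in \phi^{-1}(i)} X_j)$.

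To verify the coCartesian property, for any $(\angled{\ell}, (Z_k))$ and any $\psi \colon \angled{m} \to \angled{\ell}$ in $\bbGamma^{\op}$, I would compare fibers of the relevant simplicial mapping sets. The fiber over $\psi$ of $\sHom_{{\sLie}(A)^{\op,\oplus}}((\angled{m}, Y), (\angled{\ell}, Z))$ is the product over non-basepoint $k \in \angled{\ell}$ of $\sHom_{{\sLie}(A)}(Z_k, \bigoplus_{i \in \psi^{-1}(k)} Y_i)$, while the fiber over $\psi\phi$ of $\sHom_{{\sLie}(A)^{\op,\oplus}}((\angled{n}, X), (\angled{\ell}, Z))$ is the product of $\sHom_{{\sLie}(A)}(Z_k, \bigoplus_{j \in (\psi\phi)^{-1}(k)} X_j)$. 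On the $k$-th factor, composition with the candidate lift is postcomposition with $\bigoplus_{i \in \psi^{-1}(k)} r_i$, modulo the associativity identification $\bigoplus_i \bigoplus_j X_j = \bigoplus_{j \in (\psi\phi)^{-1}(k)} X_j$. Since the forgetful functor to $\Mod(A)$ preserves direct sums and detects weak equivalences, $\bigoplus_i r_i$ is a weak equivalence in ${\sLie}(A)$; since $Z_k$ is cofibrant and the $\Omega(\Delta^{\bullet}) \otimes -$ simplicial enrichment provides a coherent right framing (Lemma~\ref{lem:Chkcohfr}), $\sHom_{{\sLie}(A)}(Z_k, -)$ sends this weak equivalence to a weak equivalence of Kan complexes.

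The main obstacle is really bookkeeping around cofibrancy: direct sums of cofibrant shifted Lie algebras are not in general cofibrant, so the identity cannot simply serve as the lift --- hence the use of cofibrant replacements $r_i$ and the slightly fussier computation that follows. Everything else is essentially formal: associativity of direct sums, compatibility of the $\Omega(\Delta^{\bullet})$-enrichment with direct sums, and the preservation of weak equivalences by direct sums in $\Mod(A)$. Once the coCartesian lifts are in hand, $\pi$ is a coCartesian fibration, and combined with the earlier observation that $\mathrm{N}({\sLie}(A)^{\op,\oplus}) \to \bbGamma^{\op}$ is an \iopd{}, this exhibits the coherent nerve as a symmetric monoidal \icat{}; the fibers $\pi^{-1}(\angled{n})$ are equivalent to $n$-fold products of $\LLie_1(A)^{\op}$, identifying the induced tensor product with the coCartesian monoidal structure on $\LLie_1(A)^{\op}$.
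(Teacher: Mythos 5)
Your proposal is correct and follows essentially the same route as the paper: choose cofibrant replacements $r_i \colon Y_i \xrightarrow{\sim} \bigoplus_{j \in \phi^{-1}(i)} X_j$ as the coCartesian lift, check the homotopy pullback condition fiberwise over each $\psi$, decompose the fibers as products, and reduce to the fact that a direct sum of weak equivalences is a weak equivalence together with cofibrancy of the mapping-space source. The paper's proof is the same argument with only notational differences.
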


\begin{proof}
  It suffices to show that for every object $(X_{1},\ldots,X_{n})$ of
  ${\sLie}(A)^{\op,\oplus}$ and every map $\phi \colon \angled{n}
  \to \angled{m}$ in $\bbGamma^{\op}$, there exists a morphism
  $(X_{1},\ldots,X_{n}) \to (X'_{1},\ldots,X'_{m})$ over $\phi$ such
  that for any $(Y_{1},\ldots,Y_{k})$, the square
  \nolabelcsquare{\Map_{\mathrm{N}({\sLie}(A)^{\op,\oplus})}((X'_{1},\ldots,X'_{m}),
    (Y_{1},\ldots,Y_{k}))}{\Map_{\mathrm{N}({\sLie}(A)^{\op,\oplus})}((X_{1},\ldots,X_{n}),
    (Y_{1},\ldots,Y_{k}))}{\Hom_{\bbGamma^{\op}}(\angled{m},
    \angled{k})}{\Hom_{\bbGamma^{\op}}(\angled{n}, \angled{k})} is homotopy
  Cartesian. Choose a weak equivalence $X'_{i} \to \bigoplus_{j,
    \phi(j) = i} X_{j}$ in $\Lie_{n}(A)$ with $X'_{i}$ cofibrant.
  We claim the resulting map $(X_{1},\ldots,X_{n}) \to
  (X'_{1},\ldots,X'_{m})$ in ${\sLie}(A)^{\op,\oplus}$ has this property. 
  To see this it suffices to show that we have a weak
  equivalence on fibres over each $\psi \colon \angled{m} \to
  \angled{k}$, since the objects in the bottom row are discrete. 
  These fibres decompose as products, so it is enough to show that
  \[ \Map_{\mathrm{N}({\sLie}(A)^{\op,\oplus})}((X'_{i})_{\psi(i) = j}, Y_{j}) \to \Map_{\mathrm{N}({\sLie}(A)^{\op,\oplus})}((X_{k})_{\psi\phi(k)
    = j}, Y_{j})\]
  is a weak equivalence for all $j$. We can identify this map with
  \[ \sHom_{{\sLie}(A)}(Y_{j}, \bigoplus_{i} X'_{i}) \to \sHom_{{\sLie}(A)}(Y_{j},
  \bigoplus_{j} X_{j}).\] Since $Y_{j}$ is cofibrant, to see that this map
  is a weak equivalence of simplicial sets, it suffices to show that
  $\bigoplus_{i} X'_{i} \to \bigoplus_{j} X_{j}$ is a weak equivalence
  in Lie algebras. But this map is the product over $i$ of the maps
  $X'_{i} \to \bigoplus_{j, \phi(j) =i} X_{j}$, which are weak
  equivalences, and since weak equivalences are detected in $\Mod(k)$,
  it is clear that direct sums of weak equivalences are again weak
  equivalences.
\end{proof}

We thus have a symmetric monoidal \icat{} with underlying \icat{}
$\LLie_{n}(A)^{\op}$. This induces a symmetric monoidal structure
on $\LLie_{n}(A)$, \ie{} we have:
\begin{cor}
  The Cartesian product $\oplus$ of Lie algebras induces a symmetric
  monoidal structure on the \icat{} $\LLie_{n}(A)$.
\end{cor}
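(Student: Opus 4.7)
The plan is to deduce this corollary from the preceding proposition by passing to opposites. That proposition equips $\LLie_{n}(A)^{\op}$ with a symmetric monoidal $\infty$-category structure whose tensor product on underlying objects is $\oplus$: by construction, the multimorphism space $\sHom((X_{1},\ldots,X_{k}), Y)$ was defined to be $\sHom_{\sLie(A)}(Y, X_{1} \oplus \cdots \oplus X_{k})$, so this is precisely the Cartesian symmetric monoidal structure on $\LLie_{n}(A)^{\op}$ (note that $\oplus$ is the product in $\LLie_{n}(A)^{\op}$, being the coproduct in $\LLie_{n}(A)$).

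Next, I would invoke the standard fact that $\mathcal{C} \mapsto \mathcal{C}^{\op}$ promotes to a self-equivalence on the $\infty$-category of symmetric monoidal $\infty$-categories (see \cite{HA}*{\S 2.4.2}), implemented by applying $(\blank)^{\op}$ fiberwise to the coCartesian fibration over $\bbGamma^{\op}$. Applied to $\LLie_{n}(A)^{\op,\otimes}$, this yields a symmetric monoidal structure on $\LLie_{n}(A)$ itself. Since the opposite operation does not change the collection of objects, the induced tensor bifunctor is again $\oplus$; it is now realized as the coCartesian (coproduct) structure on $\LLie_{n}(A)$, which is consistent with the previous interpretation because $\oplus$ is a biproduct in Lie algebras.

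The main obstacle is really just the fiberwise-opposite bookkeeping. As a cleaner alternative, one could mimic the preceding proof of the coCartesian lifting condition directly on $\sLie(A)$ rather than on its opposite: define a simplicial operad on $\sLie(A)$ with multimorphism spaces $\sHom((X_{1},\ldots,X_{k}), Y) := \sHom_{\sLie(A)}(X_{1} \oplus \cdots \oplus X_{k}, Y)$ and then produce coCartesian lifts over $\phi \colon \angled{n} \to \angled{m}$ by choosing cofibrant $X'_{i}$ weakly equivalent to $\bigoplus_{j,\,\phi(j)=i} X_{j}$ and verifying the universal property via the same fibre-square argument used in the proof above. Either route delivers the desired symmetric monoidal structure on $\LLie_{n}(A)$ with tensor product $\oplus$.
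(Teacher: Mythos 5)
Your main line of argument is exactly the paper's: the preceding proposition produces a symmetric monoidal structure on $\LLie_{n}(A)^{\op}$, and the corollary is obtained by passing to opposites (the paper states this in a single sentence, without spelling out the fiberwise-opposite construction that you supply). One correction to your parenthetical remarks, though it does not affect the validity of the argument: $\oplus$ is the \emph{product} in $\LLie_{n}(A)$ (the coproduct of Lie algebras is the free product, and $\oplus$ is not a biproduct), hence the \emph{coproduct} in $\LLie_{n}(A)^{\op}$; the multimorphism spaces $\sHom_{\sLie(A)}(Y, X_{1}\oplus\cdots\oplus X_{k})$ are maps out of that coproduct, so the structure on $\LLie_{n}(A)^{\op}$ is the coCartesian one, and the structure induced on $\LLie_{n}(A)$ is the Cartesian one, exactly as the corollary asserts.
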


\begin{propn}\ 
  \begin{enumerate}[(i)]
  \item The tensor product of $A$-modules induces symmetric monoidal
    structures on the \icats{} $\AAlg_{\teo}(A)$ and $\AAlg_{\PZ}(A)$,
    and the tensor product of $A[\hbar]$-modules induces a symmetric
    monoidal structure on the \icat{} $\AAlg_{\BD}(A[\hbar])$.
  \item The enveloping algebra functors $U_{\BD}$, $U_{\teo}$ and
    $U_{\PZ}$, as well as the functors $\txt{ev}_{\hbar=0}$ and
    $\txt{ev}_{\hbar=1}$ induce symmetric monoidal functors of
    \icats{}.
  \end{enumerate}
\end{propn}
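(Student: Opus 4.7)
The plan is to mirror the construction just carried out for shifted Lie algebras, replacing the Cartesian monoidal structure by the tensor product. For each operad $\mathbf{O}\in\{\PZ,\teo,\BD\}$ (working over $A$ or $A[\hbar]$ as appropriate), I would enhance the simplicial category $\sAlg_\mathbf{O}$ to a simplicial coloured operad whose multimorphism spaces are
\[ \sHom((X_1,\ldots,X_n),Y) := \sHom_{\sAlg_\mathbf{O}}(X_1\otimes\cdots\otimes X_n,\,\Omega(\Delta^\bullet)\otimes Y). \]
Since $\mathbf{O}$ is a Hopf operad, tensor products of $\mathbf{O}$-algebras are again $\mathbf{O}$-algebras, so the composition law is well defined. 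Each multimorphism simplicial set is a Kan complex by the same argument as before --- all objects are fibrant, and $\Omega(\Delta^\bullet)\otimes Y$ remains a Reedy-fibrant cosimplicial resolution of $Y$. Taking the coherent nerve of the associated simplicial category of operators then yields an $\infty$-operad $\mathrm{N}(\sAlg_\mathbf{O}^{\otimes})\to\bbGamma^{\op}$ by \cite{HA}*{Proposition 2.1.1.27}.

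To prove part (i) I would then verify that this projection is a coCartesian fibration. Given an object $(X_1,\ldots,X_n)$ and a map $\phi\colon\langle n\rangle\to\langle m\rangle$ in $\bbGamma^{\op}$, choose cofibrant replacements $X'_i\xrightarrow{\sim}\bigotimes_{\phi(j)=i}X_j$ in $\Alg_\mathbf{O}$; the resulting $1$-morphism $(X_1,\ldots,X_n)\to(X'_1,\ldots,X'_m)$ should be coCartesian. Checking this reduces, fibre by fibre over each $\psi\colon\langle m\rangle\to\langle k\rangle$, to showing that the induced map on multimorphism spaces into any $(Y_1,\ldots,Y_k)$ is a weak equivalence, i.e. that the $\Omega(\Delta^\bullet)$-framed mapping space from $\bigotimes_i X'_i$ to each $Y_j$ agrees with that from $\bigotimes_k X_k$. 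This is where the technical issue arises: the tensor product of cofibrant $\mathbf{O}$-algebras need not be cofibrant, so we cannot directly invoke the model structure. I would handle this exactly as in the proof of \cite{HA}*{Proposition 4.1.3.10}: combine Lemma~\ref{lem:cofmodtensor} (the tensor product of cofibrant $A$-modules preserves quasi-isomorphisms) with Proposition~\ref{propn:opdforgetsifted} (the forgetful functor to $A$-modules detects sifted colimits, hence in particular preserves the homotopy type of such tensor products) to conclude that the underlying module of $\bigotimes_i X'_i$ is weakly equivalent to that of $\bigotimes_k X_k$, and thus the mapping spaces in question agree.

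For part (ii), note that by the explicit formulas in the three enveloping lemmas the functors $U_{\PZ}, U_{\teo}, U_{\BD}$ are \emph{strict} symmetric monoidal: $U_\bullet(\g\oplus\g')=U_\bullet(\g)\otimes U_\bullet(\g')$ as $\mathbf{O}$-algebras, not merely up to quasi-isomorphism. They also preserve cofibrancy (being left Quillen, with the necessary operad hypotheses of Proposition~\ref{propn:opdch}(ii)). Hence they extend directly to maps of the simplicial operads constructed above (using the opposite convention on the Lie side as before), and they send the coCartesian edges constructed in part~(i) to coCartesian edges, so by \cite{HA}*{Proposition 2.1.2.12} they induce symmetric monoidal functors of $\infty$-operads. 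The evaluation functors $\ev_{\hbar=0}$ and $\ev_{\hbar=1}$ are strict symmetric monoidal already at the ordinary-category level (being induced from the symmetric monoidal base-change $\phi_!$), preserve cofibrant objects, and therefore extend by the same reasoning.

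The main obstacle is the step just described: the failure of the tensor product to preserve cofibrant objects means the simplicial category of cofibrant algebras is not itself symmetric monoidal, and the coCartesian fibration check must be reduced to an underlying-module computation via Lemmas~\ref{lem:cofmodtensor}--\ref{lem:Symnqiso} together with the sifted-colimit preservation of the forgetful functor. Everything else is parallel to the Lie case or follows from the strict monoidality of the functors on the model-category level.
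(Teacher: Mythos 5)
Your overall strategy --- enhancing the simplicial categories of cofibrant algebras to simplicial operads and checking that the resulting $\infty$-operads are coCartesian fibrations over $\bbGamma^{\op}$ --- is the paper's, and you correctly identify the failure of $\otimes$ to preserve cofibrant algebras as the central obstacle. But your implementation has two genuine gaps. First, in (i) you define the multimorphism spaces as $\sHom(X_{1}\otimes\cdots\otimes X_{n},\,\Omega(\Delta^{\bullet})\otimes Y)$, i.e.\ as maps \emph{out of} the tensor product. Since $X_{1}\otimes\cdots\otimes X_{n}$ is not cofibrant as an algebra, these simplicial sets are not Kan complexes (lifting against the trivial fibrations $\Omega(\Delta^{n})\otimes Y \to \Omega(\Lambda^{n}_{i})\otimes Y$ requires a cofibrant source) and do not compute the derived mapping spaces; moreover your candidate coCartesian edge does not exist in this convention, because a cofibrant replacement $X'_{i}\to\bigotimes_{\phi(j)=i}X_{j}$ points \emph{into} the tensor product, whereas a multimorphism $(X_{j})_{\phi(j)=i}\to X'_{i}$ would have to be a map out of it. The paper avoids both problems by putting the operad structure on the \emph{opposite} simplicial category, exactly as it did for Lie algebras: a multimorphism is a map $Y\to X_{1}\otimes\cdots\otimes X_{n}$ from a cofibrant algebra, so Kan-ness is automatic, the cofibrant replacements give the coCartesian edges, and the only remaining check is that tensoring with a cofibrant algebra preserves quasi-isomorphisms, which is Lemma~\ref{lem:cofmodtensor} combined with Proposition~\ref{propn:opdch}(ii). (Your appeal to Proposition~\ref{propn:opdforgetsifted} here is a red herring: tensor products of algebras over a Hopf operad are not sifted colimits, and the relevant fact is simply that the underlying module of $X\otimes X'$ is the tensor product of the underlying modules.)

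Second, in (ii) the assertion that the enveloping functors ``send the coCartesian edges constructed in part (i) to coCartesian edges'' is precisely what requires proof, and strict monoidality plus left-Quillen-ness do not supply it. The coCartesian edges on the Lie side are cofibrant replacements $L''\xrightarrow{\sim} L\oplus L'$, and $L\oplus L'$ --- the \emph{product} of Lie algebras --- is not cofibrant as a Lie algebra even when $L$ and $L'$ are; so being left Quillen does not imply that $U(L'')\to U(L\oplus L')=U(L)\otimes U(L')$ is a weak equivalence. The paper proves this by filtering $U(\g)$ by symmetric powers, identifying the filtration quotients with $\Sym^{n}_{A}$ of the underlying (cofibrant) modules, invoking Lemma~\ref{lem:Symnqiso}, and concluding by induction and passage to the filtered colimit; the same issue arises, more mildly, for $\ev_{\hbar=0}$ and $\ev_{\hbar=1}$, where one uses that the tensor product of cofibrant \emph{modules} is cofibrant. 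Without some such argument your proof of (ii) is incomplete.
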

\begin{proof}
  (i) follows from the same argument as for Lie algebras. We only need
  to check that if $X$ is a cofibrant algebra then $X \otimes_{A}
  \blank$ preserves quasi-isomorphisms. (In the case of
  BD-algebras, we use $X \otimes_{A[\hbar]} \blank$ instead.) This claim 
  follows from Lemma~\ref{lem:cofmodtensor}, since by
  Proposition~\ref{propn:opdch}(ii), the underlying $A$-module of a
  cofibrant algebra is cofibrant.

  For (ii), if $U$ is either $U_{\teo}$ or $U_{\PZ}$, we must show
  that if $L$ and $L'$ are cofibrant Lie algebras and
  $L'' \to L \oplus L'$ is a cofibrant replacement, then
  $U(L'') \to U(L \oplus L')$ is a weak equivalence. It suffices to
  check weak equivalences at the level of the underlying modules, and
  there we have a natural filtration:
  $U(\g) \cong \colim_n U^{\leq n}(\g)$ for any Lie algebra $\g$, with
  $U^{\leq n}(\g)$ being the subcomplex whose underlying graded module
  is $\Sym^{\leq n}(\g)$.  It is manifest that
  $U^{\leq 0}(L'') = A = U^{\leq 0}(L \oplus L')$.  Now consider the
  map of cofiber sequences
  \[\begin{tikzcd} 
    U^{\leq n-1}(L'') \arrow{r} \arrow{d} & U^{\leq n}(L'') \arrow{r} \arrow{d} & \Sym^n_A(L'') \arrow{d} \\
    U^{\leq n-1}(L \oplus L') \arrow{r} & U^{\leq n}(L \oplus L') \arrow{r} & \Sym^n_A(L \oplus L')
  \end{tikzcd}\] 
  By Lemma~\ref{lem:Symnqiso} the functor $\Sym^n_A$ preserves
  quasi-isomorphisms for cofibrant modules. Since the underlying
  $A$-module of a cofibrant Lie algebra is cofibrant by
  Proposition~\ref{propn:opdch}(ii), the rightmost vertical map in the
  diagram is a quasi-isomorphism. Inducting on $n$, it follows that
  $U^{\leq n}(L'') \to U^{\leq n}(L \oplus L')$ is a quasi-isomorphism
  for all $n$. As quasi-isomorphisms commute with filtered colimits,
  we conclude that $U(L'') \to U(L \oplus L')$ is a
  quasi-isomorphism. The proof for $U_{\BD}$ is the same, except with
  some $\hbar$'s.
   
  For the functors induced by the two maps $A[\hbar] \to A$, it again
  suffices to show that we get a quasi-isomorphism of underlying
  modules, which is true since $A \otimes_{A[\hbar]} \blank$ is a left
  Quillen functor, the underlying module of a cofibrant algebra is
  cofibrant, and the tensor product of cofibrant modules is again
  cofibrant.
\end{proof}

Taking the base change (pseudo)functors into account, we have:
\begin{lemma}\label{lem:Liesymmmonftr}
  There are functors ${\LLie_{n}}(\blank)^{\oplus}$,
  $\AAlg_{\txt{BD}}(k[\hbar] \otimes \blank)^{\otimes}$,
    $\AAlg_{\teo}(\blank)^{\otimes}$ and
    $\AAlg_{\PZ}(\blank)^{\otimes}$ from $\CComm(k)$ to the \icat{}
  $\CComm(\LCatI)$ of (large) symmetric monoidal \icats{} taking $A$
  to the symmetric monoidal \icats{} constructed above. The enveloping
  algebra functors $U_{\BD}$, $U_{\teo}$ and $U_{\PZ}$, as well as the
  functors $\txt{ev}_{\hbar=0}$ and $\txt{ev}_{\hbar=1}$ induce
  natural symmetric monoidal functors between these.
\end{lemma}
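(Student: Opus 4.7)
The plan is to mirror the strategy of Lemma~\ref{lem:AlgOftr}, now working at the level of \emph{simplicial operads} rather than simplicial categories, so as to retain the symmetric monoidal structures. First, for each cofibrant cdga $A$, the preceding subsection already constructed the fibrant simplicial operads $\sLie_n(A)^{\op,\oplus}$, $\sAlg_{\PZ}(A)^{\op,\otimes_A}$, $\sAlg_{\teo}(A)^{\op,\otimes_A}$, and $\sAlg_{\BD}(A[\hbar])^{\op,\otimes_{A[\hbar]}}$. A map $\phi \colon A \to B$ of cdgas induces the base-change left Quillen functor $\phi_! = B \otimes_A (\blank)$ of Proposition~\ref{propn:commutative algebraQAdj}, which is strong symmetric monoidal and preserves cofibrant objects; hence it yields a strong symmetric monoidal functor between the underlying simplicial categories, and in turn a morphism of the associated simplicial operads. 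Since $\phi_!$ commutes with direct sums strictly but with tensor products only up to the coherent natural isomorphism $(B \otimes_A X) \otimes_B (B \otimes_A Y) \cong B \otimes_A (X \otimes_A Y)$, the assignment assembles into a \emph{normal pseudofunctor} from $\Comm(k)^{\txt{cof}}$ to the 2-category of fibrant simplicial operads.

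Second, I would repeat the Duskin-nerve argument of Lemma~\ref{lem:AlgOftr}. The pseudofunctor just described gives a map of quasicategories from $\mathrm{N}\Comm(k)^{\txt{cof}}$ to the Duskin nerve of the 2-category of fibrant simplicial operads. By Proposition~\ref{propn:opdch}(iv)(c) combined with Lemma~\ref{lem:Chkcohfr}, a quasi-isomorphism of cofibrant cdgas is carried to a Quillen equivalence of base-change, inducing a weak equivalence on each multimorphism simplicial set and hence a weak equivalence of simplicial operads. Localizing at these weak equivalences, the source is identified with $\CComm(k)$ and the target with $\CComm(\LCatI)$ via Lurie's model of symmetric monoidal \icats{} as coCartesian fibrations over $\bbGamma^{\op}$, producing the four functors in the statement.

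Third, for the natural symmetric monoidal functors between them, I would observe that the explicit formulas of the preceding lemmas make each of $U_{\PZ}$, $U_{\teo}$, and $U_{\BD}$ strong symmetric monoidal on the nose via $\Sym(\g \oplus \g') \cong \Sym(\g) \otimes \Sym(\g')$, and make $\ev_{\hbar = 0}$ and $\ev_{\hbar = 1}$ strong symmetric monoidal as left adjoints to symmetric monoidal right adjoints (using the Hopf-operad structure on $\PZ$, $\teo$, and $\BD$). Each therefore induces a strict map of simplicial operads, pseudonaturally in $A$, and running the same Duskin-nerve localization produces the claimed natural symmetric monoidal functors of \icats{}.

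The main obstacle is the pseudofunctoriality bookkeeping: strict associativity of $\otimes_A$ fails under base change, so a strict functor from $\Comm(k)$ into simplicial operads is not available and one genuinely needs the Duskin-nerve machinery of Lemma~\ref{lem:AlgOftr}. A subsidiary technical check is that each simplicial operad above is fibrant, i.e.\ that each simplicial set of multimorphisms is Kan; this reduces, via identifications such as $\sHom((X_1,\ldots,X_k), Y) = \sHom(X_1 \oplus \cdots \oplus X_k, Y)$ and its tensor-product analogue, to the cofibrancy of $Y$ together with Lemma~\ref{lem:Chkcohfr}, exactly as in the argument already carried out for $\sLie_n(A)^{\op,\oplus}$.
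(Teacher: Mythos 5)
Your proposal is correct and follows essentially the same route as the paper, whose entire proof reads ``As Lemma~\ref{lem:AlgOftr}, just replacing simplicial categories with simplicial operads (and passing to opposite \icats{})''; your write-up simply makes explicit the pseudofunctoriality, Duskin-nerve localization, and fibrancy checks that this one-line reduction implicitly invokes, together with the homotopical inputs already established in the proposition preceding the lemma.
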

\begin{proof}
  As Lemma~\ref{lem:AlgOftr}, just replacing simplicial categories
  with simplicial operads (and passing to opposite \icats{}).
\end{proof}

\subsection{Aside: An $\mathrm{E}_n$-Enveloping Algebra Functor}\label{sec:Enenv}

In this section, we will describe an ``enveloping algebra'' adjunction
\[ {\LLie_{1-n}}(A) \rightleftarrows \AAlg_{\mathrm{E}_{n}}(A),\] 
where $\mathrm{E}_{n}$ is the ``little $n$-discs'' \iopd{}. 
(This section is something of a digression from our main objective, although it is
relevant as motivation for Conjecture~\ref{conj:QuadEn}.) 
We expect that this construction agrees with the enveloping functor 
for the map of operads constructed
by Fresse~\cite{FresseKoszul} using Koszul duality as well as that
recently constructed (in greater generality) by
Knudsen~\cite{KnudsenEn} using factorization algebras. However,
although we will show that our functor satisfies some of the same
formal properties as Knudsen's, we will not attempt to compare them
here.

By our work in the preceding sections, we have a symmetric monoidal left adjoint
functor 
\[ U_{\teo}: {\LLie_{1}}(A) \to \AAlg_{\teo}(A) \simeq \AAlg_{\mathrm{E}_{0}}(A),\] 
where the second equivalence follows from the quasi-isomorphism $\teo \simeq \mathrm{E}_0$
of operads. By \cite{HA}*{Corollary 7.3.2.7}, 
the right adjoint to the $\teo$-enveloping functor is lax monoidal, and so the resulting relative adjunction over 
$\bbGamma^{\op}$ induces an adjunction
\[ \AAlg_{\mathcal{O}}({\LLie_{1}}(A)) \rightleftarrows
\AAlg_{\mathcal{O}}(\AAlg_{\teo}(A)) \]
for every $\infty$-operad $\mathcal{O}$. Taking $\mathcal{O}$ to be the \iopd{}
$\mathrm{E}_{n}$, we have
\[ \AAlg_{\mathrm{E}_{n}}(\AAlg_{\teo}(A)) \simeq
\AAlg_{\mathrm{E}_{n}}(A)\] since the Boardman-Vogt tensor $\mathrm{E}_{n} \otimes
\mathrm{E}_{0}$ is equivalent to $\mathrm{E}_{n}$; thus we get an adjunction
\[ \AAlg_{\mathrm{E}_{n}}({\LLie_{1}}(A)) \rightleftarrows
\AAlg_{\mathrm{E}_{n}}(A).\]
To get the enveloping functor we want, we combine this construction
with a result due to To\"en (though we learned the argument
from Nick Rozenblyum). Before we state it, we must recall the
\emph{bar/cobar adjunction}, as set up for \icats{} by Lurie in
\cite{HA}*{\S 5.2.2}. If $\mathcal{C}$ is a monoidal \icat{} with 
simplicial colimits and cosimplicial limits,  there is an adjunction 
\[ \txt{Bar} : \AAlg_{\txt{Ass}}^{\txt{aug}}(\mathcal{C}) \rightleftarrows
\mathcal{C}\txt{oAlg}^{\txt{coaug}}_{\txt{Ass}}(\mathcal{C}) : \txt{Cobar} \]
between augmented associative algebras and coaugmented coassociative
coalgebras. If $\mathcal{C}$ has a zero object and the monoidal
structure is the Cartesian product, then this simplifies to an
adjunction
\[ \txt{Bar} : \AAlg_{\txt{Ass}}(\mathcal{C}) \rightleftarrows \mathcal{C} :
\txt{Cobar}.\]

\begin{propn}\label{propn:barcobareq}
  Suppose $\mathcal{C}$ is a presentable stable \icat{}, $\mathcal{D}$
  is a presentable \icat{}, and $U \colon \mathcal{D} \to \mathcal{C}$
  is a functor that detects equivalences and preserves limits and
  sifted colimits. Then, regarding $\mathcal{D}$ as a monoidal \icat{}
  via the Cartesian product, the bar/cobar adjunction
  \[ \txt{Bar} : \AAlg_{\txt{Ass}}(\mathcal{D}) \rightleftarrows \mathcal{D} :
  \txt{Cobar}\]
  is an equivalence.
\end{propn}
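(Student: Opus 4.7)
\medskip
\noindent\textbf{Proof plan.} The strategy is to use the hypotheses on $U$ to transport the question from $\mathcal{D}$ to $\mathcal{C}$, and then invoke the well-known fact that $\txt{Bar}/\txt{Cobar}$ is an equivalence in the stable case.

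\emph{Step 1: $U$ commutes with $\txt{Bar}$ and $\txt{Cobar}$.}  Since $U$ preserves limits, it preserves finite products, so it is canonically symmetric monoidal with respect to the Cartesian monoidal structures on $\mathcal{D}$ and $\mathcal{C}$; consequently it induces a functor $\AAlg_{\txt{Ass}}(\mathcal{D}) \to \AAlg_{\txt{Ass}}(\mathcal{C})$, which detects equivalences because the forgetful functors to the underlying \icats{} do and $U$ itself does. Recall that $\txt{Bar}(A)$ is the geometric realization of the two-sided bar simplicial object $B_{\bullet}(1,A,1)$, whose $n$-th term is the $n$-fold Cartesian power $A^{n}$. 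Since $U$ preserves both finite products and sifted colimits (and geometric realizations are sifted), there is a natural equivalence $U\circ\txt{Bar}_{\mathcal{D}} \simeq \txt{Bar}_{\mathcal{C}}\circ U$. Dually, $\txt{Cobar}(X)$ is the totalization of a cosimplicial object, which is a limit, so $U\circ\txt{Cobar}_{\mathcal{D}} \simeq \txt{Cobar}_{\mathcal{C}}\circ U$.

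\emph{Step 2: Reduction to the stable case.} It suffices to show the unit $A \to \txt{Cobar}(\txt{Bar}(A))$ and counit $\txt{Bar}(\txt{Cobar}(X)) \to X$ are equivalences. By the naturality in Step 1 and the fact that the induced functor on algebras detects equivalences, these maps are equivalences in $\mathcal{D}$ (respectively $\AAlg_{\txt{Ass}}(\mathcal{D})$) if and only if their images under $U$ are equivalences in $\mathcal{C}$ (respectively $\AAlg_{\txt{Ass}}(\mathcal{C})$). Hence it is enough to prove the proposition for $\mathcal{D} = \mathcal{C}$.

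\emph{Step 3: The stable case.} In a presentable stable \icat{} with Cartesian monoidal structure, the Cartesian product coincides with the coproduct (direct sum), and one has the standard identifications $\txt{Bar}(A) \simeq \Sigma A$ and $\txt{Cobar}(X) \simeq \Omega X$ (with the appropriate algebra/pointed structures).  By Lurie's delooping theorem (essentially \cite{HA}*{Theorem 5.2.6.15}), the bar/cobar adjunction restricts to an equivalence between group-like $\rE_{1}$-monoids and pointed connected objects; in a stable \icat{} every $\rE_{1}$-monoid is automatically group-like (since $\pi_{0}$ carries inverses coming from $\Omega$) and the relevant connectivity condition is vacuous, so the adjunction is in fact an equivalence on the nose.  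Equivalently one may argue directly: stability makes $\Sigma \dashv \Omega$ an adjoint equivalence of \icats{}, and the Cartesian monoidal structure promotes this to a bar/cobar equivalence.

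\emph{Main obstacle.} The naturality in Step 1 and the conservativity reduction in Step 2 are essentially formal consequences of the preservation properties of $U$. The substantive content is Step 3: understanding why delooping becomes an equivalence once one is in a stable setting, and in particular that no connectivity hypothesis is needed. The key point is that stability forces every monoid to be group-like and trivialises the connectivity obstruction, which is what makes the bar/cobar adjunction collapse to the $\Sigma/\Omega$ equivalence.
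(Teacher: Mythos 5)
Your proof is correct and follows essentially the same route as the paper: use the preservation properties of $U$ to identify $U\circ\txt{Bar}$ and $U\circ\txt{Cobar}$ with suspension and loops on the stable $\infty$-category $\mathcal{C}$ (the paper cites \cite{HA}*{Example 5.2.2.3} for this), then conclude from conservativity of $U$ together with the fact that $\Sigma\dashv\Omega$ is an adjoint equivalence on $\mathcal{C}$. The only caveat is that your appeal to the delooping theorem in Step 3 is misplaced --- that statement concerns grouplike monoids in spaces or an $\infty$-topos, not a general presentable stable $\infty$-category --- but the direct argument you give in its place (Cartesian product $=$ coproduct by stability, so $\txt{Bar}\simeq\Sigma$ and $\txt{Cobar}\simeq\Omega$) is exactly what the paper uses and is all that is needed.
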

\begin{proof}
  Since $\mathcal{C}$ is stable, the Cartesian product in
  $\mathcal{C}$ is also the coproduct, and hence
  it commutes with sifted colimits and cosifted limits in each variable. 
  As $U$ detects equivalences and preserves limits and sifted colimits, 
  we find that the Cartesian product in $\mathcal{D}$ also preserves
  sifted colimits and cosifted limits in each variable.
  Thus using \cite{HA}*{Example 5.2.2.3}, for any $X \in
  \AAlg_{\txt{Ass}}(\mathcal{D})$, 
  we can identify $U\txt{Bar}(X)$ with the suspension $\Sigma UX$.
  Dually, if $U'$ denotes the forgetful functor 
  \[ \AAlg_{\txt{Ass}}(\mathcal{D}) \to \mathcal{D} \to \mathcal{C},\] 
  then for $Y \in \mathcal{D}$, we can identify
  $U'\txt{Cobar}(Y)$ with the loop object $\Omega U(Y)$.

  To show that the bar/cobar functors are an adjoint equivalence, it
  suffices to show that the unit and counit transformations are
  natural equivalences. But since the functors $U$ and $U'$ detect
  equivalences, we are finished because suspension/loops is an adjoint
  equivalence on the stable \icat{} $\mathcal{C}$.
\end{proof}

\begin{cor}[To\"en]
  The bar functor is an equivalence
  \[\AAlg_{\txt{Ass}}(\LLie(A)) \isoto \LLie(A),\]
  given by $X \mapsto X[1]$ on underlying $A$-modules.
\end{cor}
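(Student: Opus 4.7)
The plan is to apply Proposition~\ref{propn:barcobareq} with $\mathcal{C} := \MMod(A)$ and $\mathcal{D} := \AAlg_{\Lie}(A)$, taking $U \colon \AAlg_{\Lie}(A) \to \MMod(A)$ to be the forgetful functor. Here $\MMod(A)$ is presentable and stable, while $\AAlg_{\Lie}(A)$ is presentable as an \icat{} of algebras over a $k$-linear operad in a presentable \icat{}. The \icat{} $\AAlg_{\Lie}(A)$ is then regarded as monoidal via the Cartesian product (i.e.\ direct sum of Lie algebras), and $\AAlg_{\txt{Ass}}(\AAlg_{\Lie}(A))$ is understood with respect to that monoidal structure. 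If the hypotheses of Proposition~\ref{propn:barcobareq} are met, its conclusion is precisely the claimed adjoint equivalence.

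The verifications concern three properties of $U$. First, $U$ detects equivalences by definition of the weak equivalences on Lie algebras. Second, $U$ preserves limits, since limits of operad algebras are computed at the level of underlying $A$-modules. Third, $U$ preserves sifted colimits by Proposition~\ref{propn:opdforgetsifted} applied to the Lie operad: the unit $A \to \Lie(1) = A$ is the identity, hence a cofibration, and each $\Lie(n) = A \otimes \Lie_k(n)$ is a cofibrant $A$-module because in characteristic zero $\Lie_k(n)$ is a finite-dimensional $k$-vector space concentrated in degree zero, so $\Lie(n)$ is a finite direct sum of copies of $A$.

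For the description on underlying $A$-modules, I invoke the identification
\[ U\txt{Bar}(X) \simeq \Sigma U X \]
already extracted in the proof of Proposition~\ref{propn:barcobareq}. Since $\MMod(A)$ is stable with suspension the shift functor $[1]$, this gives $U\txt{Bar}(X) \simeq UX[1]$, which is exactly the asserted formula. I do not anticipate a serious obstacle: the substantial work is done in Proposition~\ref{propn:barcobareq} and in the sifted-colimit result of Pavlov--Scholbach, so the corollary reduces to checking that $U \colon \AAlg_\Lie(A) \to \MMod(A)$ satisfies those hypotheses. The only point worth flagging is that the monoidal structure on $\AAlg_\Lie(A)$ used here is the Cartesian one (direct sum), which is the one to which the general proposition applies.
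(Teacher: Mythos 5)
Your proposal is correct and is exactly the paper's argument: the paper's entire proof is the one-line observation that the forgetful functor $\AAlg_{\Lie}(A) \to \MMod(A)$ satisfies the hypotheses of Proposition~\ref{propn:barcobareq}, and your verifications of those hypotheses (conservativity, preservation of limits, preservation of sifted colimits via Proposition~\ref{propn:opdforgetsifted}) together with the identification $U\txt{Bar}(X) \simeq \Sigma UX$ are precisely the details the paper leaves implicit.
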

\begin{proof}
  The forgetful functor $\LLie(A) \to \MMod(A)$ satisfies the
  assumptions of Proposition~\ref{propn:barcobareq}.
\end{proof}

\begin{remark}
  This result is also found, with essentially the same proof, as
  \cite{ToenBranes}*{Lemma 5.3}. Note that we did not use any special
  property of the Lie operad: the same argument works for associative
  algebras with respect to the Cartesian product in
  $\AAlg_{\mathbf{O}}(A)$ for any $\mathbf{O}$.
\end{remark}

Iterating this equivalence, we get:
\begin{cor}
  By $n$-fold application of the bar construction, we get an equivalence
  \[ \AAlg_{\mathrm{E}_{n}}(\LLie(A)) \isoto \LLie(A) \]
  given on underlying $A$-modules by $X \mapsto X[n]$.
\end{cor}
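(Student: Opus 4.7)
The proof proceeds by induction on $n$, with the base case $n=1$ being exactly the preceding corollary. For the inductive step, I would invoke Dunn's additivity theorem (\cite{HA}*{Theorem 5.1.2.2}), which identifies $\mathrm{E}_n \simeq \mathrm{E}_1 \otimes \mathrm{E}_{n-1}$ as \iopd{}s and therefore provides a natural equivalence
\[
\AAlg_{\mathrm{E}_n}(\AAlg_{\Lie}(A)) \simeq \AAlg_{\mathrm{E}_1}\bigl(\AAlg_{\mathrm{E}_{n-1}}(\AAlg_{\Lie}(A))\bigr),
\]
where the inner \icat{} carries the symmetric monoidal structure induced by the $\mathrm{E}_{\infty}$-structure on $\AAlg_{\Lie}(A)$. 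Since the monoidal structure on $\AAlg_{\Lie}(A)$ is Cartesian (Lemma~\ref{lem:Liesymmmonftr}), the induced structure on $\AAlg_{\mathrm{E}_{n-1}}(\AAlg_{\Lie}(A))$ is again Cartesian by the standard fact that $\mathcal{O}$-algebras in a Cartesian symmetric monoidal \icat{} are again Cartesian symmetric monoidal (\cite{HA}*{\S 2.4.1}).

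With this compatibility in hand, the inductive hypothesis, promoted to a symmetric monoidal equivalence, identifies the inner \icat{} with $\AAlg_{\Lie}(A)$ via the shift $Y \mapsto Y[n-1]$ on underlying $A$-modules. Applying the preceding corollary then yields $\AAlg_{\mathrm{E}_1}(\AAlg_{\Lie}(A)) \simeq \AAlg_{\Lie}(A)$ with an additional shift by $[1]$. Composing the three equivalences produces the desired equivalence, with total shift $X \mapsto X[n-1] \mapsto X[n-1][1] = X[n]$ on underlying $A$-modules.

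The main obstacle is ensuring that every equivalence in the chain is symmetric monoidal for the relevant Cartesian structures, so that passage to $\mathrm{E}_1$-algebras is legitimate at each stage. The delicate point is that the bar/cobar equivalence of Proposition~\ref{propn:barcobareq} a priori only supplies an equivalence of underlying \icats{}; promoting it to a symmetric monoidal equivalence reduces to showing that the bar functor intertwines Cartesian products. This in turn follows because the forgetful functor $\AAlg_{\Lie}(A) \to \MMod(A)$ detects equivalences and preserves limits and sifted colimits (this is the content already exploited in the preceding corollary, and the analogous forgetful functor $\AAlg_{\mathrm{E}_{n-1}}(\AAlg_{\Lie}(A)) \to \MMod(A)$ inherits these properties at each inductive stage via Proposition~\ref{propn:opdforgetsifted}). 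Under these hypotheses, Cartesian products in the algebra categories are computed as direct sums of underlying modules, which commute with the suspension $[1]$ implementing the bar functor in the stable \icat{} $\MMod(A)$. This stability argument delivers monoidality at each step and closes the induction.
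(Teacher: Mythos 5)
Your proof is correct and follows the same route the paper takes, which is simply to iterate the preceding corollary; you have filled in the details (Dunn additivity, the fact that the relevant monoidal structures are all Cartesian, and that the bar equivalence therefore passes to a symmetric monoidal one) that the paper leaves implicit in the phrase ``iterating this equivalence.'' The only simplification available is that an equivalence of \icats{} with finite products automatically preserves them and is hence symmetric monoidal for the Cartesian structures, so the closing stability argument is more than is strictly needed.
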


We can interpret this result as an equivalence
$\AAlg_{\mathrm{E}_{n}}({\LLie_{k}}(A)) \isoto {\LLie_{k-n}}(A)$ given by
the identity on underlying $A$-modules. Combining this result with our
functor \[\AAlg_{\mathrm{E}_{n}}({\LLie_{1}}(A)) \to \AAlg_{\mathrm{E}_{n}}(A)\] 
gives an ``enveloping algebra''
\[ U_{n} \colon {\LLie_{1-n}}(A) \to \AAlg_{\mathrm{E}_{n}}(A)\]
that is left adjoint to a ``forgetful functor'' $\AAlg_{\mathrm{E}_{n}}(A) \to {\LLie_{1-n}}(A)$.

\begin{remark}
  It follows from the proof that under the equivalence
  $\AAlg_{\mathrm{E}_{n}}({\LLie_{1}}(A)) \isoto {\LLie_{1-n}}(A)$, the
  forgetful functor ${\LLie_{1-n}}(A) \to \MMod(A)$ is identified with
  the forgetful functor $$\AAlg_{\mathrm{E}_{n}}({\LLie_{1}}(A)) \to
  {\LLie_{1}}(A) \to \MMod(A).$$ Thus we have a commutative diagram of
  right adjoints
  \opctriangle{\AAlg_{\mathrm{E}_{n}}(A)}{{\LLie_{1-n}}(A)}{\MMod(A),}{}{}{}
  which implies that the corresponding diagram of left adjoints also
  commutes. This observation implies that $U_{n}$ takes the free
  $\Lie_{1-n}$-algebra on an $A$-module $M$ to the free
  $\mathrm{E}_{n}$-algebra on $M$, as in \cite{KnudsenEn}*{Theorem A}.
\end{remark}

\section{The Heisenberg Functor}\label{sec:Heis}

The usual Heisenberg Lie algebra of a symplectic vector space
$(V,\omega: \Lambda^2 V \to k)$ is the vector
space $V \oplus k\c$ equipped with the Lie bracket 
\[
[x + \alpha \c, y + \beta \c] = \omega(x,y) \c.
\]
In other words, it is a central extension of the abelian Lie algebra
$V$ by the one-dimensional abelian Lie algebra $k \c$. Specializing
$\c$ to $i\hbar$, one recovers Heisenberg's celebrated relation $[x,p] = i\hbar$. 
Note that the pairing $\omega$ need not be non-degenerate, so
the construction works even for ``presymplectic'' vector spaces.

Our goal in this section is to articulate a version of this
construction where the input is a \emph{quadratic module of degree
  $1$} --- a module $V$ over a commutative algebra $A$ equipped with a shifted
symmetric pairing $\omega: \Sym^2_A V \to A[1]$ --- and the
output is a shifted Lie algebra given by centrally extending the
abelian Lie algebra $V$ by $A \c$, with $\c$ in degree zero. 
This construction makes sense ``on the nose'' for
objects of the natural category of quadratic modules: 
given a quadratic module $(V,\omega)$, we can define 
the $1$-shifted Heisenberg Lie algebra $\txt{Heis}_{1}(V,\omega)$ 
as $V \oplus A\c$, where $\c$ has degree zero, 
with shifted Lie bracket
\[
[x + \alpha \c, y + \beta \c] = \omega(x,y) \c.
\]
If we have a map $f \colon V \to V'$ such that $f^{*}\omega' = \omega$, then
we get a Lie algebra map $\text{Heis}_{1}(f): \txt{Heis}_{1}(V,\omega) \to \txt{Heis}_{1}(V,\omega')$ by 
\[
x + \alpha \c \mapsto f(x) + \alpha \c
\]
since
\[
[f(x) + \alpha \c, f(y) + \beta \c] = \omega'(f(x),f(y)) \c = \omega(x,y) \c = [x + \alpha \c, y + \beta \c] .
\]
This definition, however, has two issues. Firstly, the Lie
algebra $\txt{Heis}_{1}(V)$ is not cofibrant, and so 
it needs to be cofibrantly replaced in order to get a homotopically
meaningful answer when we apply the enveloping
functors described above. Secondly, a more subtle
issue is that, as we will see in \S\ref{subsec:quadmod}, the obvious
way to make a simplicial category of quadratic modules does not define
the correct \icat{}. It turns out that we can fix the second issue by
taking the maps of quadratic modules to be maps that preserve the
pairings only up to a specific cochain homotopy.  Unfortunately, this notion of map does
not give maps between Heisenberg Lie algebras: if $F: (V,\omega) \to
(V',\omega')$ is a map of quadratic modules in this sense, given by a
map of $A$-modules $f: V \to V'$ and a homotopy $\eta$ between
$\omega$ and $f^*\omega$, meaning
\[
\d_A \circ \eta + \eta \circ \d_{\Lambda^2 V} = \omega - \omega' \circ (f \otimes f),
\]
then we see that
\[
[f(x) + \alpha \c, f(y) + \beta \c] = \omega'(f(x),f(y)) \c = \omega(x,y) \c - \left( \d_A(\eta(x,y)) + \eta(\d_V x, y) +(-1)^x \eta( x, \d_V y) \right).
\] 
In other words, $f$ produces a Lie algebra map only \emph{up to homotopy}.

For this reason we take a technical detour through
\emph{$L_{\infty}$-algebras}, as the formalism of $L_\infty$-algebras 
provides a convenient tool for working with Lie algebras up
to homotopy. The key advantage is that one works with 
the coalgebra of \emph{Chevalley-Eilenberg chains}
$\CL(\g)$ of a Lie algebra $\g$ --- its \emph{bar construction} $\mathbb{B}(V)$ ---
rather than directly with $\g$. In particular, maps between bar constructions 
capture the notion of ``maps of Lie algebras up
to homotopy.''

Using the flexibility of $L_\infty$-algebras, we will see in \S\ref{subsec:HeisLI}
that the corrected maps of quadratic modules induce natural maps on
the bar constructions $\mathbb{B}\txt{Heis}_{1}(V)$. We can then
apply the cobar construction $\bbOmega$ to get a
functor to shifted Lie algebras; this approach also fixes the first issue mentioned
above, since $\bbOmega \mathbb{B}\txt{Heis}_{1}(V)$ is a natural cofibrant
replacement of $\txt{Heis}_{1}(V)$. Passing to \icats{}, we produce a functor
\[
\HH\colon \QQuad_1(A) \to {\LLie_{1}}(A).
\]

\subsection{Quadratic Modules}\label{subsec:quadmod}

In this section we introduce the \icat{} of quadratic modules. This
admits a simple description: it is the pullback of \icats{}
\csquare{\QQuad_{n}(A)}{\MMod(A)_{/A[n]}}{\MMod(A)}{\MMod(A),}{}{}{}{\Lambda^{2}_{[-n]}}
where the right vertical functor is the forgetful functor that takes a
morphism to $A[n]$ to its domain, and $\Lambda^{2}_{[-n]} :=
\Lambda^{2}(\blank[-n])[2n]$.

\begin{remark}
  This sign convention is justified by
  Remark~\ref{rmk:shiftedliesign}: we want an $n$-shifted quadratic
  module $(X, \omega)$ over $A$ to determine an $n$-shifted Lie
  algebra structure on $X \oplus A$ with the Lie bracket of $x,y \in
  X$ given by $\omega(x,y)$ in $A$. Thus we want a skew-symmetric
  pairing on $X[-n]$, which corresponds to a pairing
  $\Lambda^{2}_{[-n]}X \to k[n]$; note that this pairing is symmetric for $n$
  odd and skew-symmetric for $n$ even.
\end{remark}

\begin{remark}
  The closely related situation of modules equipped with (shifted)
  \emph{symmetric} pairings has been studied by Vezzosi~\cite{VezzosiQuad}.
\end{remark}

For our purposes, it will be convenient to have a simplicial category
that models the \icat{} $\QQuad_{n}(A)$ of such quadratic modules,
so that we can give explicit constructions that play nicely with the
enveloping algebra functors.  As a first attempt, let us try to mimic
the pullback construction above in the setting of simplicial
categories.

By Corollary~\ref{cor:Chkicat}(ii) the \icat{} $\MMod(A)$ is modelled
by the usual simplicial enrichment $\sMod(A)$ of the category
$\Mod(A)^{\txt{cf}}$ of fibrant-cofibrant $A$-modules in cochain
complexes.  Similarly, the slice
\icat{} $\MMod(A)_{/A[n]}$ can be modelled by the corresponding
simplicial enrichment of the fibrant-cofibrant objects in the slice
category $\Mod(A)^{\txt{cf}}_{/A[n]}$. Na\"ively we might therefore
try to model the \icat{} $\QQuad_{n}(A)$ by the pullback of simplicial
categories
\csquare{\mathbf{C}}{\sMod(A)_{/A[n]}}{\sMod(A)}{\sMod(A),}{}{}{}{\Lambda^{2}_{[-n]}}
as we did with the \icats{}.
This pullback gives a simplicial enrichment of the obvious strict category
of quadratic modules, but it is not a homotopy pullback diagram of simplical categories: the right vertical functor is
not a fibration in the model category of simplicial categories. We
therefore need to replace it with a map that is a fibration, which we
do as follows:

\begin{defn}
  For $X \in \Mod(A)$, let $\Mod(A)'_{/X}$ be the category in which an
  object is a fibrant-cofibrant object of $\Mod(A)_{/X}$,
  namely a pair \[(C \in \Mod(A), f \colon C \to X),\] with $C$
  cofibrant in $\Mod(A)$ and $f$ a fibration, and in which a
  morphism $(C,f) \to (C',f')$ is a map $\phi \colon C \to C'$
  together with a cochain homotopy $\eta \colon C \to \Omega(\Delta^{1}) \otimes X$ from $f$
  to $f' \circ \phi$. This category has an obvious simplicial enrichment
  $\sMod(A)'_{/X}$,  defined as usual by tensoring with
  $\Omega(\Delta^{\bullet})$.
\end{defn}

Note that a morphism between such objects respects the maps down to $X$ only up to homotopy.

\begin{lemma}
  The inclusion $i \colon \sMod(A)_{/X} \to \sMod(A)'_{/X}$ is a weak
  equivalence of simplicial categories, and the projection
  $p \colon \sMod(A)'_{/X} \to \sMod(A)$ is a fibration of simplicial
  categories.
\end{lemma}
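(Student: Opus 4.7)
The plan is to handle the two claims separately: for the inclusion $i$, reduce to a mapping-space comparison between a strict fiber and a homotopy fiber; for the projection $p$, verify the two Bergner fibration conditions, namely a local Kan fibration on mapping spaces plus lifting of equivalences at the object level.

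For the inclusion $i$: since $i$ is the identity on objects (both simplicial categories have the same pairs $(C,f)$), it suffices to show that for each pair of objects the induced map on mapping spaces is a weak equivalence of simplicial sets. Identify the $n$-simplices of $\sHom_{\sMod(A)_{/X}}((C,f),(C',f'))$ with the strict fiber over the vertex $f$ of the post-composition map
\[
\pi\colon \sHom_{\sMod(A)}(C,C')\longrightarrow \sHom_{\sMod(A)}(C,X),\qquad \phi\mapsto f'\circ\phi,
\]
while the $n$-simplices of $\sHom_{\sMod(A)'_{/X}}((C,f),(C',f'))$ consist of pairs $(\phi,\eta)$ with $\eta$ a path from $f$ to $\pi(\phi)$, i.e., a standard model for the homotopy fiber of $\pi$ over $f$. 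Because $f'$ is a fibration and $C$ is cofibrant, and because $\Omega(\Delta^{\bullet})$ provides a coherent right framing of $\Mod(A)$ (Lemma~\ref{lem:Chkcohfr}), the map $\pi$ is a Kan fibration between Kan complexes; hence its strict fiber is weakly equivalent to its homotopy fiber, giving the required equivalence of mapping spaces.

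For the projection $p$, I check the two conditions in the Bergner model structure. The local Kan fibration condition says the forgetful map
\[
p_*\colon \sHom_{\sMod(A)'_{/X}}((C,f),(C',f'))\longrightarrow \sHom_{\sMod(A)}(C,C')
\]
is a Kan fibration; here the fiber over a vertex $\phi$ is the space of paths in the Kan complex $\sHom_{\sMod(A)}(C,X)$ from $f$ to $f'\circ\phi$, and horns of $p_*$ are filled by first extending $\phi$ using the framing and then extending the homotopy $\eta$ using that $\sHom_{\sMod(A)}(C,X)$ is Kan.

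For the isofibration condition, given $(C,f)$ in $\sMod(A)'_{/X}$ and an equivalence $\alpha\colon C\xrightarrow{\sim}C'$ in $\sMod(A)$, we must produce an object $(C',f')$ and an equivalence $(\alpha,\eta)\colon (C,f)\to (C',f')$ lifting $\alpha$. Since $\alpha$ is a quasi-isomorphism of cofibrant modules and every object of $\Mod(A)$ is fibrant in the projective structure, Ken Brown's lemma gives a bijection $\alpha^*\colon [C',X]\xrightarrow{\sim}[C,X]$ on chain-homotopy classes; hence there is a representative $\tilde f'\colon C'\to X$ with $\tilde f'\circ\alpha$ chain-homotopic to $f$ via some $\eta$. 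The main obstacle is to replace $\tilde f'$ within its chain-homotopy class by a levelwise surjection, since the Bergner isofibration condition demands that $f'$ be a genuine fibration without changing $C'$. I plan to achieve this by adding a null-homotopic levelwise surjection $s\colon C'\to X$, constructed by exploiting the explicit cell description of cofibrant $A$-modules (as in the characterization cited from \cite{BarthelMayRiehlDG}) together with the fact that $f$ itself is levelwise surjective, so that enough free generators on $C'$ can be mapped surjectively onto a contractible resolution of $X$; the resulting sum $f':=\tilde f'+s$ is then both a fibration and in the correct homotopy class, yielding the desired lifted equivalence $(\alpha,\eta)\colon (C,f)\to (C',f')$.
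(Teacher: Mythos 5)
Your argument for the inclusion $i$, and for the Kan-fibration condition on mapping spaces for $p$, is in substance the paper's: the paper exhibits $\sHom_{A/X}(f,g)$ as the pullback of $\sHom'_{A/X}(f,g)$ along the constant-path vertex of the space of paths starting at $f$ and concludes by right properness, which is exactly your comparison of the strict fibre of a Kan fibration with its homotopy fibre; and it obtains the local Kan fibration for $p$ by pulling back $\ev_0 \times \ev_1 \colon \sHom_{A}(Y, \Omega(\Delta^{1})\otimes X) \to \sHom_{A}(Y,X) \times \sHom_{A}(Y,X)$. One small imprecision: the Kan-fibrancy of this last map comes from the Reedy fibrancy of $\Omega(\Delta^{\bullet})\otimes X$ (Lemma~\ref{lem:OmegaReedy}), not merely from $\sHom_{A}(C,X)$ being a Kan complex, since the path object in play is $\sHom_{A}(C,\Omega(\Delta^{1})\otimes X)$ rather than $\sHom_{A}(C,X)^{\Delta^{1}}$.

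The genuine gap is in your isofibration step. The proposed repair $f' := \tilde f' + s$ fails twice over. First, adding a levelwise surjection to an arbitrary map does not produce a levelwise surjection (consider $\tilde f' = -s$), so even if such an $s$ existed, $f'$ need not be a fibration. Second, and fatally for the whole strategy, there need not exist \emph{any} levelwise surjection $C' \to X$: the levelwise size of $C'$ is not controlled by its homotopy type. For instance, over $A = k$ take $X = k \oplus D$ with $D$ the acyclic complex $k \to k$ in degrees $0$ and $1$, the object $(X, \id_{X})$, and the homotopy equivalence $\alpha \colon X \to C' = k$ projecting onto cohomology; every map $k \to X$ vanishes in degree $1$, so no fibration $C' \to X$ exists and no object of $\sMod(A)'_{/X}$ has underlying module $C'$ at all. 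Your plan of rectifying $\tilde f'$ within its homotopy class therefore cannot be carried out on the prescribed module. The paper takes a different route here: given an object $(Y', f)$ and a cochain homotopy equivalence $\phi \colon Y \to Y'$, it produces the lift $(Y, f\circ\phi) \to (Y', f)$ by precomposition, with the trivial homotopy as the required path, so that no surjectivity repair is ever attempted. (Even there one must still know that $f\circ\phi$ qualifies as an object of $\sMod(A)'_{/X}$, a point on which the paper is terse; but the structural lesson is that the lift should be obtained by precomposing $f$, not by trying to realize a homotopy class of maps out of $C'$ by a fibration, which the example above shows is impossible in general.)
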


\begin{proof}
  Recall that a functor of simplicial categories is a fibration \IFF{}
  it is an isofibration on homotopy categories (\ie{} every isomorphism
  in the target can be lifted to one in the source) and it is given by
  Kan fibrations on the mapping spaces.  Let $(Y, f \colon Y \to X)$
  and $(Z, g \colon Z \to X)$ be two objects of $\sMod(A)'_{/X}$; for
  brevity we will refer to these objects as just $f$ and $g$. Then the
  simplicial set of maps between them is given by the pullback square
  \csquare{\sHom'_{A/X}(f,g)}{\sHom_{A}(Y, Z)}{\sHom_{A}(Y,
    \Omega(\Delta^{1})\otimes X)}{\sHom_{A}(Y, X) \times\sHom_{A}(Y,
    X),} {}{}{\{f\} \times (g \circ -)}{\ev_0 \times \ev_1} where the
  bottom horizontal map evaluates a map in $\Omega(\Delta^{1})\otimes
  X$ at the two endpoints of the 1-simplex.  Since
  $\Omega(\Delta^{\bullet})$ is Reedy fibrant, the bottom horizontal
  map is a Kan fibration, and hence so is the top horizontal map. To
  see that $p$ is an isofibration on homotopy categories, observe that
  since $\Mod(A)$ is a model category and $\sMod(A)$ contains only the
  fibrant-cofibrant objects, the isomorphisms in the homotopy category
  of $\sMod(A)$ are precisely the cochain homotopy equivalences. Given
  a cochain homotopy equivalence $\phi \colon Y \to Y'$ and a map $f
  \colon Y' \to X$, a trivial cochain homotopy $\eta$ from $f \circ
  \phi$ to itself gives a map $\tilde{\eta}: (Y, f \phi) \to (Y', f)$
  in $\sMod(A)'_{/X}$ over $\phi$. This map induces a simplicial
  homotopy equivalence on all mapping spaces, and thus becomes an
  isomorphism in the homotopy category.

  Since the simplicial categories $\sMod(A)_{/X}$ and $\sMod(A)'_{/X}$
  have the same objects, the functor $i$ is obviously essentially
  surjective on the homotopy categories. To see that it is a weak
  equivalence, it therefore only remains to show that for any two
  objects $f \colon Y \to X$, $g \colon Z \to X$, the map of
  simplicial sets
  \[ \sHom_{A/X}(f, g) \to \sHom'_{A/X}(f, g) \]
  is a weak equivalence. To prove this, we consider the commutative
  diagram of simplicial sets
\[  
\begin{tikzcd}
    \sHom_{A/X}(f, g) \arrow[twoheadrightarrow]{d}\arrow{r} & \sHom'_{A/X}(f, g)
    \arrow[twoheadrightarrow]{d} \arrow{rr} & & \sHom_{A}(Y, Z) \arrow[twoheadrightarrow]{d}{g \circ -} \\
    \{f\} \arrow{r}{\sim} & \sHom_{A}(Y, \Omega(\Delta^{1}) \otimes X)_{f} \arrow{r} \arrow[twoheadrightarrow]{d}{\sim}& \sHom_{A}(Y,
    \Omega(\Delta^{1}) \otimes X) \arrow{r} \arrow[twoheadrightarrow]{d}{\ev_0}& \sHom_{A}(Y, X) \\
    & \{f\} \arrow{r} & \sHom_{A}(Y, X).
  \end{tikzcd}
\]
Note that by definition the bottom square, the upper right square, and
the outer composite square in the top row are all pullback squares.  Hence
the top left square is also a pullback.  The top right vertical arrow is a Kan
fibration because $g : Z \to X$ is a fibration, and so as indicated in
the diagram, it follows that all three top vertical arrows are Kan
fibrations.  The bottom right arrow $\ev_0$ is a trivial Kan fibration
and so the bottom left arrow is a trivial Kan fibration.  By the
2-out-of-3 property, we thus deduce that the bottom map in the upper
left square is a weak equivalence.  Hence the top left horizontal map
is also a weak equivalence, as required, as simplicial sets form a
right proper model category.
\end{proof}

We then define our simplicial category of quadratic modules by:
\begin{defn}
  Let $\sQuad_{n}(A)$ be the simplicial category defined by
  the pullback square
  \csquare{\sQuad_{n}(A)}{\sMod(A)'_{/A[n]}}{\sMod(A)}{\sMod(A),}{}{}{}{\Lambda^{2}_{[-n]}}
  which is also a homotopy pullback square.
  We also write $\Quad_{n}(A)$ for the underlying category of
  $\sQuad_{n}(A)$, which sits in a pullback square
  \csquare{\Quad_{n}(A)}{\Mod(A)'_{/A[n]}}{\Mod(A)^{\txt{cf}}}{\Mod(A)^{\txt{cf}}.}{}{}{}{\Lambda^{2}_{[-n]}}  
\end{defn}

We now turn to equipping these categories with a symmetric monoidal structure. 
On the underlying modules, we simply use $\oplus$, but we need to describe how the quadratic forms are combined.
Given $(X, \omega)$ and $(X', \omega')$, let $\omega+\omega'$ on the direct sum $X \oplus X'$ be given by the composite map
\[
\Lambda^{2}_{[-n]}(X \oplus X') \xto{\pi} \Lambda^{2}_{[-n]}X \oplus \Lambda^{2}_{[-n]}X'  \xto{\omega \oplus \omega'} A[n] \oplus A[n] \xto{+} A[n],
\]
where $\pi$ is projection. We then define $(X,\omega) \oplus (X',
\omega')$ to be $(X \oplus X', \omega + \omega')$. Given two maps $(f,
\eta) \colon (X, \omega) \to (X', \omega')$ and $(g, \psi) \colon (Y, \nu) \to (Y', \nu')$, their
tensor product $(f, \eta) \oplus (g, \psi) \colon (X, \omega) \oplus
(X', \omega') \to (Y, \nu) \oplus (Y', \nu')$ is defined to be $f
\oplus g \colon X \oplus X' \to Y \oplus Y'$ together with the
homotopy
\[ \Lambda^{2}_{[-n]}(X \oplus X') \xto{\pi} \Lambda^{2}_{[-n]}X \oplus
\Lambda^{2}_{[-n]}X'  \xto{\eta \oplus \psi} \Omega(\Delta^{1}) \otimes A[n]
\oplus \Omega(\Delta^{1}) \otimes A[n] \xto{+} \Omega(\Delta^{1}) \otimes A[n].\]

\begin{propn}
  This definition extends naturally to a symmetric monoidal structure
  on the simplicial category $\sQuad_{n}(A)$. Moreover, this symmetric
  monoidal structure is pseudonatural in $A$.
\end{propn}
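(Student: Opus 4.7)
The plan is to build the symmetric monoidal structure by combining three standard pieces: (i) the strict symmetric monoidal structure on $\sMod(A)$ under $\oplus$, with associators, unitors and symmetries given by the canonical isomorphisms of direct sums; (ii) the fact that $A[n]$ is a strict commutative monoid (indeed abelian group) object in $\sMod(A)$ under its module addition $+ \colon A[n] \oplus A[n] \to A[n]$; and (iii) the canonical oplax symmetric monoidal structure on the functor $\Lambda^2$ with respect to $\oplus$, whose coherence data are the projections $\pi_{X,Y} \colon \Lambda^2(X \oplus Y) \to \Lambda^2 X \oplus \Lambda^2 Y$ coming from the decomposition $\Lambda^2(X \oplus Y) \cong \Lambda^2 X \oplus (X \otimes Y) \oplus \Lambda^2 Y$. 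Because $\Omega(\Delta^{\bullet}) \otimes A[n]$ inherits the strict commutative monoid structure levelwise, all of this passes to the simplicial enrichment.

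Given these ingredients, the formulas in the definition for $(X, \omega) \oplus (X', \omega')$ and for $(f, \eta) \oplus (g, \psi)$ are precisely what one obtains by composing $\pi$ with $\omega \oplus \omega'$ (resp.\ $\eta \oplus \psi$) and then with addition on $A[n]$ (resp.\ on $\Omega(\Delta^1) \otimes A[n]$). To promote these formulas to a full symmetric monoidal structure on $\sQuad_n(A)$, I would view the defining square as a pullback in the 2-category of symmetric monoidal simplicial categories and oplax symmetric monoidal functors: the right vertical functor is strong (indeed strict) symmetric monoidal, since $A[n]$ is a strict monoid, and the lower horizontal $\Lambda^2$ is oplax symmetric monoidal. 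Associators, unitors and symmetry isomorphisms on $\sQuad_n(A)$ are then inherited from those on $\sMod(A)$, and the coherence pentagons and hexagons reduce to the strict commutative-monoid axioms for $A[n]$ together with the compatibility of $\pi_{X,Y}$ with the monoidal coherence maps in $\sMod(A)$.

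For pseudonaturality in $A$, a morphism $\phi \colon A \to B$ of commutative algebras over $k$ induces the base-change functor $\phi_! \colon \sMod(A) \to \sMod(B)$ (restricted, as in Lemma~\ref{lem:AlgOftr}, to cofibrant modules), which is strong symmetric monoidal under $\oplus$ since direct sums are preserved by base change on the nose, and which commutes with $\Lambda^2$ up to canonical natural isomorphism. Moreover, $\phi_!(A[n]) \cong B[n]$ as abelian group objects. The induced functor $\sQuad_n(A) \to \sQuad_n(B)$ therefore carries the symmetric monoidal structure just constructed, and the pseudofunctoriality of $\phi \mapsto \phi_!$, already established at the level of $\sMod$, transfers to the quadratic-module setting via the same pullback construction.

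The main obstacle will be a careful bookkeeping of the coherence data at the morphism level, since morphisms in $\sQuad_n(A)$ carry homotopy data landing in $\Omega(\Delta^{\bullet}) \otimes A[n]$ that must be compatible both with the addition on $A[n]$ and with the projections $\pi_{X,Y}$; one must check, for instance, that the symmetry constraint on $\sMod(A)$ intertwines the two possible orderings of the summands in $\omega + \omega'$ and $\eta + \psi$. The strictness of the monoid structure on $A[n]$ and the explicit form of $\pi_{X,Y}$ reduce these checks to a finite collection of diagram chases, but they are not entirely trivial; nonetheless, no genuinely new homotopical input is needed beyond what is already present in $\sMod(A)$ and its slices.
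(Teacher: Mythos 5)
Your construction is correct in substance, but it takes a more hands-on route than the paper. The paper's proof is essentially a two-line reduction: since $\sQuad_{n}(A)$ is, by construction, the simplicial category associated (via Proposition~\ref{propn:pseudosimp}) to the pseudofunctor $[m] \mapsto \Quad_{n}(A \otimes \Omega(\Delta^{m}))$, it suffices to equip the \emph{ordinary} categories $\Quad_{n}(B)$ with a symmetric monoidal structure pseudonaturally in the commutative algebra $B$; the simplicial enrichment and the pseudonaturality in $A$ then come along for free from the appendix machinery, and the 1-categorical check is declared routine. You instead build the structure directly on the enriched category, which forces you to track the homotopy data in morphisms at every simplicial level — your closing remark that this "passes to the simplicial enrichment" because $\Omega(\Delta^{\bullet}) \otimes A[n]$ is levelwise a strict commutative monoid is really the same observation as the paper's reduction, just stated informally. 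The one place I would be careful is your framing of the square as "a pullback in the 2-category of symmetric monoidal simplicial categories and oplax symmetric monoidal functors": such pullbacks do not in general exist or inherit a monoidal structure automatically, and what actually happens is that the strict pullback of underlying categories is endowed with a tensor product \emph{using} the oplax structure maps $\pi_{X,Y}$ of $\Lambda^{2}$ and the strict monoid structure on $A[n]$, after which coherence must be verified by hand — which is exactly the finite list of diagram chases you defer to at the end. Your identification of the three ingredients (the $\oplus$-monoidal structure on $\sMod(A)$, the strict commutative monoid $A[n]$, and the oplax structure on $\Lambda^{2}$) is a useful conceptual decomposition that the paper leaves implicit, and your treatment of pseudonaturality via base change preserving $\oplus$, $\Lambda^{2}$ up to canonical isomorphism, and $A[n] \mapsto B[n]$ matches the content the paper needs; what the paper's route buys is that all of this only has to be checked once, unenriched, with the simplicial and naturality bookkeeping absorbed into Proposition~\ref{propn:pseudosimp} and its corollary.
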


\begin{proof}
  Since $\sQuad_{n}(A)$ is the simplicial category associated to
  $\Quad_{n}(A \otimes \Omega(\Delta^{\bullet}))$ by the construction
  of Proposition~\ref{propn:pseudosimp}, it suffices to show that the
  categories $\Quad_{n}(A)$ are symmetric monoidal, pseudonaturally in
  $A$. It is easy to see that our definition does indeed give such a
  pseudonatural symmetric monoidal structure.
\end{proof}

\begin{cor}
The \icat{} $\QQuad_{n}(A)$ has a natural symmetric monoidal structure.
\end{cor}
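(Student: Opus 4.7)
The plan is to mirror the strategy already used for $\LLie_{n}(A)$: promote the symmetric monoidal simplicial category $\sQuad_{n}(A)$ to a simplicial (coloured) operad, form its simplicial category of operators $\sQuad_{n}(A)^{\oplus}$, take the coherent nerve to obtain an \iopd{} over $\bbGamma^{\op}$, and then verify that this projection is a coCartesian fibration so that the resulting \iopd{} is in fact a symmetric monoidal \icat{} with underlying \icat{} $\QQuad_{n}(A)$.

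Concretely, I would define the multimorphism space
\[
\sHom\bigl((X_{1},\ldots,X_{k}),Y\bigr) := \sHom_{\sQuad_{n}(A)}\bigl(X_{1} \oplus \cdots \oplus X_{k},\,Y\bigr),
\]
where $\oplus$ is the direct sum of quadratic modules built in the preceding proposition. This is well-defined as a simplicial operad because $\Omega(\Delta^{\bullet})$-tensoring commutes with finite direct sums, and each mapping space is a Kan complex since the targets of the fibrations to $A[n]$ are fibrant. Thus by \cite{HA}*{Proposition 2.1.1.27}, the coherent nerve $\mathrm{N}(\sQuad_{n}(A)^{\oplus}) \to \bbGamma^{\op}$ is an \iopd{}.

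The key verification is the coCartesian lifting property. Given $(X_{1},\ldots,X_{n})$ and $\phi \colon \angled{n} \to \angled{m}$ in $\bbGamma^{\op}$, for each $i \in \angled{m}$ I would choose a cofibrant replacement $X'_{i} \to \bigoplus_{\phi(j)=i} X_{j}$ in $\Quad_{n}(A)$, using that the relevant model structure on $\Mod(A)^{\txt{cf}}_{/A[n]}$ has enough cofibrant objects and that $\bigoplus_{\phi(j)=i} X_{j}$ is again fibrant over $A[n]$ by additivity of the pairings. The induced morphism $(X_{1},\ldots,X_{n}) \to (X'_{1},\ldots,X'_{m})$ over $\phi$ should be coCartesian: the resulting square of mapping spaces is homotopy Cartesian because (i) the bottom row is discrete, (ii) the fibres over each $\psi \colon \angled{m} \to \angled{k}$ decompose as products of mapping spaces $\sHom_{\sQuad_{n}(A)}\bigl(\bigoplus_{i} X'_{i},\,Y_{j}\bigr)$ versus $\sHom_{\sQuad_{n}(A)}\bigl(\bigoplus_{k} X_{k},\,Y_{j}\bigr)$, and (iii) direct sums of weak equivalences in $\sQuad_{n}(A)$ are weak equivalences, since they are detected on underlying $A$-modules (and on the homotopy from $\omega$ to $f^{*}\omega'$ level, where $\Omega(\Delta^{1})\otimes -$ preserves quasi-isomorphisms by Lemma~\ref{lem:unbddflat}). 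This last point is the only real obstacle, and it reduces to the corresponding fact for $\sMod(A)$, together with the observation that the homotopy pullback description of $\QQuad_{n}(A)$ respects $\oplus$.

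Finally, the underlying \icat{} of the symmetric monoidal structure so produced is the coherent nerve of $\sQuad_{n}(A)$, which by construction (Corollary~\ref{cor:Chkicat}-style reasoning applied to the pullback of simplicial categories modelling $\QQuad_{n}(A)$) is equivalent to $\QQuad_{n}(A)$. As for naturality, the pseudonaturality of the symmetric monoidal structure on $\sQuad_{n}(A)$ in $A$ given in the preceding proposition lifts, via exactly the Duskin-nerve argument of Lemma~\ref{lem:AlgOftr}, to a functor $\CComm(k) \to \CComm(\LCatI)$, so no further work is needed beyond what has already been set up.
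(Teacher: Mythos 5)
Your proposal imports the machinery the paper uses for $\LLie_{n}(A)$ --- simplicial operad, category of operators, verification that the projection to $\bbGamma^{\op}$ is a coCartesian fibration --- but that is not the paper's argument here, and as written your version contains a real error. With multimorphism spaces defined as $\sHom_{\sQuad_{n}(A)}(X_{1}\oplus\cdots\oplus X_{k},Y)$ (maps \emph{out of} the direct sum), a coCartesian lift of $\phi$ at $(X_{1},\ldots,X_{n})$ requires, for each $i$, a morphism $\bigoplus_{\phi(j)=i}X_{j}\to X'_{i}$; your candidate is a ``cofibrant replacement'' $X'_{i}\to\bigoplus_{\phi(j)=i}X_{j}$, which points the wrong way. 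That construction is copied from the Lie-algebra case, where it is correct only because the paper deliberately works with the \emph{opposite} simplicial category ${\sLie}(A)^{\op}$: there the coproduct of Lie algebras is not the direct sum, so $\bigoplus_{j}X_{j}$ need not be a cofibrant Lie algebra and must be replaced. You also invoke a model structure and cofibrant replacements for quadratic modules; the paper never constructs such a model structure --- $\QQuad_{n}(A)$ is defined only as a pullback of \icats{}, modelled by a pullback of simplicial categories.

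The point you are missing is that none of this is needed, because the obstruction that forces the operadic detour for Lie algebras is absent here. If $(V,\omega)$ and $(V',\omega')$ are objects of $\sQuad_{n}(A)$, then $V\oplus V'$ is again a cofibrant $A$-module and $\omega+\omega'$ is again a fibration onto $A[n]$ (it is already surjective on the summand $\Lambda^{2}V$), so $(V\oplus V',\omega+\omega')$ is literally an object of $\sQuad_{n}(A)$. This is exactly why the preceding proposition produces an honest, strictly functorial symmetric monoidal structure on the simplicial category $\sQuad_{n}(A)$, pseudonaturally in $A$, and the corollary is then immediate: $\oplus$ preserves weak equivalences in each variable (direct sums of quasi-isomorphisms are quasi-isomorphisms, and similarly for the homotopy components of morphisms), so the coherent nerve inherits the symmetric monoidal structure. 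If one insists on phrasing this via the category of operators, the coCartesian lift over $\phi$ is simply the identity map to $\bigoplus_{\phi(j)=i}X_{j}$ --- no replacement and no model-category input are required.
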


\subsection{$L_\infty$-Algebras}\label{subsec:Linftyalg}

In this section we introduce a version of $L_\infty$ algebras
well-suited to our purposes. A crucial requirement is that our notion
must work over any commutative algebra $A$ over a field $k$ of
characteristic zero and must play nicely with base-change. We will not
develop a general framework, but rather proceed in a somewhat ad hoc
fashion that produces the limited results we need.

Recall that $\txt{Cocomm}(A)$ denotes the category of cocommutative
coalgebras in $\Mod(A)$.  A cocommutative coalgebra $C$ is
\emph{coaugmented} if there is a retract of coalgebras $A \xto{\eta} C
\to A$.  Its \emph{reduced coalgebra} $\overline{C}$ is the kernel of
the counit map, so that $C = A \oplus \overline{C}$, and
$\overline{C}$ inherits a coproduct $\bar{\Delta}$ by
\[ \bar{\Delta}(c) = \Delta(c) - c \otimes 1 - 1 \otimes c. \] For us,
a coaugmented cocommutative coalgebra $A \xto{\eta} C$ is
\emph{conilpotent} if for any element $c$ in the reduced coalgebra
$\overline{C}$, there is some integer $n$ such that $\bar{\Delta}^n(c)
= 0$.  The key example is the symmetric coalgebra $\Sym^c_A(V)$, whose
reduced coalgebra $\Sym^{\geq 1}_A(V)$ is manifestly conilpotent as
$\ker(\bar{\Delta}^n) = \bigoplus_{j = 1}^n \Sym^{i}_A(V)$.  We write
$\CNCoalg(A)$ for the category of conilpotent coaugmented
cocommutative coalgebras in $\Mod(A)$, where we require maps to
preserve the coaugmentations.
 
A morphism of commutative algebras $f \colon A \to B$ induces a base
change functor \[f_{!} \colon \CNCoalg(A) \to \CNCoalg(B)\] by
tensoring with $B$ over $A$. Hence we can define a simplicial category
$\sCNCoalg(A)$ by taking the simplicial set of morphisms to be
\[\sHom_{\CNCoalg(A)}(C, C')_{k}= 
\Hom_{\CNCoalg(\Omega(\Delta^{k}) \otimes A)}(\Omega(\Delta^{k}) \otimes C, \Omega(\Delta^{k}) \otimes C'),\]
in parallel with our construction of simplicial categories of algebras over operads.

\begin{defn}
  For any commutative algebra $A$, there is a cobar-bar adjunction
  \[ \bbOmega : \CNCoalg(A) \rightleftarrows \Lie_1(A) : \mathbb{B}.\]
  The \emph{bar construction} $\mathbb{B}$ is given by the  functor $\CL$ of Definition \ref{CL}.
  (Recall this is the usual Chevalley-Eilenberg chains, after shifting.)
  The \emph{cobar construction} $\bbOmega$ assigns to $C \in \CNCoalg(A)$, the semi-free $\Lie_1$-algebra
  \[
  ({\rm Free}_{\Lie_1}(\overline{C}), \d_{\bbOmega})
  \]
  whose differential is the Lie algebra derivation of degree 1 determined by the shift of the coproduct on $\overline{C}$.
  This adjunction is natural in $A$, so in particular it gives a
  simplicial adjunction between the associated simplicial categories.
\end{defn}

Note that by working with shifted Lie algebras, 
we obviate the need to shift in constructing the Chevalley-Eilenberg chains.

\begin{remark}
  For a field $k$ of characteristic zero, Hinich~\cite{HinichDgCoalg}
  constructs a model structure on $\CNCoalg(k)$ where all objects are
  cofibrant and the fibrant objects are the \emph{semi-free}
  coalgebras, \ie{} those whose underlying graded coalgebra is $\Sym^c(V)$ for some graded vector space $V$. 
  The cobar-bar adjunction is a Quillen equivalence between this model category and
  that of Lie algebras. In particular, for any $L \in \Lie_1(A)$, the
  adjunction counit $\bbOmega \mathbb{B} L \to L$ is a cofibrant
  replacement of $L$. We do not know if an analogous model structure
  exists on $\CNCoalg(A)$ when $A$ is not a field, but it turns out
  that $\bbOmega \mathbb{B} L$ is still often a cofibrant replacement,
  which is enough for our purposes:
\end{remark}

\begin{lemma}\label{lem:cofcobareq}
  Let $L$ be a shifted Lie algebra over $A$ whose underlying
  $A$-module is cofibrant. Then 
  \begin{enumerate}[(i)]
  \item the counit map $\bbOmega \mathbb{B} L \to L$ is a
  weak equivalence, and 
  \item the shifted Lie algebra $\bbOmega \mathbb{B} L$ is cofibrant.
  \end{enumerate}
\end{lemma}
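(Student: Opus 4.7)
The plan is to prove (ii) first by exhibiting $\bbOmega \mathbb{B} L$ as a cell object in $\Lie_1(A)$, then to prove (i) via a weight-filtration spectral sequence whose associated graded is the operadic Koszul complex for the Koszul pair $(\Lie_1, \Comm)$.

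For (ii), I would filter $\bbOmega \mathbb{B} L$ by the sub-$\Lie_1$-algebras $F_n$ generated by $\bigoplus_{k=1}^n \Sym^k_A(L) \subset \bar{\mathbb{B} L}$. This is preserved by the cobar differential because on a generator $x \in \Sym^k_A(L)$ the differential splits into three pieces: (a) the $A$-differential on $L$ acting factorwise, which preserves $\Sym^k$; (b) the Chevalley-Eilenberg contraction via the Lie bracket of $L$, which lands in $\Sym^{k-1}$; and (c) the reduced shuffle coproduct, which produces a bracket of elements in $\Sym^i_A(L) \otimes_A \Sym^{k-i}_A(L)$ for $1 \le i \le k-1$. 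All three stay in $F_n$ when $k \le n$, so each inclusion $F_{n-1} \hookrightarrow F_n$ is a pushout that freely adjoins new generators in $\Sym^n_A(L)$ with prescribed differentials landing in $F_{n-1}$. Since $k$ has characteristic zero, $\Sym^n_A(L)$ is a retract of the cofibrant $A$-module $L^{\otimes_A n}$ via the symmetrization idempotent, hence itself cofibrant; Proposition~\ref{propn:opdch}(ii) then implies that each inclusion is a cofibration in $\Lie_1(A)$, since the left Quillen functor ${\rm Free}_{\Lie_1}$ turns the cofibration $\Sym^n_A(L)[-1] \hookrightarrow \txt{cone}(\Sym^n_A(L)[-1])$ in $\Mod(A)$ into a generating-cell attachment. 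Transfinite composition starting from $F_0 = 0$ then exhibits $\bbOmega \mathbb{B} L = \colim_n F_n$ as a cell object, hence cofibrant.

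For (i), I would use the total weight filtration: let $W_n \subset \bbOmega \mathbb{B} L$ be the sub-cochain-complex spanned by iterated brackets $[y_1, [y_2, \dots, [y_{j-1}, y_j]]]$ of generators $y_i \in \Sym^{m_i}_A(L)$ with $\sum_i m_i \le n$. The three contributions (a)--(c) above all preserve or strictly decrease the total weight, so $W_n$ is a subcomplex and on the associated graded $W_n/W_{n-1}$ only (a) and (c) survive. A direct combinatorial check should then identify $W_n / W_{n-1}$ with the weight-$n$ part of the $A$-linear Koszul complex of the Koszul pair of operads $(\Lie_1, \Comm^{!})$ applied to $L$. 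The classical Koszul duality of $(\Lie, \Comm)$ over a characteristic-zero field says this complex is a resolution of $L$ concentrated in weight $1$: $W_1 / W_0$ is quasi-isomorphic to $L$ and $W_n / W_{n-1}$ is acyclic for $n \ge 2$. Because the filtration $W_\bullet$ is exhaustive, the associated spectral sequence converges and yields the desired quasi-isomorphism $\bbOmega \mathbb{B} L \to L$, which matches the counit on $W_1$.

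The hard part will be the identification of $W_n / W_{n-1}$ with the Koszul complex, together with the propagation of Koszul acyclicity from the $k$-linear setting to the $A$-linear one. The combinatorial identification is standard once one matches the free-Lie and cofree-symmetric structures on the two sides; the propagation uses that the Koszul complex is natural and $A$-linear in $L$, and that cofibrancy of $L$ in $\Mod(A)$ together with Lemma~\ref{lem:cofmodtensor} and Lemma~\ref{lem:Symnqiso} ensures that quasi-isomorphisms are preserved by all the tensor-product constructions involved.
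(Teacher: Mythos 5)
Your proof is correct, and for part (ii) it is essentially the paper's own argument: the paper filters $\mathbb{B}L$ by the coalgebras $\mathbb{B}^k(\g) = \bigoplus_{n\le k}\Sym^n_A(\g)$ and applies $\bbOmega$ to a pushout square attaching $\mathrm{Free}_{\Lie_1}$ of the cone on $\Sym^k_A(\g)[-1]$, which is exactly your cell filtration $F_\bullet$ described on the Lie-algebra side. Where you genuinely diverge is part (i): the paper's printed proof establishes only cofibrancy and never actually addresses the counit being a weak equivalence (the preceding remark cites Hinich's result over a field, but the point of the lemma is the general base $A$). Your weight filtration $W_\bullet$, with associated graded the operadic Koszul complex of $(\Lie_1,\Comm)$ applied to $L$, is the standard and correct way to fill this in; note moreover that since weight $1$ admits no brackets, $W_1 \cong L$ splitting the counit, so once each $W_n/W_{n-1}$ is acyclic for $n\ge 2$ you do not even need spectral sequence convergence --- the inclusions $W_{n-1}\to W_n$ are quasi-isomorphisms and quasi-isomorphisms are closed under filtered colimits. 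For the ``propagation'' step you flag as hard, there is a cleaner route than Lemmas~\ref{lem:cofmodtensor} and~\ref{lem:Symnqiso}: the weight-$n$ associated graded is $K_n \otimes_{k[\Sigma_n]} L^{\otimes_A n}$ for a fixed complex $K_n$ of $\Sigma_n$-representations over $k$ that is acyclic for $n\ge 2$ by Koszulness of the Lie operad; since $k[\Sigma_n]$ is semisimple in characteristic zero, $K_n$ is contractible, so the associated graded is contractible for \emph{any} $A$-module $L$, with no cofibrancy needed at this step. In short, your proposal matches the paper on (ii) and supplies a valid argument for (i) that the paper omits.
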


\def\BB{\mathbb{B}}

\begin{proof}
The bar coalgebra is the colimit of a sequence of coalgebras
\[
A \to \BB^1(\g) \to \BB^2(\g) \to \cdots, 
\]
where
\[
\BB^k(\g) := \left(\bigoplus_{n=0}^k\Sym^n_A(\g), \d_{\BB(\g)} \right), 
\]
since the coproduct on the symmetric coalgebra decreases symmetric powers and the differential preserves and lowers the symmetric powers. 
Note that the cokernel of the map $\BB^{k-1}(\g) \to \BB^k(\g)$ is simply $\Sym_A^k(\g)$.

Consider the following pushout square in $A$-modules:
\[
\begin{tikzcd}
\Sym_A^k(\g)[-1]   \arrow{r}{\d_{\BB^k(\g)}} \arrow{d} & \BB^{k-1}(\g) \arrow{d} \\
C(\id) \arrow{r} & \BB^k(\g),
\end{tikzcd}
\] 
where 
\begin{itemize}
\item the top horizontal map is the differential on $\BB^k(\g)$, restricted to $\Sym_A^k(\g)[-1]$, and viewed as a degree zero map,  and
\item the bottom left corner $C(\id)$ denotes the cone of the identity map from $\Sym_A^k(\g)[-1]$ to itself.
\end{itemize}
This is a pushout square because it is a pushout on the underlying graded vector spaces 
and the maps are also compatible with the differentials. 
The left vertical map is a cofibration in $A$-modules, and hence the right vertical map is a cofibration of $A$-modules.

We can also view this square as a commutative diagram of cocommutative coalgebras, 
where the two coalgebras on the left-hand side have zero coproduct. 
Apply the cobar functor to this square. 
On the left-hand side it reduces to the free $\Lie_1$-algebra functor, and 
hence we have a cofibration of $\Lie_1$-algebras in $A$-modules. 
As it is a pushout square of $\Lie_1$-algebras, the right vertical map is also a cofibration.

The base case $\BB^1(\g)$ is a free $\Lie_1$-algebra on a cofibrant $A$-module and 
hence a cofibrant $\Lie_1$-algebra. 
Hence $\BB^k(\g)$ is also cofibrant as a $\Lie_1$-algebra, 
since it is the $k$-iterated pushout along a cofibration.
\end{proof}

\begin{defn}
  Let $\LI(A)$ denote the full subcategory of $\CNCoalg(A)$ spanned by
  the objects $\mathbb{B}L$ where $L$ is a $\Lie_1$-algebra over $A$ whose
  underlying $A$-module is cofibrant. Let $\sLI(A)$ denote the
  analogous simplicial category.
\end{defn}

\begin{lemma}
  The simplicial category $\sLI(A)$ is fibrant.
\end{lemma}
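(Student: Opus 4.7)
The plan is to transfer the problem to $\sLie_1(A)$ via the cobar-bar adjunction, since we already know by Corollary~\ref{cor:Chkicat}(ii) that $\sLie_1(A)$ is a fibrant simplicial category.

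For $L, L' \in \Lie_1(A)$ with cofibrant underlying $A$-modules, I would first establish a natural isomorphism of simplicial sets
\[
\sHom_{\CNCoalg(A)}(\mathbb{B}L', \mathbb{B}L)_{\bullet} \;\cong\; \sHom_{\Lie_1(A)}(\bbOmega\mathbb{B}L', L)_{\bullet}.
\]
Unwinding definitions, a $k$-simplex on the left is a morphism $\Omega(\Delta^k)\otimes\mathbb{B}L' \to \Omega(\Delta^k)\otimes\mathbb{B}L$ of conilpotent cocommutative coalgebras over $\Omega(\Delta^k)\otimes A$. Since $\Omega(\Delta^k)$ is flat over $k$, the bar construction commutes with base change along $A \to \Omega(\Delta^k)\otimes A$, so these are the same as maps $\mathbb{B}(\Omega(\Delta^k)\otimes L') \to \mathbb{B}(\Omega(\Delta^k)\otimes L)$; by the cobar-bar adjunction over $\Omega(\Delta^k)\otimes A$ and the analogous commutation of $\bbOmega$ with base change (the free shifted Lie algebra is a left adjoint and the cobar differential is induced functorially from the coproduct), these correspond to maps $\Omega(\Delta^k)\otimes\bbOmega\mathbb{B}L' \to \Omega(\Delta^k)\otimes L$ of $\Omega(\Delta^k)\otimes A$-Lie algebras; and finally, by extension-restriction of scalars, these match maps $\bbOmega\mathbb{B}L' \to \Omega(\Delta^k)\otimes L$ of $A$-Lie algebras, which is the $k$-simplices of the right-hand side.

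With this identification in hand, I would invoke Lemma~\ref{lem:cofcobareq}(ii) to conclude that $\bbOmega\mathbb{B}L'$ is a cofibrant $\Lie_1$-algebra. The right-hand mapping space is then $\Hom$ from a cofibrant object into the Reedy-fibrant cosimplicial framing $\Omega(\Delta^{\bullet})\otimes L$ produced by Lemma~\ref{lem:Chkcohfr}, so by the coherent-framing machinery of Proposition~\ref{propn:cohfrloc} that underlies Corollary~\ref{cor:Chkicat}(ii) it is a Kan complex. Note we do not need $L$ itself to be cofibrant as a Lie algebra, since the framing exists for any object and only the source needs to be cofibrant.

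The main subtlety I expect is in verifying that $\mathbb{B}$ and $\bbOmega$ commute with base change as differential graded objects, not merely as graded objects. For $\mathbb{B}$, this reduces to the fact that the Chevalley--Eilenberg differential is built functorially out of the bracket and differential of $L$, both of which base-change tautologically, together with flatness of $\Omega(\Delta^k)$. For $\bbOmega$, one uses that the free shifted Lie algebra commutes with extension of scalars and that the cobar differential is determined by the coproduct of the input coalgebra, which again base-changes tautologically.
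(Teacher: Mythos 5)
Your proof is correct and follows essentially the same route as the paper: identify $\sHom_{\sLI(A)}(\mathbb{B}L',\mathbb{B}L)$ with $\sHom_{\sLie_1(A)}(\bbOmega\mathbb{B}L',L)$ using that the cobar-bar adjunction is simplicial, then apply Lemma~\ref{lem:cofcobareq} and the coherent-framing machinery of Proposition~\ref{propn:cohframe}(i). The only difference is that you spell out why the adjunction is compatible with the $\Omega(\Delta^{\bullet})$-enrichment via base change, which the paper simply asserts.
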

\begin{proof}
  Given objects $\mathbb{B}L$ and $\mathbb{B}L'$ in $\sLI(A)$, the
  simplicial set of maps $\sHom_{\sLI(A)}(\mathbb{B}L, \mathbb{B}L')$
  is isomorphic to $\sHom(\bbOmega\mathbb{B}L, L')$, since
  the cobar-bar adjunction is simplicial. Since $\bbOmega\mathbb{B}L$
  is cofibrant by Lemma~\ref{lem:cofcobareq}, this simplical set is a
  Kan complex by Proposition~\ref{propn:cohframe}(i).
\end{proof}

We write $\LLI(A)$ for the \icat{} obtained as the coherent nerve of
the fibrant simplicial category $\sLI(A)$.

\begin{lemma}
  The simplicial functor $\bbOmega \colon \sLI(A) \to \sLie_1(A)$ is a
  weak equivalence of simplicial categories.
  Hence there is an induced equivalence of \icats{} $\bbOmega \colon \LLI(A) \isoto \LLie_1(A)$.
\end{lemma}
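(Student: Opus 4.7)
The plan is to verify the two standard conditions that together make $\bbOmega$ a weak equivalence of simplicial categories: essential surjectivity on homotopy categories, and that each induced map of simplicial mapping sets
\[
\sHom_{\sLI(A)}(\mathbb{B}L, \mathbb{B}L') \to \sHom_{\sLie_1(A)}(\bbOmega\mathbb{B}L, \bbOmega\mathbb{B}L')
\]
is a weak equivalence of Kan complexes. The \icatl{} conclusion will then be automatic: the source is fibrant by the preceding lemma and the target by Corollary~\ref{cor:Chkicat}, so taking coherent nerves of a weak equivalence between fibrant simplicial categories yields an equivalence of \icats{}.

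Essential surjectivity is immediate from Lemma~\ref{lem:cofcobareq}. Any $L \in \sLie_1(A)$ is by definition a cofibrant shifted Lie algebra, so its underlying $A$-module is cofibrant by Proposition~\ref{propn:opdch}(ii); the counit $\bbOmega\mathbb{B}L \to L$ is therefore a weak equivalence with cofibrant source (also lying in $\sLie_1(A)$), and $L$ is isomorphic in the homotopy category to the image $\bbOmega(\mathbb{B}L)$ of $\mathbb{B}L \in \sLI(A)$.

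For the mapping-space condition I would first use the simplicial cobar-bar adjunction to rewrite
\[
\sHom_{\sLI(A)}(\mathbb{B}L, \mathbb{B}L') \cong \sHom_{\sLie_1(A)}(\bbOmega\mathbb{B}L, L'),
\]
under which the map induced by $\bbOmega$ corresponds to postcomposition with the counit $\bbOmega\mathbb{B}L' \to L'$. Since $\bbOmega\mathbb{B}L$ is cofibrant by Lemma~\ref{lem:cofcobareq}(ii) and $\Omega(\Delta^{\bullet}) \otimes \blank$ provides a coherent right framing on $\Lie_1(A)$ in the sense of Lemma~\ref{lem:Chkcohfr}(iii), the functor $\sHom_{\sLie_1(A)}(\bbOmega\mathbb{B}L, \blank)$ carries weak equivalences in $\Lie_1(A)$ to weak equivalences of Kan complexes; applying this to the counit weak equivalence of Lemma~\ref{lem:cofcobareq}(i) finishes the step. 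The point that requires most care, and which I view as the main technical hurdle, is checking that the strict cobar-bar hom-set adjunction really does lift to an isomorphism of simplicial mapping sets; this should fall out from the pseudonatural compatibility of $\bbOmega$ and $\mathbb{B}$ with base change along $A \to \Omega(\Delta^{k}) \otimes A$, together with the fact that $\Omega(\Delta^{k}) \otimes \blank$ is strong symmetric monoidal and so intertwines both constructions.
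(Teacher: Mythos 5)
Your proposal is correct and follows essentially the same route as the paper's proof: essential surjectivity via the counit weak equivalence of Lemma~\ref{lem:cofcobareq}, and weak fully faithfulness by factoring the adjunction isomorphism $\sHom_{\sLI(A)}(\mathbb{B}L,\mathbb{B}L')\cong\sHom_{\sLie_1(A)}(\bbOmega\mathbb{B}L,L')$ through postcomposition with the counit, which is a weak equivalence by Proposition~\ref{propn:cohframe} since $\bbOmega\mathbb{B}L$ is cofibrant. The ``technical hurdle'' you flag --- that the cobar-bar adjunction is simplicial --- is exactly what the paper builds into its definition of the adjunction via naturality in $A$, so nothing further is needed.
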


\begin{proof}
  Given objects $\mathbb{B}L$ and $\mathbb{B}L'$ in $\sLI(A)$, we have
  a commutative diagram of simplicial sets
  \nolabelopctriangle{\sHom_{\sLI(A)}(\mathbb{B}L,
    \mathbb{B}L')}{\sHom_{\sLie_1(A)}(\bbOmega \mathbb{B} L, \bbOmega
    \mathbb{B} L')}{\sHom_{\sLie_1(A)}(\bbOmega \mathbb{B} L, L').}
  Here the left diagonal map is an isomorphism, since the cobar-bar
  adjunction is simplicial, and the right diagonal map is a weak
  equivalence by Proposition~\ref{propn:cohframe} since $\bbOmega
  \mathbb{B} L$ is cofibrant and $\bbOmega \mathbb{B} L' \to L'$ is a
  weak equivalence by Lemma~\ref{lem:cofcobareq}. Thus $\bbOmega$ is
  weakly fully faithful.

  It remains to show that $\bbOmega$ is essentially surjective on the
  homotopy category. Given $L \in \sLie_1(A)$, then by definition
  $L$ is a cofibrant $\Lie_1$-algebra, so by
  Proposition~\ref{propn:opdch}(ii) its underlying $A$-module is also
  cofibrant. Lemma~\ref{lem:cofcobareq} therefore implies that the
  counit $\bbOmega \mathbb{B} L \to L$ is a weak equivalence of
  cofibrant Lie algebras, and hence an equivalence in the simplicial
  category $\sLie_1(A)$.  
\end{proof}

We need to know that this equivalence respects symmetric monoidal structures 
coming from the Cartesian
product on both sides. For ${\LLie_1}(A)$, we constructed this product in
\S\ref{subsec:icatenv}. The case of $\LLI(A)$ is easy: The product
of conilpotent cocommutative coalgebras is given by the tensor product
over $A$, and thus $\sLI(A)$ is closed under products --- 
$\mathbb{B}$ is a right adjoint and so $\mathbb{B}L \otimes
\mathbb{B}L' \cong \mathbb{B}(L \oplus L')$. This construction is also
compatible with base change, so the simplicial category $\sLI(A)$
inherits a symmetric monoidal structure, and hence so does $\LLI(A)$.

Since this right adjoint $\mathbb{B}$ preserves products, its left
adjoint $\bbOmega$ is oplax monoidal. We thus have a lax monoidal
functor on the opposite categories. This functor is compatible with the
simplicial enrichments, so we get a map of simplicial operads from
$\sLI(A)^{\op}$ (which is a symmetric monoidal simplicial category) to
$\sLie_1(A)^{\op}$ (with the simplicial operad structure described in
\S\ref{subsec:icatenv}). Taking coherent nerves, we get a lax symmetric monoidal
functor of \icats{} $\LLI(A)^{\op} \to {\LLie_1}(A)^{\op}$.

\begin{lemma}
  The lax monoidal functor $\bbOmega \colon \LLI(A)^{\op} \to
  {\LLie_1}(A)^{\op}$ induced by $\bbOmega$ is, in fact, symmetric
  monoidal.
\end{lemma}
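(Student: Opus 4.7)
The plan is to exploit the fact that both symmetric monoidal structures in question are Cartesian: on $\LLI(A)$ the product is given by the tensor $\otimes$, and on ${\LLie_1}(A)$ by the direct sum $\oplus$. For any functor between Cartesian monoidal $\infty$-categories, the canonical oplax structure consists of equivalences exactly when the functor preserves binary products. So since the previous lemma establishes that $\bbOmega \colon \LLI(A) \to {\LLie_1}(A)$ is an equivalence of $\infty$-categories --- and equivalences preserve all limits, in particular products --- $\bbOmega$ is strong symmetric monoidal, and hence so is its opposite $\bbOmega \colon \LLI(A)^{\op} \to {\LLie_1}(A)^{\op}$.

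To make this concrete, I would identify the oplax structure map at a pair $(\mathbb{B}L, \mathbb{B}L')$ in $\LLI(A)$. Using the isomorphism $\mathbb{B}L \otimes \mathbb{B}L' \cong \mathbb{B}(L \oplus L')$ coming from product-preservation of $\mathbb{B}$, this map fits into the commutative triangle
\[
\begin{tikzcd}[column sep=small]
\bbOmega\mathbb{B}(L \oplus L') \arrow{rr} \arrow{dr}[swap]{\epsilon_{L \oplus L'}} & & \bbOmega\mathbb{B}L \oplus \bbOmega\mathbb{B}L' \arrow{dl}{\epsilon_L \oplus \epsilon_{L'}} \\
& L \oplus L' &
\end{tikzcd}
\]
in ${\LLie_1}(A)$, whose diagonal edges are counits of the cobar--bar adjunction. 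By Lemma~\ref{lem:cofcobareq}, each of $\epsilon_L$, $\epsilon_{L'}$, and $\epsilon_{L \oplus L'}$ is a weak equivalence, since the cofibrant-underlying-$A$-module hypothesis is preserved under direct sums. Moreover, weak equivalences of $A$-modules are closed under direct sums (they are detected in $\Mod(A)$, and direct sums of quasi-isomorphisms of cochain complexes are quasi-isomorphisms), so $\epsilon_L \oplus \epsilon_{L'}$ is a weak equivalence. The 2-out-of-3 property then forces the top horizontal map to be a weak equivalence, which is exactly the claim.

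The main obstacle is bookkeeping: one must identify the $\infty$-categorical oplax structure map on $\bbOmega$ --- constructed by passing through simplicial operads and their coherent nerves as in the preceding section --- with the concrete triangle-of-counits description above. This identification is a formal consequence of the mate calculus applied to the strong-monoidal right adjoint $\mathbb{B}$, but some care is needed to check that no coherence is lost in translating between the simplicial-operad-level construction and the $\infty$-operad-level statement. Once this identification is verified, the conclusion follows immediately from Lemma~\ref{lem:cofcobareq} and the closure of weak equivalences under direct sums, as above.
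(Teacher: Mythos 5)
Your concrete argument --- the commutative triangle of counits over $L \oplus L'$, with both diagonal maps weak equivalences by Lemma~\ref{lem:cofcobareq} together with closure of weak equivalences under $\oplus$, followed by 2-out-of-3 --- is exactly the proof the paper gives. The abstract framing via Cartesian monoidal structures and the mate-calculus bookkeeping you flag in your last paragraph are not needed: the paper simply takes the counit description of the oplax structure map as given and runs the triangle argument.
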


\begin{proof}
  We must show that for any objects $\mathbb{B}L$ and $\mathbb{B}L'$
  in $\LI(A)$, the oplax structure map \[\bbOmega(\mathbb{B}L \otimes
  \mathbb{B}L') \cong \bbOmega\mathbb{B}(L \oplus L')
  \to\bbOmega\mathbb{B}L \oplus \bbOmega\mathbb{B}L'\] is a weak
  equivalence of (cofibrant) Lie algebras. The counit transformation
  gives a commutative diagram
  \nolabelopctriangle{\bbOmega\mathbb{B}(L \oplus
    L')}{\bbOmega\mathbb{B}L \oplus \bbOmega\mathbb{B}L'}{L \oplus L'}
  where the diagonal maps are weak equivalences (for the right
  diagonal map, this holds since weak equivalences are closed under
  $\oplus$). By the 2-out-of-3 property the horizontal map is hence
  also a weak equivalence.
\end{proof}

The symmetric monoidal functor $\LLI(A)^{\op} \to {\LLie_1}(A)^{\op}$
then induces a symmetric monoidal functor on opposite \icats{}, $\LLI(A)
\to \LLie_{1}(A)$. Moreover, it is easy to see (using the same argument as
in Lemma~\ref{lem:Liesymmmonftr}) that this construction is natural in the
commutative algebra variable.

\subsection{The Heisenberg $L_{\infty}$-Algebra}\label{subsec:HeisLI}

We construct here a symmetric monoidal functor $$\H_\infty \colon
\QQuad_1(A) \to \LLI(A)$$ that produces a Heisenberg
$L_\infty$-algebra from a quadratic module of degree 1. As earlier, we
begin by constructing a 1-categorical functor, upgrade it to a functor
of simplicial categories, and then take the coherent nerves.

We will construct a functor $$\txt{H}_\infty: \Quad_1(A) \to \LI(A)$$
that sends $(V,\omega)$ to $\mathbb{B}\Heis_{1}(V,\omega) =
\CL(\Heis_{1}(V,\omega))$.  We then need to associate functorially
to each map $F: (V,\omega) \to (V',\omega')$ in $\Quad_1(A)$, a map of
cocommutative coalgebras
\[
\txt{H}_\infty(F) \colon \mathbb{B}(\Heis_{1}(V,\omega)) \to \mathbb{B}(\Heis_{1}(V',\omega')).
\]
This map $\txt{H}_\infty(F)$ is easy to describe once we have some elementary results about coalgebras in hand.

\begin{lemma}\label{gcoalgmap}
Let $D$ be a degree zero coderivation of a conilpotent graded coalgebra $C$ (\ie{} with trivial differential). Then $\exp(D)$ is a coalgebra automorphism of $C$.
\end{lemma}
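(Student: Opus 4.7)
The plan is to verify directly that $\exp(D) := \sum_{n \geq 0} D^n/n!$ is a morphism of coalgebras whose two-sided inverse is $\exp(-D)$; the automorphism claim then follows. The argument has three independent ingredients: convergence of the exponential series, compatibility with the coproduct $\Delta$, and compatibility with the counit $\epsilon$.

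For the coproduct side, I will first prove by induction on $n$ the Leibniz-type identity
\[
\Delta \circ D^n \;=\; \sum_{k=0}^n \binom{n}{k}\, (D^k \otimes D^{n-k}) \circ \Delta,
\]
using the defining coderivation relation $\Delta \circ D = (D \otimes \mathrm{id} + \mathrm{id} \otimes D)\circ \Delta$ together with the fact that $D\otimes \mathrm{id}$ and $\mathrm{id} \otimes D$ commute on $C\otimes C$ (no Koszul signs appear, since $D$ has even degree). Dividing by $n!$ and summing over $n$ is then the usual algebraic shadow of $e^{a+b}=e^a e^b$ for commuting operators, and gives $\Delta\circ \exp(D) = (\exp(D)\otimes \exp(D))\circ \Delta$.

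For the counit, I will observe that \emph{any} coderivation $D$ automatically satisfies $\epsilon \circ D = 0$: applying $\epsilon \otimes \mathrm{id}$ to the coderivation identity and using the counit axiom $(\epsilon \otimes \mathrm{id})\circ \Delta = \mathrm{id}$ reduces to a convolution identity which, after applying $\epsilon$ once more, forces $\epsilon \circ D = 0$. Hence $\epsilon \circ D^n = 0$ for $n \geq 1$ and $\epsilon \circ \exp(D) = \epsilon$. Together with $\exp(D)\circ \exp(-D) = \exp(0) = \mathrm{id}$ (valid because $D$ commutes with itself), this shows $\exp(D)$ is a coalgebra automorphism.

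The main obstacle, and the step where conilpotence does the real work, is making sense of $\sum_n D^n(c)/n!$ for each $c \in C$. My plan is to use the conilpotent filtration $F_n C = \{c \in C : \bar\Delta^n(\bar c) = 0\}$: a short calculation shows that any coderivation with $D(1)=0$ preserves this filtration, since $\bar\Delta\circ D = (D\otimes \mathrm{id} + \mathrm{id} \otimes D)\circ \bar\Delta$ on $\bar C$. In the coderivations to which this lemma is actually applied --- those arising on a symmetric coalgebra from a bilinear homotopy by Taylor-coefficient extension, which \emph{strictly lower} the symmetric-power filtration --- $D$ is locally nilpotent and so $D^n(c) = 0$ for $n$ large depending on $c$, making $\exp(D)$ well defined pointwise. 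More abstractly, all the identities above can be read in the pro-object $\{C/F_n C\}_n$, where they hold levelwise without any further analytic input.
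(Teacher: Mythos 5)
Your proof is correct and its core is the same computation as the paper's: the paper verifies $\Delta\circ\exp(D)=(\exp(D)\otimes\exp(D))\circ\Delta$ by iterating the coderivation identity to get the binomial Leibniz formula for $\Delta\circ D^{n}$, then resumming, and declares the inverse to be $\exp(-D)$; your inductive formulation of that Leibniz identity is just a cleaner packaging of the same argument. What you add beyond the paper is worthwhile: the counit compatibility $\epsilon\circ\exp(D)=\epsilon$ (which the paper silently omits, and which your observation that $\epsilon\circ D=0$ for any coderivation handles), and, more substantively, the convergence of the series. You are right that conilpotence alone does not make $\exp(D)$ well defined for an arbitrary degree-zero coderivation --- the coderivation on $\Sym^{c}(V)$ extending $\id_{V}$ acts on $\Sym^{n}$ by multiplication by $n$, so its exponential diverges over $\mathbb{Q}$ even though the coalgebra is conilpotent --- so the lemma really needs the local nilpotence you point out, which does hold for the coderivations $D_{\eta}$ to which it is applied, since those strictly lower the symmetric-power filtration. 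The paper elides this point entirely, so your version is, if anything, more careful than the original.
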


\begin{proof}
We compute
\begin{align*}
\Delta \circ \exp(D) 
&= \sum_{n \geq 0} \frac{1}{n!} \Delta \circ D^n \\
&= \sum_{n \geq 0} \frac{1}{n!} (\Delta \circ D) \circ D^{n-1} \\
&= \sum_{n \geq 0} \frac{1}{n!} (D \otimes \id + \id \otimes D) \circ \Delta \circ D^{n-1} \\
& \quad\quad \vdots \\
&= \sum_{n \geq 0} \frac{1}{n!} (D \otimes \id + \id \otimes D)^n \circ \Delta \\
&= \sum_{n \geq 0} \frac{1}{n!} \sum_{m=0}^n {n \choose m} D^m \otimes D^{n-m} \circ \Delta \\
&= \sum_{p,q \geq 0} \frac{1}{p!q!} {p+q \choose p} D^p \otimes D^q \circ \Delta \\
&= (\exp(D) \otimes \exp(D)) \circ \Delta
\end{align*}
as desired. The inverse is clearly $\exp(-D)$.
\end{proof}

If $(C, \d_C)$ is a differential graded coalgebra and $\delta$ is a
\emph{Maurer-Cartan element} in the Lie algebra of coderivations
$\rm{Coder}(C)$, \ie{} a degree one element such that
\[
[\d_C, \delta] + \delta^2 = 0,
\]
then $(C,\d_C + \delta)$ defines another coalgebra (with the same coproduct).

\begin{lemma}\label{dgcoalgmap}
Let $(C, \d_C)$ be a differential graded coalgebra. 
Let $\delta_1$ and $\delta_2$ be Maurer-Cartan elements in the Lie algebra $\rm{Coder}(C)$. 
If there exists a degree zero coderivation $D$ such that 
\begin{enumerate}[(i)]
\item $[\d_C, D] = \delta_1 - \delta_2$ and 
\item $[D,\delta_1] = 0 = [D,\delta_2]$, 
\end{enumerate}
then $\exp(D)$ provides a coalgebra automorphism from $(C,\d_C + \delta_1)$ to $(C,\d_C + \delta_2)$.
\end{lemma}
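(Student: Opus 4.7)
The plan is to invoke Lemma \ref{gcoalgmap} to handle the underlying graded coalgebra structure and then separately verify compatibility with the differentials. Since $D$ is a degree zero coderivation of the underlying graded coalgebra $(C,\Delta)$, Lemma \ref{gcoalgmap} immediately gives that $\phi := \exp(D)$ is a graded coalgebra automorphism of $C$ with inverse $\exp(-D)$. The entire content of the lemma therefore reduces to showing that $\phi$ intertwines the two differentials, i.e.
\[
(\d_C + \delta_2) \circ \phi = \phi \circ (\d_C + \delta_1),
\]
or equivalently, that $\phi \circ (\d_C + \delta_1) \circ \phi^{-1} = \d_C + \delta_2$.

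The main tool I would use is the standard conjugation identity
\[
\phi \circ X \circ \phi^{-1} = e^{\ad_D}(X) = \sum_{n \geq 0} \frac{1}{n!} \ad_D^n(X),
\]
valid for any graded endomorphism $X$ of $C$, where $\ad_D = [D,-]$ is the graded commutator. This is the usual BCH-type identity; because $|D|=0$ no Koszul signs appear in the binomial expansion $\exp(D)\,X\,\exp(-D) = \sum_n \frac{1}{n!}\sum_{p+q=n}\binom{n}{p}(-1)^q D^p X D^q$, so it follows by termwise rearrangement.

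Applying the identity with $X = \d_C + \delta_1$, I compute successive brackets. The first is
\[
\ad_D(\d_C + \delta_1) = [D,\d_C] + [D,\delta_1] = -[\d_C,D] + 0 = \delta_2 - \delta_1,
\]
using hypothesis (i) together with $|D|=0$ (so $[D,\d_C] = -[\d_C,D]$), and the first half of (ii). The second is
\[
\ad_D^2(\d_C + \delta_1) = [D,\delta_2 - \delta_1] = 0,
\]
by both halves of (ii); all higher iterated brackets vanish for the same reason. Hence the series telescopes to
\[
\phi \circ (\d_C + \delta_1) \circ \phi^{-1} = (\d_C + \delta_1) + (\delta_2 - \delta_1) = \d_C + \delta_2,
\]
as required.

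There is no serious obstacle here. The only step that demands any care is the conjugation identity itself, but this is purely formal once $|D|=0$ is noted. The conceptual point of the argument is that the two hypotheses play complementary roles in the telescoping: (i) produces exactly the first-order correction $\delta_2-\delta_1$ needed to pass from $\delta_1$ to $\delta_2$, while (ii) guarantees that all higher-order corrections vanish, so the $\ad_D$-series truncates after one step.
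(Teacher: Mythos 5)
Your proof is correct and is essentially the paper's argument in a cleaner package: the paper verifies the identity $(\d_C+\delta_2)\circ\exp(D)=\exp(D)\circ(\d_C+\delta_1)$ by pushing $\d_C$ through the powers $D^n$ term by term (obtaining $\d_C\circ D^n = D^n\circ \d_C + n(\delta_1-\delta_2)\circ D^{n-1}$ from (i) and (ii)), which is precisely the hand-expanded form of your $e^{\ad_D}$ conjugation identity, with hypothesis (i) supplying the first-order term and (ii) truncating the series. Your explicit appeal to Lemma \ref{gcoalgmap} for compatibility with the coproduct is also what the paper implicitly relies on, so there is nothing to add.
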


\begin{proof}
We compute
\begin{align*}
\d_C \circ \exp(D)
&= \sum_{n \geq 0} \frac{1}{n!} \d_C \circ D^n \\
&= \sum_{n \geq 0} \frac{1}{n!} (\d_C \circ D) \circ D^{n-1} \\
&= \sum_{n \geq 0} \frac{1}{n!} (D \circ \d_C +  \delta_1 - \delta_2) \circ D^{n-1} \\
& \quad\quad \vdots \\
&= \sum_{n \geq 0} \frac{1}{n!} (D^n \circ \d_C + n (\delta_1 - \delta_2) \circ D^{n-1})\\
&= \sum_{n \geq 0} \frac{1}{n!} D^n \circ \d_C + \sum_{n \geq 0} \frac{1}{(n-1)!} (\delta_1 - \delta_2) \circ D^{n-1}) \\
&= \exp(D) \circ \d_C + \exp(D) \circ \delta_1 - \delta_2 \circ \exp(D).
\end{align*}
In short,
\[
(\d_C +\delta_2) \circ \exp(D) = \exp(D) \circ (\d_C + \delta_1),
\]
as desired.
\end{proof}

Now we can prove the key lemma:

\begin{lemma}
Let $F \colon (V,\omega) \to (V',\omega')$ be a map in $\Quad_1(A)$ where $f \colon V \to V'$ is a map of $A$-modules and $\eta \colon \Lambda^2_A V \to A[1]$ is a homotopy from $\omega$ to $f^* \omega'$. Let $D_\eta$ denote the degree zero coderivation on $\CL(\Heis_{1}(V,\omega))$ determined by $\eta$. Then the composite map
\[
\Sym^c_A(f) \circ \exp(D_\eta)
\]
is a map in $\CNCoalg(A)$ from $\mathbb{B}(\Heis_{1}(V,\omega))$ to $\mathbb{B}(\Heis_{1}(V',\omega'))$.
\end{lemma}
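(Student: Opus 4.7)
The plan is to break $\exp(D_\eta)$ and $\Sym^c_A(f)$ apart and verify that they are the two coalgebra maps
\[
\mathbb{B}(\Heis_1(V,\omega)) \xrightarrow{\exp(D_\eta)} (\Sym^c_A(V\oplus A\c),\, \d_0 + \delta_{f^*\omega'}) \xrightarrow{\Sym^c_A(f)} \mathbb{B}(\Heis_1(V',\omega')).
\]
To begin, I would decompose the Chevalley-Eilenberg differential on $\mathbb{B}(\Heis_1(V,\omega)) = \Sym^c_A(V \oplus A\c)$ as $\d_0 + \delta_\omega$, where $\d_0$ is the coderivation induced by the internal differentials on $V$ and $A\c$, and $\delta_\omega$ is the coderivation induced by the shifted Lie bracket of $\Heis_1(V,\omega)$. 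Since the bracket only pairs elements of $V$ and lands in $A\c$, the linear part of $\delta_\omega$ is $\omega \colon \Lambda^2 V \to A\c$ and zero on any summand of $\Sym^2(V\oplus A\c)$ touching $\c$. The analogous decomposition works for $\mathbb{B}(\Heis_1(V',\omega'))$.

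The second step is to verify the hypotheses of Lemma \ref{dgcoalgmap} for the coderivation $D_\eta$ on $(\Sym^c_A(V \oplus A\c), \d_0 + \delta_\omega)$ and $(\Sym^c_A(V\oplus A\c), \d_0 + \delta_{f^*\omega'})$. Condition (i), namely $[\d_0, D_\eta] = \delta_\omega - \delta_{f^*\omega'}$, follows directly from the homotopy relation: on linear parts we have $[\d_0, D_\eta]|_{\Lambda^2 V} = \d_A \circ \eta + \eta \circ \d_{\Lambda^2 V} = \omega - f^*\omega'$ by assumption on $\eta$, and a coderivation of the cofree conilpotent cocommutative coalgebra is determined by its linear part. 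Condition (ii) is the crux of the argument: I would show $[D_\eta, \delta_\omega] = 0 = [D_\eta, \delta_{f^*\omega'}]$ by computing linear parts. The linear part of $D_\eta \cdot \delta_\omega$ on $\Sym^n(V\oplus A\c)$ requires composing a map whose image sits inside the $A\c$-summand with a map $\bar D_\eta \colon \Sym^2(V\oplus A\c) \to A\c$ that vanishes whenever one of the inputs lies in $A\c$; hence the composite is zero, and likewise for $\bar\delta_\omega \circ D_\eta$. In other words, the centrality of $\c$ in the Heisenberg Lie algebra forces $D_\eta$ and $\delta_\omega$ to strictly commute as coderivations, not merely up to homotopy. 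The same argument applies to $\delta_{f^*\omega'}$.

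With Lemma \ref{dgcoalgmap} applied, $\exp(D_\eta)$ is a coalgebra isomorphism from $(\Sym^c_A(V\oplus A\c), \d_0 + \delta_\omega)$ to $(\Sym^c_A(V\oplus A\c), \d_0 + \delta_{f^*\omega'})$; it also preserves the coaugmentation, since $D_\eta$ annihilates the unit $1 \in \Sym^0$. The last step is to check that $\Sym^c_A(f\oplus \id_{A\c})$ is a map in $\CNCoalg(A)$ from $(\Sym^c_A(V\oplus A\c), \d_0 + \delta_{f^*\omega'})$ to $\mathbb{B}(\Heis_1(V',\omega'))$: it commutes with $\d_0$ since $f$ is a cochain map and is the identity on $A\c$; it commutes with the bracket-induced pieces because $\delta_{\omega'} \circ \Sym^c_A(f\oplus\id) = \Sym^c_A(f\oplus\id) \circ \delta_{f^*\omega'}$ by the very definition $f^*\omega' = \omega' \circ \Lambda^2 f$.

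The only step with any subtlety is the verification $[D_\eta, \delta_\omega] = 0$ and its companion; everything else is immediate from the definitions once the linear-part yoga is set up. I expect that the careful accounting of which summand of $\Sym^2(V \oplus A\c)$ the coderivations land in, together with the vanishing of $\omega$ and $\eta$ on pairs involving $\c$, will be the main thing to spell out.
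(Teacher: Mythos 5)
Your proposal is correct and follows essentially the same route as the paper: decompose the Chevalley--Eilenberg differential as $\d_0 + \delta_\omega$, observe that $\Sym^c_A(f)$ is a coalgebra map out of $\mathbb{B}(\Heis_1(V,f^*\omega'))$ by functoriality, and apply Lemma~\ref{dgcoalgmap} to $\exp(D_\eta)$, with the key point in both arguments being that $[D_\eta,\delta_\omega]=0=[D_\eta,\delta_{f^*\omega'}]$ holds strictly because $\eta$ and $\omega$ land in $A\c$ and vanish on inputs from $A\c$. The only difference is that you spell out condition (i) of Lemma~\ref{dgcoalgmap} and the preservation of the coaugmentation, which the paper leaves implicit.
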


\begin{proof}
Recall that the differential on $\CL(\Heis_{1}(V,\omega))$ is a
sum of degree 1 coderivations $\d_V + \delta_\omega$ where $\d_V$ denotes
the differential on $V$ extended to $\Sym^c_A(V)$ as a coderivation
and $\delta_\omega$ likewise denotes $\omega$ --- viewed as a degree 1 map
from $\Sym^2_A(V)$ to $A$ --- extended as a coderivation. (The obvious
analogues hold for the other coalgebras, such as $\CL(\Heis_{1}(V',\omega'))$.) 
Thus, $\Sym^c_A(f)$ is a coalgebra map from $\CL(\Heis_{1}(V,f^*\omega'))$ to $\CL(\Heis_{1}(V',\omega'))$ 
since $f$ naturally provides a map of shifted Lie algebras from $\Heis_{1}(V,f^*\omega)$ to $\Heis_{1}(V',\omega')$. 
It remains to show that $\exp(D_\eta)$ is a map of coalgebras.

This claim follows from Lemma \ref{dgcoalgmap} once we verify that $[D_\eta, \delta_{\omega}] = 0 = [D_\eta, \delta_{f^*\omega'}]$. Without loss of generality, we simply verify that the commutator with $\delta_{\omega}$ vanishes. Note that it suffices to compute the commutator $[D_\eta, \delta_{\omega}]$ just on the second stage of the filtration
\[
F^2\; \CL(\Heis_{1}(V,\omega)) = \Sym^{\leq 2}(V \oplus A\c),
\] 
since any coderivation preserves the filtration by symmetric powers and a coderivation is determined by its behavior on cogenerators. But on this stage of the filtration, both $\eta$ and $\omega$ map into $A\c$, and they both vanish on $A\c$, so their commutator vanishes.
\end{proof}

We need to show that this construction respects composition.

\begin{lemma}
Let $F \colon (V,\omega) \to (V',\omega')$ and $G \colon (V',\omega') \to (V'',\omega'')$ be maps in $\Quad_1(A)$, with $f \colon V \to V'$ and $g \colon V' \to V''$ the maps of $A$-modules and $\eta\colon \Lambda^2_A V \to A[1]$ and $\gamma\colon \Lambda^2_A V' \to A[1]$ the respective homotopies. Then
\[
\Sym^c_A(g\circ f) \circ \exp(D_{f^* \gamma + \eta}) = \Sym^c_A(g) \circ \exp(D_\gamma) \circ \Sym^c_A(f) \circ \exp(D_\eta),
\]
where $f^*\gamma + \eta \colon \Lambda^2_A V \to A[1]$ is the homotopy from $\omega$ to $f^* g^* \omega''$ obtained by composing $\eta$ and $f^* \gamma$ is the natural way.
\end{lemma}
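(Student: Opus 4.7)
The plan is to reduce the identity to a comparison on cogenerators, using the universal property of the cofree conilpotent cocommutative coalgebra $\Sym^c_A(W)$: a coderivation (and more generally a coderivation along a coalgebra map) on $\Sym^c_A(W)$ is determined by its composition with the projection $\pi_W \colon \Sym^c_A(W) \to W$ onto the cogenerators. I will prove two commutation statements and then assemble the identity.

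\textbf{Step 1: $\Sym^c_A(f)$ intertwines $D_{f^*\gamma}$ and $D_\gamma$.} Here $\Sym^c_A(f)$ denotes the coalgebra map induced by $f \oplus \id_{A\c} \colon V \oplus A\c \to V' \oplus A\c$. Both $D_\gamma \circ \Sym^c_A(f)$ and $\Sym^c_A(f) \circ D_{f^*\gamma}$ are degree-zero coderivations along the coalgebra map $\Sym^c_A(f)$, and their projections to the cogenerators $V' \oplus A\c$ are both given by the degree-zero map
\[
\Lambda^2_A(V \oplus A\c) \twoheadrightarrow \Lambda^2_A V \xto{f^*\gamma} A\c \hookrightarrow V' \oplus A\c,
\]
extended by zero on other symmetric powers. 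Hence they coincide, and iterating we obtain $\Sym^c_A(f) \circ \exp(D_{f^*\gamma}) = \exp(D_\gamma) \circ \Sym^c_A(f)$.

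\textbf{Step 2: $D_\eta$ and $D_{f^*\gamma}$ commute.} Both are degree-zero coderivations on $\Sym^c_A(V \oplus A\c)$ whose defining maps $\Lambda^2_A(V \oplus A\c) \to V \oplus A\c$ land in $A\c$, vanish on summands of $\Lambda^2_A(V \oplus A\c)$ involving $A\c$, and are zero on $\Sym^n_A$ for $n \neq 2$. The commutator $[D_\eta, D_{f^*\gamma}]$ is again a coderivation, and its projection to cogenerators $V \oplus A\c$ is concentrated on $\Sym^3_A$ (since each operator lowers symmetric degree by one). On $\Sym^3_A$, applying either operator to a pure product $v_1 v_2 v_3 \in \Sym^3_A V$ yields a sum of terms in $A\c \cdot V \subset \Sym^2_A(V \oplus A\c)$, on which the other operator vanishes, since it requires two $V$-factors to act nontrivially. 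Hence $\pi_{V \oplus A\c}\,[D_\eta, D_{f^*\gamma}] = 0$, so $[D_\eta, D_{f^*\gamma}] = 0$. By the standard Baker-Campbell-Hausdorff formula for commuting operators, together with linearity of $D_{(-)}$ in its defining map, we conclude
\[
\exp(D_{f^*\gamma}) \circ \exp(D_\eta) = \exp(D_{f^*\gamma} + D_\eta) = \exp(D_{f^*\gamma + \eta}).
\]

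\textbf{Step 3: Assembly.} Combining Steps 1 and 2 with the functoriality $\Sym^c_A(g) \circ \Sym^c_A(f) = \Sym^c_A(g \circ f)$, we compute
\begin{align*}
\Sym^c_A(g) \circ \exp(D_\gamma) \circ \Sym^c_A(f) \circ \exp(D_\eta)
&= \Sym^c_A(g) \circ \Sym^c_A(f) \circ \exp(D_{f^*\gamma}) \circ \exp(D_\eta) \\
&= \Sym^c_A(g\circ f) \circ \exp(D_{f^*\gamma + \eta}),
\end{align*}
as required. The main subtlety is the intertwining in Step 1: one must be careful to view $\Sym^c_A(f)$ as the coalgebra map induced by $f \oplus \id_{A\c}$ and invoke the determination of coderivations along a coalgebra map by their projection to cogenerators. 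Everything else is a direct computation in the cofree conilpotent cocommutative coalgebra.
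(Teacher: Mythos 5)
Your proof is correct and follows essentially the same route as the paper's: intertwine $D_{f^*\gamma}$ with $D_\gamma$ across $\Sym^c_A(f)$, show $D_\eta$ and $D_{f^*\gamma}$ commute because both land in $A\c$ and vanish there, and assemble via functoriality of $\Sym^c_A$. Your version merely spells out the cogenerator/coderivation bookkeeping in more detail than the paper does.
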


\begin{proof}
Observe that 
\[
\exp(D_\gamma) \circ \Sym^c_A(f)= \Sym^c_A(f)\circ \exp(D_{f^*\gamma}),
\]
essentially by the definition of $D_{f^*\gamma}$. Next observe that $D_\eta$ and $D_{f^*\gamma}$ commute, by the argument in the preceding lemma:  they are determined by their behavior on cogenerators and that is defined on the second stage of the filtration, but both have image in $A\c$ and vanish on $A\c$. Hence
\[
\exp(D_{f^*\gamma}) \exp(D_{\eta}) = \exp(D_{f^*\gamma+ \eta}).
\]
Thus
\begin{align*}
\Sym^c_A(g\circ f) \circ \exp(D_{f^* \gamma + \eta}) &= \Sym^c_A(g) \circ \Sym^c_A(f) \circ \exp(D_{f^*\gamma}) \exp(D_{\eta}) \\
&=  \Sym^c_A(g) \circ \exp(D_\gamma) \circ \Sym^c_A(f) \circ \exp(D_{\eta}),
\end{align*}
as claimed.
\end{proof}

Putting these results together, we can make the following definition:
\begin{defn}
Let $\txt{H}_\infty \colon \Quad_1(A) \to \LI(A)$ denote the functor sending $(V,\omega)$ to $\mathbb{B}\Heis_{1}(V,\omega)$ and sending a map $F = (f,\eta) \colon (V,\omega) \to (V',\omega')$ to $\Sym^c_A(f) \circ \exp(D_\eta)$. 
\end{defn}

\begin{propn}
The functor $\txt{H}_\infty$ is lax symmetric monoidal, sending $\oplus$ to $\otimes_{A}$.
\end{propn}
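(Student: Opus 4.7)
My plan is to define the lax structure maps explicitly using the strict naturality of the Heisenberg construction together with the fact that $\mathbb{B}$ turns direct sums of shifted Lie algebras into tensor products of coaugmented coalgebras, and then to verify naturality and coherence by a direct computation on underlying graded coalgebras.

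For the structure maps, I begin from the strict morphism of shifted Lie algebras
$$\phi_{V,V'}\colon \Heis_1(V,\omega) \oplus \Heis_1(V',\omega') \to \Heis_1(V \oplus V',\, \omega+\omega')$$
that acts as the identity on $V \oplus V'$ and folds the two central elements $\c,\c'$ to a single central element; this is Lie because the cross-bracket of $v \in V$ and $v' \in V'$ in the target vanishes. Since $\mathbb{B}$ is right adjoint to $\bbOmega$ it preserves products, and because products in $\CNCoalg(A)$ are tensor products over $A$ one has a canonical isomorphism $\mathbb{B}\Heis_1(V,\omega) \otimes_A \mathbb{B}\Heis_1(V',\omega') \cong \mathbb{B}(\Heis_1(V,\omega) \oplus \Heis_1(V',\omega'))$. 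Composing this isomorphism with $\mathbb{B}(\phi_{V,V'})$ defines the binary structure map $\nu_{V,V'}$; the unit is the coaugmentation $A \to \mathbb{B}\Heis_1(0,0)$.

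The main task is then naturality: for morphisms $F = (f,\eta)$ and $G = (g,\gamma)$ in $\Quad_1(A)$, the evident square whose horizontal arrows are $\nu_{V,V'}$ and $\nu_{W,W'}$ and whose vertical arrows are $\txt{H}_\infty(F) \otimes \txt{H}_\infty(G)$ and $\txt{H}_\infty(F \oplus G)$ must commute. Unfolding the definition of $\txt{H}_\infty$ on morphisms, both composites take the form $\Sym^c_A(\phi_{W,W'} \circ (f \oplus g)) \circ \exp(D)$ on underlying graded coalgebras for a suitable degree-zero coderivation $D$. The crucial fact is that $D_\eta$ and $D_\gamma$ each have image in the corresponding central summand and vanish on both central summands, so they commute and their separate exponentials combine into $\exp(D_\eta + D_\gamma)$. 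Under the direct-sum isomorphism this agrees with $\exp$ of the coderivation associated to the homotopy $(\eta \oplus \gamma) \circ \pi$ entering the monoidal structure on $\Quad_1(A)$, while the central-element fold by $\phi_W$ matches the two sides on the nose.

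Associativity, unitality, and symmetry reduce to the manifest strict identities for the maps $\phi$, namely $\phi_{V \oplus V', V''} \circ (\phi_{V,V'} \oplus \id) = \phi_{V, V' \oplus V''} \circ (\id \oplus \phi_{V',V''})$, $\phi_{V,0} = \id$, and symmetry of $\phi$ in its two arguments, all of which are automatic. I expect the main obstacle to be the naturality step: tracking the coderivations through the direct-sum isomorphism and through $\mathbb{B}(\phi_W)$ requires a bit of bookkeeping, but the computation is controlled precisely by the fact that each $D_\eta$ lands in and annihilates the central summand.
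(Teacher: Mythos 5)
Your proposal is correct and takes essentially the same approach as the paper: the lax structure map is obtained by folding the two central elements $\c$ and $\c'$ into a single one, using that $\mathbb{B}$ carries $\oplus$ of shifted Lie algebras to $\otimes_A$ of coalgebras. The paper only records this structure map and asserts the conclusion, while you additionally verify naturality and coherence, which you do correctly via the same observation the paper uses for composition of morphisms, namely that the coderivations $D_\eta$ land in and annihilate the central summand and therefore commute.
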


The laxness is a consequence of the fact that each Heisenberg algebra contributes a central element. Thus, $\txt{H}_\infty(V,\omega) \otimes_A \txt{H}_\infty(V',\omega')$ has two central elements $\c$ and $\c'$. By contrast, if we take a direct sum before constructing the Heisenberg algebra, we only have one central element $\c$. We identify these two central elements with one another to produce a map
\[
\txt{H}_\infty(V,\omega) \otimes_A \txt{H}_\infty(V',\omega') \to \txt{H}_\infty(V \oplus V',\omega+\omega').
\] 
This construction provides the natural transformation making $\txt{H}_\infty$ lax symmetric monoidal. 

The functor $\txt{H}_\infty$ is natural in $A$, and we now want to use
this naturality, applied to $A \otimes \Omega(\Delta^{\bullet})$, to
get a functor of simplicial categories that is again natural in
$A$. However, the naturality in $A$ is not strict: since the tensor
product is not strictly associative, but only associative up to
isomorphism, it is only \emph{pseudo}natural. The following is
therefore not entirely obvious, but uses some pseudofunctorial
observations we have delegated to the appendix.

\begin{cor}
  The functor $\mathrm{H}_{\infty}$ induces a lax symmetric monoidal
  functor of simplicial categories $\mathbf{H}_{\infty} \colon
  \sQuad_{1}(A) \to \sLI(A)$. Moreover, this functor is
  pseudonatural in the commutative algebra $A$.
\end{cor}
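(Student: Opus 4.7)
The plan is to lift the pseudonatural, lax symmetric monoidal construction $\mathrm{H}_{\infty}$ from ordinary categories to the simplicial level by applying it fibrewise over the simplex category and then invoking the machinery of the appendix (Proposition~\ref{propn:pseudosimp}) that turns a pseudofunctor $A \mapsto \mathbf{C}(A)$ from commutative algebras into a simplicial category $\mathbf{C}_{\Delta}(A)$ with mapping spaces $\mathbf{C}(A \otimes \Omega(\Delta^{\bullet}))$.

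First, I would observe that both $\sQuad_{1}(A)$ and $\sLI(A)$ are of exactly this form. The simplicial structure on $\sQuad_{1}(A)$ is manufactured (in the preceding subsection) from the pseudofunctor $B \mapsto \Quad_{1}(B)$ evaluated at $B = A \otimes \Omega(\Delta^{\bullet})$, and similarly $\sLI(A)$ is obtained by enriching the full subcategory $\LI(A) \subset \CNCoalg(A)$ via the pseudofunctor $B \mapsto \CNCoalg(B)$ applied to the same cosimplicial algebra $A \otimes \Omega(\Delta^{\bullet})$. Because $\mathrm{H}_{\infty}$ is natural in $A$ --- more precisely, pseudonatural, since base change of the tensor product is only pseudofunctorial --- we obtain for each $n$ a functor $\mathrm{H}_{\infty}^{(n)} \colon \Quad_{1}(A \otimes \Omega(\Delta^{n})) \to \LI(A \otimes \Omega(\Delta^{n}))$, and these assemble into a pseudonatural transformation of pseudofunctors over $\Delta^{\op}$. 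Applying Proposition~\ref{propn:pseudosimp} (or the analogous construction recalled in the proof of Lemma~\ref{lem:AlgOftr}) to this transformation gives the desired simplicial functor $\mathbf{H}_{\infty} \colon \sQuad_{1}(A) \to \sLI(A)$.

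Next, to upgrade $\mathbf{H}_{\infty}$ to a lax symmetric monoidal functor, I would note that the ordinary-categorical lax structure maps $\mathrm{H}_{\infty}(V,\omega) \otimes_{A} \mathrm{H}_{\infty}(V',\omega') \to \mathrm{H}_{\infty}((V,\omega) \oplus (V',\omega'))$, which identify the two central elements $\mathbf{c}$, $\mathbf{c}'$, are themselves natural (pseudonatural) in $A$. Hence the same argument via Proposition~\ref{propn:pseudosimp} applied to the pseudofunctor of lax symmetric monoidal categories $B \mapsto (\Quad_{1}(B)^{\oplus}, \LI(B)^{\otimes})$ together with the lax monoidal transformation $\mathrm{H}_{\infty}$ produces a lax symmetric monoidal simplicial functor. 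Pseudonaturality in the commutative algebra $A$ is then automatic: the simplicial data is built, tautologically, by evaluating a pseudofunctor in the $A$-variable on the Reedy-fibrant resolution $A \otimes \Omega(\Delta^{\bullet})$, so a morphism $A \to A'$ induces the required pseudonatural comparison.

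The only genuinely nontrivial point is checking that $\mathbf{H}_{\infty}$ lands in the \emph{fibrant} simplicial category $\sLI(A)$, i.e. that the image $\mathbb{B}\,\Heis_{1}(V,\omega)$ is of the form $\mathbb{B}L$ for a cofibrant $L \in \Lie_{1}(A)$; but this is immediate since $V \oplus A\mathbf{c}$ is cofibrant as an $A$-module (as $V$ is, coming from $\sQuad_{1}(A)$), so $\Heis_{1}(V,\omega)$ is a cofibrant object of $\Lie_{1}(A)$ after the cobar-bar replacement, and more directly because $\mathbb{B}\,\Heis_{1}(V,\omega)$ is, by construction, a bar coalgebra. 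The main obstacle in writing this out carefully will be bookkeeping of the pseudofunctorial coherence data so as to invoke Proposition~\ref{propn:pseudosimp} verbatim; once that is done, both the simplicial enhancement, the lax monoidal structure, and the pseudonaturality in $A$ fall out simultaneously.
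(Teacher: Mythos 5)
Your proposal is correct and follows essentially the same route as the paper: evaluate the pseudonatural transformation $\mathrm{H}_{\infty}$ on $A \otimes \Omega(\Delta^{\bullet})$ to get a natural transformation of pseudofunctors $\simp^{\op} \to \CAT$, feed it into the machinery of Proposition~\ref{propn:pseudosimp}, and repeat with the associated (simplicial operad / category-of-operators) structure to obtain the lax symmetric monoidal enhancement and the pseudonaturality in $A$. Your extra check that the target lands in $\sLI(A)$ is already built into the earlier definition of $\mathrm{H}_{\infty} \colon \Quad_{1}(A) \to \LI(A)$ (the relevant point being, as you note, cofibrancy of the underlying $A$-module $V \oplus A\mathbf{c}$, not cofibrancy of the Lie algebra itself), so it is harmless but not needed here.
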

\begin{proof}
  The pseudonaturality in $A$ means that $\mathrm{H}_{\infty}$ is a
  natural transformation of pseudofunctors $\Comm(k) \to \Cat$. For
  any commutative algebra $A$, tensoring with
  $\Omega(\Delta^{\bullet})$ gives a functor $\simp^{\op} \to
  \Comm(k)$, so composing with this we have a natural transformation
  of pseudofunctors $\simp^{\op} \to \Cat$. By the results of
  \S\ref{sec:pseudo} this induces a functor of simplicial categories
  $\sQuad_{1}(A) \to \sLI(A)$,
  as both the simplicial categories $\sQuad_{1}(A)$ and
  $\sLI(A)$ arise as in Proposition~\ref{propn:pseudosimp}.

  Tensoring an arbitrary commutative algebra with
  $\Omega(\Delta^{\bullet})$ gives a functor $\Comm(k) \times
  \simp^{\op} \to \Comm(k)$, and composing with this we get by
  adjunction a pseudofunctor $\Comm(k) \times [1] \to
  \Fun^{\txt{Ps}}(\simp^{\op}, \Cat)$, where the target denotes the
  $2$-category of pseudofunctors. The observations of
  \S\ref{sec:pseudo} give a functor from $\Fun^{\txt{Ps}}(\simp^{\op},
  \Cat)$ to the $2$-category $\CAT_{\Delta}$ of simplicial categories, and composing
  these we end up with a pseudofunctor $\Comm(k) \times [1] \to \CAT_{\Delta}$
  that exhibits the pseudonaturality of $\mathbf{H}_{\infty}$. A
  similar argument for the associated simplicial operads gives these
  functors lax monoidal structures, also pseudonatural in $A$.
\end{proof}

\begin{cor}
  The functor $\mathbf{H}_{\infty}$ induces a lax symmetric monoidal
  functor of \icats{} \[\HH_\infty \colon \QQuad_1(A) \to \LLI(A)\]
  via the coherent nerve. Moreover, this functor is natural in $A \in \CComm(k)$.
\end{cor}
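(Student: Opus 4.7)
The plan is to apply the coherent nerve functor to the result of the preceding corollary and then check that the resulting functor has the right source, target, and monoidal structure. Concretely, the simplicial category $\sQuad_{1}(A)$ is by construction modelled (via a homotopy pullback square) on the simplicial enrichment associated to $\Omega(\Delta^{\bullet})$, and it is fibrant (its mapping spaces are Kan complexes because of the Reedy fibrancy of $\Omega(\Delta^{\bullet})\otimes A[1]$ and the fact that the slice construction $\sMod(A)'_{/A[n]}$ was arranged precisely to cut out a fibration of simplicial categories). Its coherent nerve therefore presents $\QQuad_{1}(A)$, essentially by the same argument as Corollary~\ref{cor:Chkicat}. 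Similarly, the lemma preceding Lemma~\ref{lem:cofcobareq} (and the construction following it) shows that $\sLI(A)$ is fibrant and its coherent nerve is equivalent to $\AAlg_{\Lie_{1}}(A)$; so the coherent nerve of $\mathbf{H}_{\infty}$ gives the desired functor $\HH_{\infty}\colon\QQuad_{1}(A)\to\LLI(A)$.

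For the lax symmetric monoidal structure, I would proceed as in \S\ref{subsec:icatenv}: package the lax monoidal simplicial functor $\mathbf{H}_{\infty}$ as a map of simplicial operads between the category-of-operators versions $\sQuad_{1}(A)^{\oplus}$ and $\sLI(A)^{\otimes}$ (using that $\mathbf{H}_{\infty}$ sends $\oplus$ to $\otimes_{A}$, with the structure natural transformation coming from identifying the two copies of the central element $\mathbf{c}$), and take coherent nerves. Both source and target map to $\bbGamma^{\op}$ as $\infty$-operads, so the induced morphism on coherent nerves is a lax symmetric monoidal functor of symmetric monoidal $\infty$-categories.

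Finally, naturality in $A$ follows by the exact pattern of Lemma~\ref{lem:AlgOftr}. The preceding corollary produces a pseudonatural transformation of pseudofunctors $\CComm(k)\to\CAT_{\Delta}$ (or rather its operadic enhancement into symmetric monoidal simplicial categories); feeding this through the Duskin nerve of the associated $2$-category of simplicial operads and then localizing at the quasi-isomorphisms in the source gives a functor $\CComm(k)\to\CComm(\LCatI)^{\mathrm{lax}}$, i.e.~a natural lax symmetric monoidal functor $\HH_{\infty}$ in the commutative-algebra variable. The only step that requires any real care is compatibility with base change of the simplicial operad enhancement and the verification that the strictification/pseudofunctoriality invoked from \S\ref{sec:pseudo} is compatible with the operadic structure; this is the main (purely formal) obstacle, but it is handled by the same abstract machinery used earlier in the paper for the enveloping-algebra functors, so no new ideas are required.
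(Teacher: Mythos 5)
Your proposal is correct and follows essentially the same route as the paper: the paper's proof simply takes the pseudofunctor $\Comm(k)\times[1]\to\CAT_{\Delta}$ produced by the preceding corollary, restricts to cofibrant commutative algebras (where quasi-isomorphisms go to weak equivalences of simplicial categories), and localizes via the Duskin-nerve machinery of Lemma~\ref{lem:AlgOftr}, with the lax monoidal structure obtained by running the identical argument on the associated simplicial operads. Your additional remarks about fibrancy of $\sQuad_{1}(A)$ and $\sLI(A)$ are consistent with what the paper establishes earlier and do not change the argument.
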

\begin{proof}
  We saw above that $H_{\infty}$ determines a pseudofunctor $\Comm(k)
  \times [1] \to \CAT_{\Delta}$. If we restrict to cofibrant
  commutative algebras, then this functor takes weak equivalences of
  commutative algebras to weak equivalences of simplicial
  categories. Proceeding as in the proof of Lemma~\ref{lem:AlgOftr} we
  get from this the desired functor of \icats{} $\CComm(k) \times \Delta^{1} \to
  \CatI$. A similar argument with simplicial operads gives the lax
  monoidal structure.
\end{proof}
 
The composite $\bbOmega \circ \HH_\infty$ is then a lax symmetric
monoidal functor $\HH \colon \QQuad_{1}(A) \to {\LLie_{1}}(A)$.  It
takes the unit $0$ in $\QQuad_{1}(A)$ to a 1-dimensional abelian Lie
algebra we'll denote by $A\c$, and therefore factors through a lax
monoidal functor $\widetilde{\mathcal{H}} \colon \QQuad_{1}(A) \to
\MMod_{A\c}({\LLie_{1}}(A))$.

\begin{lemma}\label{lem:Hsymmon}
  The lax symmetric monoidal functor $\widetilde{\mathcal{H}}$ is symmetric monoidal.
\end{lemma}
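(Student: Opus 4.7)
The plan is to verify strong monoidality by direct computation at the level of strict Heisenberg Lie algebras, exploiting the fact that passing to $A\c$-modules lets us identify the two copies of the central element that obstruct $\HH$ from being strong monoidal upstairs.

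First I would check that $\HH$ canonically factors through $\MMod_{A\c}(\LLie_{1}(A))$. The unit of $\QQuad_{1}(A)^\oplus$ is the zero quadratic module, and $\Heis_{1}(0,0) = A\c$ is a one-dimensional abelian shifted Lie algebra on a cofibrant module, so by Lemma~\ref{lem:cofcobareq} the counit $\bbOmega\mathbb{B}(A\c) \to A\c$ is a weak equivalence and $\HH(0) \simeq A\c$. The lax symmetric monoidal structure on $\HH$ then provides, for every quadratic module, a canonical map $A\c \simeq \HH(0) \to \HH(V,\omega)$ induced by the unique $0 \to (V,\omega)$ in $\QQuad_{1}(A)$; this picks out the central element in $\Heis_{1}(V,\omega)$ and makes it an $A\c$-module in $\LLie_{1}(A)^\oplus$, producing $\widetilde{\HH}$.

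Next I would identify the tensor product $\sqcup_{A\c}$ explicitly. Because the ambient monoidal structure on $\LLie_{1}(A)$ is Cartesian, the relative tensor product of two $A\c$-modules $L,L'$ is the coequalizer of $L \oplus A\c \oplus L' \rightrightarrows L \oplus L'$ coming from the two actions; since $A\c$ maps centrally into each factor, this coequalizer is simply the quotient Lie algebra $(L \oplus L')/\langle \iota_{L}(\c) - \iota_{L'}(\c)\rangle$, which identifies the two central inclusions. The lax structure map of $\HH$ is exactly the identification of the two central elements in $\Heis_{1}(V,\omega) \oplus \Heis_{1}(V',\omega')$ with the single central element in $\Heis_{1}(V \oplus V', \omega + \omega')$, and it descends to an isomorphism
\[ \Heis_{1}(V,\omega) \sqcup_{A\c} \Heis_{1}(V',\omega') \xrightarrow{\;\cong\;} \Heis_{1}(V \oplus V', \omega + \omega') \]
on the nose: both sides have underlying $A$-module $V \oplus V' \oplus A\c$ with identical bracket, as the $V$ and $V'$ pieces commute with each other and each self-brackets into the common $A\c$ via its respective pairing. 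Lifting this strict isomorphism through the cofibrant $\bbOmega\mathbb{B}$-replacements follows because $\bbOmega$ preserves colimits and the central map from $A\c$ is handled cleanly in the Quillen-equivalent coalgebra model. Coherence of the resulting symmetric monoidal structure (associativity, symmetry, unit) is then inherited from the lax coherence of $\HH$.

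The main obstacle will be justifying that the $\infty$-categorical relative tensor $\sqcup_{A\c}$ on $\MMod_{A\c}(\LLie_{1}(A)^\oplus)$ is really modelled by the strict central-identification quotient above. This should reduce to the observation that $A\c$ is an abelian group object concentrated in degree $0$, so the differences $\iota_{L}(\c) - \iota_{L'}(\c)$ are central in $L \oplus L'$ and generate a Lie ideal coinciding with themselves; consequently the homotopy coequalizer computing $\sqcup_{A\c}$ collapses to its strict version in the cases of interest, where the inputs are (cofibrant replacements of) the Heisenberg Lie algebras.
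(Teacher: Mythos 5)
Your overall strategy is the same as the paper's: identify the relative tensor product over $A\c$ concretely and compare it with the Heisenberg algebra of the direct sum, where the only difference is the identification of the two central elements. The setup in your first paragraph (why $\HH$ factors through $\MMod_{A\c}$) matches the paper. But the crucial step — justifying that the $\infty$-categorical relative tensor product is computed by the strict central-identification quotient — is where your argument has a genuine gap.

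First, $\sqcup_{A\c}$ is not the coequalizer of the single pair $L \oplus A\c \oplus L' \rightrightarrows L \oplus L'$; by definition it is the geometric realization of the full two-sided bar construction $|L \oplus A\c^{\oplus \bullet} \oplus L'|$ computed in the \icat{} $\LLie_1(A)$. Collapsing this homotopy colimit to a strict quotient is exactly the point that needs proof, and centrality of $\iota_L(\c) - \iota_{L'}(\c)$ is not the mechanism that makes it work: centrality tells you what the \emph{strict} colimit is, but says nothing about whether the strict colimit computes the homotopy colimit. The paper's proof supplies the two ingredients you are missing: (1) the forgetful functor $\LLie_1(A) \to \MMod(k)$ detects equivalences and preserves sifted colimits (Proposition~\ref{propn:opdforgetsifted}), so the realization of the bar construction may be computed on underlying modules, where $\oplus$ is the coproduct and the realization becomes the homotopy pushout $\mathcal{H}(V,\omega) \amalg_{A\c} \mathcal{H}(V',\omega')$; and (2) the strict pushout $V \oplus V' \oplus A\c$ of $V \oplus A\c \leftarrow A\c \to V' \oplus A\c$ is a \emph{homotopy} pushout because the inclusions $A\c \to V \oplus A\c$ are cofibrations of cofibrant modules. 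Your phrase ``the homotopy coequalizer collapses to its strict version'' is asserting the conclusion of (1) and (2) without proving either. Similarly, ``lifting this strict isomorphism through the cofibrant $\bbOmega\mathbb{B}$-replacements follows because $\bbOmega$ preserves colimits'' does not address the issue: the objects entering the relative tensor product in $\MMod_{A\c}(\LLie_1(A))$ are the cofibrant replacements themselves, and the comparison must be made as an equivalence in the \icat{}, which is precisely why the paper reduces everything to a quasi-isomorphism check on underlying cochain complexes rather than attempting a strict isomorphism of Lie algebras.
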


\begin{proof}
  Let us write $X \otimes_{A\c} Y$ for the tensor product in
  $\MMod_{A\c}({\LLie_{1}}(A))$, which is by definition the geometric
  realization $|X \oplus A\c \oplus \cdots \oplus Y|$ in the \icat{}
  ${\LLie_{n}}(A)$. Then we must show that the natural map
  $\mathcal{H}(V, \omega) \otimes_{A\c} \mathcal{H}(V', \omega') \to
  \mathcal{H}(V \oplus V', \omega + \omega')$ is an equivalence. Since
  the forgetful functor to $\MMod(k)$ detects equivalences and
  preserves sifted colimits by Proposition~\ref{propn:opdforgetsifted},
  it suffices to check that the underlying map in $\MMod(k)$ is an
  equivalence. But in $\MMod(k)$ we can identify the image of
  $\mathcal{H}(V, \omega) \otimes_{A\c} \mathcal{H}(V', \omega')$ with
  the pushout $\mathcal{H}(V, \omega) \amalg_{A\c} \mathcal{H}(V',
  \omega')$. It therefore suffices to show that $\mathcal{H}(V \oplus
  V', \omega + \omega')$ is correspondingly a homotopy pushout in
  $\Mod(k)$. To see this we can replace $\mathcal{H}(V
  \oplus V', \omega + \omega')$ with the quasi-isomorphic cochain
  complex $V \oplus V' \oplus A\c$, which is clearly the pushout of $V
  \oplus A\c$ and $V' \oplus A\c$ over $A\c$. Since the inclusions $A\c
  \to V \oplus A\c, V' \oplus A\c$ are cofibrations (as $V$ and
  $V'$ are cofibrant, and cofibrations are closed under pushouts) this
  is a homotopy pushout in $\Mod(k)$, which completes the proof.
\end{proof}

\section{Linear BV Quantization}

Combining the constructions of the previous sections, we get a
symmetric monoidal functor of \icats{}
\[ \bvq: \QQuad_{1}(A) \xto{\HH} \MMod_{A\mathbf{c}}({\LLie_{1}}(A))
\xto{U_{\BD}} \MMod_{A[\mathbf{c},\hbar]}(\AAlg_{\BD}(A[\hbar]))\]
that we call \emph{linear BV quantization}.  (As explained in
Section~\ref{div}, setting $\c = \hbar$ recovers the construction
typically seen in the literature.)  In this section we will explore
some properties of this functor and its close cousin $$\quant :=
\ev_{\hbar = \c = 1} \circ U_\BD \circ \HH \colon \QQuad_{1}(A) \to
\AAlg_{\rE_{0}}(A),$$ which we call simply \emph{linear quantization}.
In \S\ref{subsec:BVstack} we show that there is a natural extension
from modules to quasicoherent sheaves on derived stacks, so that
linear BV quantization is a well-posed construction in derived
geometry.  Then in \S\ref{subsec:quantfmp} and
\S\ref{subsec:quantsympvb} we show that this functor behaves like a
determinant when restricted either to symplectic modules on a formal
moduli problem or to symplectic vector bundles on a derived stack,
after dealing with the base case of symplectic $k$-modules in
\S\ref{subsec:quantk}.  Finally, in \S\ref{subsec:higherbv} we combine
our functors with the higher Morita category construction of
\cite{nmorita} to get symmetric monoidal functors of
$(\infty,n)$-categories.

\subsection{Linear BV Quantization as a Map of Derived
  Stacks}\label{subsec:BVstack}

We will show here that our BV quantization functor extends for formal reasons
from commutative algebras to derived stacks. 

Recall that a $\mathcal{C}$-valued \emph{\'{e}tale sheaf} is a
presheaf $\mathcal{F} \colon \CComm(k) \to \mathcal{C}$ that satisfies
\emph{\'{e}tale descent}: it preserves finite products and takes
\emph{derived \'{e}tale covers} (which we will not define here,
\cf{} \cite{HAG2}*{Definition 2.2.2.12} or \cite{DAG5}*{Definition
  4.3.13}) to cosimplicial limits. A \emph{derived stack} (in the most
general sense) is an $\widehat{\mathcal{S}}$-valued \'{e}tale sheaf,
where $\widehat{\mathcal{S}}$ is the \icat{} of large spaces. We use
$\dSt_{k}$ to denote the full subcategory of $\Fun(\CComm(k),
\widehat{\mathcal{S}})$ spanned by the derived stacks. It is then a
formal consequence of the definition (\cf{} \cite{DAG5}*{Proposition
  5.7}) that for any (very large) presentable \icat{}
$\mathcal{C}$, the \icat{} $\Fun^{R}(\dSt_{k}^{\op}, \mathcal{C})$ of
limit-preserving functors is equivalent to the \icat{} of functors
$\CComm(k) \to \mathcal{C}$ that are \'{e}tale sheaves, via
restricting along the  Yoneda embedding
$\CComm(k)^{\op} \to \dSt_{k}$.  The inverse functor is given simply
by taking right Kan extensions.

We have constructed natural transformations $\QQuad_{1}(\blank) \to
\LLie_{1}(\blank)$, $\LLie_{1}(\blank) \to \AAlg_{\BD}(\blank)$, etc.,
of functors $\CComm(k) \to \LCatI$. To see that these extend to
natural transformations of functors on derived stacks, it suffices to
show that the functors in question are \'{e}tale sheaves. This claim
will follow quite straightforwardly from Lurie's descent theorem for
modules:

\begin{thm}[Lurie \cite{DAG7}*{Theorem 6.1}]\label{LurieDesc}
  The functor $\MMod(\blank) \colon \CComm(k) \to \LCatI$ is an
  \'{e}tale sheaf.
\end{thm}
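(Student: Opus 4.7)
The plan is to verify the two defining conditions of an \'{e}tale sheaf separately: that $\MMod(\blank)$ preserves finite products, and that it satisfies \v{C}ech descent along derived \'{e}tale covers $A \to B$.

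For finite products, given two cdgas $A$ and $B$, the product $A \times B$ carries a canonical pair of orthogonal idempotents summing to the identity, and these split any module $M$ over $A \times B$ as a direct sum of an $A$-module and a $B$-module. This splitting is already visible at the model category level and exhibits $\Mod(A \times B)$ as the product $\Mod(A) \times \Mod(B)$; after inverting quasi-isomorphisms it passes to the desired equivalence of \icats{}.

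For \v{C}ech descent along an \'{e}tale cover $\phi \colon A \to B$, I would invoke the Barr--Beck--Lurie comonadicity theorem. The base change functor
\[ L \colon \MMod(A) \to \MMod(B), \qquad M \mapsto B \otimes_A M, \]
is left adjoint to the restriction of scalars $R$ along $\phi$, and a standard argument identifies the totalization $\lim_{[n] \in \Delta} \MMod(B^{\otimes_A (n+1)})$ of the \v{C}ech diagram with the \icat{} of comodules in $\MMod(B)$ for the comonad $L \circ R$. Hence the descent claim reduces to showing that $L$ is comonadic, which by Barr--Beck--Lurie amounts to two conditions: that $L$ is conservative, and that $L$ preserves limits of $L$-split cosimplicial diagrams.

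Both checks flow from faithful flatness of $\phi$, which is part of being an \'{e}tale cover. Conservativity is the derived enhancement of faithful flatness: if $B \otimes_A M$ is acyclic, then flatness of $B$ over $A$ implies $\pi_\ast(B \otimes_A M) \cong B \otimes_A \pi_\ast M$, and faithful flatness forces $\pi_\ast M = 0$. Preservation of $L$-split cosimplicial limits follows from the absoluteness of split totalizations together with the exactness of $L$. The main obstacle is the identification of the totalization of the \v{C}ech diagram with the comodule \icat{}; this requires a careful analysis of the cosimplicial diagram of \icats{} together with an iterated Barr--Beck argument to untangle the limit, and is the technical heart of Lurie's proof in \cite{DAG7}*{Theorem 6.1}.
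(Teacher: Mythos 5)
The paper does not prove this statement: it is imported as a black box from Lurie (\cite{DAG7}*{Theorem 6.1}), and the surrounding text only \emph{uses} it (via Corollary~\ref{cor:flatdesccartcocart} and Proposition~\ref{propn:AlgOdesc}) to propagate descent to quadratic modules and operad algebras. So there is no in-paper argument to compare yours against; what you have written is an outline of Lurie's own proof of the cited result.

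As such an outline it is essentially accurate. The finite-products step via the orthogonal idempotents of $A\times B$ is correct and unproblematic, and the reduction of \v{C}ech descent to comonadicity of $L = B\otimes_A\blank$ via the dual Barr--Beck--Lurie theorem is the standard architecture. The one place where your sketch glosses over the real content is the second comonadicity hypothesis: ``absoluteness of split totalizations together with the exactness of $L$'' does not yield that $L(\lim_\Delta X^\bullet)\to\lim_\Delta LX^\bullet$ is an equivalence for an $L$-split cosimplicial object $X^\bullet$ --- absoluteness only computes the target, and exactness of a functor of stable \icats{} controls finite limits, not infinite totalizations. This is precisely where faithful flatness must be used a second time (e.g.\ through convergence of the Bousfield--Kan spectral sequence for the Amitsur complex, whose rows are exact by classical faithfully flat descent of discrete modules), and it, together with the identification of the \v{C}ech limit with the comodule \icat{}, is the technical heart that you explicitly defer back to Lurie. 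In short: a fair summary of the cited proof, but not a self-contained one, and the paper itself never intended to supply one.
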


\begin{remark}
  In fact, Lurie's result is substantially more general: he shows that
  $\MMod(\blank)$ is a hypercomplete sheaf in the flat topology, and
  that this holds for modules over commutative ring spectra.
\end{remark}

As a trivial consequence we have:
\begin{lemma}
  The functor $\QQuad_{n}(\blank)$ satisfies \'etale descent, and so
  has a natural extension to a limit-preserving functor $\QQuad_{n}
  \colon \dSt_{k}^{\op} \to \CatI$.
\end{lemma}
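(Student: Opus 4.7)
The plan is to express $\QQuad_n(\blank)$ as a finite limit of functors $\CComm(k) \to \CatI$ each of which is an \'etale sheaf, then invoke Theorem~\ref{LurieDesc} together with the closure of sheaves under limits. The extension to derived stacks will then be immediate from the equivalence, recalled just before the lemma, between $\CatI$-valued \'etale sheaves on $\CComm(k)$ and limit-preserving functors $\dSt_k^{\op} \to \CatI$ given by right Kan extension.

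First I would rewrite the slice $\infty$-category appearing in the defining pullback as itself a pullback,
\[
\MMod(A)_{/A[n]} \simeq \Fun(\Delta^1, \MMod(A)) \times_{\MMod(A)} \{A[n]\},
\]
where $\Fun(\Delta^1, \MMod(A)) \to \MMod(A)$ is evaluation at the target and $\{A[n]\} \hookrightarrow \MMod(A)$ picks out $A[n]$. The assignment $A \mapsto A[n]$ assembles into a natural transformation $\ast \to \MMod(\blank)$ of functors $\CComm(k) \to \CatI$, since for a morphism $A \to B$ of commutative algebras the base change $\phi_! = B \otimes_A (\blank)$ sends $A[n]$ canonically to $B[n]$. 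Hence the above display exhibits $A \mapsto \MMod(A)_{/A[n]}$ as a pullback of functors $\CComm(k) \to \CatI$.

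Combining this with the defining square of $\QQuad_n(A)$, we realize $\QQuad_n(\blank)$ as a finite limit in $\Fun(\CComm(k), \CatI)$ of diagrams assembled from $\MMod(\blank)$, the constant functor $\ast$, the cotensor $\Fun(\Delta^1, \MMod(\blank))$, and the endofunctor $\Lambda^2$ on $\MMod(\blank)$. Each of these is an \'etale sheaf: $\MMod(\blank)$ by Theorem~\ref{LurieDesc}; $\Fun(\Delta^1, \MMod(\blank))$ because the cotensor with the fixed simplicial set $\Delta^1$ is a right adjoint and so preserves limits in $\CatI$; and $\ast$ trivially. Since \'etale sheaves form a limit-closed subcategory of $\Fun(\CComm(k), \CatI)$, the functor $\QQuad_n(\blank)$ is itself an \'etale sheaf, and right Kan extension along the Yoneda embedding $\CComm(k)^{\op} \hookrightarrow \dSt_k$ yields the claimed limit-preserving functor $\QQuad_n \colon \dSt_k^{\op} \to \CatI$.

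The only point that is at all subtle is making sure that the pointing by $A[n]$ is genuinely natural (and not merely pseudo-natural) as $A$ varies, but this is taken care of uniformly by the pullback presentation above: once $\MMod(\blank)$ is given as a functor out of $\CComm(k)$ and $\ast \to \MMod(\blank)$ is given as a natural transformation, everything else assembles automatically.
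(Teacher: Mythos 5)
Your argument is correct and is essentially the paper's own proof, which simply observes that $\QQuad_n(A)$ is by definition a pullback of \icats{} built from $\MMod(\blank)$ and then invokes Theorem~\ref{LurieDesc} together with the closure of \'etale sheaves under limits. The extra care you take in presenting the slice as $\Fun(\Delta^1,\MMod(A))\times_{\MMod(A)}\{A[n]\}$ and in checking that the pointing by $A[n]$ is natural in $A$ fills in details the paper leaves implicit, but the route is the same.
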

\begin{proof}  
  Immediate from Lurie's descent theorem and the description of
  $\QQuad_{n}(A)$ as a pullback of \icats{}.
\end{proof}

For any commutative algebra $R \in \CComm(k)$ and operad $\mathbf{O}$
in $\Mod(R)$, we have a functor $\AAlg_{\mathbf{O}}(R \otimes \blank)
\colon \CComm(k) \to \LCatI$ (\cf{} Lemma~\ref{lem:AlgOftr}). We now
prove that this functor also satisfies descent; this is no doubt
well-known to the experts --- in particular, in the case of Lie
algebras it is stated by Hennion as \cite{Hennion}*{Proposition
  2.1.3}.

\begin{propn}\label{propn:AlgOdesc}
  Let $R$ be a commutative algebra over $k$ and let $\mathbf{O}$ be an operad in
  $\Mod(R)$.  Then the functor $\AAlg_{\mathbf{O}}(R \otimes \blank)$
  satisfies \'etale descent, and so determines a limit-preserving
  functor
  \[
  \AAlg_{\mathbf{O}}(R \otimes \blank) \colon \txt{dSt}_{k}^{\op} \to \PrL,
  \]
  where $\PrL$ is the \icat{} of presentable \icats{} and left adjoint functors.
\end{propn}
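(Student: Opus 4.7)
The plan is to bootstrap from Lurie's descent theorem (Theorem~\ref{LurieDesc}) by enhancing it to symmetric monoidal \icats{} and composing with a limit-preserving algebra functor. First, I would upgrade $\MMod(R \otimes \blank)$ to a sheaf of symmetric monoidal presentable \icats{}, i.e.\ a limit-preserving functor $\CComm(k) \to \mathrm{CAlg}(\PrL)$ where the symmetric monoidal structure is given by tensor product over $R \otimes A$. This is formal from Theorem~\ref{LurieDesc} together with the fact that the forgetful functor $\mathrm{CAlg}(\PrL) \to \PrL$ creates limits, so descent for the underlying \icats{} promotes automatically to descent for the tensor product enhancement.

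Second, I would realize $\AAlg_{\mathbf{O}}(R \otimes A)$ via its $\infty$-operadic description: base change $\mathbf{O}$ along $R \to R \otimes A$ to produce a $k$-linear operad in $\Mod(R \otimes A)$, pass to the associated $\infty$-operad $\mathbf{O}^{\otimes}_{R \otimes A}$, and form algebras in $\MMod(R \otimes A)^{\otimes}$; by \cite{HA}*{Theorem 4.1.8.4} this coincides with the model-categorical construction of Lemma~\ref{lem:AlgOftr}. The crucial observation is that the functor $\AAlg_{\mathbf{O}} \colon \mathrm{CAlg}(\PrL) \to \PrL$ preserves limits, since $\AAlg_{\mathbf{O}^{\otimes}}(\mathcal{C}^{\otimes})$ is an \icat{} of $\infty$-operad morphisms with source fixed at $\mathbf{O}^{\otimes}$, and such mapping \icats{} preserve limits in the target variable.

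Composing these observations, $\AAlg_{\mathbf{O}}(R \otimes \blank)$ factors as
\[
\CComm(k) \xrightarrow{\MMod(R \otimes \blank)^{\otimes}} \mathrm{CAlg}(\PrL) \xrightarrow{\AAlg_{\mathbf{O}}} \PrL,
\]
so it is an étale sheaf valued in $\PrL$. Right Kan extension along the Yoneda embedding $\CComm(k)^{\op} \hookrightarrow \dSt_{k}$ then produces the desired limit-preserving functor on derived stacks; presentability of the values is ensured by monadicity of the forgetful functor $\AAlg_{\mathbf{O}}(R \otimes A) \to \MMod(R \otimes A)$, together with Proposition~\ref{propn:opdforgetsifted}, which guarantee that the forgetful functor creates sifted colimits and detects equivalences.

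The main obstacle I anticipate is the compatibility bookkeeping: one must verify that the pseudonatural functorial structure of Lemma~\ref{lem:AlgOftr} rectifies to the canonical $\infty$-operadic one so that descent for the model-categorical construction is genuinely read off from the $\infty$-operadic description. This is technical but standard once the $\infty$-operadic setup is in place, and can either be invoked from the relevant parts of \cite{HA} or carried out directly using the simplicial-operad presentation employed in \S\ref{subsec:icatenv}.
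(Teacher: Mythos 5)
There is a genuine gap at the heart of your second step. You propose to ``pass to the associated $\infty$-operad $\mathbf{O}^{\otimes}_{R\otimes A}$'' and invoke \cite{HA}*{Theorem 4.1.8.4} to identify its algebras with the model-categorical construction. But $\mathbf{O}$ here is a $k$-linear operad in $\Mod(R)$ --- in the applications it is $\Lie_{1}$, $\PZ$, $\BD$, or $\teo$ --- and such a dg-operad has no associated $\infty$-operad over $\bbGamma^{\op}$ in the sense of \cite{HA}: Lurie's $\infty$-operads are operads enriched in spaces, and the cited rectification theorem concerns the associative operad only. Producing an ``enriched $\infty$-operad'' from $\mathbf{O}$ and proving the comparison with $\Alg_{\mathbf{O}}(R\otimes A)$ is precisely the missing technology that this paper is explicitly working around (see the opening of \S\ref{sec:opd}: ``there is not yet available a theory of enriched $\infty$-operads''). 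So the compatibility issue you defer to the end as ``technical but standard once the $\infty$-operadic setup is in place'' is not bookkeeping --- the setup itself is not in place, and with it falls the claim that $\AAlg_{\mathbf{O}}$ is a limit-preserving functor out of $\mathrm{CAlg}(\PrL)$, which is the engine of your argument. (Your first step, enhancing Theorem~\ref{LurieDesc} to symmetric monoidal $\infty$-categories via the limit-creating forgetful functor $\mathrm{CAlg}(\PrL)\to\PrL$, is fine, but it is not what is needed.)

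The paper's proof avoids $\infty$-operads entirely and instead leverages only the forgetful functor to modules. It proves a general fibration lemma (Proposition~\ref{propn:limcartcocart}, specialized in Corollary~\ref{cor:flatdesccartcocart}): if $F\colon \mathcal{E}\to\MMod$ is a map over $\CComm(k)$ of Cartesian-and-coCartesian fibrations that preserves both classes of morphisms, whose fiberwise functors detect equivalences and preserve limits, and whose fibers are complete, then descent for $\MMod$ (Theorem~\ref{LurieDesc}) forces descent for $\mathcal{E}$. Applying this to the forgetful functor $\AAlg_{\mathbf{O}}(R\otimes A)\to\MMod(R \otimes A)$ --- which commutes with both base change and restriction, is conservative, and preserves limits --- gives the result with no operadic rectification at all. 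If you want to repair your argument, the fix is to replace the factorization through $\mathrm{CAlg}(\PrL)$ by this kind of conservative, limit-preserving comparison with the module fibration, for which the needed inputs are already established in Proposition~\ref{propn:opdch} and Lemma~\ref{lem:AlgOftr}.
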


\begin{remark}
  It is easy to enhance this to get, for instance, a functor $\OOpd(k)
  \times \txt{dSt}_{k}^{\op} \to \PrL$, where $\OOpd(k)$ is the
  \icat{} of operads in $\MMod(k)$, \eg{} obtained by inverting the
  weak equivalences between (cofibrant) operads in $\Mod(k)$. To see
  this, observe that such a functor is equivalent to a functor
  $\CComm(k) \to \Fun(\OOpd(k), \PrL)$ that satisfies \'{e}tale
  descent, which it does \IFF{} it does so when evaluated at each
  operad (since limits in functor \icats{} are computed
  objectwise). The simplicial categories $\sAlg_{(\blank)}(\blank)$ are
  naturally pseudofunctorial in both variables, so they determine a
  pseudofunctor $\Opd(k) \times \Comm(k) \to \CAT_{\Delta}$. By
  Proposition~\ref{propn:opdch}(iii--iv), if we restrict to cofibrant
  objects of $\Comm(k)$ then this functor takes quasi-isomorphisms in
  both variables to weak equivalences of simplicial
  categories. Localizing, we obtained the required functor $\OOpd(k)
  \times \CComm(k) \to \PrL$. 
\end{remark}

\begin{remark}
  If we had a good theory of enriched $\infty$-operads, we would be
  able to formally identify $\AAlg_{\mathbf{O}}(X)$, for $X$ a derived
  stack, with the \icat{} of $\mathbf{O}$-algebras in the \icat{}
  $\QCoh(X)$ of quasicoherent sheaves on $X$, regarded as enriched
  over $k$-modules.
\end{remark}

For any derived stack $X$, we thus obtain natural functors
\[ \bvq(X) : \QQuad_{1}(X) \to \MMod_{\mathcal{O}_{X}[\mathbf{c},\hbar]}\AAlg_{\BD}(X[\hbar])\]
and
\[ \quant(X) : \QQuad_{1}(X) \to \MMod_{\mathcal{O}_{X}}(\AAlg_{\rE_{0}}(X))\]
from our earlier work.

For the proof of this proposition, we need a technical result:

\begin{propn}\label{propn:limcartcocart}
  Let $p \colon \mathcal{E} \to \mathcal{C}^{\triangleleft}$ and $q
  \colon \mathcal{F} \to \mathcal{C}^{\triangleleft}$ be Cartesian and
  coCartesian fibrations. Suppose the functor $\phi \colon
  \mathcal{C}^{\triangleleft} \to \CatI$ associated to $q$ is a limit
  diagram.  If a functor $F \colon \mathcal{E} \to \mathcal{F}$ over
  $\mathcal{C}^{\triangleleft}$ satisfies
  \begin{enumerate}[(a)]
  \item the functor $F$ preserves both Cartesian and coCartesian morphisms,
  \item for every $x \in \mathcal{C}^{\triangleleft}$, the functor $F_{x} \colon \mathcal{E}_{x} \to \mathcal{F}_{x}$ detects equivalences and preserves limits,
  \item the \icats{} $\mathcal{E}_{x}$ are complete for all $x \in
    \mathcal{C}^{\triangleleft}$,
  \end{enumerate}
then the functor $\epsilon \colon \mathcal{C}^{\triangleleft} \to \CatI$ associated to $p$ is also a limit diagram.
\end{propn}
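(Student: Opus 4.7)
Let $v$ denote the cone point of $\mathcal{C}^{\triangleleft}$ and write $L_\epsilon := \lim_{x \in \mathcal{C}} \epsilon(x)$ and $L_\phi := \lim_{x \in \mathcal{C}} \phi(x)$, with canonical comparison functors $\Psi \colon \epsilon(v) \to L_\epsilon$ and $\Psi_\phi \colon \phi(v) \to L_\phi$. By hypothesis $\Psi_\phi$ is an equivalence, so the task is to show the same for $\Psi$. Since $F$ is a functor over $\mathcal{C}^{\triangleleft}$ that preserves Cartesian and coCartesian morphisms, a straightening argument promotes it to a natural transformation $\widetilde F \colon \epsilon \Rightarrow \phi$ of $\CatI$-valued diagrams on $\mathcal{C}^{\triangleleft}$ whose component at $x$ is $F_x$. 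Taking limits over $\mathcal{C}$ yields the commutative square
\[
\begin{tikzcd}
\epsilon(v) \arrow{r}{\Psi} \arrow{d}{F_v} & L_\epsilon \arrow{d}{L_F} \\
\phi(v) \arrow{r}{\Psi_\phi} & L_\phi,
\end{tikzcd}
\]
where $L_F := \lim_{x \in \mathcal{C}} F_x$, and the bottom arrow $\Psi_\phi$ is an equivalence.

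To show $\Psi$ is fully faithful, for $e, e' \in \epsilon(v)$ I would identify $\Map_{L_\epsilon}(\Psi e, \Psi e')$ with $\lim_{x \in \mathcal{C}} \Map_{\epsilon(x)}(e_x, e'_x)$, where $e_x, e'_x$ denote the Cartesian transports of $e, e'$ to the fibre at $x$. The analogous identification for $\phi$, combined with the equivalence $\Psi_\phi$, gives $\Map_{\phi(v)}(F_v e, F_v e') \simeq \lim_x \Map_{\phi(x)}(F_x e_x, F_x e'_x)$. Combining this with commutativity of the square and the fact that $F_v$ preserves limits and detects equivalences then forces the canonical comparison $\Map_{\epsilon(v)}(e, e') \to \lim_x \Map_{\epsilon(x)}(e_x, e'_x)$ to be an equivalence.

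For essential surjectivity, given $s \in L_\epsilon$ the object $L_F(s) \in L_\phi$ corresponds under $\Psi_\phi^{-1}$ to some $f \in \phi(v)$. I would construct a lift $e \in \epsilon(v)$ of $s$ by viewing $s$ as a Cartesian section of the pulled-back fibration $\mathcal{E} \times_{\mathcal{C}^{\triangleleft}} \mathcal{C}$, using the Cartesian lifts at $v$ to form a $\mathcal{C}^{\op}$-indexed diagram in $\epsilon(v)$ whose value at $x$ is a chosen source of a Cartesian lift ending at $s(x)$, and taking the limit of this diagram in $\epsilon(v)$, which exists by completeness (hypothesis (c)). The hypothesis that $F_v$ preserves limits then yields $F_v(e) \simeq f$, and full faithfulness of $\Psi$ together with the commuting square and the equivalence $\Psi_\phi$ imply $\Psi(e) \simeq s$.

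The main obstacle is the essential-surjectivity step: producing a genuine lift $e$ of the Cartesian section $s$, rather than merely identifying its image $f$ in $\phi(v)$. Completeness of $\epsilon(v)$ is what enables building $e$ as a limit, and preservation of limits by $F_v$ is what lets us recognize it is correct, so both halves of hypotheses (b) and (c) are indispensable. A secondary subtlety is the variance bookkeeping in passing from ``$p$ Cartesian, $q$ coCartesian, $F$ preserves both'' to a bona-fide natural transformation $\widetilde F$ of $\CatI$-valued functors; this can be handled by applying Lurie's straightening equivalences to the bifibration-like data, but requires care.
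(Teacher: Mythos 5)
Your overall strategy --- proving directly that the comparison functor $\Psi \colon \epsilon(v) \to \lim_{\mathcal{C}}\epsilon$ is fully faithful and essentially surjective --- is a legitimate alternative to the paper's route, which instead invokes the criterion of \cite{DAG7}*{Lemma 5.17} reducing the limit condition to (1) joint conservativity of the coCartesian pushforwards out of the cone fibre and (2) extendability of coCartesian sections to $p$-limit diagrams with coCartesian legs. But as written your fully faithfulness step has a genuine gap. You form a square of mapping spaces with $\Map_{\epsilon(v)}(e,e') \to \lim_x \Map_{\epsilon(x)}(e_x,e'_x)$ on top and the corresponding equivalence for $\phi$ on the bottom, and assert that commutativity plus ``$F_v$ preserves limits and detects equivalences'' forces the top map to be an equivalence. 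That inference is invalid: conservativity of $F_v$ is a statement about \emph{morphisms} of $\epsilon(v)$, not about its mapping spaces, and a commutative square of spaces whose bottom edge is an equivalence tells you nothing about the top edge unless the square is Cartesian or the vertical maps are equivalences --- neither of which you have, since $F_v$ is not assumed fully faithful. The missing idea is to corepresent the comparison map: by the Cartesian/coCartesian adjunctions one has $\lim_x \Map_{\epsilon(x)}(e_{x,!}e, e_{x,!}e') \simeq \Map_{\epsilon(v)}(e, R\Psi e')$, where $R$ is the right adjoint you implicitly construct in the essential-surjectivity step, so fully faithfulness is equivalent to the single morphism $e' \to R\Psi e'$ being an equivalence in $\epsilon(v)$ --- and \emph{that} can be checked after applying the conservative, limit-preserving functor $F_v$ and comparing with the invertible unit for $\phi$. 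This reformulation is exactly what the paper buys by citing Lurie's lemma, which replaces ``fully faithful'' by the much weaker ``jointly conservative.''

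Once that is repaired, the remainder of your argument matches the paper's in substance: building the lift $e$ as the limit in $\epsilon(v)$ of the Cartesian transports of the section $s$ is the paper's appeal to \cite{HTT}*{Propositions 4.3.1.10 and 4.3.1.11}, and recognizing $F_v(e)\simeq f$ uses limit-preservation just as there. Two smaller points to tidy: the diagram of Cartesian transports is indexed by $\mathcal{C}$, not $\mathcal{C}^{\op}$ (relative limits over a cone are computed by pulling the $\mathcal{C}$-shaped diagram into the cone fibre), and in the final step the conclusion $\Psi(e)\simeq s$ should come from exhibiting the canonical counit map $\Psi(e) \to s$ and checking it is an equivalence fibrewise using conservativity of the $F_x$; merely knowing that $\Psi(e)$ and $s$ have equivalent images under a non-fully-faithful functor does not suffice.
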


\begin{proof}
  For $c \in \mathcal{C}$, let $e_{c}$ denote the unique map $-\infty
  \to c$ in $\mathcal{C}^{\triangleleft}$, where $-\infty$ denotes the
  initial object (or cone point) of $\mathcal{C}^{\triangleleft}$. By \cite{DAG7}*{Lemma
    5.17}, we know that $\epsilon$ is a limit diagram \IFF{} the
  following conditions hold:
  \begin{enumerate}[(1)]
  \item The functors $e_{c,!} \colon \mathcal{E}_{-\infty} \to
    \mathcal{E}_{c}$ are jointly conservative, \ie{} if $f$ is a
    morphism in $\mathcal{E}_{-\infty}$ such that $e_{c,!}f$ is an
    equivalence in $\mathcal{E}_{c}$ for all $c \in \mathcal{C}$, then
    $f$ is an equivalence.
  \item If $G \colon \mathcal{C} \to \mathcal{E}$ is a coCartesian
    section of $p$ over $\mathcal{C}$, then $G$ can be extended to a
    $p$-limit diagram $\bar{G} \colon \mathcal{C}^{\triangleleft} \to
    \mathcal{E}$, such that $\bar{G}$ carries $e_{c}$ to a coCartesian
    morphism for all $c \in \mathcal{C}$.
  \end{enumerate}
  Let us first prove (1). Suppose $f$ is a morphism in
  $\mathcal{E}_{-\infty}$ such that $e_{c,!}f$ is an equivalence in
  $\mathcal{E}_{c}$ for all $c \in \mathcal{C}$. Since $F_{-\infty}$
  detects equivalences, to show that $f$ is an equivalence it suffices
  to prove that $F_{-\infty}f$ is an equivalence. But as $\phi$ is
  a limit diagram, $F_{-\infty}f$ is an equivalence \IFF{}
  $e_{c,!}F_{-\infty}f$ is an equivalence for all $c \in
  \mathcal{C}$. And since $F$ preserves coCartesian morphisms, we have
  natural equivalences $e_{c,!}F_{-\infty}f \simeq F_{c}e_{c,!}f$,
  hence these maps are indeed equivalences.

  Now we prove (2). The functor $p$ is a Cartesian fibration, its
  fibres are complete, and the Cartesian pullback functors preserve
  limits since they are right adjoints. Therefore the $p$-limit of any
  diagram $G$ exists by \cite{HTT}*{Corollary 4.3.1.11}. Moreover, by
  \cite{HTT}*{Proposition 4.3.1.10} we know that the limit is given by
  the limit in $\mathcal{E}_{-\infty}$ of the Cartesian pullback of
  the diagram $G$ to this fibre. Given the $p$-limit diagram $\bar{G}$
  we are left with proving that the maps $G(e_{c}) \colon
  \bar{G}(-\infty) \to G(c)$ are all coCartesian, \ie{} that the
  induced maps $e_{c,!}\bar{G}(-\infty) \to G(c)$ are equivalences in
  $\mathcal{E}_{c}$. Since the functors $F_{c}$ detect equivalences,
  it suffices to show that the maps
  $e_{c,!}F_{-\infty}\bar{G}(-\infty) \simeq
  F_{c}e_{c,!}\bar{G}(-\infty) \to F_{c}G(c)$ are equivalences in
  $\mathcal{F}_{c}$, \ie{} that the maps $F\bar{G}(e_{c})$ are
  $q$-coCartesian. But since $F_{-\infty}$ preserves limits and $F$
  preserves Cartesian morphisms, we know that
  $F_{-\infty}\bar{G}(-\infty)$ is the limit of the Cartesian pullback
  of $FG$ to $\mathcal{F}_{-\infty}$, which is the $q$-limit of $F
  \circ G$. Since $\phi$ is a limit diagram, we know that (2) holds
  for $q$, \ie{} that $F\bar{G}(e_{c})$ is coCartesian for all $c$.
\end{proof}

\begin{cor}\label{cor:flatdesccartcocart}
  Let $q \colon \MMod \to \CComm(k)$ be the coCartesian (and
  Cartesian) fibration associated to the functor $\MMod(\blank) \colon
  \CComm(k) \to \CatI$, and suppose $p \colon \mathcal{E} \to
  \CComm(k)$ is a Cartesian and coCartesian fibration. If $F \colon
  \mathcal{E} \to \MMod$ is a functor over $\CComm(k)$ such that
  \begin{enumerate}[(a)]
  \item $F$ preserves Cartesian and coCartesian morphisms,
  \item for every $A \in \CComm(k)$, the functor $F_{A} \colon
    \mathcal{E}_{A} \to \MMod(A)$ detects equivalences and preserves
    limits,
  \item the \icats{} $\mathcal{E}_{A}$ are complete for all $A \in \CComm(k)$,
  \end{enumerate}
  then the functor $\epsilon \colon \CComm(k) \to \CatI$ associated to
  $p$ satisfies \'etale descent, and so determines a
  limit-preserving functor $\epsilon \colon \txt{dSt}_{k}^{\op} \to
  \CatI$ from the \icat{} of derived stacks over $k$.
\end{cor}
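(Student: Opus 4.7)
The plan is to deduce the corollary as a direct fibrewise application of Proposition~\ref{propn:limcartcocart}, using Lurie's descent theorem (Theorem~\ref{LurieDesc}) as the seed input. Concretely, to verify étale descent for $\epsilon$, one must check two things: (i) $\epsilon$ preserves finite products, and (ii) for every derived étale cover $A \to B$ with Čech nerve $B^{\bullet} \in \Fun(\simp, \CComm(k))$, the canonical map $\epsilon(A) \to \lim_{[n] \in \simp} \epsilon(B^{n})$ is an equivalence. In both cases the indexing shape can be packaged as a diagram $\mathcal{C} \to \CComm(k)$ (with $\mathcal{C} = \simp$ for covers, or a finite discrete category for products), together with a cone point $A = -\infty$ extending it to $\mathcal{C}^{\triangleleft} \to \CComm(k)$.

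Given such a diagram, I would form the pullback fibrations $\mathcal{E}' := \mathcal{E} \times_{\CComm(k)} \mathcal{C}^{\triangleleft} \to \mathcal{C}^{\triangleleft}$ and $\mathcal{F}' := \MMod \times_{\CComm(k)} \mathcal{C}^{\triangleleft} \to \mathcal{C}^{\triangleleft}$, together with the restricted functor $F' \colon \mathcal{E}' \to \mathcal{F}'$ over $\mathcal{C}^{\triangleleft}$. Both fibrations remain Cartesian and coCartesian (pullback preserves these), and $F'$ inherits hypotheses (a), (b), (c) of Proposition~\ref{propn:limcartcocart} from the corresponding hypotheses on $F$, since all of these conditions are fibrewise or are about preservation of (co)Cartesian morphisms. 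The functor $\phi \colon \mathcal{C}^{\triangleleft} \to \CatI$ classifying $\mathcal{F}'$ is the restriction of $\MMod(\blank)$ along $\mathcal{C}^{\triangleleft} \to \CComm(k)$, which is a limit diagram precisely by Theorem~\ref{LurieDesc} (for the Čech case) and by the trivial fact that $\MMod(\blank)$ sends finite products to products (for the product case). Proposition~\ref{propn:limcartcocart} then yields that the classifying functor of $\mathcal{E}'$, which is $\epsilon$ restricted to $\mathcal{C}^{\triangleleft}$, is a limit diagram as required.

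Once étale descent is established, the desired extension to a limit-preserving functor $\epsilon \colon \dSt_{k}^{\op} \to \CatI$ comes for free from the formal equivalence between étale sheaves on $\CComm(k)$ and limit-preserving functors on $\dSt_{k}^{\op}$, obtained by right Kan extension along the Yoneda embedding $\CComm(k)^{\op} \to \dSt_{k}$ (as discussed above the statement of Theorem~\ref{LurieDesc}). Finally, the upgrade from $\CatI$-valued to $\PrL$-valued is a separate observation: since each $\mathcal{E}_{A}$ is complete and the transition functors are left adjoints (being coCartesian pushforwards of a fibration whose Cartesian pullbacks preserve limits), one checks that the limits computed in $\CatI$ agree with those in $\PrL$, hence the functor factors through $\PrL$.

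The only nontrivial obstacle is verifying the hypotheses of Proposition~\ref{propn:limcartcocart} after pullback, and among those the one requiring thought is hypothesis (a): one must observe that a pullback of a functor preserving (co)Cartesian morphisms again preserves (co)Cartesian morphisms, which is immediate from the characterization of (co)Cartesian morphisms in a pullback fibration. Hypotheses (b) and (c) are fibrewise statements, so they transfer tautologically. Thus the corollary reduces cleanly to the proposition, with Lurie's theorem supplying the required property of $\phi$.
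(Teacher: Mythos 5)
Your proposal is correct and follows exactly the paper's (one-line) proof, which simply combines Proposition~\ref{propn:limcartcocart} with Theorem~\ref{LurieDesc}; you have merely filled in the routine details of pulling back along the cone diagrams and checking that the hypotheses transfer. The closing remark about upgrading the target from $\CatI$ to $\PrL$ is not needed for this corollary as stated (that refinement belongs to Proposition~\ref{propn:AlgOdesc}), but it is harmless.
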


\begin{proof}
  Combine Proposition~\ref{propn:limcartcocart} with Theorem~\ref{LurieDesc}.
\end{proof}

\begin{proof}[Proof of Proposition~\ref{propn:AlgOdesc}]
  The conditions of Corollary~\ref{cor:flatdesccartcocart} clearly
  hold in this situtation.
\end{proof}

\subsection{Quantization over $k$}\label{subsec:quantk}

Over the field $k$, the linear quantization functor $\quant$ 
is particularly well-behaved on quadratic $k$-modules 
that are non-degenerate and have finite-dimensional cohomology: 
the quantization has one-dimensional cohomology. 
In fact, we will see that for a cohomologically finite $W$, 
there is a numerical factor $d_W$ depending on the Betti numbers $\dim
H^{*}W$ of $W$
such that
\[
\quant(T^*[1]W) \simeq \det(W)[-d_W],
\]
where $T^*[1]W$ denotes $W \oplus W^*[1]$ with symplectic pairing 
given by the skew-symmetrization of the evaluation pairing. 
In other words, $\quant$ is a determinant-type functor, 
which illuminates one sense in which it provides 
a homological approach to integration, 
as the determinant of a vector space is the natural home for volume forms on it.

Every cochain complex is quasi-isomorphic to its cohomology, 
so it suffices to verify this invertibility property on such modules.

\begin{lemma}\label{detlemma}
Let $V$ be a cochain complex with zero differential such that $\dim_k V^d < \infty$ for all $d$ and it vanishes for $d \gg 0$ and $d \ll 0$.
Suppose $V$ is equipped with a non-degenerate pairing $\omega \colon \Lambda^2 V \to k[1]$.
Then
\[
H^d( \quant(V,\omega)) \cong \begin{cases} k, & d = \sum_{n} (2n+1)\dim_k V^{2n+1}, \\ 0, & \txt{else}.\end{cases}
\]
\end{lemma}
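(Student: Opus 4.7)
The plan is to leverage the symmetric monoidality of $\quant$ to reduce the computation to a one-variable Koszul calculation. Since $\omega$ is non-degenerate and $V$ has zero differential, $\omega$ pairs $V^d$ non-degenerately with $V^{-d-1}$ for each $d$. Choosing dual bases on each such pair yields an orthogonal decomposition
\[ (V, \omega) \cong \bigoplus_\alpha (P_\alpha, \omega_\alpha) \]
in $\QQuad_1(k)$, where each $P_\alpha = k\langle e_\alpha\rangle_{a_\alpha} \oplus k\langle f_\alpha\rangle_{-a_\alpha - 1}$ is a two-dimensional ``symplectic plane'' with $\omega_\alpha(e_\alpha, f_\alpha) = 1$.

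Because $\quant = \ev_{\hbar = \mathbf{c} = 1} \circ U_\BD \circ \HH$ is a composite of symmetric monoidal functors (by Lemma~\ref{lem:Hsymmon} combined with the results on $U_\BD$ in Section~\ref{subsec:icatenv}), it converts $\oplus$ in $\QQuad_1(k)$ into $\otimes_k$ in $\AAlg_{\rE_0}(k)$, so
\[ \quant(V, \omega) \simeq \bigotimes_\alpha \quant(P_\alpha, \omega_\alpha). \]
Künneth over the field $k$ then reduces the problem to computing each factor $H^*\quant(P_\alpha, \omega_\alpha)$ and summing the degrees.

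For a plane $P = k\langle e\rangle_a \oplus k\langle f\rangle_{-a-1}$, the Heisenberg Lie algebra is $P \oplus k\mathbf{c}$ with $[e,f] = \mathbf{c}$. Unwinding the explicit description of $U_\BD$ from Section~\ref{subsec:icatenv} and setting $\hbar = \mathbf{c} = 1$, one finds that $\quant(P)$ has underlying graded module $\Sym(P)$ with differential the second-order operator $D$ determined by $D(ef) = 1$; concretely $D = \partial_e \partial_f$, the BV Laplacian. Exactly one of $|e|, |f|$ is odd, so as a graded algebra $\Sym(P)$ is a polynomial algebra in one variable tensored with an exterior algebra in one variable, and an elementary Koszul/Poincar\'e-lemma computation in characteristic zero gives
\[ H^*(\quant(P, \omega)) \cong k, \quad \text{concentrated in the odd degree } d_P \in \{a, -a-1\}. \]

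Finally, every odd-degree basis vector of $V$ appears as the unique odd-degree generator of exactly one plane $P_\alpha$, so
\[ \sum_\alpha d_{P_\alpha} \;=\; \sum_n (2n+1)\,\#\{\alpha : d_{P_\alpha} = 2n+1\} \;=\; \sum_n (2n+1)\,\dim_k V^{2n+1}, \]
yielding the claimed cohomology. The only real subtlety lies in the second step: verifying that an orthogonal $\oplus$ of quadratic modules becomes an honest $\otimes_k$ of $\rE_0$-algebras after passing through $\widetilde{\HH}$ (which a priori produces a Lie algebra with a shared central element rather than two) and $U_\BD$ followed by $\ev_{\mathbf{c}=\hbar=1}$. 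This compatibility is exactly what Lemma~\ref{lem:Hsymmon} packages, together with symmetric monoidality of $U_\BD$. Everything else is a finite amount of degree bookkeeping and a one-variable Koszul calculation.
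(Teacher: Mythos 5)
Your proposal is correct and follows essentially the same route as the paper's proof: decompose $(V,\omega)$ into two-dimensional symplectic planes pairing degree $a$ against degree $-a-1$, identify the quantization of each plane with $(k[x,\xi],\partial^2/\partial x\partial\xi)$, observe its cohomology is one-dimensional and concentrated in the degree of the odd generator, and assemble via the monoidality of $\quant$ and K\"unneth. The paper is terser about the tensor-decomposition step that you correctly flag as the one real subtlety, but the argument is the same.
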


In short, 
\[
\quant(V,\omega) \simeq \det(\bigoplus_n V^{2n+1})[-m],
\]
where $m = \sum_{n} (2n+1)\dim_k V^{2n+1}$ is the index from the lemma.
In particular, when $V = T^*[1]W = W \oplus W^*[1]$ with the natural pairing $\omega_\ev$, we find that 
\[
\quant(T^*[1]W) \simeq \det(W)[-d_W]
\]
where $d_W = \sum_n (2n+1)(\dim_k W^{2n+1} - \dim_k W^{2n})$.

\begin{proof}
The vector space $V$ is a direct sum of atomic components of the following form: Let $V_n$ denote the graded vector space with a copy of $k$ in degree $n$ and a copy of $k$ in degree $-1-n$, and the obvious pairing $\omega_n$. Denote the degree $n$ generator by $x$ and the dual generator in degree $-1-n$ by $\xi$. Then $\omega_n(x,\xi) = 1$. Observe that
\[
\quant(V_n,\omega_n) = (k[x,\xi], \triangle = \partial^2/\partial x \partial \xi),
\]
by unraveling the definitions. Without loss of generality, assume that $\xi$ has odd degree. (Otherwise, just swap the labels on $x$ and $\xi$.) Then compute that $\triangle(x^{a+1} \xi) = \pm (a+1) x^a$, and so every monomial $x^a$ is exact and only the monomial $\xi$ is closed. Hence
\[
H^*\quant(V_n,\omega_n) \cong k \,\xi \cong k[1+n],
\]
when $n$ is even. If $x_1,\ldots,x_N$ is a set of even degree basis elements for $V$ and $\xi_1,\ldots,\xi_N$ the dual set of odd degree basis elements, then 
\[
H^*\quant(V,\omega) \cong k \,\xi_1 \cdots \xi_N \cong k[-m],
\]
where $m = \sum_i |\xi_i|$. In short, $H^*\quant(V,\omega)$ is isomorphic to the top exterior power (or determinant) of the odd-degree components of $V$, but placed in degree $m$.
\end{proof}

\subsection{Quantization over Formal Moduli Stacks}\label{subsec:quantfmp}

The main result of this section is that this determinant-type behavior
extends to formal moduli problems. We carefully state the result here and 
spend the rest of the section working through the proof.

To do this, we need to introduce the notion of a symplectic module.

\begin{defn}
  A quadratic module $(V,\omega) \in \QQuad_n(A)$ is \emph{symplectic}
  of degree $n$ if the pairing $\omega$ is non-degenerate, \ie{} the
  associated map $\omega^* \colon V \to \Hom_A(V,A[n])$ is an
  equivalence. Let $\SSymp_n(A)$ denote the full subcategory of
  $\QQuad_n(A)$ whose objects are symplectic.
\end{defn}

Following Lurie, a commutative algebra $A$ is \emph{small} if it sits in a finite sequence
\[
A = A_0 \to A_1 \to \cdots \to A_n = k
\]
of algebras where each map $A_i \xto{f_{i}} A_{i+1}$ is an \emph{elementary extension}, \ie{}
sits in a pullback square of commutative algebras
\[
\begin{tikzcd}
A_i \arrow{d}{f_{i}} \arrow{r} & k \arrow{d} \\
A_{i+1} \arrow{r} & k \oplus k[n]
\end{tikzcd}
\]
with $n \geq 0$, where $k \oplus k[n]$ is the trivial square-zero
extension of $k$ by the $k$-module $k[n]$.
Note that $A$ is naturally augmented.
The \icat{} $\AAlg^{sm}(k)$ of small algebras is the full subcategory of
augmented connective commutative algebras $\CComm(k)^{\leq 0}_{/k}$.
(Lurie uses ``small'' where many people use ``Artinian.'')

\begin{defn}\cite{SAG}*{Chapter 13}
A \emph{formal moduli problem} is a functor $X$ from $\AAlg^{sm}(k)$ to $\mathcal{S}$ 
such that 
\begin{enumerate}
\item $X(k) \simeq \ast$ and
\item given a pullback square $\sigma$ of small algebras
\csquare{A'}{B'}{A}{B}{}{}{\phi}{}
where $\phi$ is elementary, then its image $X(\sigma)$ is a pullback square in $\mathcal{S}$.
\end{enumerate}
Let $\Moduli$ denote the \icat{} of formal moduli problems over the field $k$. 
\end{defn}

Let $\QCoh$ denote the functor 
\[
X \in \Moduli \mapsto \lim_{\Spec(A) \to X} \MMod(A),
\]
as defined in \cite{SAG}*{\S13.4.6}.
Likewise, let $\QQuad_1$ denote the analogous functor for 1-shifted quadratic modules, 
let $\SSymp_1$ denote the analogous functor for 1-shifted symplectic modules, and
let $\PPic$ denote the analogous functor for $\otimes$-invertible objects. 

\begin{thm}
The functor $\quant: \SSymp_1 \to \QQCoh$ factors through $\PPic$ when
restricted to $\Moduli$.
\end{thm}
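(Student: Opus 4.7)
The plan is to reduce, via the pointwise nature of invertibility on formal moduli problems and a derived Nakayama argument for small algebras, to the base case already handled by Lemma \ref{detlemma}. Since $\QCoh(X) = \lim_{\Spec(A) \to X} \MMod(A)$ for $X \in \Moduli$, where the limit runs over small augmented algebras $A \in \AAlg^{sm}(k)$, and invertibility in a limit of symmetric monoidal $\infty$-categories is a pointwise condition, this reduces the statement to showing that for each small $A$ and each $(V, \omega) \in \SSymp_1(A)$, the module $\quant_A(V, \omega)$ is invertible in $\MMod(A)$.

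Next, I would apply base change along the augmentation $\epsilon \colon A \to k$. Symplecticness provides the equivalence $V \simeq V^{\vee}[1]$, which (in the perfect setting implicit in $\SSymp_1$) makes $V$ a dualizable, hence perfect, $A$-module, so $V \otimes_A k$ is a perfect 1-shifted symplectic $k$-module. Using the naturality in the algebra variable and symmetric monoidality of $\quant$ established earlier, base change yields a canonical equivalence
\[
\quant_A(V, \omega) \otimes_A k \simeq \quant_k(V \otimes_A k, \omega \otimes_A k),
\]
and Lemma \ref{detlemma} identifies the right-hand side with a shift $k[d]$, which is invertible in $\MMod(k)$.

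To finish, set $M := \quant_A(V, \omega)$. Since $M \otimes_A k$ has finite-dimensional total cohomology, derived Nakayama for small algebras implies that $M$ is perfect over $A$. The canonical evaluation map $M \otimes_A M^{\vee} \to A$ is then a morphism of perfect $A$-modules whose base change along $\epsilon$ is the evaluation for the invertible object $k[d]$, hence an equivalence; a second application of derived Nakayama, namely that equivalences between perfect modules over a small algebra are detected on the residue field, implies that the evaluation itself is an equivalence, so $M$ is invertible. The main obstacle is therefore the verification of these two derived Nakayama facts; both are standard and may be proved by induction on the tower of elementary extensions $A = A_0 \to A_1 \to \cdots \to A_n = k$ defining $A$, using at each step the pullback description $\MMod(A_i) \simeq \MMod(A_{i+1}) \times_{\MMod(k \oplus k[m])} \MMod(k)$ furnished by Lurie's descent theorem \ref{LurieDesc}.
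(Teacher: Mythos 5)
Your overall skeleton matches the paper's: reduce to small algebras, base-change along the augmentation to land in Lemma~\ref{detlemma}, then lift invertibility from $k$ back up to $A$. The lifting step, however, is where your argument has two genuine gaps. First, the implication ``$M \otimes_A k$ perfect $\Rightarrow$ $M$ perfect'' is not standard derived Nakayama: Nakayama-type statements over a small algebra apply to modules that are almost connective (bounded below) or almost perfect, and $M = \quant(V,\omega)$ is a priori neither --- its underlying graded module is the full symmetric algebra $\Sym_A(V \oplus A\c)$ with a deformed differential, which is unbounded in both directions as soon as $V$ has generators in nonzero degrees. So the first Nakayama step needs either a boundedness argument for the quantization or a different strategy. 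Second, the inductive tool you propose, the pullback description $\MMod(A_i) \simeq \MMod(A_{i+1}) \times_{\MMod(k\oplus k[m])} \MMod(k)$ ``furnished by Theorem~\ref{LurieDesc},'' is a misattribution: that theorem is \'etale descent, and none of the maps in an elementary extension square are \'etale. The Milnor-patching statement you actually want is a different theorem and holds only with restrictions (e.g.\ on connective or almost perfect modules), which circles back to the first gap.

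The paper sidesteps both problems. Its key lemma shows directly, for an \emph{arbitrary} $A$-module $X$ over a small $A$, that the evaluation map $X \otimes_A \txt{HOM}_A(X,A) \to A$ is an equivalence provided the evaluation maps for the base changes to $B$, $k$, and $k \oplus k[n]$ are: one writes $A$ as the finite pullback of modules induced up from those three rings, applies $\txt{HOM}_{A}(X,-)$, tensors with $X$ (which preserves the pullback by stability of $\MMod(A)$), and uses the projection formula --- no perfectness and no Nakayama. If you want to keep your route, you must either prove that $\quant(V,\omega)$ is bounded below or almost perfect before invoking Nakayama, or replace the two ``standard facts'' with an argument of this kind that works for unbounded modules.
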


In other words, $\quant$ is a determinant-type functor on the
symplectic modules over any formal moduli problem.  To prove this
theorem, it suffices to verify it on all small algebras.

By Lemma \ref{detlemma}, we know the theorem holds on $k$, so now we need to extend to an arbitrary small commutative algebra.
In the usual style of arguments in formal geometry, we will show that the relevant property --- here, invertibility --- plays nicely with an elementary extension.

\begin{lemma}
For $A$ a small commutative algebra, an $A$-module $M$ is invertible if and only if its base-change $k \otimes_A M$ along the augmentation is invertible over $k$.
\end{lemma}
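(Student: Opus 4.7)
The plan is to argue by induction on the length of the tower $A = A_0 \to A_1 \to \cdots \to A_n = k$ of elementary extensions that exhibits $A$ as small. The base case $n = 0$ is trivial. For any $n$, the ``only if'' direction is immediate: base change $k \otimes_A (-)$ is a symmetric monoidal functor, so it preserves invertible objects.

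For the inductive step of the ``if'' direction, observe that $k \otimes_A M \simeq k \otimes_{A_1}(A_1 \otimes_A M)$ as a $k$-module, and that $A_1$ is itself small (with a shorter tower). By the inductive hypothesis applied to $A_1$, the module $A_1 \otimes_A M$ is invertible over $A_1$ iff $k \otimes_A M$ is invertible over $k$. It therefore suffices to prove the single-step statement: if $A \to A_1$ is an elementary extension and $A_1 \otimes_A M$ is invertible over $A_1$, then $M$ is invertible over $A$. Note that by the pullback square defining an elementary extension, the map $A \to A_1$ is a square-zero extension of connective cdgas, with fibre a shift of $k$.

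For this reduction, I would use a Nakayama-type argument. Since a connective local cdga has Picard group generated by shifts of the unit, the invertibility of the base change gives $k \otimes_A M \simeq k[n]$ for some integer $n$; after shifting $M$, I may assume $n = 0$. A generator of $\pi_0(k \otimes_A M) \cong k$ lifts along the surjection $\pi_0(M) \twoheadrightarrow \pi_0(k \otimes_A M)$ (surjective because the augmentation ideal is connective), producing a morphism $\phi \colon A \to M$ in $\MMod(A)$ whose base change to $k$ is an equivalence. The cofibre $C := \mathrm{cofib}(\phi)$ then satisfies $k \otimes_A C \simeq 0$. Applying the derived Nakayama lemma for small cdgas --- an almost perfect $A$-module is zero as soon as its base change to the residue field is zero --- forces $C \simeq 0$, hence $\phi$ is an equivalence and $M \simeq A$ is invertible.

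The main obstacle is verifying the finiteness hypothesis required by Nakayama, namely that $M$ (and hence $C$) is almost perfect. I would address this by the companion Nakayama statement: for $A$ small, an $A$-module is almost perfect iff its base change to $k$ is almost perfect. Since $k \otimes_A M \simeq k$ is trivially so, this yields the required finiteness of $M$, and the cofibre $C$ then inherits almost perfectness from the fibre sequence $A \to M \to C$. With this in hand, the Nakayama step goes through and the inductive argument closes.
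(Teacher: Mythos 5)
Your overall strategy (induction on the tower, reduce to a Nakayama-type argument) is a genuinely different route from the paper's. The paper never lifts generators or invokes finiteness: it takes the defining pullback square $A \simeq B \times_{k \oplus k[n]} k$ of the elementary extension, applies $\txt{HOM}_A(X,-)$ and the projection formula to exhibit $X \otimes_A \mathbb{D}_A X \to A$ as a limit of the corresponding evaluation maps for the base changes of $X$, and concludes that invertibility descends. That argument works for arbitrary $A$-modules with no connectivity or almost-perfectness hypotheses. Your argument, if completed, would prove something slightly stronger (that every invertible module over a small algebra is a shift of the unit), but as written it has two gaps.

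First, the surjectivity of $\pi_0(M) \to \pi_0(k \otimes_A M)$ does not follow just from connectivity of the augmentation ideal $I = \mathrm{fib}(A \to k)$: the long exact sequence of $I \otimes_A M \to M \to k \otimes_A M$ requires control of $\pi_{-1}(I \otimes_A M)$, and $I \otimes_A M$ need not be connective unless you already know $M$ is connective --- which is not given and must itself be deduced from the hypothesis on $k \otimes_A M$. Second, the ``companion Nakayama statement'' (almost perfectness is detected after base change to $k$) is asserted without proof and is not a formal consequence of anything you cite; it is a claim of roughly the same depth as the lemma you are proving. Both gaps are repaired by one observation you never make explicit: since each elementary extension is a square-zero extension whose fibre is a shift of $k$, the algebra $A$ is, as a module over itself, a \emph{finite} iterated extension of shifts of $k$, and hence every $A$-module $M$ is a finite iterated extension of shifts of $k \otimes_A M$. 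This gives connectivity and boundedness of $M$ from that of $k \otimes_A M$, makes the $\pi_0$-surjectivity legitimate, and in fact yields derived Nakayama over small algebras with \emph{no} finiteness hypothesis at all, so the almost-perfectness detour (and the unproved companion lemma) can be dropped entirely. With that observation supplied, your proof closes; without it, the two steps above are unjustified.
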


\begin{proof}
  Suppose we have an elementary extension \csquare{A}{k}{B}{k\oplus
    k[n]}{g}{f}{}{} and write $h$ for the composite $A \to k \oplus
  k[n]$. We will show that an $A$-module $X$ is invertible \IFF{}
  $f_{!}X$ and $g_{!}X$ are invertible; this will then imply the
  result by induction. The ``only if'' direction is obvious, since any
  strong symmetric monoidal functor preserves invertible objects.

  To prove the other direction, we start with the pullback square of
  $A$-modules 
  \csquare{A}{g^{*}k}{f^{*}B}{h^{*}(k\oplus k[n]).}{}{}{}{} 
  If we write $\txt{HOM}_{A}$ for the internal Hom,
  then any $A$-module $X$ yields a pullback square
  \nolabelcsquare{\txt{HOM}_{A}(X, A)}{\txt{HOM}_{A}(X,g^{*}k)}{\txt{HOM}_{A}(X, f^{*}B)}{\txt{HOM}_{A}(X, h^{*}(k\oplus k[n])).}  
  We have natural isomorphisms $\txt{HOM}_{A}(X, f^{*}B)
  \cong f^{*}\txt{HOM}_{B}(f_{!}X, B)$, so writing $\mathbb{D}_{A}X$ for
  the dual $\txt{HOM}_{A}(X, A)$, and so on, we have a pullback square of
  $A$-modules
  \nolabelcsquare{\mathbb{D}_{A}X}{g^{*}\mathbb{D}_{k}(g_{!}X)}{f^{*}\mathbb{D}_{B}(f_{!}X)}{h^{*}\mathbb{D}_{k\oplus k[n]}(h_{!}X).}  
      Tensoring with $X$ and applying the
  projection formula (\ie{} the natural equivalences $M \otimes f^{*}M'
  \simeq f^{*}(f_{!}M \otimes M')$, etc.), we finally get a pullback square
  \nolabelcsquare{X \otimes_{A} \mathbb{D}_{A}X}{g^{*}(g_{!}X \otimes_{k} \mathbb{D}_{k}(g_{!}X))}{f^{*}(f_{!}X \otimes_{B}
    \mathbb{D}_{B}(f_{!}X))}{h^{*}(h_{!}X \otimes_{k\oplus k[n]} \mathbb{D}_{k \oplus k[n]}(h_{!}X)).}  
  Using the evaluation maps, we get a map of pullback squares 
  from this square to the original pullback square from above (with $A$ in the upper left corner).
  Hence the evaluation map $X \otimes_{A} \mathbb{D}_{A}X \to A$ is an
  equivalence if the evaluation maps for $f_{!}X$, $g_{!}X$ and
  $h_{!}X$ are equivalences, \ie{} $X$ is invertible if $f_{!}X$,
  $g_{!}X$, and $h_{!}X$ are. But $h_{!}X$ is the image of $f_{!}X$
  (and of $g_{!}X$) under a strong symmetric monoidal functor, hence
  it is invertible if $f_{!}X$ (or $g_{!}X$) is. This means that $X$
  is invertible if $f_{!}X$ and $g_{!}X$ are invertible, as required.
\end{proof}

In consequence, we obtain the desired claim:

\begin{cor}
For $A$ a small commutative algebra over $k$ and $(V,\omega) \in \SSymp_1(A)$, the quantization $\quant(V,\omega)$ is invertible over $A$.
\end{cor}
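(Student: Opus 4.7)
My plan is to reduce the statement to the field case handled by Lemma~\ref{detlemma}, using the invertibility-detection lemma just established as the bridge.

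The first step is to exploit naturality of linear quantization in the commutative algebra variable. By Lemma~\ref{lem:Liesymmmonftr}, its analogue for $\QQuad_1$ developed in \S\ref{subsec:BVstack}, and the fact that the specialization $\ev_{\hbar = \c = 1}$ is symmetric monoidal and compatible with base change, the composite $\quant = \ev_{\hbar = \c = 1} \circ U_{\BD} \circ \HH$ assembles into a natural transformation of functors in $A$. Letting $\varepsilon \colon A \to k$ denote the augmentation of the small algebra, this yields a canonical equivalence
\[
k \otimes_A \quant(V,\omega) \;\simeq\; \quant\bigl(k \otimes_A V,\; \varepsilon_{!}\omega\bigr).
\]
Since $\varepsilon_{!}$ is symmetric monoidal and preserves the duals of dualizable objects, the non-degeneracy of $\omega$ descends to $\varepsilon_{!}\omega$, so the right-hand input lies in $\SSymp_1(k)$.

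The second step is to apply Lemma~\ref{detlemma} to $(k \otimes_A V, \varepsilon_{!}\omega)$. For this to be legal we need cohomological finiteness of the base-changed module; this follows from $V$ being perfect over $A$ (a consequence of the non-degeneracy equivalence $V \simeq \Hom_A(V, A[1])$ together with $A$ itself being perfect over $k$, being small), since base change of a perfect $A$-module along $\varepsilon$ is perfect over $k$ and hence has finite-dimensional bounded cohomology. Lemma~\ref{detlemma} then identifies $\quant(k \otimes_A V, \varepsilon_{!}\omega)$ with a shift of a one-dimensional $k$-vector space, in particular an invertible object of $\MMod(k)$.

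The third and final step is to invoke the preceding lemma, which says precisely that invertibility of an $A$-module is detected by the base change along $\varepsilon$. Combined with the equivalence of the first step, this gives that $\quant(V,\omega)$ is invertible in $\MMod(A)$, as required. The main obstacle is the first step: one must check that the pseudonatural structures on $\mathcal{H}$, on $U_{\BD}$, and on the specialization functor assemble coherently into genuine base-change compatibility for the composite $\quant$. However, this is a matter of patient bookkeeping with the coherent nerve machinery already in place (via Lemma~\ref{lem:AlgOftr} and Corollary~\ref{cor:flatdesccartcocart}), rather than requiring any genuinely new input.
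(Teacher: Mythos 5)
Your proposal is correct and follows essentially the same route as the paper's own (much terser) proof: base-change compatibility of $\quant$ along the augmentation $A \to k$, the field case of Lemma~\ref{detlemma}, and the preceding lemma that invertibility over a small algebra is detected after base change to $k$. The extra care you take with the finiteness hypotheses needed to invoke Lemma~\ref{detlemma} is a reasonable elaboration of a point the paper leaves implicit, not a departure from its argument.
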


\begin{proof}
If $k \otimes_{A} \quant(V,\omega)$ is invertible over $k$, then $\quant(V,\omega)$ is invertible over $A$. 
But the construction of quantization commutes with base-change, so
\[
k \otimes_{A} \quant(V,\omega) \simeq \quant(k \otimes_A V, k \otimes_A \omega),
\]
which is invertible since the base-change of $(V,\omega)$ is a small symplectic module over $k$. 
\end{proof}

\subsection{Symplectic Vector Bundles on Derived Stacks}\label{subsec:quantsympvb}

In this section, we examine how quantization behaves on derived Artin stacks.
As the conditions we are considering are checked locally, our work in the preceding sections implies:

\begin{cor}
For $V$ a 1-shifted symplectic module on a derived Artin stack $X$,
the fiber of $\quant(V)$ at any $k$-point $p: \Spec(k) \to X$ is invertible.
More generally, the pullback of $\quant(V)$ to the formal moduli stack $X^\wedge_p$ is invertible.
\end{cor}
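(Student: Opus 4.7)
The plan is to deduce both statements from the preceding corollary on small commutative algebras via the descent-theoretic extension of $\quant$ to derived stacks from \S\ref{subsec:BVstack}. I would first handle the stronger claim about the formal moduli stack, and then obtain the fibre statement by a further pullback.

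For the formal neighbourhood: the formal moduli stack $X^\wedge_p$ is by definition a formal moduli problem in $\Moduli$, hence in particular a derived stack, and comes equipped with a canonical map $\iota\colon X^\wedge_p \to X$. Pulling back $V$ along $\iota$ yields a $1$-shifted symplectic module $\iota^\ast V$ on $X^\wedge_p$: the underlying quasicoherent sheaf pulls back, the skew pairing pulls back, and non-degeneracy is preserved because the symmetric monoidal pullback $\iota^\ast$ sends the equivalence $V \simeq \Hom_{\mathcal{O}_X}(V, \mathcal{O}_X[1])$ to the corresponding equivalence on $X^\wedge_p$. Since the extension of $\quant$ to $\dSt_k^{\op}$ is obtained by right Kan extension of a base-change--natural construction on $\CComm(k)$, it commutes with pullback, so we obtain a canonical equivalence $\iota^\ast \quant(V) \simeq \quant(\iota^\ast V)$. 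Applying the preceding Theorem to the formal moduli problem $X^\wedge_p$ then shows that the right-hand side lies in $\PPic(X^\wedge_p)$, proving the ``more generally'' clause.

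The fibre statement is then immediate by a further pullback: the $k$-point $p$ factors through $X^\wedge_p$ as $\Spec(k) \xrightarrow{p'} X^\wedge_p \xrightarrow{\iota} X$, and since $(p')^\ast\colon \QCoh(X^\wedge_p) \to \MMod(k)$ is symmetric monoidal, it carries the invertible object $\iota^\ast\quant(V)$ to an invertible $k$-module, which is exactly $p^\ast\quant(V)$.

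The main obstacle, such as it is, is the naturality statement $\iota^\ast\quant(V) \simeq \quant(\iota^\ast V)$ at the level of derived stacks. This is essentially formal once one has the \'etale descent results of \S\ref{subsec:BVstack}: both quasicoherent pullback and our stackified $\quant$ are determined by their values on affines via right Kan extension, and the base-change naturality of the affine $\quant$ established in the preceding sections forces the two constructions to commute. All the genuine content has already been packaged into Lemma~\ref{detlemma} and the inductive argument propagating invertibility along elementary extensions, so no further computation is required here.
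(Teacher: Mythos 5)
Your proof is correct and follows essentially the same route the paper intends: the paper gives no explicit argument beyond noting that invertibility is checked locally, and your write-up simply spells out the two ingredients it is relying on, namely that $\quant$ commutes with pullback (so the restriction to $X^\wedge_p$ is the quantization of the restricted symplectic module) and that the theorem of \S\ref{subsec:quantfmp} applies to the formal moduli problem $X^\wedge_p$, with the fibre statement following by one further symmetric monoidal pullback along $\Spec(k) \to X^\wedge_p$. No discrepancy with the paper's reasoning.
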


Thus, we know that very locally --- in the formal neighborhood of any
$k$-point --- quantization is well-behaved, but its behavior as the
point varies is complicated.  In particular, the quantization of a
\emph{perfect} symplectic module need not be invertible. Here is a
particularly simple example:
\begin{ex}
  Consider $\mathbb{A}^1 = \Spec(k[t])$, where $t$ has degree 0, and
  the two-term $k[t]$-module
\[
0 \to k[t] \xto{t \cdot -} k[t] \to 0
\]
concentrated in degrees 0 and $-1$, which is perfect, and equipped with the natural shifted pairing from evaluation.
Then the quantization is given by the complex
\[
(k[t,x,\xi], \triangle = t x \partial/\partial \xi + \partial^2/\partial x \partial \xi),
\]
which is concentrated in degree $-1$ after specializing to $t = 0$ and is concentrated in degree 0 after specializing $t$ to any nonzero value.
In other words, the quantization jumps at the origin.
This result is not so strange as it may appear at first: 
this quadratic module presents the skyscraper sheaf at the origin and hence already jumps at the origin.
By contrast, the determinant of this complex is just $k[t]$ concentrated in degree 0.
  
\end{ex}

\begin{remark}
This example underscores a key difference between the usual determinant functor and linear BV quantization:
the determinant ignores the differential on the cochain complex, whereas BV quantization depends on the differential.
(Note that the Euler characteristic also has the remarkable property that it does not depend on the differential,
and the determinant is a kind of categorification of the Euler characteristic.)
\end{remark}

Hence, we would like to restrict our attention to some collection of symplectic modules
on which quantization does produce invertible modules.

Recall that the notion of being \emph{locally free of rank $d$} is well-behaved
in derived algebraic geometry in the \'etale topology. (See Section 2.9 of \cite{SAG}.)
Let $\VB$ denote the derived stack of \emph{vector bundles}, \ie{} modules that are 
locally free of some finite rank, which clearly admits a natural map to $\MMod$.
Let $\GVB$ denote the derived stack of \emph{graded vector bundles}, 
which consists of modules that are finite direct sums of shifts of vector bundles. 

\begin{defn}
The derived stack of \emph{$n$-shifted symplectic vector bundles} is the pullback
\[ \nodispcsquare{\SSymp_n\VB}{\SSymp_n}{\GVB}{\MMod}{}{}{}{} \]
of derived stacks.
\end{defn}

We now restrict to a collection of 1-shifted symplectic vector bundles
on which $\quant$ is well-behaved.

\begin{defn}
The {\em cotangent quantization} of a graded vector bundle $V$ on a derived stack $X$
is the quantization of its shifted cotangent bundle $T^*[1]V = V \oplus V^\vee[1]$, 
equipped with the skew-symmetrization of its evaluation pairing.
\end{defn}

Let $\mathcal{C}\rm{ot}$ denote the full subcategory of $\SSymp_1\VB$ 
whose objects are equivalent to a cotangent quantization.
Note that \emph{a priori} the construction $V \mapsto T^*[-1]V$ is not a functor,
as not every map of graded vector bundles gives a map between the shifted cotangent bundles.
By restricting to equivalences (\ie{} the underlying $\infty$-groupoid
$\iota \GVB$ of $\GVB$), 
one can form a functor $T^*[-1]-: \iota\GVB \to \mathcal{C}\rm{ot}$, with proper care.
The composite $\quant \circ T^*-$ would be a ``cotangent quantization functor";
we will not do that here.
Instead, for simplicitly,
we denote by $\cq$ the \emph{cotangent quantization functor} given 
by restricting $\quant$ to $\mathcal{C}\rm{ot}$.

\begin{propn}
The symmetric monoidal functor $\cq: \mathcal{C}\rm{ot}^\oplus \to \QCoh^\otimes$ 
factors through the substack $\PPic$.
\end{propn}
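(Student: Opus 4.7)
The plan is to reduce \'etale-locally to the trivial case and then invoke the explicit cohomology computation of Lemma~\ref{detlemma} over the base field. Since $\PPic \subset \QCoh$ cuts out the substack of $\otimes$-invertible objects and invertibility is an \'etale-local property, it suffices to show that for every map $\Spec A \to X$, the pullback of $\cq(V)$ lies in $\PPic(\Spec A) \subseteq \MMod(A)$. Given $V \in \GVB(\Spec A)$, I pass further to an \'etale cover $\Spec B \to \Spec A$ on which each locally free summand of $V$ trivializes; by the descent result of \S\ref{subsec:BVstack} together with the naturality of $\cq$, this reduces the problem to verifying that $\cq\bigl(\bigoplus_{i=1}^N B[n_i]^{\oplus d_i}\bigr)$ is invertible in $\MMod(B)$.

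Next, I use the symmetric monoidality of $\cq$ as a functor $\GVB^{\oplus} \to \QCoh^{\otimes}$, which yields an equivalence
\[
\cq\Bigl(\bigoplus_{i=1}^N B[n_i]^{\oplus d_i}\Bigr) \;\simeq\; \bigotimes_{i=1}^N \cq(B[n_i])^{\otimes d_i}
\]
in $\MMod(B)$. Since tensor products and tensor powers of invertible modules are invertible, it suffices to treat a single rank-one summand and show that $\cq(B[n])$ is invertible for each integer $n$.

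The naturality of the Heisenberg functor $\HH$ and the BD-enveloping functor $U_{\BD}$ in the commutative algebra variable, as established in \S\ref{sec:opd}--\S\ref{sec:Heis}, implies that $\cq$ is compatible with base change. Since $B[n] \simeq B \otimes_{k} k[n]$ and the cotangent construction commutes with base change (the evaluation pairing is functorial), we obtain $\cq(B[n]) \simeq B \otimes_{k} \cq(k[n])$. Applying Lemma~\ref{detlemma} to the non-degenerate $1$-shifted symplectic $k$-module $T^{*}[1](k[n]) = k[n] \oplus k[1-n]$ equipped with its skew-symmetrized evaluation pairing identifies $\cq(k[n])$ with a shift of $k$, since its cohomology is one-dimensional and concentrated in a single degree. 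Tensoring up to $B$, we conclude that $\cq(B[n])$ is a shift of $B$, hence invertible in $\MMod(B)$.

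The principal difficulty is the bookkeeping: one must verify that each reduction step --- \'etale localization, trivialization of the graded vector bundle, symmetric-monoidal decomposition, and base change --- is compatible with the symmetric monoidal $\infty$-categorical formalism developed for $\cq$ on derived stacks. Once these compatibilities are in place, which follows from the pseudonaturality of the constructions throughout the paper together with Lurie's descent theorem and Proposition~\ref{propn:AlgOdesc}, the argument collapses to the base-field computation of Lemma~\ref{detlemma}. Note that this strategy genuinely uses the hypothesis that $V$ is a graded vector bundle rather than a general perfect symplectic module, since the example in \S\ref{subsec:quantsympvb} shows that invertibility can fail in the latter setting; it is the \'etale-local splitting into shifts of the structure sheaf that is essential.
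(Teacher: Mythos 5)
Your proof is correct and follows essentially the same route as the paper's: reduce \'etale-locally to the case of a direct sum of shifted free modules, observe that the cotangent quantization there is base-changed from $k$, and invoke Lemma~\ref{detlemma}. The only difference is your extra use of symmetric monoidality to split off rank-one summands, which is harmless but unnecessary since Lemma~\ref{detlemma} already handles the full direct sum at once.
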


\begin{proof}
This can be checked locally on affines.
By \cite{SAG}*{Proposition 2.9.2.3}, we can reduce to the case where the graded vector bundle is a direct sum of shifted free modules. 
In this free case, working over a commutative algebra $A$, 
the cotangent quantization is equivalent to the module
\[ (\Sym_A(V \oplus V^\vee[1]), \d_V + \d_{V^\vee[1]} + \Delta),\]
where $\Delta$ is determined by the pairing.
The argument from Lemma \ref{detlemma} applies here, suitably interpreted.
(In fact, one can view this complex as base-changed from a complex over $k$.)
Hence, there is a natural $A$-linear quasi-isomorphism
\[ A[d_V] \simeq \cq(V), \]
where
\[ d_V = \sum_{n} (2n+1)( b_{2n+1} - b_{2n}) \]
with $b_m$ the number of degree $m$ generators of the graded vector bundle $V$.
\end{proof}

\begin{remark}
This construction applies to any 1-shifted symplectic module $V$
for which, sufficiently locally, the module is free and 
the pairing has the standard form.
In other words, it applies to any 1-shifted symplectic module
that is locally a shifted cotangent space.
It seems plausible that all 1-shifted symplectic vector bundles have this form, 
but we do not pursue a classification of shifted symplectic vector bundles here.
\end{remark}

\subsection{Higher BV Quantization}\label{subsec:higherbv}

The paper \cite{nmorita} constructs for any nice symmetric monoidal
\icat{} $\mathcal{C}$ a symmetric monoidal $(\infty,n)$-category
$\mathfrak{Alg}_{n}(\mathcal{C})$, whose objects are
$\mathrm{E}_{n}$-algebras in $\mathcal{C}$ and whose $i$-morphisms are
$i$-fold iterated bimodules in $\mathrm{E}_{n-i}$-algebras in
$\mathcal{C}$. Here the precise meaning of ``nice'' holds in
particular if $\mathcal{C}$ has sifted colimits and the tensor product
preserves these in each variable. Moreover, the construction is
natural in $\mathcal{C}$ with respect to symmetric monoidal functors
that preserve sifted colimits. We will now show that these assumptions
hold for the \icats{} and functors involved in our linear BV
quantization, giving:
\begin{propn}
  For any derived stack $X$ there is a diagram of symmetric monoidal
  $(\infty,n)$-categories and symmetric monoidal functors
\[ \mathfrak{Alg}_{n}(\QQuad_{1}(X)) \to
\mathfrak{Alg}_{n}(\MMod_{\mathcal{O}_{X}\mathbf{c}}{\LLie_{1}}(X)) \to
\mathfrak{Alg}_{n}(\MMod_{\mathcal{O}_{X}[\hbar,\mathbf{c}]}\AAlg_{\BD}(X))
\to \mathfrak{Alg}_{n}(\AAlg_{\teo}(X)).\]
\end{propn}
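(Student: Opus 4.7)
The plan is to invoke the main functoriality theorem of \cite{nmorita}: the assignment $\mathcal{C} \mapsto \mathfrak{Alg}_n(\mathcal{C})$ is a symmetric monoidal functor from the $\infty$-category of symmetric monoidal $\infty$-categories with sifted colimits and tensor product preserving them variable-wise, with respect to symmetric monoidal functors that preserve sifted colimits. Consequently the proposition reduces to two verifications: (i) each of the four symmetric monoidal $\infty$-categories in the diagram satisfies this ``niceness'' hypothesis, and (ii) each of the three symmetric monoidal functors in the diagram preserves sifted colimits. Once these are in hand the displayed diagram is obtained simply by applying $\mathfrak{Alg}_n$ to the corresponding diagram of symmetric monoidal functors already constructed in the previous sections.

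For (i), I would first treat the affine case and then pass to derived stacks by descent. Presentability of each $\infty$-category follows from its model-categorical or operadic presentation (Corollary \ref{cor:Chkicat} together with Proposition \ref{propn:AlgOdesc}), and the constructions of $\QQuad_1(A)$ and of the module categories over $A\mathbf{c}$ and $A[\hbar,\mathbf{c}]$ are presented as limits of presentable $\infty$-categories and thus remain presentable. For the tensor product: on $\LLie_1(A)$ the symmetric monoidal structure is the coproduct $\oplus$, hence automatically preserves colimits in each variable; on $\AAlg_{\BD}(A[\hbar])$ and $\AAlg_{\teo}(A)$ the symmetric monoidal structure is induced by $\otimes_{A[\hbar]}$ resp.\ $\otimes_A$ of underlying modules via the Hopf structures, and by Proposition \ref{propn:opdforgetsifted} the forgetful functors detect sifted colimits, so sifted colimits in algebras are computed in the underlying modules, where the tensor product plainly preserves them; the case of $\MMod_{A\mathbf{c}}\LLie_1(A)$ and $\MMod_{A[\hbar,\mathbf{c}]}\AAlg_{\BD}(A[\hbar])$ is analogous, using that modules over a fixed algebra inherit the same sifted-colimit property. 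For $\QQuad_1(A)$, the pullback description gives sifted colimits componentwise in $\MMod(A)$ and $\MMod(A)_{/A[1]}$, and the symmetric monoidal structure is built from $\oplus$ on the underlying modules, hence commutes with all colimits.

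For (ii), all three functors in the diagram are essentially left adjoints between presentable $\infty$-categories, hence preserve all colimits and in particular the sifted ones. Indeed, $\widetilde{\mathcal{H}}$ is the composite $\bbOmega \circ \mathcal{H}_\infty$ of the cobar construction with the Heisenberg $L_\infty$-functor: $\bbOmega$ is left adjoint to the bar construction $\mathbb{B}$, and $\mathcal{H}_\infty$ is assembled from coproducts in modules, base change, and an exponentiated coderivation that is an isomorphism of underlying coalgebras, all of which commute with sifted colimits; the enveloping functor $U_{\BD}$ and the base-change $A \otimes \blank \mapsto A[\hbar] \otimes \blank$ are left adjoints by construction; the specialization $\ev_{\hbar = 1}$ (together with its parallel for $\mathbf{c}$) is base change along a map of commutative algebras, hence a left adjoint. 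To extend the picture from $A$ to an arbitrary derived stack $X$, I would then use that $\mathfrak{Alg}_n(\blank)$ preserves limits of $\infty$-categories (this is already built into its construction in \cite{nmorita}, since the $(\infty,n)$-categorical structure is defined levelwise), and combine this with the descent result of Proposition~\ref{propn:AlgOdesc} for each of the four functors on $\CComm(k)$; a sifted-colimit-preserving symmetric monoidal functor on affines then right-Kan-extends to a symmetric monoidal functor on $\dSt_k^{\op}$, and likewise for the induced $\mathfrak{Alg}_n$-valued functors.

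The main obstacle, and the point where most care is required, is ensuring that the sifted-colimit preservation property really propagates through the bar/cobar construction used to build $\widetilde{\mathcal{H}}$, because $\mathbb{B}$ is a right adjoint and does not obviously commute with filtered colimits on its own; however, we only need that its \emph{left} adjoint $\bbOmega$ preserves colimits, which is automatic, and the additional exponential cochain-homotopy data entering $\mathcal{H}_\infty$ is visibly degreewise polynomial and so is compatible with filtered colimits at the level of underlying modules. A secondary subtlety is verifying the tensor-product sifted-colimit preservation in the module categories $\MMod_{A\mathbf{c}}\LLie_1(A)$ and $\MMod_{A[\hbar,\mathbf{c}]}\AAlg_{\BD}(A[\hbar])$; here I would reduce to the underlying algebra categories by observing that the forgetful functors detect sifted colimits and appealing to the corresponding statement there, as already noted above.
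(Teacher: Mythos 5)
Your overall strategy is exactly the paper's: reduce everything to the functoriality of $\mathfrak{Alg}_{n}(\blank)$ from \cite{nmorita}, then verify that the four symmetric monoidal $\infty$-categories have sifted colimits which the tensor product preserves in each variable, and that the three functors preserve sifted colimits. Your handling of the tensor products, your use of the pullback description of $\QQuad_{1}(A)$, and your reduction to underlying modules via forgetful functors that detect sifted colimits all match Lemma~\ref{lem:Quadsifted} and Proposition~\ref{propn:opdforgetsifted}.

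The one genuine gap is in your step (ii), for the first functor. The claim that ``all three functors in the diagram are essentially left adjoints between presentable $\infty$-categories'' is false for the Heisenberg functor $\widetilde{\mathcal{H}}$: it is not constructed as a left adjoint (the entire $L_{\infty}$ detour of \S\ref{sec:Heis} exists precisely because the Heisenberg construction is not a naive categorical operation), and no adjoint for it is ever produced. Your fallback --- that $\mathcal{H}_{\infty}$ is ``assembled from coproducts, base change, and an exponentiated coderivation, all of which commute with sifted colimits'' --- describes what the strict simplicial functor does to objects and morphisms, but it does not show that the induced functor of $\infty$-categories $\QQuad_{1}(A) \to \LLI(A) \simeq \LLie_{1}(A)$ preserves $\infty$-categorical sifted colimits; in particular the bar construction $\mathbb{B}$ sitting inside $\mathcal{H}_{\infty}$ is a right adjoint, and nothing in your argument controls it (noting that $\bbOmega$ preserves colimits does not help, since the colimit must already be computed correctly in $\LLI(A)$). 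The paper closes exactly this gap in Lemma~\ref{lem:Heissifted}: the forgetful functors $\QQuad_{1}(A) \to \MMod(A)$ and $\LLie_{1}(A) \to \MMod(A)$ both detect sifted colimits (by Lemma~\ref{lem:Quadsifted} and Proposition~\ref{propn:opdforgetsifted}), and under them $\mathcal{H}$ becomes $(\blank) \oplus A\mathbf{c}$, which preserves colimits indexed by weakly contractible $\infty$-categories because the colimit of a constant diagram over such an index is constant. Replace the ``left adjoint'' claim for $\widetilde{\mathcal{H}}$ with this detection argument --- which you in fact already deploy for the module categories in your final paragraph --- and the rest of your proof stands.
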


We must prove that the \icats{} in question have sifted colimits, and
that these are preserved by the functors between them. For the operad
algebra \icats{} this follows from
Proposition~\ref{propn:opdforgetsifted}, so it remains to check for
$\QQuad_{n}(X)$ and the Heisenberg algebra functor:

\begin{lemma}\label{lem:Quadsifted}\ 
  \begin{enumerate}[(i)]
  \item The \icat{} $\QQuad_{n}(A)$ has sifted colimits, and the
    forgetful functor $\QQuad_{n}(A) \to \MMod(A)$ detects these.
  \item The direct sum of quadratic modules preserves sifted colimits
    in each variable.
  \end{enumerate}
\end{lemma}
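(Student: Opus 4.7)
The plan is to exploit the pullback description
\[ \QQuad_{n}(A) \simeq \MMod(A) \times_{\MMod(A)} \MMod(A)_{/A[n]}, \]
in which the left-hand functor is $\Lambda^{2}$ and the right-hand one is the forgetful functor from the slice, and to reduce both assertions to standard facts about $\MMod(A)$.

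For (i), the key step is to show that both of the functors into $\MMod(A)$ appearing in the pullback preserve sifted colimits. The projection $\MMod(A)_{/A[n]} \to \MMod(A)$ preserves (and detects) all colimits that exist in the base, since slices inherit colimits pointwise. For $\Lambda^{2}$, I would use that, in characteristic zero, $\Lambda^{2}V$ is a natural retract of $V \otimes_{A} V$, so it suffices that the functor $V \mapsto V \otimes_{A} V$ preserve sifted colimits. This in turn follows from the standard fact that, for any bifunctor preserving colimits in each variable separately (as $\otimes_{A}$ does), the composite with the diagonal preserves sifted colimits; this is a consequence of the cofinality of $I \to I \times I$ for every sifted $I$. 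Once this is in place, a pullback of $\infty$-categories along functors preserving sifted colimits inherits them factorwise: the sifted colimit of a diagram $(V_{i}, \omega_{i})$ is $V = \colim V_{i}$ equipped with the pairing $\Lambda^{2}V \simeq \colim \Lambda^{2}V_{i} \to A[n]$ induced from the $\omega_{i}$ by the universal property. This construction simultaneously exhibits the sifted colimit in $\QQuad_{n}(A)$ and shows the forgetful functor preserves them; detection is then immediate, since the pairing on the colimit is uniquely determined by the cocone data.

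Part (ii) reduces to the same property for $\MMod(A)$ via (i): the direct sum of quadratic modules is computed on underlying modules as the direct sum in $\MMod(A)$, with pairing given by the orthogonal sum of the summand pairings and zero on cross-terms. By (i) it therefore suffices to show that $\oplus$ preserves sifted colimits in each variable in $\MMod(A)$. But $\MMod(A)$ is stable, so finite direct sums coincide with finite products, and finite products commute with sifted colimits in any $\infty$-category, essentially by the defining property of siftedness.

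The only technicality worth noting is making the pullback-of-colimits argument fully rigorous in the $\infty$-categorical setting, i.e.\ verifying that the evident construction of $\omega$ on $\colim V_{i}$ really realizes the colimit in the homotopy pullback. This is a formal consequence of the sifted-colimit preservation of the defining functors, and one can either invoke a general pullback-of-presentable-$\infty$-categories result or unpack the universal property directly; I do not expect it to present any genuine obstacle.
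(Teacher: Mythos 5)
Your proof is correct and takes essentially the same route as the paper's: both work from the defining pullback square, observe that the slice projection detects all colimits while $\Lambda^{2}$ preserves sifted ones (your retract-of-$V \otimes_{A} V$ argument supplies a justification the paper leaves implicit), and then invoke the standard pullback fact (\cite{HTT}*{Lemma 5.4.5.5} in the paper) to conclude that $\QQuad_{n}(A)$ has sifted colimits detected by the forgetful functor, with (ii) reduced to the corresponding statement in $\MMod(A)$. The one slip is your claim that finite products commute with sifted colimits in \emph{any} $\infty$-category, which is false in general; the conclusion survives because $\MMod(A)$ is stable, so $\oplus$ is the coproduct and $X \sqcup \blank$ preserves colimits indexed by weakly contractible (in particular sifted) $\infty$-categories.
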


\begin{proof}
  In the Cartesian square
  \csquare{\QQuad_{n}(A)}{\MMod(A)_{/A[n]}}{\MMod(A)}{\MMod(A),}{p}{q}{u}{\Lambda^{2}_{[-n]}}
  the \icats{} $\MMod(A)$ and $\MMod(A)_{/A[n]}$ have all colimits,
  the forgetful functor $u$ preserves all colimits, and the functor
  $\Lambda^{2}_{[-n]}$ preserves sifted colimits. By \cite{HTT}*{Lemma
    5.4.5.5} the \icat{} $\QQuad(A)$ therefore has all sifted
  colimits, and if $\bar{f} \colon K^{\triangleright} \to \QQuad(A)$
  is a diagram with $K$ sifted, then $\bar{f}$ is a colimit diagram
  \IFF{} $p \circ \bar{f}$ and $q \circ \bar{f}$ are both colimit
  diagrams. But $u$ detects all colimits, so $p \circ \bar{f}$ is a
  colimit diagram \IFF{} $u \circ p \circ \bar{f}\simeq
  \Lambda^{2}_{[-n]}\circ q \circ \bar{f}$ is a colimit diagram. Since
  $\Lambda^{2}_{[-n]}$ preserves sifted colimits this is implied by $q \circ
  \bar{f}$ being one, so $\bar{f}$ is a colimit diagram \IFF{} $q
  \circ \bar{f}$ is one. In other words, $q$ detects sifted colimits,
  giving (i).

  (ii) then follows since the direct sum in $\MMod(A)$ preserves
  colimits in each variable.
\end{proof}

\begin{lemma}\label{lem:Heissifted}
  The functor $\HH\colon \QQuad_{n}(A) \to {\LLie_{n}}(A)$
  preserves sifted colimits.
\end{lemma}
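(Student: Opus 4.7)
The plan is to reduce preservation of sifted colimits by $\HH$ to a statement in $\MMod(A)$ via the forgetful functor $U_L \colon {\LLie_n}(A) \to \MMod(A)$. By Proposition~\ref{propn:opdforgetsifted} applied to the shifted Lie operad, $U_L$ detects sifted colimits, and is in particular conservative; so it suffices to show that the composite $U_L \circ \HH$ preserves sifted colimits and then transfer along $U_L$.

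First, I would identify $U_L \circ \HH$ up to natural equivalence. By construction $\HH(V,\omega) = \bbOmega\mathbb{B}\,\Heis_{1}(V,\omega)$ is a cofibrant replacement of the strict shifted Heisenberg Lie algebra, whose underlying $A$-module is $V \oplus A\c$. Hence in $\MMod(A)$ there is a natural equivalence
\[
U_L \HH(V,\omega) \;\simeq\; U_Q(V,\omega) \oplus A\c,
\]
where $U_Q \colon \QQuad_{n}(A) \to \MMod(A)$ denotes the forgetful functor.

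Next, I would check that the functor $(V,\omega) \mapsto U_Q(V,\omega) \oplus A\c$ preserves sifted colimits. By Lemma~\ref{lem:Quadsifted}(i), $\QQuad_{n}(A)$ has sifted colimits and $U_Q$ detects, hence preserves, them. The endofunctor $\blank \oplus A\c$ of $\MMod(A)$ preserves sifted colimits, since finite coproducts commute with sifted colimits in any \icat{} admitting both (and in any case $\MMod(A)$ is stable, so direct sums are biproducts). Composing yields preservation of sifted colimits by $U_L \circ \HH$.

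Finally, given a sifted diagram $F \colon K \to \QQuad_{n}(A)$ with colimit $X$, the canonical comparison map $\mathrm{colim}(\HH \circ F) \to \HH(X)$ becomes an equivalence after applying $U_L$ (using that $U_L$ itself preserves sifted colimits, since it detects them and ${\LLie_n}(A)$ has them), and hence the map itself is an equivalence because $U_L$ is conservative. I foresee no serious obstacle here: the only substantive point is that on underlying modules $\HH$ merely adjoins a one-dimensional central summand, after which the argument is formal.
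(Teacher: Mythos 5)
Your proof is correct and follows essentially the same route as the paper's: both reduce the claim to the commutative square identifying $U_L \circ \HH$ with $(\blank) \oplus A\c$ applied after the forgetful functor on quadratic modules, invoke Lemma~\ref{lem:Quadsifted} and Proposition~\ref{propn:opdforgetsifted} for detection of sifted colimits by the vertical functors, and observe that adding the constant summand $A\c$ preserves sifted colimits. The paper phrases the last point via weak contractibility of sifted index categories (colimits commute with colimits and the colimit of a constant diagram over a weakly contractible \icat{} is constant), which is the precise justification underlying your "finite coproducts commute with sifted colimits" claim.
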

\begin{proof}
  We have a commutative diagram
  \csquare{\QQuad_{n}(A)}{{\LLie_{n}}(A)}{\MMod_{n}(A)}{\MMod_{n}(A),}{\mathcal{H}}{}{}{(\blank)
    \oplus A\c}
  where the vertical functors detect sifted colimits by
  Lemma~\ref{lem:Quadsifted} and
  Proposition~\ref{propn:opdforgetsifted}. It therefore suffices to
  observe that the functor $(\blank) \oplus A$ preserves sifted
  colimits (and more generally colimits indexed by weakly contractible
  \icats{}) since colimits commute and the colimit diagram of a
  constant diagram indexed by a weakly contractible \icat{} is constant.
\end{proof}

\appendix

\section{Some Technicalities}

\subsection{From Pseudofunctors to Simplicial
  Categories}\label{sec:pseudo} 
In this appendix we will show that there is a natural way to
produce a simplicial category from a pseudofunctor $\simp^{\op} \to
\CAT$, where $\CAT$ is the 2-category of categories. More precisely,
we will see that there is a functor of 2-categories from the
2-category $\Fun^{\txt{Ps}}(\simp^{\op}, \CAT)$ of pseudofunctors to
the 2-category $\CAT_{\Delta}$ of simplicial categories.

Recall that if $\mathbf{C}$ is a category, a \emph{pseudofunctor} $F$
from $\mathbf{C}$ to the (strict) 2-category $\txt{CAT}$ of categories
(or to its underlying (2,1)-category) consists of the following data:
\begin{itemize}
\item for each object $X \in \mathbf{C}$, a category $F(X)$,
\item for each morphism $f \colon X \to Y$ in $\mathbf{C}$, a functor
  $F(f) \colon F(X) \to F(Y)$,
\item for each object $X \in \mathbf{C}$, a natural isomorphism $u_{X}
  \colon F(\id_{X}) \Rightarrow \id_{F(X)}$
\item for each pair of composable morphisms $f \colon X \to Y$, $g
  \colon Y \to Z$ in $\mathbf{C}$, a natural isomorphism $\eta_{f,g}
  \colon F(g \circ f) \Rightarrow F(g) \circ F(f)$,
\end{itemize}
such that
\begin{itemize}
\item for every morphism $f \colon X \to Y$, the triangles
\[  \nodispopctriangle{F(f)}{F(f) \circ
    F(\id_{X})}{F(f),}{\eta_{\id_{X},f}}{\id_{F(f)}}{F(f) \circ u_{X}} \qquad
  \nodispopctriangle{F(f)}{F(\id_{Y}) \circ
    F(f)}{F(f)}{\eta_{f,\id_{Y}}}{\id_{F(f)}}{u_{Y} \circ F(f)} \]
  both commute.
\item for composable triples of morphisms $f \colon X \to Y$, $g
  \colon Y \to Z$, $h \colon Z \to W$, the square
  \csquare{F(h\circ g \circ f)}{F(h) \circ F(g \circ f)}{F(h \circ g)
    \circ F(f)}{F(h) \circ F(g) \circ F(f)}{\eta_{g\circ f,
      h}}{\eta_{f, h \circ g}}{F(h) \circ \eta_{f,g}}{\eta_{g,h} \circ
    F(f)} commutes.
\end{itemize}

\begin{propn}\label{propn:pseudosimp}
 Suppose $\mathbf{C} \colon \simp^{\op} \to \txt{CAT}$ is a
pseudofunctor (where we write $\mathbf{C}_{n}$ for the image of $[n]$
and $\phi^{*} \colon \mathbf{C}_{n} \to \mathbf{C}_{m}$ for the image
of $\phi \colon [m] \to [n]$ in $\simp$). Then $\mathbf{C}$ determines
a simplicial category $\mathbf{C}^{\Delta}$ as follows:
\begin{itemize}
\item the objects of $\mathbf{C}^{\Delta}$ are the objects of $\mathbf{C}_{0}$,
\item for $x, y$ in $\mathbf{C}_{0}$ the $n$-simplices in
  $\mathbf{C}^{\Delta}$ are given by $\mathbf{C}_{n}(\sigma_{n}^{*}x,
  \sigma_{n}^{*}y)$, where $\sigma_{n}$ denotes the unique map $[n]
  \to [0]$ in $\simp$.
\item for $\phi \colon [m] \to [n]$ in $\simp$, the corresponding
  simplicial structure map in $\mathbf{C}^{\Delta}(x,y)$, which we
  denote $\tilde{\phi}^{*}$, takes $f \colon \sigma_{n}^{*}x \to
  \sigma_{n}^{*}y$ to the composite
  \[ \sigma_{m}^{*}x = (\sigma_{n} \circ \phi)^{*}x
  \xto{\eta_{\phi,\sigma_{n}}} \phi^{*}\sigma_{n}^{*}x \xto{\phi^{*}f}
  \phi^{*}\sigma_{n}^{*}y \xto{\eta^{-1}_{\phi,\sigma_{n}}}
  (\sigma_{n}\circ \phi)y = \sigma_{m}^{*}y.\]
\end{itemize}
\end{propn}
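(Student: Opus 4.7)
The plan is to spell out the composition law left implicit in the statement, check the simplicial identities for the structure maps $\tilde\phi^{*}$, and finally verify that composition is simplicial. Throughout, the only tools we have are the coherence isomorphisms $u$ and $\eta$ of the pseudofunctor, so essentially every verification amounts to a diagram chase combining naturality of $\eta$ with its pentagon identity applied to the evident composable strings of morphisms in $\simp$.

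First I would define composition: given $f\colon\sigma_{n}^{*}x\to\sigma_{n}^{*}y$ and $g\colon\sigma_{n}^{*}y\to\sigma_{n}^{*}z$ in $\mathbf{C}_{n}$, set $g\circ f$ using the ambient composition in $\mathbf{C}_{n}$. The identity $n$-simplex at $x$ is $\id_{\sigma_{n}^{*}x}$; this is strictly an identity because each $\sigma_{n}^{*}$ is a genuine functor (the coherence $u$ would only intervene if we had to compare $\sigma_{n}^{*}\id_{x}$ with $\id$, which does not arise here). With these definitions each $\mathbf{C}^{\Delta}(x,y)_{n}$ is manifestly a set and composition is level-wise a map of sets that is associative and unital at every level.

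Next I would verify that $\mathbf{C}^{\Delta}(x,y)$ is actually a simplicial set, i.e.\ that $\widetilde{\phi\circ\psi}^{*}=\tilde\psi^{*}\circ\tilde\phi^{*}$ for composable $\psi\colon[\ell]\to[m]$ and $\phi\colon[m]\to[n]$, and that $\widetilde{\id_{[n]}}^{*}=\id$. The first identity reduces, after writing out both composites, to the equality of the two canonical isomorphisms $\sigma_{\ell}^{*}x=(\sigma_{n}\circ\phi\circ\psi)^{*}x\Rightarrow\psi^{*}\phi^{*}\sigma_{n}^{*}x$, which is exactly the pentagon coherence for $\eta$ applied to the composable triple $(\psi,\phi,\sigma_{n})$ in $\simp$, combined with naturality of $\eta$ to move $\phi^{*}f$ past the coherence isomorphism. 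The second identity reduces to the unit axiom involving $u_{[n]}$ together with the naturality of $\eta_{\id_{[n]},\sigma_{n}}$.

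With simplicial identities in hand, the last thing to check is that composition assembles into a map of simplicial sets
\[
\mathbf{C}^{\Delta}(x,y)\times\mathbf{C}^{\Delta}(y,z)\longrightarrow\mathbf{C}^{\Delta}(x,z),
\]
i.e.\ that $\tilde\phi^{*}(g\circ f)=\tilde\phi^{*}(g)\circ\tilde\phi^{*}(f)$. Expanding the left-hand side, the inner $\phi^{*}(g\circ f)=\phi^{*}g\circ\phi^{*}f$ because $\phi^{*}$ is a functor, and inserting the canceling pair $\eta^{-1}_{\phi,\sigma_{n}}\circ\eta_{\phi,\sigma_{n}}=\id_{\phi^{*}\sigma_{n}^{*}y}$ in the middle recovers $\tilde\phi^{*}(g)\circ\tilde\phi^{*}(f)$. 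This completes the construction of $\mathbf{C}^{\Delta}$ as a simplicial category. The main obstacle, to the extent there is one, is purely bookkeeping: one must keep track of which instances of $\eta$ are being composed and invoke the pentagon and naturality at the right moments; no genuinely new idea is required beyond the definition of a pseudofunctor. Finally, the assignment $\mathbf{C}\mapsto\mathbf{C}^{\Delta}$ is itself $2$-functorial, since a pseudonatural transformation $\mathbf{C}\to\mathbf{D}$ provides, for each $n$, a functor $\mathbf{C}_{n}\to\mathbf{D}_{n}$ compatible (up to coherent isomorphism) with the $\sigma_{n}^{*}$, and another diagram chase of the same flavor shows these assemble into a simplicial functor $\mathbf{C}^{\Delta}\to\mathbf{D}^{\Delta}$.
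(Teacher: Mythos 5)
Your proposal is correct and follows essentially the same route as the paper: define composition levelwise via the Kleisli-style ambient composition in $\mathbf{C}_{n}$, verify the simplicial identities for $\tilde\phi^{*}$ by combining naturality of $\eta$ with the pentagon coherence applied to the triple $(\psi,\phi,\sigma_{n})$ and the unit axiom for $u$, and observe that composition is simplicial because $\phi^{*}$ is a functor and the coherence isomorphisms cancel in the middle. The paper presents the same two verifications as explicit commutative diagrams and leaves the compatibility of composition (and the $2$-functoriality you mention at the end) to the reader, so your write-up is, if anything, slightly more complete.
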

\begin{proof}
To see that this does indeed define a simplicial set, we must check
that these maps respect composition and identities. For composition,
take $\psi \colon [k] \to [m]$ and $\phi \colon [m] \to [n]$ and
consider for $f \colon \sigma_{n}^{*}x \to \sigma_{n}^{*}y$ the
commutative diagram
\[
\begin{tikzcd}
  \sigma_{k}^{*}x \arrow{r}{\sim} \arrow{d}{\sim}& (\psi\phi)^{*}\sigma_{n}^{*}x \arrow{r}{(\psi\phi)^{*}f}\arrow{d}{\sim} &
  (\psi\phi)^{*}\sigma_{n}^{*}y \arrow{d}{\sim} & \sigma_{k}^{*}y
  \arrow{l}{\sim} \arrow{d}{\sim} \\
  \psi^{*}\sigma_{m}^{*}x \arrow{r}{\sim} & \psi^{*}\phi^{*}\sigma_{n}^{*}x \arrow{r}{\psi^{*}\phi^{*}f}&
  \psi^{*}\phi^{*}\sigma_{n}^{*}y & \psi^{*}\sigma_{m}^{*}y, \arrow{l}{\sim}
\end{tikzcd}
\]
where the unlabelled maps come from the natural isomorphisms
$\eta$. Here the top horizontal composite is
$(\widetilde{\psi\phi})^{*}f$, the bottom horizontal composite is
$\psi^{*}\tilde\phi^{*}f$, and the composite from the top left to the
top right along the bottom row is
$\tilde{\psi}^{*}\tilde{\phi}^{*}f$. For identities, consider for $f$
as above the commutative diagram
\[
\begin{tikzcd}
  \sigma_{n}^{*}x \arrow{r}{\sim} \arrow{dr}{\id}& (\id_{[n]})^{*}\sigma_{n}^{*}x
  \arrow{r}{(\id_{[n]})^{*}f} \arrow{d}{\sim}&
  (\id_{[n]})^{*}\sigma_{n}^{*}y \arrow{d}{\sim} & \sigma_{n}^{*}y \arrow{l}{\sim} \arrow{dl}{\id}\\
   & \sigma_{n}^{*}x \arrow{r}{f} & \sigma_{n}^{*}y. &
\end{tikzcd}
\]
Here the top horizontal composite is $\widetilde{\id_{[n]}}^{*}f$ and the
composite along the bottom is $f$. It is then clear that composition
in the categories $\mathbf{C}_{n}$ induces composition maps for
$\mathbf{C}^{\Delta}$.
\end{proof}

A \emph{natural transformation} $\lambda$ from $F$ to $G$ of pseudofunctors from
$\mathbf{C}$ to $\txt{CAT}$ consists of the data of:
\begin{itemize}
\item for every $X \in \mathbf{C}$ a functor $\lambda_{X} \colon F(X)
  \to G(X)$,
\item for every morphism $f \colon X \to Y$ in $\mathbf{C}$ a natural
  isomorphism $\lambda(f) \colon \lambda_{Y} \circ F(f) \Rightarrow
  G(f) \circ \lambda_{X}$,
\end{itemize}
satisfying the obvious pentagon and triangle identities.

A \emph{modification} $\epsilon$ from $\lambda$ to $\mu$ of natural
transformations from $F$ to $G$ of pseudofunctors from 
$\mathbf{C}$ to $\txt{CAT}$ consists of the data of:
\begin{itemize}
\item for every object $X \in \mathbf{C}$, a natural transformation
  $\epsilon_{X} \colon \lambda_{X} \Rightarrow \mu_{X}$,
\end{itemize}
such that $\mu(f) \circ (G(f) \circ \epsilon_{X}) = (\epsilon_{Y}\circ
F(f)) \circ \lambda(f)$.

These give, respectively, the 1- and 2-morphisms in a 2-category
$\Fun^{\txt{Ps}}(\mathbf{C}, \CAT)$ of pseudofunctors. It is easy to
see that the construction taking a pseudofunctor $\mathbf{C} \colon
\simp^{\op} \to \CAT$ to the simplicial category $\mathbf{C}^{\Delta}$
is natural with respect to these morphisms and 2-morphisms, giving:
\begin{cor}
  The construction $(\mathbf{C} \colon \simp^{\op} \to \CAT) \mapsto
  \mathbf{C}^{\Delta}$ extends naturally to a functor of 2-categories
  $\Fun^{\txt{Ps}}(\simp^{\op}, \CAT) \to \CAT_{\Delta}$, where
  $\CAT_{\Delta}$ is the 2-category of simplicial categories.
\end{cor}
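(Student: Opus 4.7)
The strategy is to spell out the assignments on 1-morphisms (natural transformations) and 2-morphisms (modifications) of the 2-category $\Fun^{\txt{Ps}}(\simp^{\op}, \CAT)$, and then verify the required functoriality and coherence. Given a natural transformation $\lambda \colon F \to G$ of pseudofunctors, I would define a simplicial functor $\lambda^{\Delta} \colon F^{\Delta} \to G^{\Delta}$ by using $\lambda_{[0]}$ on objects and, on $n$-simplices of the mapping simplicial sets, sending a morphism $f \colon \sigma_{n}^{*}x \to \sigma_{n}^{*}y$ in $F_{n}$ to the composite
\[
\sigma_{n}^{*}\lambda_{[0]}x \xto{\lambda(\sigma_{n})^{-1}} \lambda_{[n]}\sigma_{n}^{*}x \xto{\lambda_{[n]}(f)} \lambda_{[n]}\sigma_{n}^{*}y \xto{\lambda(\sigma_{n})} \sigma_{n}^{*}\lambda_{[0]}y
\]
in $G_{n}$. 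Similarly, for a modification $\epsilon \colon \lambda \Rightarrow \mu$, I would define a simplicial natural transformation $\epsilon^{\Delta} \colon \lambda^{\Delta} \Rightarrow \mu^{\Delta}$ whose component at $x \in F_{0}$ is the morphism $\epsilon_{[0],x} \colon \lambda_{[0]}x \to \mu_{[0]}x$ of $G_{0}$ viewed as a $0$-simplex of $G^{\Delta}(\lambda_{[0]}x, \mu_{[0]}x)$.

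Next I would verify that $\lambda^{\Delta}$ is actually a simplicial functor. For functoriality (preservation of composition and identities in each $F^{\Delta}(x,y)$, $G^{\Delta}(x,y)$) one pastes together the hexagon expressing that $\lambda(\sigma_{n})$ is a natural isomorphism with the functoriality of $\lambda_{[n]}$ and $F_{n},G_{n}$. For simplicial compatibility with the structure maps $\tilde{\phi}^{*}$ of Proposition~\ref{propn:pseudosimp}, one must show that for every $\phi \colon [m] \to [n]$ the two composites $\tilde{\phi}^{*} \circ \lambda^{\Delta}_{n}$ and $\lambda^{\Delta}_{m} \circ \tilde{\phi}^{*}$ agree; this is a diagram chase combining the pentagon axiom for $\lambda$ (relating $\lambda(\sigma_{n}\phi)$, $\lambda(\sigma_{m})$, and $\lambda(\phi)$) with the associator coherence $\eta_{\phi,\sigma_{n}}$ of both $F$ and $G$. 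For the modification $\epsilon^{\Delta}$, simpliciality of the component morphisms and their naturality follows directly from the defining modification axiom relating $\lambda(\sigma_{n})$, $\mu(\sigma_{n})$, and $\epsilon$.

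Finally I would verify the 2-functor axioms. The identity natural transformation $\id_{F}$ satisfies $\id(\sigma_{n}) = \id$, so $\id_{F}^{\Delta}$ is the identity simplicial functor on $F^{\Delta}$; the identity modification on $\lambda$ likewise goes to the identity simplicial transformation on $\lambda^{\Delta}$. Horizontal and vertical composition of natural transformations and modifications is preserved essentially by construction: the formula for $\lambda^{\Delta}$ is built functorially out of $\lambda_{[0]}$, $\lambda_{[n]}$, and the structural isomorphism $\lambda(\sigma_{n})$, and each of these composes strictly in the target 2-category $\CAT_{\Delta}$.

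The main obstacle is the simplicial-compatibility diagram in the second paragraph: it is the one place where the pseudofunctor associators $\eta$ of both $F$ and $G$ and the structural 2-cells $\lambda(\phi)$, $\lambda(\sigma_{n})$, $\lambda(\sigma_{m})$ interact, and checking it amounts to pasting a cube of natural isomorphisms whose four faces are instances of the pentagon coherence for a natural transformation of pseudofunctors. All other verifications are mechanical; once this coherence is dispatched, the remaining 2-functoriality reduces to unwinding the formulas above.
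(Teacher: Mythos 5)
Your construction is correct and is exactly the one the paper has in mind; the paper simply states the corollary and leaves the details to the reader, and your conjugation of $\lambda_{[n]}(f)$ by the structural isomorphisms $\lambda(\sigma_{n})$, together with the coherence check against the simplicial structure maps $\tilde{\phi}^{*}$ via the pentagon axiom and the associators $\eta_{\phi,\sigma_{n}}$, is precisely the intended verification.
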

We leave the (straightforward) details of the proof to the
reader.

\subsection{Simplicial Enrichment of Model Categories}
\label{sec: SEMC}

The \icats{} we work with in this paper mostly arise from model
categories, and in order to carry out our construction we want to show
that these \icats{} can also be described using natural simplicial
enrichments of these model categories. For simplicial model categories
this is a standard result, originally due to Dwyer and Kan
\cite{DwyerKanFnCpx}. However, in our case the model categories are
not quite simplicial in the usual sense, so we will
need a slight variant of the comparison of Dwyer-Kan; this result is
no doubt well-known to the experts, but we have included a proof in
this appendix as we were not able to find a reference in the
literature.

We begin by recalling how we construct the \icat{} associated to a
model category, or more generally to a \emph{relative category}. A
relative category $(\bC, W)$ is a category $\bC$ equipped with a
collection $W$ of ``weak equivalences'', \ie{} a collection of
morphisms in $\bC$ that contains all the isomorphisms and satisies the
2-out-of-3 property: if $f$ and $g$ are composable morphisms such that
two out of $f$, $g$, and $gf$ are in $W$, then so is the
third. Relative categories are the most basic form of homotopical data
on a category, and have been studied as a model for \icats{} by
Barwick and Kan \cite{BarwickKanRelCat}. If $(\bC, W)$ is a relative
category, we can invert the weak equivalences $W$ to obtain an \icat{}
$\bC[W^{-1}]$:
\begin{defn}
  Let $\|\blank\| \colon \CatI \to \mathcal{S}$ be the left adjoint to
  the inclusion $\mathcal{S} \hookrightarrow \CatI$ of the \icat{}
  $\mathcal{S}$ of spaces into that of \icats{}. Thus if $\mathcal{C}$
  is an \icat{},  $\|\mathcal{C}\|$ is the space or $\infty$-groupoid
  obtained by inverting all the morphisms in $\mathcal{C}$. If
  $(\bC, W)$ is a relative category, let $\mathbf{W}$ denote
  the subcategory of $\bC$ containing only the morphisms in $W$. Then
  the localization $\bC[W^{-1}]$ is defined by the pushout square of \icats{}
  \nolabelcsquare{\mathbf{W}}{\|\mathbf{W}\|}{\bC}{\bC[W^{-1}].}
\end{defn}

\begin{remark}
  More generally, if $W$ is a collection of ``weak equivalences'' in
  an \icat{} $\mathcal{C}$, we similarly define $\mathcal{C}[W^{-1}]$
  by the pushout square
  \nolabelcsquare{\mathcal{W}}{\|\mathcal{W}\|}{\mathcal{C}}{\mathcal{C}[W^{-1}],}
  where $\mathcal{W}$ is the subcategory of $\mathcal{C}$ containing
  only the morphisms in $W$.
\end{remark}

We are interested in simplicial enrichments that arise in a
particularly pleasant way, as follows: Suppose $T_{\bullet}$ is a
simplicial monad on a category $\bC$, \ie{} a simplicial object in the
category of monads, or more explicitly a collection of functors $T_{n}
\colon \bC \to \bC$ together with natural transformations $\mu_{n}
\colon T_{n} \circ T_{n} \to T_{n}$ (the multiplication) and $\eta_{n}
\colon \id \to T_{n}$ (the unit) satisfying the usual identities, as
well as natural transformations $\phi^{*} \colon T_{n} \to T_{m}$ for
every map $\phi \colon [m] \to [n]$ in $\simp$, compatible with the
multiplication and unit transformations. Then we can define a
simplicial category $\bCs$ with the same objects as $\bC$, where the
mapping space $\Map_{\bCs}(X, Y)$ is given by $\Hom_{\bC}(X,
T_{\bullet}Y)$. The identity map of $X$ corresponds to the unit $X \to
T_{0}X$, and the composition $\Map_{\bCs}(X,Y)\times \Map_{\bCs}(Y, Z)
\to \Map_{\bCs}(X,Z)$ is given on simplices by taking $f\colon X \to
T_{n}Y$ and $g \colon Y \to T_{n}Z$ to the composite
\[ X \xto{f} T_{n}Y \xto{T_{n}g} T_{n}T_{n}Z \xto{\mu} T_{n}Z.\]

In other words, if we let $\bC_{n}$ be the Kleisli category of the
monad $T_{n}$, then $\bC_{\bullet}$ is a simplicial object in
categories with a constant set of objects, and $\bCs$ is the
associated simplicial category. Note that there is a canonical functor
$\bC \to \bCs$ that is the identity on objects and sends a map $X \to
Y$ to $X \to Y \xto{\eta} T_{0}Y$.

\begin{remark}
  Our notation is slightly abusive, in that the underlying category of
  the simplicial category $\bCs$ is not in general $\bC$ --- a
  morphism $X \to Y$ in $(\bCs)_{0}$ corresponds to a morphism $X \to
  T_{0}Y$ in $\bC$. However, the monad $T_{0}$ is typically the
  identity in examples.
\end{remark}

We'll now use results of Hovey to give conditions for a
simplicial monad to interact well with the weak equivalences in a
model category:
\begin{defn}\label{defn:cohfr}
  Let $\mathbf{C}$ be a model category. A \emph{coherent right
    framing} on $\mathbf{C}$ is a simplicial monad $T_{\bullet}$ on
  $\mathbf{C}$ such that for every object $X$ of $\mathbf{C}$ the unit
  maps $X \to T_{n}X$ are weak equivalences for all $n$, and if $X$ is
  fibrant then $T_{\bullet}X$ is a Reedy fibrant simplicial object of
  $\mathbf{C}$.
\end{defn}

\begin{propn}[Hovey]\label{propn:cohframe}
  Suppose $\mathbf{C}$ is a model category equipped with a coherent
  right framing $T_{\bullet}$. Then:
  \begin{enumerate}[(i)]
  \item If $Y$ is a fibrant object of $\mathbf{C}$, then the functor
    $\Hom_{\mathbf{C}}(\blank, T_{\bullet}Y)$ is a right Quillen
    functor from $\mathbf{C}^{\op}$ to $\sSet$, \ie{} it takes
    cofibrations and trivial cofibrations in $\mathbf{C}$ to Kan
    fibrations and trivial Kan fibrations. In particular, if $X$ is
    cofibrant, then $\Hom_{\mathbf{C}}(X, T_{\bullet}Y)$ is a Kan
    complex, and $\Hom_{\mathbf{C}}(\blank, T_{\bullet}Y)$ preserves
    weak equivalences between cofibrant objects.
  \item If $X$ is a cofibrant object of $\mathbf{C}$, then
    $\Hom_{\mathbf{C}}(X, T_{\bullet}\blank)$ preserves weak
    equivalences between fibrant objects in $\mathbf{C}$.
  \end{enumerate}
\end{propn}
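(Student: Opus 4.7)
The plan is to deduce both parts from Hovey's standard results on framings in model categories (\emph{Model Categories}, Chapter 5), once we check that our hypothesis of a coherent right framing matches his setup. The key preliminary observation: for any fibrant $Y$, the simplicial object $T_\bullet Y$ is a \emph{simplicial frame} on $Y$ in Hovey's sense. The Reedy fibrancy is given, and the unit maps $\eta_n : Y \to T_n Y$ supply a levelwise weak equivalence from the constant simplicial object $c_\bullet Y$ to $T_\bullet Y$. An induction on $n$, using that the $n$th matching map $T_n Y \to M_n T_\bullet Y$ is a fibration in $\mathbf{C}$ and that $M_0 T_\bullet Y = *$, shows that each $T_n Y$ is itself fibrant.

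For part~(i), I would invoke Hovey's Theorem 5.4.9: given a simplicial frame $\hat{Y}_\bullet$ on a fibrant object $Y$, the functor $\Hom_{\mathbf{C}}(\blank, \hat{Y}_\bullet) : \mathbf{C}^{\op} \to \sSet$ is a right Quillen functor. Unpacking: for any cofibration $X \hookrightarrow X'$ in $\mathbf{C}$, the map $\Hom_{\mathbf{C}}(X', T_\bullet Y) \to \Hom_{\mathbf{C}}(X, T_\bullet Y)$ is a Kan fibration, trivial if the cofibration is; via SM7, the proof reduces to the Reedy fibrancy of the matching maps, which is exactly our hypothesis. The two subordinate consequences follow at once: specializing to the cofibration $\emptyset \to X$ yields that $\Hom_{\mathbf{C}}(X, T_\bullet Y)$ is a Kan complex, and Ken Brown's lemma delivers preservation of weak equivalences between cofibrant objects.

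For part~(ii), fix a cofibrant $X$ and a weak equivalence $f : Y \to Y'$ between fibrant objects. Naturality of the unit gives commutative squares
\[
\begin{tikzcd}
Y \arrow{r}{\eta_n} \arrow{d}{f} & T_n Y \arrow{d}{T_n f} \\
Y' \arrow{r}{\eta_n} & T_n Y'
\end{tikzcd}
\]
in which both horizontal maps are weak equivalences by hypothesis, so $T_n f$ is a weak equivalence for every $n$ by 2-out-of-3. Hence $T_\bullet f$ is a levelwise, hence Reedy, weak equivalence between Reedy fibrant simplicial objects in $\mathbf{C}$. Applying Hovey's invariance results for framings---in particular, the fact that a map of simplicial frames between fibrant objects that is a Reedy weak equivalence induces a weak equivalence on the mapping simplicial sets into them out of a cofibrant object---yields that $\Hom_{\mathbf{C}}(X, T_\bullet f)$ is a weak equivalence of Kan complexes.

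The main obstacle is not mathematical but bookkeeping: Hovey's literature distinguishes carefully between cosimplicial and simplicial frames, between augmented and unaugmented versions, and between several equivalent ways of expressing the Reedy fibrancy condition, so some care is needed to identify our $T_\bullet$ with the correct flavor. Once that dictionary is fixed, both parts are formal consequences of his general framing machinery, and no independent verification is required.
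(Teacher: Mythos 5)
Your proposal follows essentially the same route as the paper: observe that for fibrant $Y$ the simplicial object $T_{\bullet}Y$ is a simplicial frame in Hovey's sense (Definition 5.2.7) and then quote his framing results in \S 5.4 for both parts (the paper cites Corollaries 5.4.4 and 5.4.8, with your levelwise 2-out-of-3 argument for part (ii) being exactly what those results package up). The only difference is bookkeeping over which numbered statement of Hovey to cite, which does not change the substance.
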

\begin{proof}
  From the definition of a coherent right framing it is evident that
  for $Y$ fibrant the simplicial object $T_{\bullet}Y$ is a simplicial
  framing of $Y$ in the sense of \cite{HoveyModCats}*{Definition
    5.2.7}. Part (i) is therefore a special case of
  \cite{HoveyModCats}*{Corollary 5.4.4}, and (ii) of
  \cite{HoveyModCats}*{Corollary 5.4.8}.
\end{proof}

\begin{cor}\label{cor:cohframesimp}
  Suppose $\mathbf{C}$ is a model category equipped with a coherent
  right framing $T_{\bullet}$. Let $\bCcfs$ be the simplicial category
  of fibrant-cofibrant objects in $\mathbf{C}$ with mapping spaces
  $\Hom_{\mathbf{C}}(\blank, T_{\bullet}\blank)$. Then $\bCcfs$ is a
  fibrant simplicial category.
\end{cor}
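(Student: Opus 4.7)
The plan is to reduce the statement to the fact that a simplicial category is fibrant precisely when all of its mapping simplicial sets are Kan complexes, and then to invoke Proposition~\ref{propn:cohframe}(i) directly.

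First, I would note that the simplicial category $\bCcfs$ is well-defined: its objects are the fibrant-cofibrant objects of $\mathbf{C}$, its mapping simplicial sets are $\Hom_{\mathbf{C}}(X, T_{\bullet}Y)$, and composition is induced from the monad multiplication $\mu_{n} \colon T_{n} \circ T_{n} \to T_{n}$ together with the simplicial structure maps $\phi^{*}$ of $T_{\bullet}$ (as described in the construction of $\bCs$ just before Definition~\ref{defn:cohfr}). Identities come from the unit $\eta_{n} \colon \id \to T_{n}$. So the only thing to check is that each mapping simplicial set is a Kan complex.

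Fix fibrant-cofibrant objects $X$ and $Y$. By Proposition~\ref{propn:cohframe}(i), since $Y$ is fibrant, the functor $\Hom_{\mathbf{C}}(\blank, T_{\bullet}Y) \colon \mathbf{C}^{\op} \to \sSet$ is a right Quillen functor. In particular, it sends cofibrations in $\mathbf{C}$ to Kan fibrations and trivial cofibrations to trivial Kan fibrations. Applied to the cofibration $\emptyset \to X$ (which is a cofibration since $X$ is cofibrant), this gives that the map $\Hom_{\mathbf{C}}(X, T_{\bullet}Y) \to \Hom_{\mathbf{C}}(\emptyset, T_{\bullet}Y) = *$ is a Kan fibration, i.e. $\Hom_{\mathbf{C}}(X, T_{\bullet}Y)$ is a Kan complex. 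This is precisely the second sentence of Proposition~\ref{propn:cohframe}(i), which we can cite directly.

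Since every mapping space $\sHom_{\bCcfs}(X, Y)$ is a Kan complex, the simplicial category $\bCcfs$ is fibrant, as desired. No step is really an obstacle here — the work has already been packaged into the hypothesis that $T_{\bullet}$ is a coherent right framing, which via Hovey's framework guarantees the right Quillen property we need.
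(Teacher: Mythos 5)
Your proof is correct and is essentially the paper's own argument: the paper likewise cites Proposition~\ref{propn:cohframe}(i) to conclude that all mapping spaces in $\bCcfs$ are Kan complexes, hence the simplicial category is fibrant. Your unpacking of the right Quillen property via the cofibration $\emptyset \to X$ is just an explicit spelling-out of the second sentence of that proposition.
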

\begin{proof}
  By Proposition~\ref{propn:cohframe}(i) the mapping spaces in
  $\bCcfs$ are all Kan complexes, so it is fibrant as a simplicial
  category.
\end{proof}

Our goal is now to prove the following comparison result:
\begin{propn}\label{propn:cohfrloc}
  Suppose $\mathbf{C}$ is a model category equipped with a coherent
  right framing. Then the natural maps
  \[ \mathbf{C}[W^{-1}] \to \NobCs[W^{-1}] \from
  \NbCcfs[W_{\txt{cf}}^{-1}] \from \NbCcfs\]
  are equivalences, where $W_{\txt{cf}}$ denotes the class of weak
  equivalences between fibrant-cofibrant objects and $\obCs$ is a
  fibrant replacement for the simplicial category $\bCs$.
\end{propn}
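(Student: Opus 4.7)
The strategy is to verify each of the three maps is an equivalence. I would tackle them in order, saving the leftmost for last.

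For the rightmost map $\NbCcfs \to \NbCcfs[W_{\txt{cf}}^{-1}]$, it suffices to show that every weak equivalence between fibrant-cofibrant objects is already an equivalence in $\NbCcfs$. Since $\bCcfs$ is a fibrant simplicial category by Corollary~\ref{cor:cohframesimp}, a morphism is an equivalence \IFF{} it induces weak equivalences on all mapping spaces. For a weak equivalence $f \colon Y \to Y'$ of fibrant-cofibrant objects and any fibrant-cofibrant $X$, Proposition~\ref{propn:cohframe}(ii) applied to the cofibrant $X$ produces a weak equivalence $\Hom_{\mathbf{C}}(X, T_{\bullet}Y) \xrightarrow{\sim} \Hom_{\mathbf{C}}(X, T_{\bullet}Y')$, which is precisely what is needed.

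For the middle map $\NbCcfs[W_{\txt{cf}}^{-1}] \to \NobCs[W^{-1}]$, coming from the inclusion $\bCcfs \hookrightarrow \obCs$, one needs full faithfulness and essential surjectivity after localization. Proposition~\ref{propn:cohframe}(i) ensures that for fibrant-cofibrant $X, Y$ the simplicial set $\Hom_{\mathbf{C}}(X, T_{\bullet}Y)$ is already a Kan complex, so the fibrant replacement $\bCs \to \obCs$ is a weak equivalence on such mapping spaces, giving full faithfulness on the underlying \icats{}. Essential surjectivity after inverting $W$ is a standard consequence of functorial cofibrant-then-fibrant replacement in the model category $\mathbf{C}$, which exhibits every object of $\obCs$ as weakly equivalent to a fibrant-cofibrant one.

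The leftmost map $\mathbf{C}[W^{-1}] \to \NobCs[W^{-1}]$ is the main obstacle. Having identified the right-hand side with $\NbCcfs$ via the previous two equivalences, the claim reduces to showing that the functor $\mathbf{C}[W^{-1}] \to \NbCcfs$ induced by fibrant-cofibrant replacement is an equivalence. Essential surjectivity is clear; full faithfulness amounts to the Dwyer--Kan style statement that, for fibrant-cofibrant $X, Y$, the derived mapping space $\Map_{\mathbf{C}[W^{-1}]}(X, Y)$ is computed by any simplicial frame on $Y$. Since the definition of a coherent right framing gives exactly that $T_{\bullet}Y$ is a simplicial frame on each fibrant $Y$ (as used in the proof of Proposition~\ref{propn:cohframe}), the classical Dwyer--Kan argument --- either via the hammock localization, or via Hinich's more recent reformulation of the comparison theorem for relative categories --- applies in this setting with no essential change.
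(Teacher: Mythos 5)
Your decomposition into three maps is the same as the paper's, and your argument for the rightmost map is essentially the paper's Lemma~\ref{lem:cfdontloc} (the paper uses part (i) of Proposition~\ref{propn:cohframe} where you use part (ii); either works). The other two maps are where the problems lie. For the middle map you establish full faithfulness of $\NbCcfs \hookrightarrow \NobCs$ \emph{before} localization and essential surjectivity \emph{after} localization, and conclude that the localized functor is an equivalence. That inference is not valid: inverting $W$ in the big category and $W_{\txt{cf}}$ in the small one can change the mapping spaces differently, since a zig-zag in $\bC$ between fibrant-cofibrant objects may pass through objects that are not fibrant-cofibrant. The correct criterion is the one from \cite{DwyerKanLoc}*{Corollary 3.6} (or its \icatl{} version): the inclusion must admit an adjoint whose unit (resp.\ counit) lies in the weak equivalences. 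This is exactly what the paper supplies in Proposition~\ref{propn:repladj} and Corollary~\ref{cor:wsmcloceq}, using the factorizations of $X \to *$ and $\emptyset \to X$ together with Proposition~\ref{propn:cohframe} to verify the unit/counit condition.

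For the leftmost map you correctly isolate the crux --- that $\Hom_{\bC}(X, T_{\bullet}Y)$ computes the derived mapping space --- but then assert that the classical Dwyer--Kan argument ``applies in this setting with no essential change.'' That is precisely the claim this appendix exists to justify: $\bC$ is not a simplicial model category, the enrichment comes from a simplicial monad, and \cite{DwyerKanFnCpx}*{Proposition 4.8} does not apply verbatim (the paper says as much in the remark following the proposition). The paper's actual argument (Proposition~\ref{propn:simprelcat}) takes a different route: it writes $\NobCs$ as the colimit over $\simp^{\op}$ of the Kleisli categories $\bC_{n}$ of the monads $T_{n}$, uses weak contractibility of $\simp^{\op}$ to reduce to showing each $\bC[W^{-1}] \to \bC_{n}[W_{n}^{-1}]$ is an equivalence, and checks that via the Kleisli adjunction $F_{n} \dashv G_{n}$, whose unit is the unit of $T_{n}$ (a weak equivalence by the definition of a coherent right framing) and whose counit lies in $W_{n}$. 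If you want to keep your route, you would need to actually verify the hypotheses of whichever comparison theorem you invoke for this non-simplicial enrichment; as written, the hardest step of the proposition is being cited rather than proved.
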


\begin{remark}
  This is just a minor variant of \cite{DwyerKanFnCpx}*{Proposition
    4.8}, although our proof is slightly different.
\end{remark}

We will prove this by considering the three maps separately. Let us
start with the easiest one:
\begin{lemma}\label{lem:cfdontloc}
  The map $\NbCcfs \to \NbCcfs[W_{\txt{cf}}^{-1}]$ is an equivalence
  of \icats{}.
\end{lemma}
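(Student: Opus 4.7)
The plan is to show that every morphism in $W_{\txt{cf}}$ is already an equivalence in the \icat{} $\NbCcfs$. Once this is established, the localization functor $\NbCcfs \to \NbCcfs[W_{\txt{cf}}^{-1}]$ is an equivalence by the universal property of localization (the unit of the pushout defining localization is an equivalence precisely when the morphisms being inverted are already equivalences).

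To carry this out, first I would recall that a morphism $f \colon X \to Y$ in a fibrant simplicial category $\mathbf{D}$ becomes an equivalence in the coherent nerve $\mathrm{N}\mathbf{D}$ exactly when $f$ is a homotopy equivalence in $\mathbf{D}$, equivalently when the induced maps on mapping Kan complexes $f_{*} \colon \sHom_{\mathbf{D}}(Z, X) \to \sHom_{\mathbf{D}}(Z, Y)$ and $f^{*} \colon \sHom_{\mathbf{D}}(Y, Z) \to \sHom_{\mathbf{D}}(X, Z)$ are weak equivalences for every object $Z$ of $\mathbf{D}$.

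Next, given $f \in W_{\txt{cf}}$ a weak equivalence between fibrant-cofibrant objects, and $Z$ any fibrant-cofibrant object, I would invoke Proposition~\ref{propn:cohframe} twice. Part (ii) applied to the cofibrant object $Z$ shows that $\Hom_{\mathbf{C}}(Z, T_{\bullet}\blank)$ preserves the weak equivalence $f$ between fibrant objects, so $f_{*}$ is a weak equivalence of Kan complexes. Dually, part (i) applied to the fibrant object $Z$ shows that $\Hom_{\mathbf{C}}(\blank, T_{\bullet}Z)$ preserves $f$ as a weak equivalence between cofibrant objects, giving that $f^{*}$ is a weak equivalence of Kan complexes.

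Combining these with the characterization of equivalences in the coherent nerve, every $f \in W_{\txt{cf}}$ is an equivalence in $\NbCcfs$. This concludes the argument: the map $\NbCcfs \to \NbCcfs[W_{\txt{cf}}^{-1}]$ is an equivalence of \icats{}. There is no real obstacle here — the entire content of the lemma is packaged into the two parts of Proposition~\ref{propn:cohframe}, which are precisely tailored to supply pre- and post-composition invariance; the only mild subtlety is being careful that the simplicial structure on $\bCcfs$ (with mapping objects $\Hom_{\mathbf{C}}(\blank, T_{\bullet}\blank)$) is the one for which these Quillen-type arguments about framings apply directly.
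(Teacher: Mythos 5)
Your proof is correct and follows essentially the same route as the paper: reduce to showing that every morphism of $W_{\txt{cf}}$ is already an equivalence in $\NbCcfs$, then deduce this from Proposition~\ref{propn:cohframe}. The only difference is that you verify both $f_{*}$ and $f^{*}$ are weak equivalences, whereas the paper uses only part (i) to check $f^{*}$, which already suffices by the Yoneda argument in the homotopy category.
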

\begin{proof}
  It suffices to show that the morphisms in $W_{\txt{cf}}$ are already
  equivalences in $\NobCcfs$. But if $f \colon X \to X'$ is a weak equivalence
  between fibrant-cofibrant objects, then it follows from
  Proposition~\ref{propn:cohframe}(i) that
  \[ \sHom_{\NobCcfs}(X', Z) \to \sHom_{\NobCcfs}(X, Z)\] is an
  equivalence for all fibrant-cofibrant $Z$, hence $f$ is an
  equivalence in $\NobCcfs$.
\end{proof}

Let us write $\bCc$ and $\bCf$ for the full subcategories of $\bC$
spanned by the cofibrant and fibrant objects, and $\bCcs$ and $\bCfs$
for the corresponding full subcategories of the simplicial category
$\bCs$. Then we have the following observation, a version of which is
found in the proof of \cite{HA}*{Theorem 1.3.4.20}:
\begin{propn}\label{propn:repladj}
  If $\bCs \to \obCs$, $\bCcs \to \obCcs$, $\bCfs \to \obCfs$, and
  $\bCcfs \to \obCcfs$ are (compatible) fibrant replacements, then we
  have a commutative diagram of \icats{}
  \csquare{\NobCcfs}{\NobCcs}{\NobCfs}{\NobCs}{j^{\txt{f}}}{j^{\txt{c}}}{i^{\txt{c}}}{i^{\txt{f}}}
  where all the functors are fully faithful. Here:
  \begin{enumerate}[(i)]
  \item The inclusion $i^{\txt{f}} \colon \NobCfs \hookrightarrow
    \NobCs$ has a left adjoint $l^{\txt{f}}$ and the unit $X \to
    i^{\txt{f}}l^{\txt{f}}X$ is in $W$ for all $X$.
  \item The inclusion $j^{\txt{f}} \colon \NobCcfs \hookrightarrow
    \NobCcs$ has a left adjoint $l^{\txt{f}}$ and the unit $X \to
    j^{\txt{f}}l^{\txt{f}}X$ is in $W^{\txt{c}}$ for all $X$.
  \item The inclusion $j^{\txt{c}} \colon \NobCcfs \hookrightarrow
    \NobCfs$ has a right adjoint $r^{\txt{c}}$ and the counit
    $r^{\txt{c}}j^{\txt{c}}X \to X$ is in $W^{\txt{f}}$ for all $X$.
  \item The inclusion $i^{\txt{c}} \colon \NobCcs \hookrightarrow
    \NobCs$ has a right adjoint $r^{\txt{c}}$ and the counit
    $r^{\txt{c}}i^{\txt{c}}X \to X$ is in $W$ for all $X$.
  \end{enumerate}
\end{propn}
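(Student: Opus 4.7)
The plan is to establish fully faithfulness of all four inclusions uniformly, then construct the four adjoints separately using Proposition~\ref{propn:cohframe} as the key homotopical input. For fully faithfulness: at the level of the underlying simplicial categories $\bCcfs \subset \bCcs \subset \bCs$ and $\bCcfs \subset \bCfs \subset \bCs$, the inclusions are fully faithful by construction, since every mapping space is computed by the same formula $\Hom_{\bC}(X, T_{\bullet}Y)$ regardless of which subcategory we view the objects in. Choosing a compatible functorial fibrant replacement on simplicial categories (for instance, by applying a functorial Kan replacement pointwise on mapping spaces) produces fibrant replacements in which these inclusions remain fully faithful, and the coherent nerve then preserves this.

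For cases (i) and (ii), I would use functorial factorization in $\bC$ to produce, for each $X$, a trivial cofibration $X \to l^{f}X$ with $l^{f}X$ fibrant. If $X$ is already cofibrant then so is $l^{f}X$, so the same construction handles both cases simultaneously. Checking the adjunction via the standard universal criterion reduces to verifying that for every fibrant $Y$, precomposition with $X \to l^{f}X$ induces a weak equivalence of simplicial sets $\Hom_{\bC}(l^{f}X, T_{\bullet}Y) \to \Hom_{\bC}(X, T_{\bullet}Y)$; this is immediate from Proposition~\ref{propn:cohframe}(i), since $\Hom_{\bC}(-, T_{\bullet}Y)$ is a right Quillen functor and hence takes trivial cofibrations to trivial Kan fibrations. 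The unit is a weak equivalence by construction. Case (iii) is dual: for fibrant $X$, factor the map to $X$ to produce a trivial fibration $r^{c}X \to X$ with $r^{c}X$ cofibrant (and automatically fibrant since $X$ is); then for $Y$ fibrant-cofibrant, Proposition~\ref{propn:cohframe}(ii) applied to the weak equivalence $r^{c}X \to X$ between fibrant objects gives the required equivalence of mapping spaces, with the counit lying in $W^{\txt{f}}$ by construction.

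Case (iv) is the main obstacle. For general $X$, the cofibrant replacement $X^{c} \to X$ involves non-fibrant objects, so Proposition~\ref{propn:cohframe}(ii) does not apply directly. I would define the right adjoint as the composite $r^{c} := j^{f} \circ r^{c}_{(\rm iii)} \circ l^{f} \colon \NobCs \to \NobCcs$---fibrantly replace via (i), then cofibrantly replace in the fibrant-cofibrant world via (iii), then include into $\NobCcs$. A diagram chase through the chain of already-established adjunctions reduces the verification of the universal property to the single claim that for cofibrant $Y$, the fibrant replacement map $Y \to l^{f}Y$ is already an equivalence in $\NobCs$ (not just in $\NobCfs$). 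The hard part is checking this equivalence on mapping spaces into arbitrary, possibly non-fibrant, targets. The key inputs are that $T_{\bullet}$ preserves weak equivalences in $\bC$ (by 2-out-of-3 applied to the natural weak equivalence $Z \to T_{n}Z$), giving a levelwise weak equivalence $T_{\bullet}Y \to T_{\bullet}l^{f}Y$ of cosimplicial objects, combined with the observation (as in Lemma~\ref{lem:OmegaReedy} for the framings of interest) that $T_{\bullet}X$ is Reedy fibrant for every $X$, so that $\Hom_{\bC}(Y, T_{\bullet}-)$ preserves weak equivalences out of cofibrant $Y$---yielding the required equivalence after Kan replacement.
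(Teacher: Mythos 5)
Your handling of full faithfulness and of parts (i)--(iii) is essentially the paper's proof: one verifies the adjoint-functor criterion by factoring $X \to *$ (for (i) and (ii)) or $\emptyset \to X$ (for (iii)) and quoting Proposition~\ref{propn:cohframe}(i) or (ii). The divergence is in part (iv), where the paper simply runs the same one-line argument as in (iii) --- factor $\emptyset \to X$ as a cofibration followed by a trivial fibration $X' \to X$ and apply Proposition~\ref{propn:cohframe}(ii) --- and where your alternative construction does not actually work. The composite $j^{\txt{f}} \circ r^{\txt{c}}_{(\txt{iii})} \circ l^{\txt{f}}$ carries no counit to the identity, since the unit of (i) points from $X$ \emph{into} $l^{\txt{f}}X$; so you are forced to argue via representability of $Y \mapsto \Map(i^{\txt{c}}Y, X)$, and the chain of equivalences you need still contains the step $\Map(Y, X) \to \Map(Y, l^{\txt{f}}X)$, i.e.\ $\Hom_{\bC}(Y, T_{\bullet}X) \to \Hom_{\bC}(Y, T_{\bullet}l^{\txt{f}}X)$ with $X$ not fibrant. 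That is exactly the comparison of mapping spaces into a non-fibrant target that you set out to avoid, now for a trivial cofibration rather than a trivial fibration; the difficulty has been relocated, not removed.

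The justification you then offer for that step is not a proof. Definition~\ref{defn:cohfr} only requires $T_{\bullet}X$ to be Reedy fibrant when $X$ is fibrant; Lemma~\ref{lem:OmegaReedy} proves the stronger statement for the particular framing $A \otimes \Omega(\Delta^{\bullet}) \otimes_{A} (-)$, but Proposition~\ref{propn:repladj} is stated for an arbitrary coherent right framing, so that hypothesis is not available here. And even granting it, the inference ``levelwise weak equivalence of Reedy fibrant simplicial objects, hence $\Hom_{\bC}(Y, T_{\bullet}(-))$ preserves all weak equivalences for cofibrant $Y$'' is precisely the homotopy-invariance statement whose standard proof (Hovey's Corollary 5.4.8, the source of Proposition~\ref{propn:cohframe}(ii)) uses fibrancy of the underlying objects; asserting it for arbitrary targets is assuming what is to be shown. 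The clean fixes are either to follow the paper and treat (iv) verbatim as (iii) --- observing, if the non-fibrant case worries you, that every model category to which this proposition is applied in the paper has all objects fibrant, so $\bCf = \bC$ and (iv) literally reduces to (iii) --- or to prove directly that for a trivial fibration $X' \to X$ with $X'$ cofibrant and $Y$ cofibrant, the map $\Hom_{\bC}(Y, T_{\bullet}X') \to \Hom_{\bC}(Y, T_{\bullet}X)$ is a weak equivalence. Your proposal does neither.
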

\begin{proof}
  To prove (i), it suffices to show that for every $X \in \bC$ there
  exists a map $X \to X'$ such that $X'$ is fibrant and
  $\sHom_{\bCs}(X', Z) \to \sHom_{\bCs}(X, Z)$ is a weak equivalence in
  $\sSet$ for every fibrant object $Z$. By
  Proposition~\ref{propn:cohframe}(i) it suffices to factor $X \to *$
  as a trivial cofibration $X \to X'$ followed by a fibration $X' \to
  *$. The proof of (ii) is the same, and (iii) and (iv) follow
  similarly using Proposition~\ref{propn:cohframe}(ii) and the
  factorization of the map $\emptyset \to X$ as a cofibration
  $\emptyset \to X'$ followed by a trivial fibration $X' \to X$.
\end{proof}

\begin{cor}\label{cor:wsmcloceq}
  Inverting the weak equivalences gives equivalences of \icats{}
    \csquare{(\NobCcfs)[W_{\txt{cf}}^{-1}]}{(\NobCcs)[W_{\txt{c}}^{-1}]}{(\NobCfs)[W_{\txt{f}}^{-1}]}{(\NobCs)[W^{-1}].}{\sim}{\sim}{\sim}{\sim}
\end{cor}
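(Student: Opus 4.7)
The plan is to deduce each of the four localization equivalences from the corresponding adjunction in Proposition~\ref{propn:repladj} via a general principle. Specifically, suppose $i \colon \mathcal{A} \hookrightarrow \mathcal{B}$ is a fully faithful functor of \icats{} admitting either a left adjoint $l$ whose unit $\eta \colon \id \Rightarrow il$ has all components in a class $W$ of morphisms of $\mathcal{B}$ that is closed under $2$-out-of-$3$, or dually a right adjoint $r$ whose counit is in $W$. Let $W_{\mathcal{A}} := i^{-1}(W)$. I claim that under these hypotheses, the induced functor $\mathcal{A}[W_{\mathcal{A}}^{-1}] \to \mathcal{B}[W^{-1}]$ is an equivalence of \icats{}.

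To prove the claim, I first need to check that $l$ sends $W$ to $W_{\mathcal{A}}$. For $f \colon X \to Y$ in $W$, naturality of $\eta$ gives a commutative square with horizontal edges $\eta_X, \eta_Y \in W$ and vertical edges $f$ and $ilf$. By $2$-out-of-$3$, $ilf \in W$, so $lf \in W_{\mathcal{A}}$. Hence $l$ descends to a functor $\bar l \colon \mathcal{B}[W^{-1}] \to \mathcal{A}[W_{\mathcal{A}}^{-1}]$, and $i$ descends to $\bar i \colon \mathcal{A}[W_{\mathcal{A}}^{-1}] \to \mathcal{B}[W^{-1}]$. The adjunction $l \dashv i$ descends to an adjunction $\bar l \dashv \bar i$; its counit $li \Rightarrow \id$ is already an equivalence on $\mathcal{A}$ (since $i$ is fully faithful), and its unit $\id \Rightarrow il$ becomes an equivalence after localization because all its components lie in $W$. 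Hence $\bar i$ and $\bar l$ are mutually inverse equivalences. The dual argument handles the right-adjoint case.

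Now I apply this general principle to each of the four inclusions in the square of Proposition~\ref{propn:repladj}: item (iv) gives $(\NobCcs)[W_{\txt{c}}^{-1}] \simeq (\NobCs)[W^{-1}]$; item (i) gives $(\NobCfs)[W_{\txt{f}}^{-1}] \simeq (\NobCs)[W^{-1}]$; item (ii) gives $(\NobCcfs)[W_{\txt{cf}}^{-1}] \simeq (\NobCcs)[W_{\txt{c}}^{-1}]$; item (iii) gives $(\NobCcfs)[W_{\txt{cf}}^{-1}] \simeq (\NobCfs)[W_{\txt{f}}^{-1}]$. In each case the restriction of the larger class of weak equivalences to the subcategory is precisely the smaller class, so the hypothesis $W_{\mathcal{A}} = i^{-1}(W)$ matches our notation. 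The commutativity of the displayed square follows from the commutativity of the square of inclusions in Proposition~\ref{propn:repladj} (the localization functors are natural).

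The only genuine content is the verification that $l$ preserves weak equivalences, which as above is a direct consequence of $2$-out-of-$3$ together with naturality of the unit; everything else is formal, so there is no real obstacle beyond carefully tracking the four adjunctions.
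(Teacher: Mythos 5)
Your proposal is correct and follows essentially the same route as the paper: the paper likewise deduces the four equivalences by combining Proposition~\ref{propn:repladj} with the general principle that a fully faithful inclusion admitting an adjoint whose unit (or counit) lies in the weak equivalences induces an equivalence on localizations, citing \cite{enr}*{Lemma 5.3.14} (an \icatl{} version of \cite{DwyerKanLoc}*{Corollary 3.6}) for that principle where you instead supply the short direct argument. Your verification that the adjoint preserves weak equivalences via $2$-out-of-$3$ and naturality of the unit is exactly the content of that cited lemma, so there is no substantive difference.
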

\begin{proof}
  Combine Proposition~\ref{propn:repladj} with \cite{enr}*{Lemma
    5.3.14}, which is just an \icatl{} version of
  \cite{DwyerKanLoc}*{Corollary 3.6}.
\end{proof}

\begin{propn}\label{propn:simprelcat}
  Let $\bCs
  \to \obCs$ be a fibrant replacement of $\bCs$. Then the map $\bC \to
  \bCs$ induces an equivalence of \icats{}
  \[ \bC[W^{-1}] \isoto \NobCs[W^{-1}].\]
\end{propn}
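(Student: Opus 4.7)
The plan is to factor through the full subcategory of fibrant-cofibrant objects, where Lemma~\ref{lem:cfdontloc} and Corollary~\ref{cor:wsmcloceq} have already supplied the comparisons we need on the simplicial side. Specifically, I would construct the commutative square of \icats{}
\[
\begin{tikzcd}
\bCcf[W_{\txt{cf}}^{-1}] \arrow{r}{\alpha} \arrow{d}{\phi} & \bC[W^{-1}] \arrow{d}{\psi} \\
\NbCcfs \arrow{r}{\beta} & \NobCs[W^{-1}],
\end{tikzcd}
\]
where $\alpha$ is induced by the inclusion $\bCcf \hookrightarrow \bC$, the vertical maps are induced by the canonical functor $\bC \to \bCs$ (composed with fibrant replacement on the right), and $\psi$ is the map of interest. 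If $\alpha$, $\phi$, and $\beta$ are all equivalences, then so is $\psi$ by two-out-of-three.

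That $\alpha$ is an equivalence is a standard model-categorical fact: composing the functorial cofibrant and fibrant replacements produces, for each $X \in \bC$, a natural zigzag of weak equivalences $X \leftarrow QX \to RQX$ with $RQX \in \bCcf$, which furnishes a quasi-inverse to $\alpha$ once $W$ is inverted. That $\beta$ is an equivalence is exactly the composition of Lemma~\ref{lem:cfdontloc} (so $\NbCcfs \simeq \NbCcfs[W_{\txt{cf}}^{-1}]$) with the bottom horizontal arrow of Corollary~\ref{cor:wsmcloceq} (so $\NbCcfs[W_{\txt{cf}}^{-1}] \simeq \NobCs[W^{-1}]$).

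The main obstacle is showing $\phi \colon \bCcf[W_{\txt{cf}}^{-1}] \to \NbCcfs$ is an equivalence, i.e. that on fibrant-cofibrant objects the simplicial enrichment arising from the coherent right framing correctly models the \icatl{} localization of the underlying 1-category. For a genuine simplicial model category this is the classical Dwyer-Kan theorem \cite{DwyerKanFnCpx}*{Proposition~4.8} (see also Lurie's \cite{HA}*{Theorem~1.3.4.20}). In our setting we have only a one-sided framing rather than a full simplicial enrichment, so the standard proof must be adapted. The key observation is that Dwyer-Kan's comparison between the hammock localization $L^{H}\bCcf$ and the simplicial mapping complexes uses only the Quillen-bifunctor properties of $\Hom_{\bC}(\blank, T_{\bullet}\blank)$ recorded in Proposition~\ref{propn:cohframe}: on fibrant-cofibrant inputs this bifunctor sends (trivial) cofibrations to (trivial) Kan fibrations and preserves weak equivalences in each variable separately. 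These are precisely the properties of a ``right framing'' in the sense of \cite{HoveyModCats}*{Chapter~5}, so the Dwyer-Kan argument carries over verbatim, producing a weak equivalence of simplicial categories between $L^{H}\bCcf$ and $\bCcfs$ and hence the desired equivalence $\phi$.

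The principal technical obstacle is thus verifying that the Dwyer-Kan comparison is insensitive to the mild weakening from ``simplicial model structure'' to ``coherent right framing''; I expect the rest of the proof to be routine bookkeeping.
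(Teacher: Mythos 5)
Your reduction is sound in outline: the square commutes, $\alpha$ is an equivalence by the standard functorial-replacement argument, and $\beta$ is exactly Lemma~\ref{lem:cfdontloc} composed with the bottom edge of Corollary~\ref{cor:wsmcloceq}. But the entire mathematical content of the proposition has been pushed into $\phi \colon \bCcf[W_{\txt{cf}}^{-1}] \to \NbCcfs$, and there you have asserted rather than proved the key point. Saying that the Dwyer--Kan comparison ``carries over verbatim'' is not a proof: their Proposition~4.8 uses the full simplicial model structure (tensoring as well as cotensoring), and even granting that the \emph{mapping spaces} of $L^{H}\bCcf$ can be computed from a one-sided simplicial resolution, upgrading that to a zigzag of weak equivalences of simplicial \emph{categories} --- compatibly with composition --- is exactly the delicate step. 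You would also need, separately, the identification $L^{H}\bCcf \simeq \bCcf[W_{\txt{cf}}^{-1}]$ of the hammock localization with the \icatl{} localization. The paper's remark that its proof is ``slightly different'' from Dwyer--Kan's is a signal that a verbatim transport was not available; as written, your argument has a genuine gap at $\phi$.

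For comparison, the paper proves the statement by an entirely different and more self-contained route that never mentions fibrant or cofibrant objects: it regards $\bCs$ as a simplicial diagram of ordinary categories $\bC_{\bullet}$, with $\bC_{n}$ the Kleisli category of the monad $T_{n}$, observes that $\NobCs[W^{-1}]$ is the colimit over $\simp^{\op}$ of the levelwise localizations $\bC_{n}[W_{n}^{-1}]$ (localization commutes with this colimit since $\|\blank\|$ is a left adjoint), and then, because $\simp^{\op}$ is weakly contractible, reduces to showing each $\bC[W^{-1}] \to \bC_{n}[W_{n}^{-1}]$ is an equivalence. That follows from the Kleisli adjunction $F_{n} \dashv G_{n}$, whose unit $X \to T_{n}X$ lies in $W$ and whose counit lies in $W_{n}$, via \cite{DwyerKanLoc}*{Corollary 3.6}. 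If you want to salvage your approach, the honest task is to actually carry out the adaptation of \cite{DwyerKanFnCpx}*{Proposition 4.8} to a coherent right framing; otherwise the colimit-over-$\simp^{\op}$ argument is both shorter and avoids hammock localization altogether.
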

The proof is a variant of that of \cite{HA}*{1.3.4.7}, and is based on
ideas that are implicit in the proofs of
\cite{DwyerKanFnCpx}*{Propositions 4.8 and 5.3}.
\begin{proof}
  We may regard the simplicial category $\bCs$ as a simplicial diagram
  $\bC_{\bullet}$ in categories, where $\bC_{n}$ has the same objects
  as $\bC$ and $\Hom_{\bC_{n}}(X, Y) = \Hom_{\bC}(X, T_{n}Y)$ --- as
  mentioned above, this is just the Kleisli category of the monad $T_{n}$.

  Let $W_{n}$ be the collection of morphisms in $\bC_{n}$
  corresponding to morphisms $X \to T_{n}Y$ in $\bC$ that are weak
  equivalences. This determines a simplicial subcategory
  $\mathbf{W}_{\Delta}$ of $\bCs$, and if $\mathbf{W}_{\Delta} \to
  \bar{\mathbf{W}}_{\Delta}$ is a fibrant replacement for this, then
  the \icat{} $\NobCs[W^{-1}]$ is by definition determined
  by the pushout square
  \nolabelcsquare{\mathrm{N}\bar{\mathbf{W}}_{\Delta}}{\|\mathrm{N}\bar{\mathbf{W}}_{\Delta}\|}{\NobCs}{\NobCs[W^{-1}].}

  The \icat{} $\mathrm{N}\obCs$ is the colimit of $\bC_{\bullet}$,
  regarded as a simplicial diagram of \icats{}. This result can be
  found, for example, as \cite{HA}*{Proposition 1.3.4.14}, but is
  certainly far older, and is implicitly used in \cite{DwyerKanLoc} in
  the context of simplicial categories with a fixed set of objects.

  Similarly, $\mathrm{N}\bar{\mathbf{W}}_{\Delta}$ is the colimit of
  $\mathbf{W}_{\bullet}$, and since $\|\blank\|$ preserves colimits (being
  a left adjoint) it follows that the \icat{}
  $\NobCs[W^{-1}]$ is the colimit of the simplicial diagram
  of \icats{} $\bC_{\bullet}[W_{\bullet}^{-1}]$.

  Since $\simp^{\op}$ is a weakly contractible category, to show that
  $\bC[W^{-1}] \to \NobCs[W^{-1}]$ is an equivalence it
  therefore suffices to show that the functor $\bC[W^{-1}] \to
  \bC_{n}[W_{n}^{-1}]$ is an equivalence of \icats{} for all $n$.

  This map arises from the functor $F_{n} \colon \bC \to \bC_{n}$ that is
  the identity on objects and sends a morphism $X \to Y$ to $X \to Y
  \to T_{n}Y$. Since $\bC_{n}$ is the Kleisli category of the monad
  $T_{n}$, this functor has a right adjoint $G_{n} \colon \bC_{n} \to
  \bC$, which sends an object $X$ to $T_{n}X$ and a morphism from $X$ to
  $Y$, which corresponds to a map $f \colon X \to T_{n}Y$ in $\bC$, to
  the map \[ T_{n}X \xto{T_{n}f} T_{n}T_{n}Y \xto{\mu} T_{n}Y.\] The
  composite functor $G_{n}F_{n}$ is $T_{n}$, and the unit $\id \to
  G_{n}F_{n}=T_{n}$ is the unit for $T_{n}$, which is by assumption
  given by maps $X \to T_{n}X$ in $W$ for all $X \in \bC$. On the
  other hand, the counit $F_{n}G_{n}X \to X$ corresponds to the
  identity $T_{n}X \to T_{n}X$, and so lies in $W_{n}$. By
  \cite{DwyerKanLoc}*{Corollary 3.6} this implies that the induced
  maps $\bC[W^{-1}] \rightleftarrows \bC_{n}[W_{n}^{-1}]$ are
  equivalences of \icats{}.
\end{proof}

\begin{proof}[Proof of Proposition~\ref{propn:cohfrloc}]
  Combining Proposition~\ref{propn:simprelcat},
  Proposition~\ref{cor:wsmcloceq}, and Lemma~\ref{lem:cfdontloc}, we
  get the required zig-zag of equivalences.
\end{proof}

 \begin{bibdiv}
 \begin{biblist}

\bib{BarthelMayRiehlDG}{article}{
  author={Barthel, Tobias},
  author={May, J. P.},
  author={Riehl, Emily},
  title={Six model structures for {DG}-modules over {DGA}s: model category theory in homological action},
  eprint={arXiv:1310.1159},
  journal={New York J. Math.},
  volume={20},
  date={2014},
  pages={1077--1159},
}

\bib{BarwickKanRelCat}{article}{
  author={Barwick, C.},
  author={Kan, D. M.},
  title={Relative categories: another model for the homotopy theory of homotopy theories},
  journal={Indag. Math. (N.S.)},
  volume={23},
  date={2012},
  number={1-2},
  pages={42--68},
}

\bib{BashkirovVoronov}{article}{
  author={Bashkirov, Denis},
  author={Voronov, Alexander A.},
  title={The {B}{V} formalism for $L_{\infty }$-algebras},
  journal={J. Homotopy Relat. Struct.},
  issn={2193-8407},
  eprint={arXiv:1410.6432},
}

\bib{BV1}{article}{
  author={Batalin, I. A.},
  author={Vilkovisky, G. A.},
  title={Gauge algebra and quantization},
  journal={Phys. Lett. B},
  volume={102},
  date={1981},
  number={1},
  pages={27--31},
}

\bib{BV2}{article}{
  author={Batalin, I. A.},
  author={Vilkovisky, G. A.},
  title={Quantization of gauge theories with linearly dependent generators},
  journal={Phys. Rev. D (3)},
  volume={28},
  date={1983},
  number={10},
  pages={2567--2582},
}

\bib{BV3}{article}{
  author={Batalin, I. A.},
  author={Vilkovisky, G. A.},
  title={Existence theorem for gauge algebra},
  journal={J. Math. Phys.},
  volume={26},
  date={1985},
  number={1},
  pages={172--184},
}

\bib{Behrend-Fantechi}{article}{
  author={Behrend, Kai},
  author={Fantechi, Barbara},
  title={Gerstenhaber and {B}atalin-{V}ilkovisky structures on {L}agrangian intersections},
  conference={ title={Algebra, arithmetic, and geometry: in honor of Yu. I. Manin. Vol. I}, },
  book={ series={Progr. Math.}, volume={269}, publisher={Birkh\"auser Boston, Inc., Boston, MA}, },
  date={2009},
  pages={1--47},
}

\bib{BD}{book}{
  author={Beilinson, Alexander},
  author={Drinfeld, Vladimir},
  title={Chiral algebras},
  series={American Mathematical Society Colloquium Publications},
  volume={51},
  publisher={American Mathematical Society, Providence, RI},
  date={2004},
  pages={vi+375},
}

\bib{Joyceetal}{article}{
  author={Ben-Bassat, Oren},
  author={Brav, Christopher},
  author={Bussi, Vittoria},
  author={Joyce, Dominic},
  title={A `Darboux theorem' for shifted symplectic structures on derived Artin stacks, with applications},
  journal={Geom. Topol.},
  volume={19},
  date={2015},
  number={3},
  pages={1287--1359},
}

\bib{BousfieldGugenheim}{article}{
  author={Bousfield, A. K.},
  author={Gugenheim, V. K. A. M.},
  title={On ${\rm PL}$ de Rham theory and rational homotopy type},
  journal={Mem. Amer. Math. Soc.},
  volume={8},
  date={1976},
  number={179},
}

\bib{BraunLazarev}{article}{
  author={Braun, C.},
  author={Lazarev, A.},
  title={Homotopy BV algebras in Poisson geometry},
  journal={Trans. Moscow Math. Soc.},
  date={2013},
  pages={217--227},
}

\bib{CPTVV}{article}{
  author={Calaque, Damien},
  author={Pantev, Tony},
  author={To{\"e}n, Bertrand},
  author={Vaqui{\'e}, Michel},
  author={Vezzosi, Gabriele},
  title={Shifted {P}oisson structures and deformation quantization},
  eprint={arXiv:1506.03699},
  journal={J. Topol.},
  volume={10},
  date={2017},
  number={2},
  pages={483--584},
}

\bib{Cat}{article}{
  author={Cattaneo, Alberto S.},
  title={From topological field theory to deformation quantization and reduction},
  conference={ title={International Congress of Mathematicians. Vol. III}, },
  book={ publisher={Eur. Math. Soc., Z\"urich}, },
  date={2006},
  pages={339--365},
  review={\MR {2275685}},
}

\bib{CMR2}{article}{
  author={Cattaneo, Alberto S.},
  author={Mnev, Pavel},
  author={Reshetikhin, Nicolai},
  title={Classical and quantum {L}agrangian field theories with boundary},
  eprint={arXiv:1207.0239},
  date={2012},
}

\bib{CMR1}{article}{
  author={Cattaneo, Alberto S.},
  author={Mnev, Pavel},
  author={Reshetikhin, Nicolai},
  title={Semiclassical quantization of classical field theories},
  conference={ title={Mathematical aspects of quantum field theories}, },
  book={ series={Math. Phys. Stud.}, publisher={Springer, Cham}, },
  date={2015},
  pages={275--324},
}

\bib{CMR}{article}{
  author={Cattaneo, Alberto S.},
  author={Mnev, Pavel},
  author={Reshetikhin, Nicolai},
  title={Perturbative {BV} theories with {S}egal-like gluing},
  date={2016},
  eprint={arXiv:1602.00741},
}

\bib{enropd}{article}{
  author={Chu, Hongyi},
  author={Haugseng, Rune},
  title={Enriched $\infty $-operads},
  date={2017},
  eprint={arXiv:1707.08049},
}

\bib{Cos}{book}{
  author={Costello, Kevin},
  title={Renormalization and effective field theory},
  series={Mathematical Surveys and Monographs},
  volume={170},
  publisher={American Mathematical Society, Providence, RI},
  date={2011},
}

\bib{CG}{book}{
  author={Costello, Kevin},
  author={Gwilliam, Owen},
  title={Factorization algebras in quantum field theory. Vol. 1},
  note={Available at \url {http://people.mpim-bonn.mpg.de/gwilliam}.},
  series={New Mathematical Monographs},
  volume={31},
  publisher={Cambridge University Press, Cambridge},
  date={2017},
}

\bib{Duskin}{article}{
  author={Duskin, John W.},
  title={Simplicial matrices and the nerves of weak $n$-categories {I}: nerves of bicategories },
  date={2002},
  journal={Theory Appl. Categ.},
  volume={9},
  pages={No. 10, 198\ndash 308 (electronic)},
}

\bib{DwyerKanLoc}{article}{
  author={Dwyer, W. G.},
  author={Kan, D. M.},
  title={Simplicial localizations of categories},
  journal={J. Pure Appl. Algebra},
  volume={17},
  date={1980},
  number={3},
  pages={267--284},
}

\bib{DwyerKanFnCpx}{article}{
  author={Dwyer, W. G.},
  author={Kan, D. M.},
  title={Function complexes in homotopical algebra},
  journal={Topology},
  volume={19},
  date={1980},
  number={4},
  pages={427--440},
}

\bib{Fiorenza}{article}{
  author={Fiorenza, Domenico},
  title={An introduction to the {B}atalin--{V}ilkovisky formalism},
  date={2003},
  journal={Comptes Rendus des Rencontres Mathematiques de Glanon},
  eprint={arXiv:math/0402057},
}

\bib{FresseKoszul}{article}{
  author={Fresse, Benoit},
  title={Koszul duality complexes for the cohomology of iterated loop spaces of spheres},
  conference={ title={An alpine expedition through algebraic topology}, },
  book={ series={Contemp. Math.}, volume={617}, publisher={Amer. Math. Soc., Providence, RI}, },
  date={2014},
  pages={165--188},
  doi={10.1090/conm/617/12281},
}

\bib{GR1}{book}{
  author={Gaitsgory, Dennis},
  author={Rozenblyum, Nick},
  title={A study in derived algebraic geometry. Vol. I. Correspondences and duality},
  series={Mathematical Surveys and Monographs},
  volume={221},
  publisher={American Mathematical Society, Providence, RI},
  date={2017},
  note={Available at \url {http://www.math.harvard.edu/~gaitsgde/GL}.},
}

\bib{GR2}{book}{
  author={Gaitsgory, Dennis},
  author={Rozenblyum, Nick},
  title={A study in derived algebraic geometry. Vol. II. Deformations, Lie theory and formal geometry},
  series={Mathematical Surveys and Monographs},
  volume={221},
  publisher={American Mathematical Society, Providence, RI},
  date={2017},
  note={Available at \url {http://www.math.harvard.edu/~gaitsgde/GL}.},
}

\bib{enr}{article}{
  author={Gepner, David},
  author={Haugseng, Rune},
  title={Enriched $\infty $-categories via non-symmetric $\infty $-operads},
  journal={Adv. Math.},
  volume={279},
  pages={575--716},
  eprint={arXiv:1312.3178},
  date={2015},
}

\bib{freepres}{article}{
  author={Gepner, David},
  author={Haugseng, Rune},
  author={Nikolaus, Thomas},
  title={Lax colimits and free fibrations in $\infty $-categories},
  eprint={arXiv:1501.02161},
  journal={Doc. Math.},
  volume={22},
  date={2017},
  pages={1225--1266},
}

\bib{OGthesis}{book}{
  author={Gwilliam, Owen},
  title={Factorization algebras and free field theories},
  note={Thesis (Ph.D.)--Northwestern University. Available at \url {http://people.mpim-bonn.mpg.de/gwilliam}.},
  date={2012},
}

\bib{JFG}{article}{
  author={Gwilliam, Owen},
  author={Johnson-Freyd, Theo},
  title={How to derive {F}eynman diagrams for finite-dimensional integrals directly from the {BV} formalism},
  date={2012},
  eprint={arXiv:1202.1554},
}

\bib{nmorita}{article}{
  author={Haugseng, Rune},
  title={The higher {M}orita category of $E_{n}$-algebras},
  date={2017},
  eprint={arXiv:1412.8459},
  journal={Geom. Topol.},
  volume={21},
  issue={3},
  pages={1631--1730},
}

\bib{symmseq}{article}{
  author={Haugseng, Rune},
  title={$\infty $-Operads via Day convolution},
  date={2017},
  eprint={arXiv:1708.09632},
}

\bib{Hennion}{article}{
  author={Hennion, Benjamin},
  title={Tangent {L}ie algebra of derived {A}rtin stacks},
  eprint={arXiv:1312.3167},
  date={2013},
}

\bib{HinichHAHA}{article}{
  author={Hinich, Vladimir},
  title={Homological algebra of homotopy algebras},
  journal={Comm. Algebra},
  volume={25},
  date={1997},
  number={10},
  pages={3291--3323},
}

\bib{HinichDgCoalg}{article}{
  author={Hinich, Vladimir},
  title={{DG} coalgebras as formal stacks},
  journal={J. Pure Appl. Algebra},
  volume={162},
  date={2001},
  number={2-3},
  pages={209--250},
  eprint={arXiv:math/9812034},
}

\bib{HoveyModCats}{book}{
  author={Hovey, Mark},
  title={Model categories},
  series={Mathematical Surveys and Monographs},
  volume={63},
  publisher={American Mathematical Society},
  place={Providence, RI},
  date={1999},
}

\bib{KnudsenEn}{article}{
  author={Knudsen, Ben},
  title={Higher enveloping algebras},
  date={2016},
  eprint={arXiv:1605.01391},
}

\bib{LV}{book}{
  author={Loday, Jean-Louis},
  author={Vallette, Bruno},
  title={Algebraic operads},
  series={Grundlehren der Mathematischen Wissenschaften},
  volume={346},
  publisher={Springer, Heidelberg},
  date={2012},
  pages={xxiv+634},
  doi={10.1007/978-3-642-30362-3},
}

\bib{HTT}{book}{
  author={Lurie, Jacob},
  title={Higher Topos Theory},
  series={Annals of Mathematics Studies},
  publisher={Princeton University Press},
  address={Princeton, NJ},
  date={2009},
  volume={170},
  note={Available at \url {http://math.harvard.edu/~lurie}},
}

\bib{DAG5}{article}{
  author={Lurie, Jacob},
  title={Derived algebraic geometry {V}: Structured spaces},
  date={2009},
  eprint={http://math.harvard.edu/~lurie},
}

\bib{DAG7}{article}{
  author={Lurie, Jacob},
  title={Derived algebraic geometry {VII}: spectral schemes},
  date={2011},
  eprint={http://math.harvard.edu/~lurie},
}

\bib{HA}{book}{
  author={Lurie, Jacob},
  title={Higher Algebra},
  date={2014},
  note={Available at \url {http://math.harvard.edu/~lurie}.},
}

\bib{SAG}{book}{
  author={Lurie, Jacob},
  title={Spectral algebraic geometry},
  date={2017},
  note={Available at \url {http://math.harvard.edu/~lurie}.},
}

\bib{mark}{article}{
  author={Markarian, Nikita},
  eprint={arXiv:1504.01931},
  title={Weyl $n$-algebras},
  journal={Comm. Math. Phys.},
  volume={350},
  date={2017},
  number={2},
  pages={421--442},
}

\bib{MazelGeeQAdj}{article}{
  author={Mazel-Gee, Aaron},
  title={Quillen adjunctions induce adjunctions of quasicategories},
  eprint={arXiv:1501.03146},
  journal={New York J. Math.},
  volume={22},
  date={2016},
  pages={57--93},
}

\bib{PTVV}{article}{
  author={Pantev, Tony},
  author={To{\"e}n, Bertrand},
  author={Vaqui{\'e}, Michel},
  author={Vezzosi, Gabriele},
  title={Shifted symplectic structures},
  journal={Publ. Math. Inst. Hautes \'Etudes Sci.},
  volume={117},
  date={2013},
  pages={271--328},
}

\bib{PavlovScholbachOpd}{article}{
  author={Pavlov, Dmitri},
  author={Scholbach, Jakob},
  title={Admissibility and rectification of colored symmetric operads},
  date={2014},
  eprint={arXiv:1410.5675},
}

\bib{PavlovScholbachSymm}{article}{
  author={Pavlov, Dmitri},
  author={Scholbach, Jakob},
  title={Homotopy theory of symmetric powers},
  date={2015},
  eprint={arXiv:1510.04969},
}

\bib{Pridham}{article}{
  author={Pridham, J. P.},
  title={Deformation quantisation for $(-1)$-shifted symplectic structures and vanishing cycles},
  date={2015},
  eprint={arXiv:1508.07936},
}

\bib{SafronovPoisson}{article}{
  author={Safronov, Pavel},
  title={Poisson reduction as a coisotropic intersection},
  eprint={arXiv:1509.08081},
  date={2015},
}

\bib{SchwedeShipleyAlgMod}{article}{
  author={Schwede, Stefan},
  author={Shipley, Brooke E.},
  title={Algebras and modules in monoidal model categories},
  journal={Proc. London Math. Soc. (3)},
  volume={80},
  date={2000},
  number={2},
  pages={491--511},
}

\bib{HAG2}{article}{
  author={To{\"e}n, Bertrand},
  author={Vezzosi, Gabriele},
  title={Homotopical algebraic geometry {I}{I}: geometric stacks and applications},
  journal={Mem. Amer. Math. Soc.},
  volume={193},
  date={2008},
  number={902},
  eprint={arXiv:math/0404373},
}

\bib{ToenBranes}{article}{
  author={To{\"e}n, Bertrand},
  title={Operations on derived moduli spaces of branes},
  date={2013},
  eprint={arXiv:1307.0405},
}

\bib{VezzosiQuad}{article}{
  author={Vezzosi, Gabriele},
  title={Quadratic forms and {C}lifford algebras on derived stacks},
  eprint={arXiv:1309.1879},
  journal={Adv. Math.},
  volume={301},
  date={2016},
  pages={161--203},
}

\bib{Wallbridge}{article}{
  author={Wallbridge, James},
  title={Homotopy theory in a quasi-abelian category},
  eprint={arXiv:1510.04055},
  date={2015},
}

\bib{Witten}{article}{
  author={Witten, Edward},
  title={A note on the antibracket formalism},
  journal={Modern Phys. Lett. A},
  volume={5},
  date={1990},
  number={7},
  pages={487--494},
}

 \end{biblist}
 \end{bibdiv}

\end{document}